\newcommand{\newcom}{\newcommand}
\newcom{\al}{\alpha}
\newcom{\be}{\beta}
\newcom{\eps}{\epsilon}
\newcom{\veps}{\varepsilon}
\newcom{\ga}{\gamma}
\newcom{\Ga}{\Gamma}
\newcom{\ka}{\kappa}
\newcom{\Lam}{\Lambda}
\newcom{\lam}{\lambda}
\newcom{\Om}{\Omega}
\newcom{\om}{\omega}
\newcom{\Si}{\Sigma}
\newcom{\si}{\sigma}
\newcom{\tht}{\theta}
\newcom{\dtri}{\nabla}
\newcom{\tri}{\triangle}
\newcom{\oo}{\infty}
\newcom{\vphi}{\varphi}
\newcom{\cB}{{\mathcal B}}
\newcom{\cC}{{\mathcal C}}
\newcom{\cD}{{\mathcal D}}
\newcom{\cF}{{\mathcal F}}
\newcom{\cL}{{\mathcal L}}
\newcom{\cM}{{\mathcal M}}
\newcom{\cP}{{\mathcal P}}
\newcom{\cS}{{\mathcal S}}
\newcom{\cQ}{{\mathcal Q}}
\newcom{\cT}{{\mathcal T}}
\newcom{\cY}{{\mathcal Y}}
\newcom{\cZ}{{\mathcal Z}}
\newcom{\R}{\mathbb R}
\newcom{\T}{\mathbb T}
\newcom{\N}{\mathbb N}
\newcom{\Z}{\mathbb Z}
\newcom{\C}{\mathbb C}
\newcom{\E}{\mathbb E}
\newcommand{\FP}{\mathbf{P}}
\newcommand{\De}{\Delta}
\newcom{\f}{\frac}
\newcom{\di}{\displaystyle\int}
\newcom{\ds}{\displaystyle\sum}
\newcom{\dl}{\displaystyle\lim}
\newcom{\ov}{\overline}
\newcom{\sset}{\subset}
\newcom{\wt}{\widetilde}
\newcom{\pa}{\partial}
\newcom{\p}{\partial}
\newcom\na{\nabla}
\newcom{\suml}{\sum\limits}
\newcom{\supl}{\sup\limits}
\newcom{\intl}{\int\limits}
\newcom{\infl}{\inf\limits}
\newcom{\disp}{\displaystyle}
\newcom{\non}{\nonumber}
\newcom{\no}{\noindent}
\newcom{\QED}{$\square$}
\newtheorem{athm}{\bf \t}[section]
\newenvironment{thm} [1] {\def\t{#1}\begin{athm} \bf \rm} {\end{athm}}
\newcom{\bthm}{\begin{thm}}
\newcom{\ethm}{\end{thm}}
\newtheorem{theorem}{Theorem}[section]
\newtheorem{lemma}{Lemma}[section]
\newtheorem{corollary}{Corollary}[section]
\newcom{\beq}{\begin{equation}}
\newcom{\eeq}{\end{equation}}
\newcom{\ben}{\begin{eqnarray}}
\newcom{\een}{\end{eqnarray}}
\newcom{\beno}{\begin{eqnarray*}}
\newcom{\eeno}{\end{eqnarray*}}
\newcom{\bali}{\begin{aligned}}
\newcom{\eali}{\end{aligned}}
\numberwithin{equation}{section}
\begin{document}

\title[Two fluid Euler-Maxwell system]
{Darcy's law and diffusion for a two-fluid Euler-Maxwell  system with dissipation}

\author{Renjun Duan}
\address{(RJD)
Department of Mathematics, The Chinese University of Hong Kong, Shatin, Hong Kong}
\email{rjduan@math.cuhk.edu.hk}

\author{Qingqing Liu}
\address{(QQL)
The Hubei Key Laboratory of Mathematical Physics, School of
Mathematics and Statistics, Central China Normal University, Wuhan,
430079, P. R. China} \email{shuxueliuqingqing@126.com}

\author{Changjiang Zhu}
\address{(CJZ)
The Hubei Key Laboratory of Mathematical Physics, School of
Mathematics and Statistics, Central China Normal University, Wuhan,
430079, P. R. China} \email{cjzhu@mail.ccnu.edu.cn}


\keywords{Euler-Maxwell system; dissipation; diffusion waves; large-time behavior}


\begin{abstract}
This paper is concerned with the large-time behavior of solutions to the Cauchy problem on the two-fluid Euler-Maxwell system with dissipation when initial data are around a constant equilibrium state. The main goal is the rigorous justification of diffusion phenomena in fluid plasma at the linear level.  Precisely, motivated by the classical Darcy's law for the nonconductive fluid, we first give a heuristic derivation of the asymptotic equations of the Euler-Maxwell system in large time. It turns out that both the density and the magnetic field tend time-asymptotically to the diffusion equations with diffusive coefficients explicitly determined by given physical parameters. Then, in terms of the Fourier energy method, we analyze the linear dissipative structure of the system, which implies the almost exponential time-decay property of solutions over the high-frequency domain. The key part of the paper is the spectral analysis of the linearized system, exactly capturing the diffusive feature of solutions over the low-frequency domain. Finally, under some conditions on initial data, we show the convergence  of the  densities and the magnetic field to the corresponding linear diffusion waves with the rate $(1+t)^{-5/4}$  in $L^2$ norm and also the convergence of the velocities and the electric field to the corresponding asymptotic profiles given in the sense of the geneneralized Darcy's law with the faster rate  $(1+t)^{-7/4}$  in $L^2$ norm.  Thus, this work can be also regarded as the mathematical proof of the Darcy's law in the context of collisional fluid plasma.
\end{abstract}

\maketitle
\thispagestyle{empty}

\tableofcontents

\section{Introduction}

It is generally believed that the Darcy's law governs the motion of the inviscid flow with frictional damping \cite{Liu} or the slow viscous flow \cite{Lions} in large time. It is quite nontrivial to mathematically justify the large-time behavior of solutions to those relative physical systems, particularly in the case when vacuum appears, cf.~\cite{DP,HMP, MM,Pa}. Besides, there are also some results, for instance, see \cite{Im} and references therein, to discuss the modified Darcy's law for conducting porous media. In the paper, we attempt to give a rigorous proof of Darcy's laws and diffusion phenomena in the context of collisional fluid plasma whenever the densities of fluids are close to non-vacuum states.

In a weakly ionised gas with a small enough ionisation fraction, charged particles will interact primarily by means of elastic collisions with neutral atoms rather than with other charged particles, cf.~\cite[Chapter 12.4]{GR}. In such situation, the motion of fluid plasmas consisting of ions $(\al=i)$, electrons $(\al=e)$ and neutral atoms is generally governed by the two-fluid Euler-Maxwell system in three space dimensions
\begin{eqnarray}\label{1.1}
&&\left\{\begin{aligned}
&\partial_t n_{\alpha}+\nabla\cdot(n_\alpha u_\alpha)=0,\\
&m_{\alpha}n_{\alpha}(\partial_t u_{\alpha}+u_{\alpha} \cdot \nabla
u_\alpha)+\nabla
p_\alpha(n_\alpha)\\
&\qquad\qquad\qquad\qquad=q_{\alpha}n_{\alpha}\left(E+\frac{u_{\alpha}}{c}\times B\right)-\nu_\alpha m_{\alpha} n_{\alpha}u_\alpha,\\
&\partial_t E-c\nabla\times B=-4\pi\sum_{\alpha=i,e}q_{\alpha}n_{\alpha}u_{\alpha},\\
&\partial_t B+c\nabla \times E=0,\\
&\nabla \cdot E=4\pi\sum_{\alpha=i,e}q_{\alpha}n_{\alpha}, \ \
\nabla \cdot B=0.
\end{aligned}\right.
\end{eqnarray}
Here the unknowns are $n_{\alpha}=n_{\alpha}(t,x)\geq 0 $ and $
u_{\alpha}=u_{\alpha}(t,x)\in \mathbb{R}^{3}$ with $\al=i,e$, denoting the densities and velocities of the $\al$-species respectively, and also $ E=E(t,x)\in
\mathbb{R}^{3}$ and $ B=B(t,x)\in \mathbb{R}^{3}$, denoting the self-consistent electron and  magnetic fields respectively, for $t>0$ and $x\in \R^3$. For the $\al$-species, $p_\al(\cdot)$ depending only on the density is the pressure function which is smooth and satisfies $p_\al'(n)>0$ for $n>0$, and for simplicity we assume in the paper that the fluid is isothermal and hence $p_\al(n)=T_\al n$ for the constant temperature $T_\al>0$. Constants $m_\al>0$, $q_\al$, $\nu_\al>0$, $c>0$ stand for the mass, charge and collision frequency of $\al$-species and the speed of light, respectively.
The constant $4\pi$ appearing in the system is related to the spatial dimension. Notice $q_{e}=-e$ and $q_{i}=Ze$ in the general physical situation,
where $e>0$ is the electronic charge and $Z\geq 1$ is an positive integer. Without loss of generality,
we may assume  $Z=1$ through the paper, since it can be normalized to be unit under the transformation \begin{eqnarray*}
\tilde{n}_{i}=Zn_{i},\ \ \tilde{q}_{i}=e,\ \
\tilde{m}_{i}=\frac{m_{i}}{Z},\ \
\tilde{p}_{i}(n)=p_{i}\left(\frac{n}{Z}\right).
\end{eqnarray*}
%
%
Initial data are given by
\begin{eqnarray}\label{1.2}
[n_{\alpha},u_{\alpha},E,B]|_{t=0}=[n_{\alpha 0},u_{\alpha
0},E_{0},B_0], 
\end{eqnarray}
with the compatibility condition
\begin{eqnarray}\label{1.3}
\nabla \cdot E_0=4\pi \sum_{\alpha=i,e}q_{\alpha}n_{\alpha 0}, \ \
\nabla \cdot B_0=0.
\end{eqnarray}

The paper is concerned with the large-time asymptotic behavior of solutions to the Cauchy problem on the two-fluid Euler-Maxwell system with collisions whenever initial data are close to a  constant equilibrium state $[n_\al=1,u_\alpha=0,E=0,B=0]$. Notice that collision terms play a key role in the analysis of the problem, see \cite{STW} for instance. For that purpose, we introduce the large-time asymptotic profile as follows.
Let $G_\mu(t,x)=(4\pi \mu t)^{-3/2}\exp \{-|x|^2/(4\mu t)\}$ be the heat kernel with the diffusion coefficient $\mu>0$. Let us define the ambipolar diffusive coefficient $\mu_1>0$ and the magnetic diffusive coefficient $\mu_2>0$ by
\begin{equation}
\label{def.heat}
\displaystyle \mu_1=\frac{T_{i}+T_{e}}{m_{i}\nu_{i}+m_{e}\nu_{e}},\quad
\mu_2=\frac{c^{2}m_{i}\nu_{i}m_{e}\nu_{e}}{4\pi e^{2}\left(
m_{i}\nu_{i}+m_{e}\nu_{e}\right)},
\end{equation}
respectively. Corresponding to given initial data \eqref{1.2}, we define the asymptotic profile $[\overline{n},\overline{u}_\alpha,\overline{E},\overline{B}]$ by
\begin{eqnarray}
&&\overline{n}=\sum_{\al=i,e} \frac{m_{\al}\nu_{\al}}{m_{i}\nu_{i}+m_{e}\nu_{e}}G_{\mu_1}(t,\cdot)\ast (n_{\al 0}-1),\label{def.pp1}\\
&&\overline{B}=G_{\mu_2}(t,\cdot)\ast B_0,\label{def.pp2}
\end{eqnarray}
and
\begin{eqnarray}
&&\overline{u}_{\alpha}(t,x)=-\frac{T_{i}+T_{e}}{m_{i}\nu_{i}+m_{e}\nu_{e}}\nabla
\overline{n}(t,x)+\frac{c}{4\pi q_\al}\frac{
m_{e}\nu_{e}}{m_{i}\nu_{i}+m_{e}\nu_{e}}\nabla \times
\overline{B}(t,x),\quad \al=i,e,\label{def.pp3}\\
&&\overline{E}(t,x)=\frac{T_{i}m_{e}\nu_{e}-T_{e}m_{i}\nu_{i}}{e(m_{i}\nu_{i}+m_{e}\nu_{e})}\nabla
\overline{n}(t,x)+\frac{c}{4\pi
e^{2}}\frac{m_{i}\nu_{i}m_{e}\nu_{e}}{
m_{i}\nu_{i}+m_{e}\nu_{e}}\nabla\times
\overline{B}(t,x).\label{def.pp4}
\end{eqnarray}
The main result of the paper is stated as follows.

\begin{theorem}\label{thm.main}
There are constants $\eps>0$ and $C>0$ such that if
\begin{equation}
\label{thm.ma1}
\sum_{\al=i,e} \|[n_{\alpha0}-1,u_{\alpha 0}]\|_{H^{11}\cap L^1} +\|[E_0,B_0]\|_{H^{11}\cap L^1}<\eps,
\end{equation}
then the Cauchy problem \eqref{1.1}, \eqref{1.2}, \eqref{1.3} admits a unique global solution
\begin{equation}
\label{thm.ma3}
n_\alpha-1,u_\alpha, E,B\in C([0,\infty);H^5(\R^3)),
\end{equation}
satisfying
\begin{equation}
\label{thm.ma4}
\sum_{\al=i,e} \|n_\al-1-\overline{n}\|_{L^2} +\|B-\overline{B}\|_{L^2}\leq C (1+t)^{-\frac{5}{4}},
\end{equation}
and
\begin{equation}
\label{thm.ma5}
\sum_{\al=i,e}\|u_\al-\overline{u}_\al\|_{L^2} +\|E-\overline{E}\|_{L^2}\leq C (1+t)^{-\frac{7}{4}},
\end{equation}
for all $t\geq 0$.
\end{theorem}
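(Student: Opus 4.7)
The plan is to combine (i) global-in-time energy estimates for the nonlinear Cauchy problem, (ii) a detailed spectral decomposition of the linearized semigroup in Fourier variables, and (iii) a Duhamel argument that transfers the linear decay to the nonlinear convergence rates. First I would establish the global existence in \eqref{thm.ma3} by the standard uniform a priori energy method: the linearization of \eqref{1.1} at $[1,0,0,0]$ is a symmetric hyperbolic system with partial relaxation through $-\nu_\alpha m_\alpha n_\alpha u_\alpha$, and under \eqref{thm.ma1} this yields a closed $H^{11}$-bound $\|U(t)\|_{H^{11}}\lesssim\veps$ together with the usual extra time-integrated dissipation for $u_\alpha$, which combined with Maxwell interaction terms also produces dissipation on $E,B$ as in \cite{STW}.

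The core of the argument is the Fourier analysis of the linearized operator $L$, written as $\partial_t\widehat U=A(\xi)\widehat U$ for the perturbation $U=(\rho_i,\rho_e,u_i,u_e,E,B)$. The analysis splits by frequency. On $|\xi|\geq r_0$ I would construct a Lyapunov functional in Fourier space, adding cross terms of the type $\rho_\alpha\,\overline{i\xi\cdot u_\alpha}$, $E\cdot\overline{u_\alpha}$ and $(i\xi\times E)\cdot\overline{B}$ to upgrade the partial dissipation into a full one, thereby obtaining an almost-exponential bound $|\widehat U(t,\xi)|\lesssim e^{-\lambda t}|\widehat U(0,\xi)|$. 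On $|\xi|\leq r_0$ I would carry out a perturbative spectral expansion as $\xi\to 0$: the kernel of $A(0)$ is multi-dimensional, reflecting conservation of the total perturbed density and of $B$, and it bifurcates under perturbation into two parabolic branches with eigenvalues
\begin{equation*}
\lambda_1(\xi)=-\mu_1|\xi|^2+O(|\xi|^3),\qquad \lambda_2(\xi)=-\mu_2|\xi|^2+O(|\xi|^3),
\end{equation*}
with $\mu_1,\mu_2$ from \eqref{def.heat}. Computing the corresponding eigenprojections at $\xi=0$ and expanding to first order in $\xi$ reproduces exactly the ambipolar combination $\overline n$ in \eqref{def.pp1}, the magnetic mode $\overline B$ in \eqref{def.pp2}, and the gradient/curl corrections that define $\overline u_\alpha$ and $\overline E$ in \eqref{def.pp3}--\eqref{def.pp4}. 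The remaining eigenvalues stay away from the imaginary axis (plasma-type oscillations strongly damped by the collisions), so their modes contribute $O(e^{-ct})$ to the semigroup. Summing the high- and low-frequency contributions and peeling off the leading heat kernels gives $L^1\cap L^2\to L^2$ bounds of the form $(1+t)^{-5/4}$ for the $(\rho_\alpha,B)$ residuals and $(1+t)^{-7/4}$ for the $(u_\alpha,E)$ residuals, the extra $(1+t)^{-1/2}$ coming from the factor $i\xi$ already present in the Darcy formulas.

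With these linear estimates at hand, I would close the nonlinear problem through Duhamel's formula $U(t)=e^{tL}U_0+\int_0^t e^{(t-s)L}N(U)(s)\,ds$, where $N(U)$ collects the quadratic terms $\nabla\cdot(\rho_\alpha u_\alpha)$, $u_\alpha\cdot\nabla u_\alpha$, $\rho_\alpha(E+c^{-1}u_\alpha\times B)$, $\nabla p_\alpha$-corrections, and so on. Splitting the time integral at $s=t/2$ and using the uniform $H^{11}$-bound from Step 1 to put $N(U)\in L^1\cap L^2$ with quadratic decay in $s$, combined with the refined semigroup estimates above, a standard time-weighted bootstrap yields precisely \eqref{thm.ma4}--\eqref{thm.ma5}.

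The hardest step is undoubtedly the low-frequency spectral analysis: $A(\xi)$ is a $13\times 13$ matrix with the Maxwell coupling and the Gauss constraint $\nabla\cdot E=4\pi\sum q_\alpha\rho_\alpha$ entangling the density and electric-field eigendirections, so pinning down the two diffusive branches and verifying that their second-order eigenvalue expansions reproduce exactly $\mu_1,\mu_2$ in \eqref{def.heat} requires a careful choice of basis that diagonalizes the quasineutral zero block. A secondary difficulty is not losing the extra $(1+t)^{-1/2}$ in the nonlinear Duhamel step for $u_\alpha,E$: this forces me to track the velocity/field components of $N(U)$ separately and use the gradient structure of the residual semigroup on those components, rather than the uniform $(1+t)^{-5/4}$ rate.
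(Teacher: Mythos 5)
Your overall plan — global energy estimates, Fourier spectral analysis split by frequency, then Duhamel — is the right skeleton, but there are two substantive issues and one structural difference worth flagging.

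First, the high-frequency bound you claim is too strong. You write that on $|\xi|\ge r_0$ the Lyapunov functional gives $|\widehat U(t,\xi)|\lesssim e^{-\lambda t}|\widehat U(0,\xi)|$, but this system is of \emph{regularity-loss type}: the Lyapunov inequality one actually obtains (Theorem \ref{lyapunov}) gives the multiplier $\exp(-\lambda|k|^2 t/(1+|k|^2)^2)$, which behaves like $e^{-\lambda t/|k|^2}$ for $|k|\gg 1$. That is strictly weaker than a true exponential and is the reason the hypothesis \eqref{thm.ma1} imposes $H^{11}$ smallness on the data while the conclusion \eqref{thm.ma3} only gives $H^5$ regularity for the solution: the high-frequency tail must be compensated by extra derivatives of the data. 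A proof built on the claimed uniform exponential bound would not explain the $H^{11}\to H^5$ discrepancy and would actually yield a (false) sharper statement, so this is not a harmless simplification.

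Second, your Duhamel step as stated would lose the extra $(1+t)^{-1/2}$ for $u_\alpha$ and $E$. The rates \eqref{thm.ma4}--\eqref{thm.ma5} are each $(1+t)^{-1/2}$ faster than the decay of the corresponding linear profile, so the residual $U(t)-e^{tL}U_0$ must decay strictly faster than $e^{tL}U_0$; a generic $L^1\cap L^2\to L^2$ estimate on $e^{(t-s)L}$ applied to $N(U)(s)$ only reproduces the linear rates. The paper closes this gap by observing that the nonlinear source has a special structure: the density forcing is a pure divergence $\nabla\cdot f_\alpha$ and the $B$-component of the forcing vanishes. Feeding data of this special form into the linear semigroup kills the magnetic diffusion wave entirely and gains one extra factor of $|k|$ in the density branch; this is precisely the content of the dedicated estimate in Section \ref{sec.spec.} (Theorem \ref{thm.special}). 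Your remark about ``using the gradient structure of the residual semigroup'' gestures in this direction, but to make the bootstrap close you genuinely need a separate semigroup estimate for inputs of the form $[\nabla\cdot f_\alpha,\,g_{2\alpha},\,g_3,\,0]$, not just component-wise tracking.

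Finally, a structural difference rather than a gap: you identify the $13\times 13$ matrix $A(\xi)$ with the Gauss constraint as the hardest step, whereas the paper sidesteps this entirely by first projecting onto curl-free (``fluid'') and divergence-free (``electromagnetic'') components. The fluid part becomes a genuine $4\times 4$ ODE system in $[\hat\rho_i,\hat\rho_e,\hat u_{i,\parallel},\hat u_{e,\parallel}]$ with $E_\parallel$ eliminated via the Gauss law, and the electromagnetic part reduces, after taking curls and time derivatives, to a fourth-order scalar ODE for $\hat B$ alone; the remaining transverse components $u_{\alpha,\perp},E_\perp$ are then controlled by a Fourier energy estimate rather than by explicit eigenprojections (Lemma \ref{errorUE}). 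This decomposition is what makes the eigenvalue expansion tractable and what exhibits $\mu_1$ and $\mu_2$ as arising from two separate, much lower-dimensional characteristic polynomials. Your basis-choice approach could in principle work, but you would be re-deriving the parallel/perpendicular splitting in disguise and would still need to handle the case distinction on the discriminant $\Delta$ of the cubic $g(\lambda)$ (the paper explicitly excludes $\Delta=0$ to avoid Puiseux expansions), which your proposal does not mention.
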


We give a few remarks on Theorem \ref{thm.main}.  First of all, from the proof later on, under the assumption \eqref{thm.ma1} the solution to the Cauchy problem \eqref{1.1}, \eqref{1.2}, \eqref{1.3} around the constant equilibrium state enjoys the time-decay property
\begin{equation*}
\sum_{\al=i,e}\|[n_\al-1,u_\al]\|_{L^2}+\|[E,B]\|_{L^2}\leq C (1+t)^{-\frac{3}{4}},
\end{equation*}
where the time-decay rate must be optimal for general initial data with $B_0\neq 0$ due to those results from the spectral analysis given in Section \ref{sec4}; see Corollary \ref{corollary.decayL} for instance.  On the other hand the large-time asymptotic profile also satisfies
\begin{equation*}
\|\overline{n}\|_{L^2}+\|\overline{B}\|_{L^2}\leq C (1+t)^{-\frac{3}{4}},\quad \sum_{\al=i,e}\|\overline{u}_\al\|_{L^2}+\|\overline{E}\|_{L^2}\leq C (1+t)^{-\frac{5}{4}},
\end{equation*}
which are  also optimal in terms of the definition \eqref{def.pp1}, \eqref{def.pp2}, \eqref{def.pp3}, \eqref{def.pp4} of $[\overline{n},\overline{u}_\al,\overline{E},\overline{B}]$. Therefore it is nontrivial to obtain the faster time-decay rates \eqref{thm.ma4} and \eqref{thm.ma5}, and this  also assures that $[1+\overline{n},\overline{u}_\al,\overline{E},\overline{B}]$ indeed can be regarded as the more accurate large-time asymptotic profile for solutions to the Cauchy problem under consideration, compared to the trivial constant equilibrium state. Notice that $\overline{n}$ and $\overline{B}$ are diffusion waves by  \eqref{def.pp1} and \eqref{def.pp2} as well as \eqref{def.heat}, and $\overline{u}_\al$ and $\overline{E}$ are defined in terms of those two diffusion waves by \eqref{def.pp3} and \eqref{def.pp4}. From the heuristic derivation of the large-time asymptotic profiles in the next section, we see that the asymptotic profiles can be solved from the asymptotic equations obtained in a formal way in the sense of the Darcy's law. In the case without any electromagnetic field, there have been extensive mathematical studies of the large-time behavior for the damped Euler system basing on the Darcy's law; see \cite{GHL, HL, MM, Pa} and reference therein. However few rigorous results are known for such physical law in the context of two-fluid plasma with collisions. This work can be regarded to some extent as the generalisation of the Darcy's law for the classical non-conductive fluid to the plasma fluid under the influence of the self-consistent electromagnetic field.

Second, by \eqref{thm.ma1} and \eqref{thm.ma3}, there is a discrepancy between regularities of initial data and the solution. The reason for the discrepancy is that the Euler-Maxwell with collisions is of the regularity-loss type, which is essentially induced by the fact that eigenvalues of the linearized system may tend asymptotically to the imaginary axis as the frequency goes to infinity; see \cite{Duan} in the one-fluid case. There has been a general theory developed in \cite{UDK} in terms of the Fourier energy method to study the decay structure of general symmetric hyperbolic systems with partial relaxations of the regularity-loss type. The main feature of time-decay properties for such regularity-loss system is that solutions over the high-frequency domain can still gain the enough time-decay rate by compensating enough regularity of initial data.

Third, the key point of Theorem \ref{thm.main} is to present the convergence in $L^2$ norm  of the solution $[n_\al,u_\al, E,B]$ to the profile $[1+\overline{n},\overline{u}_\al,\overline{E},\overline{B}]$ if initial data approach the constant steady state in the sense of \eqref{thm.ma1}. 
There could be several direct generalisations of the current result. As in \cite{UK}, it can be expected to obtain the convergence rates for the derivatives up to to some order. In general, the higher the order of derivatives is, the faster they decay in time. Another possible approach for obtaining the global existence and convergence of solutions to the constant steady state is to introduce as in \cite{GW} the negative Sobolev space basing on the pure energy method together with the functional interpolation inequalities, where the advantage is that both $L^2$ norms of the higher derivatives and $L^1$ norms of the zero-order are not necessarily small. However, it seems still nontrivial to apply such method to obtain the large-time asymptotic behaviour \eqref{thm.ma4} and \eqref{thm.ma5}.

The final remark is concerned with the nonlinear diffusion of the two fluid Euler-Maxwell system with collisions. In fact, the current work is done at the linearized level. Even for the general pressure functions $P_\al$ $(\al=i,e)$, by using the same formal derivation as in Section \ref{sec2}, the density satisfies the nonlinear heat equation
$$
\pa_t \overline{n} -\De P(\overline{n})=0,
$$
where $P(\cdot)$ is in connection with  $P_\al$ $(\al=i,e)$ as well as other physical parameters. The nonlinear heat equation above is also a type of the porous medium equation. Thus, it would be interesting and challenging to further investigate the asymptotic stability of the nonlinear diffusion waves, cf.~\cite{HL}. We hope to report it in the future study.

To prove Theorem \ref{thm.main} we need to carry out the spectral analysis of the linearized system around the constant steady state; see \cite{LK}, for instance. In fact, the solution can be written as the sum of the fluid part and the electromagnetic part in the form of
\begin{eqnarray*}
&& \left[
  \begin{array}{c}
   \rho_{\alpha}(t,x)\\
   u_{\alpha}(t,x)\\
   E(t,x)\\
   B(t,x)\\
 \end{array}\right]
 =
 \left[
  \begin{array}{c}
   \rho_{\alpha}(t,x)\\
   u_{\alpha,\parallel}(t,x)\\
   E_{\parallel}(t,x)\\
   0\\
 \end{array}\right]+
\left[
  \begin{array}{c}
   0 \\
   u_{\alpha,\perp}(t,x)\\
   E_{\perp}(t,x)\\
   B(t,x)\\
 \end{array}\right].
\end{eqnarray*}
However it seems difficult to give an explicit representation of
solutions to two eigenvalue problems due to the high phase
dimensions under consideration. The main idea is to obtain the
asymptotic expansions of solutions to the linearized system as the
frequency $|k|\to 0$ (cf.~\cite{Duan1}, \cite{IK}); see Section \ref{sec4}. One trick to deal with
the electromagnetic part is to first reduce the system to the
high-order ODE of the magnetic field $B$ only, then study the
asymptotic expansion of $B$ as $|k|\to 0$, and finally apply the
Fourier energy method to estimate the other two components
$u_{\al,\perp}$ and $E_\perp$(cf.~\cite{RSY} and reference therein);
see Lemma \ref{errorUE}.  For $|k|\to \infty$, it can be directly
treated by the Fourier energy method since the linearized solution
operator in the Fourier space behaves like
$$
\exp\left\{-\frac{\lambda|k|^2}{(1+|k|^2)^2}t\right\},
$$
which leads to the almost exponential time-decay depending on regularity of initial data; see Section \ref{sec3}. In the mean time, we find that the large-time behavior of solutions to the two-fluid Euler-Maxwell system \eqref{1.1} is governed by the following two subsystems
\begin{equation*}
\left\{\begin{array}{l}
 \displaystyle \pa_t n +\nabla \cdot (n u_{\parallel})=0,  \\[3mm]
\displaystyle      \nabla   P_\al (n)=q_\al n E_\parallel -\nu_\al m_\al n u_\parallel, \quad \al=i,e,
\end{array}\right.
\end{equation*}
and
\begin{equation*}
\left\{\begin{array}{l}
 \displaystyle q_\al E_\perp -\nu_\al m_\al u_{\al,\perp}=0,\quad \al=i,e,\\[3mm]
\displaystyle  -c \nabla \times B=-4\pi n \sum_{\al=i,e} q_\al u_{\al,\perp},\\[3mm]
 \displaystyle \pa_t B +c \nabla \times E_\perp=0.
\end{array}\right.
\end{equation*}
For more details see Section \ref{sec2} and Section \ref{sec4}.

Finally we would mention the following works related to the paper: some derivations and numerical computations of the relative models \cite{BCD, BMP,DDS,T}, global existence and large-time behavior for the damped Euler-Maxwell system \cite{CJW, Duan,DLZ,P,PWG,TW,UK, USK, WFL,X}, global existence in the non-damping case \cite{GM,GIP,LP}, and asymptotic limits under small parameters \cite{HP, PW-08,PWG}.

The rest of the paper is organised as follows. In Section \ref{sec2}, we give the heuristic derivation of diffusion waves motivated by the classical Darcy's law. In Section \ref{sec3} we reformulate the Cauchy problem on the Euler-Maxwell system around the constant steady state, and study the decay structure of the linearized homogeneous system by the Fourier energy method. In Section \ref{sec4}, we present the spectral analysis of the linearized system by three parts. The fist part is for the fluid, the second one for the electromagnetic field, and the third one for the extra time-decay of solutions with special initial data. The result in the third part accounts for estimating the inhomogeneous source terms. In Section \ref{sec5}, we first prove the global existence of solutions by the energy method, show the time asymptotic rate of solutions around the constant states and then obtain the main result concerning the time asymptotic rate around linear diffusion waves.

\medskip
\noindent{\it Notations.} Let us introduce some notations for the use throughout this paper.
$C$ denotes some positive (generally large) constant and $ \lambda$
denotes some positive (generally small) constant, where both $C$ and
$ \lambda$ may take different values in different places. For two
quantities $a$ and $b$, $a\sim b$ means $\lambda a \leq  b \leq
\frac{1}{\lambda} a $ for a generic constant $0<\lambda<1$. For any
integer$m\geq 0$, we use $H^{m}$, $\dot{H}^{m}$ to denote the usual
Sobolev space $H^{m}(\mathbb{R}^{3})$ and the corresponding
$m$-order homogeneous Sobolev space, respectively. Set $L^{2}=H^{m}$
when $m = 0$. For simplicity, the norm of $ H^{m}$ is denoted by
$\|\cdot\|_{m} $ with $\|\cdot \|=\|\cdot\|_{0}$. We use $
\langle\cdot, \cdot \rangle$ to denote the inner product over the
Hilbert space $ L^{2}(\mathbb{R}^{3})$, i.e.
\begin{eqnarray*}
\langle f,g \rangle=\int_{\mathbb{R}^{3}} f(x)g(x)dx,\ \ \ \  f =
f(x),\ \  g = g(x)\in L^2(\mathbb{R}^{3}).
\end{eqnarray*}
 For a multi-index $\alpha =
[\alpha_1, \alpha_2, \alpha_3]$, we denote $\partial^{\alpha} =
\partial^{\alpha_{1}}_ {x_1}\partial^{\alpha_{2}}_ {x_2} \partial^{\alpha_{3}}_ {x_3} $.
The length of $ \alpha$ is $|\alpha| = \alpha_1 + \alpha_2 +
\alpha_3$. For simplicity, we also
set $\partial_{j}=\partial_{x_{j}}$ for $j = 1, 2, 3$.


\section{Heuristic derivation of diffusion waves}\label{sec2}


In this section we would provide a heuristic derivation of the large-time asymptotic equations of the densities, velocities and the electromagnetic field. Indeed, both the densities and the magnetic field satisfy the diffusion equations with different diffusion coefficients in terms of those physical parameters appearing in the system, and the velocities and the electric field are defined by the densities and the magnetic field according to the Darcy's law.

\subsection{Diffusion of densities}
We first give a formal derivation of the large-time asymptotic equations of densities and velocities. Assume the quasineutral condition
\begin{eqnarray}\label{de.qnc}
n_{i}=n_{e}=n(t,x),\quad u_{i}=u_{e}=u(t,x),
\end{eqnarray}
and also assume that the background magnetic field is a constant vector, for instance, $B=(0,0,|B|)$ is constant along $x_{3}$-direction. Note that $|B|$ here is not necessarily assumed to be zero.  

We start from the
asymptotic momentum equations for $\al=i$ and $e$:
\begin{eqnarray}
\label{de.mie}
\nabla
p_\alpha(n)=q_{\alpha}n\left(E+\frac{u}{c}\times
B\right)-\nu_\alpha m_{\alpha} n u.
\end{eqnarray}
Along $B$, \eqref{de.mie} reduces to
\begin{eqnarray*}
\nabla p_\alpha(n)\cdot B=q_{\alpha}n E\cdot
B-\nu_\alpha m_{\alpha} n u \cdot B.
\end{eqnarray*}
i.e.,
\begin{eqnarray*}
\left\{
\begin{aligned}
&\nabla p_i(n)\cdot B=en E\cdot B-\nu_i m_{i} n u \cdot
B,\\
&\nabla p_e(n)\cdot B=-en E\cdot B-\nu_e m_{e} nu \cdot
B.
\end{aligned}
\right.
\end{eqnarray*}
It can be further written in the matrix form:
\begin{eqnarray*}
\left(\begin{array}{c}
\nabla p_i\cdot B\\
\nabla p_e\cdot B
\end{array} \right)=
\left(\begin{array}{cc}
  en\ &\ -\nu_{i}m_{i} n \\
-en  \ &\ -\nu_{e}m_{e} n
\end{array} \right)
\left(
\begin{array}{c}
E\cdot B\\
u\cdot B
\end{array}
\right).
\end{eqnarray*}
One can solve $E\cdot B$ and $u\cdot B$  as
\begin{eqnarray*}
\begin{aligned}
&u\cdot B=-\frac{1}{\nu_{e}m_{e}+\nu_{i}m_{i}}\frac{(\nabla
p_{i}+\nabla p_{e})\cdot B}{n},\\
&E\cdot B=\frac{1}{n} \frac{\left(\frac{\nabla
p_{i}}{m_{i}\nu_{i}}-\frac{\nabla p_{e}}{m_{e}\nu_{e}}\right)\cdot
B}{\frac{e}{\nu_{i}m_{i}}+\frac{e}{\nu_{e}m_{e}}}.
\end{aligned}
\end{eqnarray*}
Notice that since $B=(0,0,|B|)$ is along the $x_3$-direction, then
\begin{eqnarray}\label{u3E3}
\begin{aligned}
&u_{3}=-\frac{1}{\nu_{e}m_{e}+\nu_{i}m_{i}}\frac{\partial_{3}(
p_{i}(n)+  p_{e}(n)) }{n},\\
&E_{3}=\frac{1}{\frac{e}{\nu_{i}m_{i}}+\frac{e}{\nu_{e}m_{e}}}
\frac{\partial_{3}\left(\frac{ p_{i}(n)}{m_{i}\nu_{i}}-\frac{
p_{e}(n)}{m_{e}\nu_{e}}\right)}{n}.
\end{aligned}
\end{eqnarray}
Along the $x_1x_2$-plane normal to $B$, noticing $u\times B=(u_{2}|B|,-u_{1}|B|,0)$,
%
%
 \eqref{de.mie} reduces to
\begin{eqnarray*}
\left\{
\begin{aligned}
\partial_{1}p_{\alpha}=q_{\alpha}n\left(E_{1}+\frac{|B|}{c}u_{2}\right)-\nu_\alpha m_{\alpha}
nu_{1},\\
\partial_{2}p_{\alpha}=q_{\alpha}n\left(E_{2}-\frac{|B|}{c}u_{1}\right)-\nu_\alpha m_{\alpha}
nu_{2},
\end{aligned}
\right.
\end{eqnarray*}
i.e.,
\begin{eqnarray}\label{uErelation}
\left(\begin{array}{cc}
-m_{\alpha}\nu_{\alpha}n\ &\ \frac{q_{\alpha}n}{c}|B|\\
-\frac{q_{\alpha}n}{c}|B|\ &\
-m_{\alpha}\nu_{\alpha}n
\end{array} \right)\left(\begin{array}{c}
u_{1}\\
u_{2}
\end{array} \right)+q_{\alpha}n
\left(\begin{array}{c}
E_{1}\\
E_{2}
\end{array} \right)=
\left(\begin{array}{c}
\partial_{1}p_{\alpha}\\
\partial_{2}p_{\alpha}
\end{array} \right).
\end{eqnarray}
This implies
\begin{eqnarray}\label{udetermin1}
\left(\begin{array}{c}
u_{1}\\
u_{2}
\end{array} \right)=\left(\begin{array}{cc}
-m_{\alpha}\nu_{\alpha}n\ &\ \frac{q_{\alpha}n}{c}|B|\\
-\frac{q_{\alpha}n}{c}|B|\ &\
-m_{\alpha}\nu_{\alpha}n
\end{array} \right)^{-1}\left[
-q_{\alpha}n \left(\begin{array}{c}
E_{1}\\
E_{2}
\end{array}
\right)+\left(\begin{array}{c}
\partial_{1}p_{\alpha}\\
\partial_{2}p_{\alpha}
\end{array} \right)
\right],
\end{eqnarray}
for $\al=i$ and $e$. We denote
\begin{eqnarray*}
A_{\alpha}:=\left(\begin{array}{cc}
-m_{\alpha}\nu_{\alpha}n\ &\ \frac{q_{\alpha}n}{c}|B|\\
-\frac{q_{\alpha}n}{c}|B|\ &\
-m_{\alpha}\nu_{\alpha}n
\end{array} \right).
\end{eqnarray*}
Then letting the right-hand terms of  \eqref{udetermin1} be equal for $\alpha=i$ and $e$ further implies
\begin{eqnarray*}
-q_{i}nA_{i}^{-1} \left(\begin{array}{c}
E_{1}\\
E_{2}
\end{array}
\right)+A_{i}^{-1}\left(\begin{array}{c}
\partial_{1}p_{i}\\
\partial_{2}p_{i}
\end{array} \right)=-q_{e}nA_{e}^{-1} \left(\begin{array}{c}
E_{1}\\
E_{2}
\end{array}
\right)+A_{e}^{-1}\left(\begin{array}{c}
\partial_{1}p_{e}\\
\partial_{2}p_{e}
\end{array} \right).
\end{eqnarray*}
Due to the isothermal assumption $p_{\alpha}(n)=T_{\alpha}n$, one has
\begin{eqnarray*}
(q_{i}nA_{i}^{-1}-q_{e}nA_{e}^{-1}) \left(\begin{array}{c}
E_{1}\\
E_{2}
\end{array}
\right)=(T_{i}A_{i}^{-1}-T_{e}A_{e}^{-1})\left(\begin{array}{c}
\partial_{1}n\\
\partial_{2}n
\end{array} \right).
\end{eqnarray*}
Therefore,
\begin{eqnarray}\label{Edetermin}
 \left(\begin{array}{c}
E_{1}\\
E_{2}
\end{array}
\right)=(q_{i}nA_{i}^{-1}-q_{e}nA_{e}^{-1})^{-1}(T_{i}A_{i}^{-1}-T_{e}A_{e}^{-1})
\left(\begin{array}{c}
\partial_{1}n\\
\partial_{2}n
\end{array} \right).
\end{eqnarray}
Plugging \eqref{Edetermin} back into
\eqref{udetermin1} gives
\begin{eqnarray}\label{udetermin2}
&&\begin{aligned} \left(\begin{array}{c}
u_{1}\\
u_{2}
\end{array}\right)=&\left[
-q_{i}nA_{i}^{-1}(q_{i}nA_{i}^{-1}-q_{e}nA_{e}^{-1})^{-1}(T_{i}A_{i}^{-1}-T_{e}A_{e}^{-1})
+T_{i}A_{i}^{-1} \right]\left(\begin{array}{c}
\partial_{1}n\\
\partial_{2}n
\end{array} \right).
\end{aligned}
\end{eqnarray}
Let us give an explicit computation of the coefficient
matrix in \eqref{udetermin2}:
\begin{equation}
\label{def.G}
G=-q_{i}nA_{i}^{-1}(q_{i}nA_{i}^{-1}-q_{e}nA_{e}^{-1})^{-1}(T_{i}A_{i}^{-1}-T_{e}A_{e}^{-1})
+T_{i}A_{i}^{-1}.
\end{equation}
Notice $q_{i}=e$, $q_{e}=-e$, and
\begin{eqnarray*}
A_{\alpha}^{-1}=\frac{1}{\det A_{\alpha}}\left(\begin{array}{cc}
-m_{\alpha}\nu_{\alpha}n\ &\ -\frac{q_{\alpha}n}{c}|B|\\
\frac{q_{\alpha}n}{c}|B|\ &\
-m_{\alpha}\nu_{\alpha}n
\end{array} \right)=\frac{1}{\det A_{\alpha}}A_{\alpha}^{T},
\end{eqnarray*}
where $\det
A_{\alpha}=n^2(m_{\alpha}^{2}\nu_{\alpha}^{2}+\frac{q_{\alpha}^{2}}{c^{2}}|B|^{2})$.
To cancel $n^2$ in $\det A_{\alpha}$,  we write
\begin{eqnarray*}
\begin{aligned}
n A_{\alpha}^{-1}=&\frac{n}{\det
A_{\alpha}}A_{\alpha}^{T}
 =&-
\left(\begin{array}{cc}
\frac{m_{\alpha}\nu_{\alpha}}{m_{\alpha}^{2}\nu_{\alpha}^{2}+\frac{1}{c^{2}}e^{2}|B|^{2}}\
&\
\frac{\frac{q_{\alpha}}{c}|B|}{m_{\alpha}^{2}\nu_{\alpha}^{2}+\frac{1}{c^{2}}e^{2}|B|^{2}}\\[3mm]
\frac{-\frac{q_{\alpha}}{c}|B|}{m_{\alpha}^{2}\nu_{\alpha}^{2}+\frac{1}{c^{2}}e^{2}|B|^{2}}\
&\
\frac{m_{\alpha}\nu_{\alpha}}{m_{\alpha}^{2}\nu_{\alpha}^{2}+\frac{1}{c^{2}}e^{2}|B|^{2}}
\end{array} \right):= -K_{\alpha}.
\end{aligned}
\end{eqnarray*}
Then one can compute \eqref{def.G} as
\begin{eqnarray}
G&=&-q_{i}nA_{i}^{-1}(q_{i}nA_{i}^{-1}-q_{e}nA_{e}^{-1})^{-1}(T_{i}A_{i}^{-1}-T_{e}A_{e}^{-1})
+T_{i}A_{i}^{-1}\notag\\
&=& -\frac{e n}{\det A_{i}}A_{i}^{T}\left[\frac{e n}{\det
A_{i}}A_{i}^{T}+\frac{e n}{\det
A_{e}}A_{e}^{T}\right]^{-1}\left[\frac{T_{i}}{\det
A_{i}}A_{i}^{T}-\frac{T_{e}}{\det
A_{e}}A_{e}^{T}\right]+\frac{T_{i}}{\det A_{i}}A_{i}^{T}\notag\\
&=&-\frac{e n}{\det A_{i}}A_{i}^{T}(-e M)^{-1}\frac{T_{i}}{\det
A_{i}}A_{i}^{T}+\frac{e n}{\det A_{i}}A_{i}^{T}(-e
M)^{-1}\frac{T_{e}}{\det A_{e}}A_{e}^{T}+(-e M)(-e M)^{-1}\frac{T_i}{\det A_{i}}A_{i}^{T}\notag\\
&=&\frac{e n}{\det A_{e}}A_{e}^{T}(-e M)^{-1}\frac{T_{i}}{\det
A_{i}}A_{i}^{T}+\frac{e n}{\det A_{i}}A_{i}^{T}(-e
M)^{-1}\frac{T_{e}}{\det A_{e}}A_{e}^{T}\notag\\
&=&-eK_{e}\frac{1}{-e}(K_{i}+K_{e})^{-1}\frac{T_{i}}{n}(-K_{i})+(-eK_{i})\frac{1}{-e}(K_{i}+K_{e})^{-1}\frac{T_{e}}{n}(-K_{e})\notag\\
&=&-\frac{T_{i}}{n}K_{e}(K_{i}+K_{e})^{-1}K_{i}-\frac{T_{e}}{n}K_{i}(K_{i}+K_{e})^{-1}K_{e},\label{de.Gf}
\end{eqnarray}
where $M=K_i+K_e$.
Denoting
$$
C_{i}=m_{i}^{2}\nu_{i}^{2}+\frac{1}{c^{2}}e^{2}|B|^{2},\quad
C_{e}=m_{e}^{2}\nu_{e}^{2}+\frac{1}{c^{2}}e^{2}|B|^{2},
$$
one has
\begin{eqnarray*}
\begin{aligned}
K_{i}+K_{e}=&\frac{1}{C_{i}}\left(\begin{array}{cc}
m_{i}\nu_{i} \ &\ \frac{e|B|}{c}\\[3mm]
-\frac{e|B|}{c}\ &\ m_{i}\nu_{i}
\end{array} \right)+\frac{1}{C_{e}}\left(\begin{array}{cc}
m_{e}\nu_{e} \ &\ -\frac{e|B|}{c}\\[3mm]
\frac{e|B|}{c}\ &\ m_{e}\nu_{e}
\end{array} \right)\\
=&\left(\begin{array}{cc}
\frac{m_{i}\nu_{i}}{C_{i}}+\frac{m_{e}\nu_{e}}{C_{e}}\ &\ \frac{e|B|}{c}\left(\frac{1}{C_{i}}-\frac{1}{C_{e}}\right)\\[3mm]
-\frac{e|B|}{c}\left(\frac{1}{C_{i}}-\frac{1}{C_{e}}\right)\ &\
\frac{m_{i}\nu_{i}}{C_{i}}+\frac{m_{e}\nu_{e}}{C_{e}}
\end{array} \right)\\
=&\left(\begin{array}{cc}
\frac{(m_{i}\nu_{i}+m_{e}\nu_{e})\left(m_{i}\nu_{i}m_{e}\nu_{e}+\frac{e^{2}|B|^{2}}{c^{2}}\right)}{C_{i}C_{e}}\
&
\ \frac{e|B|}{c}\frac{(m_{i}\nu_{i}+m_{e}\nu_{e})(m_{e}\nu_{e}-m_{i}\nu_{i})}{C_{i}C_{e}}\\[3mm]
-\frac{e|B|}{c}\frac{(m_{i}\nu_{i}+m_{e}\nu_{e})(m_{e}\nu_{e}-m_{i}\nu_{i})}{C_{i}C_{e}}\
&\
\frac{(m_{i}\nu_{i}+m_{e}\nu_{e})\left(m_{i}\nu_{i}m_{e}\nu_{e}+\frac{e^{2}|B|^{2}}{c^{2}}\right)}{C_{i}C_{e}}
\end{array} \right)\\
=&\frac{m_{i}\nu_{i}+m_{e}\nu_{e}}{C_{i}C_{e}}
\left(\begin{array}{cc}
m_{i}\nu_{i}m_{e}\nu_{e}+\frac{e^{2}|B|^{2}}{c^{2}}\ &\ \frac{e|B|}{c}\left(m_{e}\nu_{e}-m_{i}\nu_{i}\right)\\[3mm]
-\frac{e|B|}{c}\left(m_{e}\nu_{e}-m_{i}\nu_{i}\right)\ &\
m_{i}\nu_{i}m_{e}\nu_{e}+\frac{e^{2}|B|^{2}}{c^{2}}
\end{array} \right).
\end{aligned}
\end{eqnarray*}
Hence,
$$
\det(K_{i}+K_{e})=\frac{(m_{i}\nu_{i}+m_{e}\nu_{e})^{2}}{C_{i}C_{e}},
$$
where we have used the identity
$$
C_{i}C_{e}=m_{i}^{2}\nu_{i}^{2}m_{e}^{2}\nu_{e}^{2}
+\frac{e^{2}|B|^{2}}{c^{2}}(m_{i}^{2}\nu_{i}^{2}+m_{e}^{2}\nu_{e}^{2})
+\left(\frac{e^{2}|B|^{2}}{c^{2}}\right)^{2}.
$$
It is therefore straightforward to see
\begin{eqnarray*}
\begin{aligned}
\left(K_{i}+K_{e}\right)^{-1}=\frac{1}{m_{i}\nu_{i}+m_{e}\nu_{e}}\left(\begin{array}{cc}
m_{i}\nu_{i}m_{e}\nu_{e}+\frac{e^{2}|B|^{2}}{c^{2}}\ &\ -\frac{e|B|}{c}\left(m_{e}\nu_{e}-m_{i}\nu_{i}\right)\\[3mm]
\frac{e|B|}{c}\left(m_{e}\nu_{e}-m_{i}\nu_{i}\right)\ &\
m_{i}\nu_{i}m_{e}\nu_{e}+\frac{e^{2}|B|^{2}}{c^{2}}
\end{array} \right).
\end{aligned}
\end{eqnarray*}
After strenuous computations, one can verify that
\begin{eqnarray*}
\begin{aligned}
K_{e}\left(K_{i}+K_{e}\right)^{-1}K_{i}=K_{i}\left(K_{i}+K_{e}\right)^{-1}K_{e}=\frac{1}{m_{i}\nu_{i}+m_{e}\nu_{e}}\left(\begin{array}{cc}
1\ &\ 0\\[3mm]
0\ &\ 1
\end{array} \right).
\end{aligned}
\end{eqnarray*}
Here we have omitted the proof of the above identity for brevity. Plugging this identity into \eqref{de.Gf} yields that the coefficient matrix $G$ in \eqref{udetermin2} is given by
\begin{eqnarray}\label{compG}
\begin{aligned}
G=\frac{1}{n}\left(\begin{array}{cc}
-\frac{T_{i}+T_{e}}{m_{i}\nu_{i}+m_{e}\nu_{e}}\ &\ 0\\[3mm]
0\ &\ -\frac{T_{i}+T_{e}}{m_{i}\nu_{i}+m_{e}\nu_{e}}
\end{array} \right).
\end{aligned}
\end{eqnarray}
This together with \eqref{u3E3} imply that
\begin{eqnarray}
\label{de.upro}
\begin{aligned} n\left(\begin{array}{c}
u_{1}\\[3mm]
u_{2}\\[3mm]
u_{3}
\end{array}\right)=  \left(\begin{array}{ccc}
-\frac{T_{i}+T_{e}}{m_{i}\nu_{i}+m_{e}\nu_{e}}\ &\ 0\ & 0\ \\[3mm]
0\ &\ -\frac{T_{i}+T_{e}}{m_{i}\nu_{i}+m_{e}\nu_{e}}\ & 0\ \\[3mm]
0\ &\ 0 \ & -\frac{T_{i}+T_{e}}{m_{i}\nu_{i}+m_{e}\nu_{e}}\ \\
\end{array} \right)\left(\begin{array}{c}
\partial_{1}n\\[3mm]
\partial_{2}n \\[3mm]
\partial_{3}n
\end{array} \right).
\end{aligned}
\end{eqnarray}
Therefore, using the first equation of \eqref{1.1} for the conservation of mass under the quasineutral assumption \eqref{de.qnc},  we obtain that $n$ satisfies the diffusion equation
\begin{equation}
\label{de.ndiff}
\pa_t  n -\frac{T_{i}+T_{e}}{m_{i}\nu_{i}+m_{e}\nu_{e}}\De n=0.
\end{equation}

It remains to determine the components of $E$ normal to $B$, namely $E_1$ and $E_2$. In fact,
%
%
\eqref{uErelation} implies that
\begin{eqnarray*}
-q_{\alpha}n_{\alpha}\left(\begin{array}{c}
E_{1}\\
E_{2}
\end{array}
\right)=A_{\alpha}\left(\begin{array}{c}
u_{1}\\
u_{2}
\end{array}
\right)-T_{\alpha}\left(\begin{array}{c}
\partial_{1}n\\
\partial_{2}n
\end{array} \right),
\end{eqnarray*}
for $\al=i$ and $e$.
Taking $\al=i$ for instance and then using \eqref{udetermin2}, \eqref{def.G} and \eqref{compG}, one has
\begin{eqnarray*}
\begin{aligned} \left(\begin{array}{c}
E_{1}\\
E_{2}
\end{array}
\right)=&-\frac{1}{en}\left(\begin{array}{cc}
-m_{i}\nu_{i}n\ &\ \frac{en}{c}|B|\\
-\frac{en}{c}|B|\ &\ -m_{i}\nu_{i}n
\end{array} \right)G \left(\begin{array}{c}
\partial_{1}n\\
\partial_{2}n
\end{array} \right)+\frac{T_{i}}{en}\left(\begin{array}{c}
\partial_{1}n\\
\partial_{2}n
\end{array} \right)\\
=&\left(-\frac{1}{en}\left(\begin{array}{cc}
-m_{i}\nu_{i}n\ &\ \frac{en}{c}|B|\\
-\frac{en}{c}|B|\ &\ -m_{i}\nu_{i}n
\end{array} \right)G +\frac{T_{i}}{en}\left(\begin{array}{cc}
1\ &\ 0\\[3mm]
0\ &\ 1
\end{array} \right)\right)\left(\begin{array}{c}
\partial_{1}n\\[3mm]
\partial_{2}n
\end{array} \right)\\
=&\frac{1}{en}\left(\begin{array}{cc}
 \frac{T_{i}m_{e}\nu_{e}-T_{e}m_{i}\nu_{i}}{m_{i}\nu_{i}+m_{e}\nu_{e}}\ &\
 \frac{e|B|}{c}\frac{T_{i}+T_{e}}{m_{i}\nu_{i}+m_{e}\nu_{e}}\\[3mm]
-\frac{e|B|}{c}\frac{T_{i}+T_{e}}{m_{i}\nu_{i}+m_{e}\nu_{e}}\ &\
\frac{T_{i}m_{e}\nu_{e}-T_{e}m_{i}\nu_{i}}{m_{i}\nu_{i}+m_{e}\nu_{e}}
\end{array} \right)\left(\begin{array}{c}
\partial_{1}n\\[3mm]
\partial_{2}n
\end{array} \right).
\end{aligned}
\end{eqnarray*}
This together with \eqref{u3E3} imply that
\begin{eqnarray}
\label{de.Epro}
\begin{aligned} n\left(\begin{array}{c}
E_{1}\\[3mm]
E_{2}\\[3mm]
E_{3}
\end{array}\right)= \frac{1}{e}\left(\begin{array}{ccc}
 \frac{T_{i}m_{e}\nu_{e}-T_{e}m_{i}\nu_{i}}{m_{i}\nu_{i}+m_{e}\nu_{e}}\ &\
 \frac{e|B|}{c}\frac{T_{i}+T_{e}}{m_{i}\nu_{i}+m_{e}\nu_{e}}\ & \ 0\\[3mm]
-\frac{e|B|}{c}\frac{T_{i}+T_{e}}{m_{i}\nu_{i}+m_{e}\nu_{e}}\ &\
\frac{T_{i}m_{e}\nu_{e}-T_{e}m_{i}\nu_{i}}{m_{i}\nu_{i}+m_{e}\nu_{e}}\
&\ 0\\[3mm]\\
0\ &\ 0\ &\
\frac{T_{i}m_{e}\nu_{e}-T_{e}m_{i}\nu_{i}}{m_{i}\nu_{i}+m_{e}\nu_{e}}
\end{array} \right)\left(\begin{array}{c}
\partial_{1}n\\[3mm]
\partial_{2}n \\[3mm]
\partial_{3}n
\end{array} \right).
\end{aligned}
\end{eqnarray}
We point out that in the coefficient matrix on the right of \eqref{de.Epro}, the diagonal entries are equal and independent of $B$,  and the non-diagonal entries are skew-symmetric and linear in $|B|$.

\subsection{Diffusion of the magnetic field}

Notice that the large-time asymptotic profiles of $u$ and $E$ given by \eqref{de.upro} and \eqref{de.Epro} are along the gravitational direction of the diffusive density $n$ determined by \eqref{de.ndiff}. Let $u_{\al,\perp}$ $(\al=i,e)$ and $E_\perp$ be the asymptotic profiles along the direction normal to the gravitation of the density. Then by taking the background densities as $n_i=n_e=1$, we expect that the large-time profile of the magnetic field $B$ is governed by the following system
 \begin{equation*}
\left\{
  \begin{aligned}
  &-e{E}_{\perp}+m_{i}\nu_{i}{u}_{i,\perp}=0,\\
  &e{E}_{\perp}+m_{e}\nu_{e}{u}_{e,\perp}=0,\\
  &-c\nabla\times {B}+4\pi  (e{u}_{i,\perp}-e{u}_{e,\perp})=0,\\
  &\partial_t {B}+c\nabla\times{ E}_{\perp}=0.\\
\end{aligned}\right.
\end{equation*}
It is easy to obtain that $B$ satisfies the diffusion equation
\begin{equation*}
\partial_{t} {B}-\dfrac{c^{2}m_{i}\nu_{i}m_{e}\nu_{e}}{4\pi e^{2}(m_{i}\nu_{i}+m_{e}\nu_{e})}\Delta B=0,
\end{equation*}
and  $u_{\al,\perp}$ $(\al=i,e)$ and $E_\perp$ are given by
\begin{equation*}
\left\{
  \begin{aligned}
  &{u}_{i,\perp}=\frac{e}{m_{i}\nu_{i}}{E}_{\perp}=\frac{c}{4\pi e}\dfrac{m_{e}\nu_{e}}{m_{i}\nu_{i}+m_{e}\nu_{e}}\nabla\times {B},\\
  &{u}_{e,\perp}=-\frac{e}{m_{e}\nu_{e}}{E}_{\perp}=-\frac{c}{4\pi e}\dfrac{ m_{i}\nu_{i}}{m_{i}\nu_{i}+m_{e}\nu_{e}}\nabla\times {B},\\
  &{E}_{\perp}=\frac{c}{4\pi e^{2}}\dfrac{m_{i}\nu_{i}m_{e}\nu_{e}}{m_{i}\nu_{i}+m_{e}\nu_{e}}\nabla\times {B}.\\
\end{aligned}\right.
\end{equation*}


\section{Decay property of linearized system}\label{sec3}

In this section, we study the time-decay property of solutions to the linearized  system basing on the  Fourier energy method. The result of this part is similar to the case of one-fluid in \cite{Duan}, and also similar to the study of two-species kinetic Vlasov-Maxwell-Boltzmann system in \cite{DS}. The main motivation to present this part is to understand the linear dissipative structure of such complex system as in \cite{UDK} in terms of the direct energy method and also provide a clue to the more delicate spectral analysis to be given later on.  Notice that the key estimate \eqref{s4.j} in this section will be used to deal with the time-decay property of solutions over the high-frequency domain in the next sections.

\subsection{Reformulation of the problem}

We assume that the steady state of the Euler-Maxwell system \eqref{1.1} is trivial, taking the form of
$$
n_\al=1,\ u_\al=0,\ E=B=0.
$$
Before constructing the more accurate large-time asymptotic profile around the trivial steady state, we first consider the linearized system around the above constant state. For that, let us set $\rho_{\alpha}=n_{\alpha}-1$ for $\al=i$ and $e$. Then
$U:=[\rho_{\alpha},u_{\alpha},E,B]$ satisfies
\begin{equation}\label{2.5}
\left\{
  \begin{aligned}
  &\partial_t \rho_\alpha+\nabla\cdot u_\alpha=g_{1\alpha},\\
  &m_{\alpha}\partial_t u_{\alpha}+T_{\alpha}\nabla \rho_{\alpha}-q_{\alpha}E+m_{\alpha}\nu_\alpha u_\alpha=g_{2\alpha},\\
  &\partial_t E-c\nabla\times B+4\pi\sum_{\alpha=i,e}q_{\alpha}u_{\alpha}=g_{3},\\
  &\partial_t B+c\nabla \times E=0,\\
  &\nabla \cdot E=4\pi\sum_{\alpha=i,e}q_{\alpha}\rho_{\alpha}, \ \  \nabla
\cdot B=0.
\end{aligned}\right.
\end{equation}
Initial data are given by
\begin{eqnarray}\label{NI}
[\rho_{\alpha},u_{\alpha},E,B]|_{t=0}=[n_{\alpha 0}-1,u_{\alpha
0},E_{0},B_0],
\end{eqnarray}
with the compatible condition
\begin{eqnarray}\label{NC}
\nabla \cdot E_0=4\pi\sum_{\alpha=i,e}q_{\alpha}\rho_{\alpha 0}, \ \
\nabla \cdot B_0=0.
\end{eqnarray}
Here the nonhomgeneous source terms are
\begin{equation}\label{sec5.ggg}
\arraycolsep=1.5pt \left\{
 \begin{aligned}
 &g_{1\alpha}=-\nabla\cdot(\rho_\alpha u_\alpha):=\nabla\cdot f_{\alpha},\\
 &g_{2\alpha}=- m_{\alpha}u_{\alpha} \cdot \nabla
u_\alpha-\left(\frac{p_{\alpha}'(\rho_{\alpha}+1)}{\rho_{\alpha}+1}-\frac{p_{\alpha}'(1)}{1}\right)\nabla
\rho_{\alpha}+q_{\alpha}\frac{u_{\alpha}}{c}\times B,\\
 &g_{3}=-4\pi\sum_{\alpha=i,e}q_{\alpha}\rho_{\alpha}u_{\alpha}.
\end{aligned}\right.
\end{equation}
Notice in the isothermal case that $p_{\alpha}'(n)=T_{\alpha} $ for any $n>0$.

\subsection{Linear decay structure}

In this section, for brevity of presentation we still use $U=[\rho_{\alpha},u_{\alpha},E,B]$ to denote
the solution to  the linearized homogeneous system
\begin{equation}\label{Linear}
\left\{
  \begin{aligned}
  &\partial_t \rho_\alpha+\nabla\cdot u_\alpha=0,\\
  &m_{\alpha}\partial_t u_{\alpha}+T_{\alpha}\nabla \rho_{\alpha}-q_{\alpha}E+m_{\alpha}\nu_\alpha u_\alpha=0,\\
  &\partial_t E-c\nabla\times B+4\pi\sum_{\alpha=i,e}q_{\alpha}u_{\alpha}=0,\\
  &\partial_t B+c\nabla \times E=0,\\
  &\nabla \cdot E=4\pi\sum_{\alpha=i,e}q_{\alpha}\rho_{\alpha}, \ \  \nabla
\cdot B=0,
\end{aligned}\right.
\end{equation}
with given initial data
\begin{eqnarray}\label{NLI}
[\rho_{\alpha},u_{\alpha},E,B]|_{t=0}=[\rho_{\al 0},u_{\alpha
0},E_{0},B_0],
\end{eqnarray}
satisfying the compatible condition
\begin{eqnarray}\label{LNC}
\nabla \cdot E_0=4\pi\sum_{\alpha=i,e}q_{\alpha}\rho_{\alpha 0}, \ \
\nabla \cdot B_0=0.
\end{eqnarray}

The goal of this section is to apply the Fourier energy method  to the Cauchy problem
\eqref{Linear}, \eqref{NLI}, \eqref{LNC} to show that there exists a
time-frequency Lyapunov functional which is equivalent with
$|\hat{U}(t,k)|^{2}$ and moreover its dissipation rate can also be
characterized by the functional itself.
Let us state the main result
of this section as follows.

\begin{theorem}\label{lyapunov} Let $U(t,x)$, $t>0$,
$x\in\mathbb{R}^{3}$, be a well-defined solution to the system
$\eqref{Linear}$-$\eqref{LNC}$. There is a time-frequency Lyapunov
functional $\mathcal{E}(\hat{U}(t,k))$ with
\begin{eqnarray}\label{equver}
\mathcal {E}(\hat{U}(t,k))\sim
|\hat{U}^{2}|:=\sum_{\alpha=i,e}|[\hat{\rho}_{\alpha},\hat{u}_{\alpha}]|^{2}
+|\hat{E}|^{2}+|\hat{B}|^{2},
\end{eqnarray}
such that, for some $\lambda>0$, the Lyapunov
inequality
\begin{eqnarray}\label{Lyainequlity1}
 \frac{d}{dt}\mathcal {E}(\hat{U}(t,k))+\dfrac{\lambda|k|^{2}}{(1+|k|^{2})^{2}}\mathcal
 {E}(\hat{U}(t,k))\leq 0
\end{eqnarray}
holds for any $ t>0$ and $k\in\mathbb{R}^{3}$.
\end{theorem}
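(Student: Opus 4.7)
The strategy is the standard Fourier energy/Lyapunov method adapted to partially dissipative hyperbolic systems of regularity-loss type, as in the one-fluid treatment \cite{Duan} and the general framework of \cite{UDK}. After taking the spatial Fourier transform, the system becomes an ODE in $t$ parametrized by $k$, and the goal is to build a quadratic form $\mathcal E(\hat U(t,k))$ as a linear combination of the natural $L^2$-energy plus several small ``interactive'' cross terms, each designed to extract dissipation on the component that the basic energy identity misses.

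\textbf{Step 1: basic identity.} Multiplying the continuity equations by $T_\alpha\bar{\hat\rho}_\alpha$, the momentum equations by $\bar{\hat u}_\alpha$, and the Maxwell equations by $\frac{1}{4\pi}\bar{\hat E}$ and $\frac{1}{4\pi}\bar{\hat B}$ respectively, taking real parts and summing over $\alpha$, the drift and Lorentz couplings cancel thanks to the algebraic structure $q_i+q_e=0$ is \emph{not} used; rather the $\pm q_\alpha\hat E$, $T_\alpha ik\hat\rho_\alpha$ and $ick\times\hat B$ terms cancel pairwise. One obtains
\begin{equation*}
\tfrac{1}{2}\tfrac{d}{dt}\Bigl[\sum_{\alpha=i,e}(T_\alpha|\hat\rho_\alpha|^2+m_\alpha|\hat u_\alpha|^2)+\tfrac{1}{4\pi}(|\hat E|^2+|\hat B|^2)\Bigr]+\sum_{\alpha=i,e} m_\alpha\nu_\alpha|\hat u_\alpha|^2=0,
\end{equation*}
so the only direct dissipation is on $\hat u_\alpha$. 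I then need three further functionals to produce dissipation on $\hat\rho_\alpha$, $\hat E$ and $\hat B$.

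\textbf{Step 2: interactive functionals.} (i) Pair $ik\hat\rho_\alpha$ with the momentum equation: compute $\frac{d}{dt}\mathrm{Re}\langle ik\hat\rho_\alpha,\bar{\hat u}_\alpha\rangle$; the dominant output is $T_\alpha|k|^2|\hat\rho_\alpha|^2$, modulo terms controlled by $|k|^2|\hat u_\alpha|^2$, $|\hat E||\hat u_\alpha|$ and $|k||\hat\rho_\alpha||\hat u_\alpha|$. Divide by $(1+|k|^2)$ to keep the correction small relative to the energy at high frequency. (ii) For $\hat E$, compute $\sum_\alpha \frac{d}{dt}\mathrm{Re}\langle m_\alpha\hat u_\alpha,-q_\alpha\bar{\hat E}\rangle$; the $-q_\alpha\hat E$ term in the momentum equation gives $\sum_\alpha q_\alpha^2|\hat E|^2$, while the residues are controlled by $|\hat u_\alpha|^2$, $|k|^2|\hat\rho_\alpha|^2$ and, crucially, by $|k||\hat E||\hat B|$ (from $ck\times\hat B$ in the $\partial_t\hat E$ equation). (iii) For $\hat B$, compute $\frac{d}{dt}\mathrm{Re}\langle\hat E,-ik\times\bar{\hat B}\rangle$; using $\partial_t\hat E=ick\times\hat B-4\pi\sum q_\alpha\hat u_\alpha$ and $\partial_t\hat B=-ick\times\hat E$, together with the divergence-free condition $k\cdot\hat B=0$ giving $|k\times\hat B|^2=|k|^2|\hat B|^2$, one extracts $c|k|^2|\hat B|^2$ at the price of $c|k|^2|\hat E_\perp|^2$ and terms $|k||\hat u_\alpha||\hat B|$. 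Again divide by $(1+|k|^2)$.

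\textbf{Step 3: assembly.} Define
\begin{equation*}
\mathcal E(\hat U)=\mathcal E_0(\hat U)+\sum_{\alpha}\tfrac{\kappa_1}{1+|k|^2}\mathrm{Re}\langle ik\hat\rho_\alpha,\bar{\hat u}_\alpha\rangle-\kappa_2\sum_\alpha q_\alpha\mathrm{Re}\langle m_\alpha\hat u_\alpha,\bar{\hat E}\rangle+\tfrac{\kappa_3}{1+|k|^2}\mathrm{Re}\langle\hat E,-ik\times\bar{\hat B}\rangle,
\end{equation*}
where $\mathcal E_0$ is the basic energy from Step~1 and $0<\kappa_3\ll\kappa_2\ll\kappa_1\ll 1$ are chosen sequentially. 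Young's inequality with the $(1+|k|^2)^{-1}$ weights gives $\mathcal E\sim|\hat U|^2$ uniformly in $k$, which is \eqref{equver}. Summing the time derivatives, each error term is absorbed by the dissipation generated by an earlier (larger-coefficient) functional or by $\mathcal E_0$. The resulting total dissipation is bounded below by
\begin{equation*}
\lambda\Bigl\{\sum_\alpha|\hat u_\alpha|^2+\tfrac{|k|^2}{1+|k|^2}\sum_\alpha|\hat\rho_\alpha|^2+|\hat E|^2+\tfrac{|k|^2}{1+|k|^2}|\hat B|^2\Bigr\}\gtrsim\tfrac{|k|^2}{(1+|k|^2)^2}\mathcal E(\hat U),
\end{equation*}
since each single component is already controlled by $\mathcal E$ and the $|k|^2/(1+|k|^2)^2$ weight is the minimum of the individual rates.

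\textbf{Main obstacle.} The delicate point is the $\hat B$-functional in Step~2(iii): it is second order (uses both $\partial_t\hat E$ and $\partial_t\hat B$) and produces a defect term $-c|k|^2|\hat E_\perp|^2$ that must be dominated by $\kappa_2\sum_\alpha q_\alpha^2|\hat E|^2$ from Step~2(ii); meanwhile the cross term $|k||\hat E||\hat B|$ appearing in Step~2(ii) must be dominated by the $\hat B$-dissipation from Step~2(iii). This forces both the ordering $\kappa_3\ll\kappa_2$ and the $(1+|k|^2)^{-1}$ weights on the first and third correctors, and it is precisely this structural degeneracy at high frequency that produces the regularity-loss rate $|k|^2/(1+|k|^2)^2$ rather than the pure parabolic rate $|k|^2/(1+|k|^2)$.
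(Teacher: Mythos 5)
Your overall strategy is the right one and matches the paper: construct $\mathcal E$ as the base $L^2$-energy plus three small interactive correctors ($\langle ik\hat\rho_\alpha,\bar{\hat u}_\alpha\rangle$, $\langle m_\alpha\hat u_\alpha,-q_\alpha\bar{\hat E}\rangle$, $\langle\hat E,-ik\times\bar{\hat B}\rangle$), each multiplied by a $k$-dependent weight, and close the estimate by Cauchy--Schwarz. However, your choice of weights on the second and third correctors is wrong, and the construction as written does \emph{not} close at high frequency.

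The problematic cross term coming from the $E$-functional is $|k|\,|\hat u_\alpha|\,|\hat B|$, not $|k|\,|\hat E|\,|\hat B|$ as you wrote: when you differentiate $\mathrm{Re}\langle m_\alpha\hat u_\alpha,-q_\alpha\bar{\hat E}\rangle$, the $\partial_t\hat E$ equation contributes $-q_\alpha m_\alpha\,\mathrm{Re}(\hat u_\alpha\mid cik\times\hat B)$. With your \emph{unweighted} $E$-functional (coefficient $\kappa_2$) and your $B$-functional weighted only by $(1+|k|^2)^{-1}$, this cross term cannot be absorbed: any Young split $\kappa_2|k||\hat u_\alpha||\hat B|\le \tfrac{\kappa_2^2}{2\eta}|\hat u_\alpha|^2+\tfrac{\eta}{2}|k|^2|\hat B|^2$ requires simultaneously $\tfrac{\kappa_2^2}{2\eta}\lesssim m_\alpha\nu_\alpha$ (so $\eta$ bounded below) and $\tfrac{\eta}{2}|k|^2\lesssim \kappa_3\tfrac{|k|^2}{1+|k|^2}$ (so $\eta\lesssim\kappa_3/(1+|k|^2)\to 0$), which is impossible for fixed $\kappa_2,\kappa_3$. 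For the same reason your claimed dissipation lower bound, with $|\hat E|^2$ and $\tfrac{|k|^2}{1+|k|^2}|\hat B|^2$ appearing uniformly in $k$, is too strong to be true: it would eliminate the regularity loss, contradicting the very feature you correctly identify in your closing paragraph.

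The fix, and what the paper does, is to weight the $E$-functional by $\tfrac{|k|^2}{(1+|k|^2)^2}$ and the $E$-$B$ functional by $\tfrac{1}{(1+|k|^2)^2}$. Then the defect is $\tfrac{|k|^3}{(1+|k|^2)^2}|\hat u_\alpha||\hat B|$, and Young's inequality splits it as $\tfrac{|k|^4}{(1+|k|^2)^2}|\hat u_\alpha|^2+\tfrac{|k|^2}{(1+|k|^2)^2}|\hat B|^2$; the first factor is bounded by $1$ (so absorbed by the base dissipation $m_\alpha\nu_\alpha|\hat u_\alpha|^2$), while the second matches precisely the $B$-dissipation $\tfrac{|k|^2}{(1+|k|^2)^2}|\hat B|^2$ produced by the weighted third corrector. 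The resulting dissipation lower bound is $\sum_\alpha|\hat u_\alpha|^2+\tfrac{|k|^2}{1+|k|^2}\sum_\alpha|\hat\rho_\alpha|^2+\tfrac{|k|^2}{(1+|k|^2)^2}(|\hat E|^2+|\hat B|^2)$, which gives exactly the regularity-loss rate $\tfrac{|k|^2}{(1+|k|^2)^2}$ and no better.
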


\begin{proof}
As in \cite{Duan}, we use the following notations. For an integrable function $f:\mathbb{R}^{3} \rightarrow
\mathbb{R}$, its Fourier transform is defined by
\begin{eqnarray*}
   \hat{f}(k)=\int_{\mathbb{R}^{3}}{\exp({-ix\cdot k})}f(x)dx,\ \ \
   x\cdot k:=\sum_{j=1}^{3}x_{j}k_{j},\ \ k\in \mathbb{R}^{3},
\end{eqnarray*}
where $i=\sqrt{-1}\in\mathbb{C}$ is  the imaginary unit. For two
complex numbers or vectors $a$ and $b$, $(a|b)$ denotes the dot
product of $a$ with the complex conjugate of $b$.
Taking the Fourier
transform in $x$ for $\eqref{Linear}$,
$\hat{U}=[\hat{\rho}_{\alpha},\hat{u}_{\alpha},\hat{E},\hat{B}]$
satisfies
\begin{equation}\label{LinearF}
\left\{
  \begin{aligned}
  &\partial_t \hat{\rho}_{\alpha}+ik\cdot \hat{u}_{\alpha}=0,\\
  &m_{\alpha}\partial_t \hat{u}_{\alpha} +T_{\alpha} ik\hat{\rho}_{\alpha}-q_{\alpha}\hat{ E}+m_{\alpha}\nu_{\alpha}\hat{u}_{\alpha}=0,\\
  &\partial_t \hat{E}-cik\times\hat{ B}+4\pi\sum_{\alpha=i,e}q_{\alpha}\hat{u}_{\alpha}=0,\\
  &\partial_t \hat{B}+cik\times \hat{E}=0,\\
  &ik\cdot \hat{E}=4\pi\sum_{\alpha=i,e}q_{\alpha}\hat{\rho}_{\alpha}, \ \ ik \cdot\hat{ B}=0, \ \ \ t>0,\
  k\in\mathbb{R}^{3}.
\end{aligned}\right.
\end{equation}
First of all, it is straightforward to obtain from the first four
equations of $\eqref{LinearF}$ that
\begin{eqnarray}\label{LinearF1}
   \dfrac{1}{2}\dfrac{d}{dt}\sum_{\alpha=i,e}\left|\left[\sqrt{T_{\alpha}}\hat{\rho}_{\alpha},
   \sqrt{m_{\alpha}}\hat{u}_{\alpha}\right]\right|^{2}+\dfrac{1}{2}\dfrac{1}{4\pi}\dfrac{d}{dt}\left|\left[\hat{E},\hat{B}\right]\right|^{2}+\sum_{\alpha=i,e}m_{\alpha}\nu_{\alpha}|\hat{u}_{\alpha}|^{2}=0.
\end{eqnarray}
By taking the complex dot product of the second equation of
 $\eqref{LinearF}$ with $ik\hat{\rho}_{\alpha}$, replacing
 $\partial_{t}\hat{\rho}_{\alpha}$ by the first equation of
 $\eqref{LinearF}$, taking the real part, and taking summation for $\alpha=i,e$, one has
\begin{multline*}
   \partial_{t}\sum_{\alpha=i,e}\mathfrak{R}(m_{\alpha}\hat{u}_{\alpha}|ik\hat{\rho}_{\alpha})
   +\sum_{\alpha=i,e}T_{\alpha}|k|^{2}|\hat{\rho}_{\alpha}|^{2}+4\pi\left|\sum_{\alpha=i,e}q_{\alpha}\hat{\rho}_{\alpha}\right|^{2}\\
   =\sum_{\alpha=i,e}m_{\alpha}|k\cdot
   \hat{u}_{\alpha}|^{2}-\sum_{\alpha=i,e}m_{\alpha}\nu_{\alpha}\mathfrak{R}(\hat{u}_{\alpha}|ik\hat{\rho}_{\alpha}),
\end{multline*}
which by using the Cauchy-Schwarz inequality, implies
\begin{eqnarray*}
\partial_{t}\sum_{\alpha=i,e}\mathfrak{R}(m_{\alpha}\hat{u}_{\alpha}|ik\hat{\rho}_{\alpha})
   +\lambda\sum_{\alpha=i,e}|k|^{2}|\hat{\rho}_{\alpha}|^{2}+4\pi\left|\sum_{\alpha=i,e}q_{\alpha}\hat{\rho}_{\alpha}\right|^{2}
  \leq C(1+|k|^{2})\sum_{\alpha=i,e}|\hat{u}_{\alpha}|^{2}.
\end{eqnarray*}
Dividing it by $1+|k|^{2}$ gives
\begin{equation}\label{LinearF2}
\partial_{t}\dfrac{\sum\limits_{\alpha=i,e}\mathfrak{R}(m_{\alpha}\hat{u}_{\alpha}|ik\hat{\rho}_{\alpha})}{1+|k|^{2}}
  +\lambda\frac{|k|^{2}}{1+|k|^{2}}\sum_{\alpha=i,e}|\hat{\rho}_{\alpha}|^{2}
  +\frac{4\pi}{1+|k|^{2}}\left|\sum_{\alpha=i,e}q_{\alpha}\hat{\rho}_{\alpha}\right|^{2}\leq C
  \sum_{\alpha=i,e}|\hat{u}_{\alpha}|^{2}.
\end{equation}
In a similar way, by taking the complex dot product of the second
equation of $\eqref{LinearF}$ with $-4\pi
q_{\alpha}\hat{E}/T_\al$, replacing
 $\partial_{t}\hat{E}$ by the third equation of
 $\eqref{LinearF}$, and taking summation for $\alpha=i,e$,  one has
\begin{multline}\label{Linear3}
  -\partial_{t}\sum_{\alpha=i,e}\dfrac{4\pi m_{\alpha}q_{\alpha}}{T_{\alpha}}(\hat{u}_{\alpha}|\hat{E})+|k\cdot\hat{E}|^{2}
  +\sum_{\alpha=i,e}\dfrac{4\pi q_{\alpha}^{2}}{T_{\alpha}}|\hat{E}|^{2}
=\sum_{\alpha=i,e}\dfrac{4\pi
m_{\alpha}q_{\alpha}}{T_{\alpha}}(\hat{u}_{\alpha}|-c
ik\times\hat{B})\\
   +\sum_{\alpha=i,e}\dfrac{4\pi m_{\alpha}q_{\alpha}}{T_{\alpha}}\left(\hat{u}_{\alpha}|4\pi \sum_{\alpha=i,e}q_{\alpha}\hat{u}_{\alpha}\right)
   +\sum_{\alpha=i,e}\dfrac{4\pi q_{\alpha}}{T_{\alpha}}(m_{\alpha}\nu_{\alpha}\hat{u}_{\alpha}|\hat{E}),
\end{multline}
where we have used $ik\cdot
\hat{E}=4\pi\sum\limits_{\alpha=i,e}q_{\alpha}\hat{\rho}_{\alpha}$.
 Taking the
real part of $\eqref{Linear3}$ and using the Cauchy-Schwarz
inequality imply
\begin{multline*}
  -\partial_{t}\sum_{\alpha=i,e}\dfrac{4\pi m_{\alpha}q_{\alpha}}{T_{\alpha}}\mathfrak{R}(\hat{u}_{\alpha}|\hat{E})+|k\cdot\hat{E}|^{2} +\sum_{\alpha=i,e}\dfrac{4\pi q_{\alpha}^{2}}{2T_{\alpha}}|\hat{E}|^{2}\\
\leq \sum_{\alpha=i,e}\dfrac{4\pi
m_{\alpha}q_{\alpha}}{T_{\alpha}}\mathfrak{R}(\hat{u}_{\alpha}|-c
ik\times\hat{B})+C\sum_{\alpha=i,e}|\hat{u}_{\alpha}|^{2},
\end{multline*}
which further multiplying it by $|k|^{2}/(1+|k|^{2})^{2}$ gives
\begin{multline}\label{Linear4}
  -\partial_{t}\sum_{\alpha=i,e}\dfrac{4\pi m_{\alpha}q_{\alpha}}{T_{\alpha}}\dfrac{|k|^{2}\mathfrak{R}(\hat{u}_{\alpha}|\hat{E})}{(1+|k|^{2})^{2}}
  +\dfrac{|k|^{2}|k\cdot\hat{E}|^{2}}{(1+|k|^{2})^{2}}
  +\sum\limits_{\alpha=i,e}\dfrac{4\pi q_{\alpha}^{2}}{2T_{\alpha}}\dfrac{|k|^{2}|\hat{E}|^{2}}{(1+|k|^{2})^{2}}\\
\leq \sum_{\alpha=i,e}\dfrac{4\pi
m_{\alpha}q_{\alpha}}{T_{\alpha}}\dfrac{|k|^{2}\mathfrak{R}(\hat{u}_{\alpha}|-cik\times\hat{B})}{(1+|k|^{2})^{2}}
   +C\sum_{\alpha=i,e}|\hat{u}_{\alpha}|^{2}.
\end{multline}
Similarly, it follows from equations of the electromagnetic field in
 $\eqref{LinearF}$ that
\begin{eqnarray*}
  && \partial_{t}(\hat{E}|-ik\times\hat{B})+c|k\times\hat{B}|^{2}=c|k\times\hat{E}|^{2}
  +4\pi\sum_{\alpha=i,e}(q_{\alpha}\hat{u}_{\alpha}|ik\times\hat{B}),
\end{eqnarray*}
which after using Cauchy-Schwarz and dividing it by
$(1+|k|^{2})^{2}$, implies
\begin{eqnarray}\label{Linear5}
  && \partial_{t}\dfrac{\mathfrak{R}(\hat{E}|-ik\times\hat{B})}{(1+|k|^{2})^{2}}
  +\dfrac{\lambda|k\times\hat{B}|^{2}}{(1+|k|^{2})^{2}}
  \leq \dfrac{c|k|^{2}|\hat{E}|^{2}}{(1+|k|^{2})^{2}}
  +C\sum_{\alpha=i,e}|\hat{u}_{\alpha}|^{2}.
\end{eqnarray}
 Finally, let's define
 \begin{equation*}
\arraycolsep=1.5pt
\begin{array}{rcl}
   \mathcal{E}(\hat{U}(t,k))&=&\sum\limits_{\alpha=i,e}\left|\left[\sqrt{T_{\alpha}}\hat{\rho}_{\alpha},
   \sqrt{m_{\alpha}}\hat{u}_{\alpha}\right]\right|^{2}+|[\hat{E},\hat{B}]|^{2}+
   \kappa_{1}\dfrac{\sum\limits_{\alpha=i,e}\mathfrak{R}( m_{\alpha}\hat{u}_{\alpha}|ik\hat{\rho}_{\alpha})}{1+|k|^{2}}\\[3mm]
   &&-\kappa_{1}\sum\limits_{\alpha=i,e}\dfrac{4\pi m_{\alpha}q_{\alpha}}{T_{\alpha}}\dfrac{|k|^{2}\mathfrak{R}(\hat{u}_{\alpha}|\hat{E})}{(1+|k|^{2})^{2}}
   +\kappa_{1}\kappa_{2}\dfrac{\mathfrak{R}(\hat{E}|-ik\times\hat{B})}{(1+|k|^{2})^{2}},
\end{array}
\end{equation*}
for constants $0<\kappa_{2},\kappa_{1}\ll 1$ to be determined.
Notice that as long as $ 0<\kappa_{i}\ll 1$ is small enough for
$i=1,2$, then $\mathcal{E}(\hat{U}(t,k))\sim |\hat{U}(t)|^{2}$ holds
true and \eqref{equver} is proved. The sum of $\eqref{LinearF1}$, $\eqref{LinearF2}\times
\kappa_{1}$, $\eqref{Linear4}\times\kappa_{1}$ and
$\eqref{Linear5}\times \kappa_{1}\kappa_{2}$ gives
\begin{eqnarray}\label{DJF6}
&&\partial_{t}\mathcal{E}(\hat{U}(t,k))+\lambda\sum\limits_{\alpha=i,e}|\hat{u}_{\alpha}|^{2}
   +\lambda\dfrac{|k|^{2}}{1+|k|^{2}}\sum\limits_{\alpha=i,e}|\hat{\rho}_{\alpha}|
   +\dfrac{\lambda|k|^{2}}{(1+|k|^{2})^{2}}|[\hat{E},\hat{B}]|^{2}
   \leq 0,
\end{eqnarray}
where we have used the identity $|k\times
\hat{B}|^{2}=|k|^{2}|\hat{B}|^{2} $ due to $ k\cdot \hat{B}=0$ and
also used the following Cauchy-Schwarz inequality
\begin{eqnarray*}
\kappa_{1}\sum_{\alpha=i,e}\dfrac{4\pi
m_{\alpha}q_{\alpha}}{T_{\alpha}}\dfrac{|k|^{2}\mathfrak{R}(\hat{u}_{\alpha}|-ik\times\hat{B})}{(1+|k|^{2})^{2}}\leq
\sum_{\alpha=i,e}\dfrac{4\pi
m_{\alpha}q_{\alpha}}{T_{\alpha}}\dfrac{\kappa_{1}
|k|^{4}|\hat{u}_{\alpha}|^{2}}{4\epsilon\kappa_{2}(1+|k|^{2})^{2}}
+\sum_{\alpha=i,e}\dfrac{4\pi
m_{\alpha}q_{\alpha}}{T_{\alpha}}\dfrac{\epsilon\kappa_{1}\kappa_{2}|k|^{2}|\hat{B}|^{2}}{(1+|k|^{2})^{2}}.
\end{eqnarray*}
First, we chose $\epsilon>0$ such that
$$
\epsilon4\pi
\sum\limits_{\alpha=i,e}\dfrac{m_{\alpha}q_{\alpha}}{T_{\alpha}}\leq
\lambda
$$
for $\lambda$ appearing on the left of \eqref{Linear5}, and then let  $\kappa_{2}>0$ be fixed and let
$\kappa_{1}>0$ be further chosen small enough. Therefore,
$\eqref{Lyainequlity1}$ follows from $\eqref{DJF6}$ by noticing
\begin{eqnarray*}
\sum\limits_{\alpha=i,e}|\hat{u}_{\alpha}|^{2}
   +\dfrac{\lambda|k|^{2}}{1+|k|^{2}}\sum\limits_{\alpha=i,e}|\rho_{\alpha}|^2
   +\dfrac{\lambda|k|^{2}}{(1+|k|^{2})^{2}}|[\hat{E},\hat{B}]|^{2}
   \geq \dfrac{\lambda|k|^{2}}{(1+|k|^{2})^{2}}|\hat{U}|^{2}.
\end{eqnarray*}
This completes the proof of Theorem \ref{lyapunov}.
\end{proof}

Theorem \ref{lyapunov} directly leads to the pointwise
time-frequency estimate on the modular $|\hat{U}(t,k)|$ in terms of
initial data modular $|\hat{U}_{0}(k)|$, which is the same as \cite[Corollary 4.1]{Duan}.

\begin{corollary}
Let $U(t,x)$, $t\geq0$, $x\in\mathbb{R}^{3}$ be a well-defined
solution to the system $\eqref{Linear}$-$\eqref{LNC}$. Then, there
are $\lambda>0$, $ C>0$  such that
\begin{eqnarray}\label{s4.j}
   |\hat{U}(t,k)|\leq C {\exp\left({-\tfrac{\lambda|k|^{2}t}{(1+|k|^{2})^{2}}}\right)}|\hat{U_{0}}(k)|
\end{eqnarray}
holds for any $t\geq 0$ and $k\in \mathbb{R}^{3}$.
\end{corollary}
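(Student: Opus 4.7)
The plan is to derive this pointwise time-frequency estimate directly from Theorem \ref{lyapunov} by a Gronwall-type argument on the Lyapunov functional, then convert back to $|\hat{U}(t,k)|$ via the equivalence \eqref{equver}.

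First I would fix $k\in\mathbb{R}^3$ and treat $t\mapsto \mathcal{E}(\hat{U}(t,k))$ as a scalar function of $t$. The differential inequality \eqref{Lyainequlity1} reads
\begin{equation*}
\frac{d}{dt}\mathcal{E}(\hat{U}(t,k)) + \frac{\lambda |k|^2}{(1+|k|^2)^2}\,\mathcal{E}(\hat{U}(t,k)) \leq 0.
\end{equation*}
Since the coefficient $\frac{\lambda |k|^2}{(1+|k|^2)^2}$ is independent of $t$, multiplying by the integrating factor $\exp\!\left(\frac{\lambda |k|^2 t}{(1+|k|^2)^2}\right)$ shows that $t\mapsto \exp\!\left(\frac{\lambda |k|^2 t}{(1+|k|^2)^2}\right)\mathcal{E}(\hat{U}(t,k))$ is non-increasing, which gives
\begin{equation*}
\mathcal{E}(\hat{U}(t,k)) \leq \exp\!\left(-\frac{\lambda |k|^2 t}{(1+|k|^2)^2}\right)\mathcal{E}(\hat{U}_0(k)).
\end{equation*}

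Next, I would invoke the two-sided equivalence \eqref{equver}: there exist constants $0<c_1\leq c_2$ (independent of $t$ and $k$) such that $c_1 |\hat{U}(t,k)|^2 \leq \mathcal{E}(\hat{U}(t,k)) \leq c_2 |\hat{U}(t,k)|^2$ for every $t\geq 0$ and $k\in\mathbb{R}^3$. Applying the lower bound at time $t$ and the upper bound at time $0$ yields
\begin{equation*}
c_1 |\hat{U}(t,k)|^2 \leq \mathcal{E}(\hat{U}(t,k)) \leq \exp\!\left(-\frac{\lambda |k|^2 t}{(1+|k|^2)^2}\right) \mathcal{E}(\hat{U}_0(k)) \leq c_2 \exp\!\left(-\frac{\lambda |k|^2 t}{(1+|k|^2)^2}\right)|\hat{U}_0(k)|^2.
\end{equation*}
Taking square roots and absorbing the constant $\sqrt{c_2/c_1}$ into $C$ gives \eqref{s4.j}, and replacing $\lambda$ with $\lambda/2$ if desired would produce the same form of bound.

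There is essentially no obstacle here, since Theorem \ref{lyapunov} has already done all the work: the equivalence \eqref{equver} and the Lyapunov inequality \eqref{Lyainequlity1} together reduce the corollary to a one-line Gronwall argument on a non-negative scalar function of $t$. The only minor care needed is to ensure the comparison constants in \eqref{equver} are uniform in $k$, which is already built into the statement of Theorem \ref{lyapunov} (the constants hidden in the symbol $\sim$ come from fixing $\kappa_1,\kappa_2$ small, independently of $k$, in the construction of $\mathcal{E}$).
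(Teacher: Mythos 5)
Your argument is correct and is precisely the one the paper intends: the corollary is obtained from Theorem \ref{lyapunov} by the integrating-factor (Gronwall) argument applied to the scalar ODE inequality \eqref{Lyainequlity1}, followed by the uniform-in-$k$ equivalence \eqref{equver}. The paper simply states that Theorem \ref{lyapunov} ``directly leads to'' the estimate and cites the analogous result in \cite{Duan}, so there is nothing missing in your reconstruction.
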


 Based on the pointwise
time-frequency estimate $\eqref{s4.j}$,  it is also straightforward to obtain the
$L^{p}$-$L^{q}$ time-decay property to the Cauchy problem
$\eqref{Linear}$-$\eqref{NLI}$. Formally, the solution to
 the Cauchy problem $\eqref{Linear}$-$\eqref{NLI}$ is denoted by
\begin{eqnarray*}
\begin{aligned}
U(t)=\left[\rho_{\alpha},u_{\alpha},E,B\right]={e^{tL}}U_{0}, 
\end{aligned}
\end{eqnarray*}
where ${e^{tL}} $ for $t\geq 0$ is said to be the linearized
solution operator corresponding to the linearized Euler-Maxwell
system.

\begin{corollary}[see \cite{Duan} for instance]
Let $1\leq p,r\leq 2\leq q\leq \infty$, $\ell\geq 0$ and let $m\geq
1$ be an integer. Define
\begin{equation}\label{thm.decay.1}
    \left[\ell+3\left(\frac{1}{r}-\frac{1}{q}\right)\right]_{+}
    =\left\{\begin{array}{ll}
      \ell, & \  \text{if $\ell$ is integer and $r=q=2$},\\[3mm]
      {[}\ell+3(\frac{1}{r}-\frac{1}{q}){]}_{-}+1,  &\ \text{otherwise},
     \end{array}\right.
\end{equation}
where $[\cdot]_{-}$ denotes the integer part of  the argument.
Suppose $U_{0}$ satisfying $\eqref{LNC}$. Then $e^{tL}$ satisfies
the following time-decay property:
\begin{eqnarray*}
&&\begin{aligned}
 \|\nabla^{m}e^{Lt} U_0\|_{L^q}\leq C
 (1+t)^{-\tfrac{3}{2}(\tfrac{1}{p}-\tfrac{1}{q})-\tfrac{m}{2}}
 &\|U_0\|_{L^p}\\
 + &C(1+t)^{-\tfrac{\ell}{2}}\|\nabla^{m+[\ell+3(\tfrac{1}{r}-\tfrac{1}{q})]_+}{U}_0\|_{L^r}
\end{aligned}
\end{eqnarray*}
for any $t\geq 0$, where  $C=C(m,p,r,q,\ell)$.
\end{corollary}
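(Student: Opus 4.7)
The plan is to derive the $L^p$-$L^q$ estimate by substituting the pointwise Fourier estimate \eqref{s4.j} into a low/high frequency decomposition and combining Hausdorff-Young with H\"older's inequality. The dichotomy to exploit is that on the low-frequency set $\{|k|\leq 1\}$ the multiplier behaves like $\exp(-\lam|k|^2 t/4)$ (standard parabolic decay), while on the high-frequency set $\{|k|\geq 1\}$ it behaves like $\exp(-\lam t/|k|^2)$, yielding decay only at the cost of regularity. This dichotomy is exactly what produces the two terms on the right-hand side of the claim.

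First, since $q\geq 2$, Hausdorff-Young gives $\|\nabla^m e^{tL}U_0\|_{L^q}\leq C\||k|^m\hat U(t,k)\|_{L^{q'}}$ with $1/q+1/q'=1$. Splitting this $L^{q'}$ norm over $\{|k|\leq 1\}$ and $\{|k|\geq 1\}$, I would apply H\"older against $\hat U_0$ in each piece. For the low-frequency part, choosing $1/a=1/p-1/q$ so that $1/a+1/p'=1/q'$ gives
$$\|\chi_{\{|k|\leq 1\}}|k|^m e^{-\lam|k|^2 t/4}\hat U_0\|_{L^{q'}}\leq \|\chi_{\{|k|\leq 1\}}|k|^m e^{-\lam|k|^2 t/4}\|_{L^a}\,\|\hat U_0\|_{L^{p'}}.$$
A rescaling $k\mapsto k/\sqrt{t}$ reduces the first factor to $C(1+t)^{-\frac{3}{2}(1/p-1/q)-m/2}$, while the assumption $1\leq p\leq 2$ together with Hausdorff-Young controls the second factor by $\|U_0\|_{L^p}$, producing the first term of the claim.

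For the high-frequency part, set $N:=m+[\ell+3(1/r-1/q)]_+$ and factor the highest derivative onto $U_0$:
$$|k|^m e^{-\lam t/|k|^2}\hat U_0=\bigl(|k|^{m-N}e^{-\lam t/|k|^2}\bigr)\cdot|k|^N\hat U_0.$$
H\"older with $1/q'=1/b+1/r'$ (so $1/b=1/r-1/q$) together with Hausdorff-Young reduces the problem to showing
$$\|\chi_{\{|k|\geq 1\}}|k|^{m-N}e^{-\lam t/|k|^2}\|_{L^b}\leq Ct^{-\ell/2}.$$
When $r=q=2$ and $\ell$ is integer, $b=\infty$ and $N-m=\ell$, and this is immediate from the pointwise bound $|k|^{-\ell}e^{-\lam t/|k|^2}\leq Ct^{-\ell/2}$ obtained by maximising in $|k|$. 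Otherwise, passing to polar coordinates and substituting $y=\lam t/|k|^2$ reduces the $L^b$ integral to $t^{-(\sigma-3/b)/2}$ times a convergent one-variable integral, where $\sigma:=N-m=[\ell+3(1/r-1/q)]_+$.

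The main obstacle I anticipate is the bookkeeping forced by definition \eqref{thm.decay.1}: the ``$+1$'' in the non-integer case is precisely what ensures $\sigma>3/b=3(1/r-1/q)$, so that the integral $\int_0^{b\lam t}y^{(b\sigma-5)/2}e^{-y}\,dy$ converges at $y=0$ (equivalently $|k|=\infty$); without it one would lose both integrability and the required decay rate. With $\sigma$ so chosen, $\sigma-3/b\geq\ell$ yields decay at least $t^{-\ell/2}$, and summing the low- and high-frequency contributions completes the estimate. The argument runs parallel to that for the one-fluid case in \cite{Duan} and requires no new ideas beyond the careful frequency localisation described above.
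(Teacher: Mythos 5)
Your proposal is correct and follows exactly the standard low/high-frequency splitting via Hausdorff--Young and H\"older that underlies the cited reference \cite{Duan}; the paper itself gives no proof and simply refers there. The only minor point to make explicit is the uniform boundedness as $t\to 0$ (on the low-frequency piece because the domain $\{|k|\leq 1\}$ is bounded, on the high-frequency piece because $\sigma>3/b$ already ensures $\|\chi_{\{|k|\geq 1\}}|k|^{-\sigma}\|_{L^b}<\infty$), which is what converts the raw rate $t^{-(\sigma-3/b)/2}$ into the stated $(1+t)^{-\ell/2}$.
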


\section{Spectral representation} \label{sec4}

In order to study the more accurate large-time asymptotic profile, we need to carry out the spectral analysis of the linearized system.

\subsection{Preparations}
As in \cite{Duan},
the linearized system \eqref{Linear} can be written as two decoupled subsystems which
govern the time evolution of $\rho_{\alpha}$, $\nabla\cdot
u_{\alpha}$, $\nabla \cdot E$ and $\nabla\times u_{\alpha}$,
$\nabla\times E$ and $\nabla \times B$ respectively. We decompose
the solution to \eqref{Linear}-\eqref{LNC} into two parts in the form of
\begin{eqnarray}\label{decompo}
&& \left[
  \begin{array}{c}
   \rho_{\alpha}(t,x)\\
   u_{\alpha}(t,x)\\
   E(t,x)\\
   B(t,x)\\
 \end{array}\right]
 =
 \left[
  \begin{array}{c}
   \rho_{\alpha}(t,x)\\
   u_{\alpha,\parallel}(t,x)\\
   E_{\parallel}(t,x)\\
   0\\
 \end{array}\right]+
\left[
  \begin{array}{c}
   0 \\
   u_{\alpha,\perp}(t,x)\\
   E_{\perp}(t,x)\\
   B(t,x)\\
 \end{array}\right],
\end{eqnarray}
where $u_{\alpha,\parallel}$, $u_{\alpha,\perp}$ are defined by
\begin{equation*}
u_{\alpha,\parallel}=-(-\Delta)^{-1}\nabla\nabla\cdot u_{\alpha},\ \
\ u_{\alpha,\perp}=(-\Delta)^{-1}\nabla\times(\nabla\times
u_{\alpha}),
\end{equation*}
and likewise for $E_{\parallel}$, $E_{\perp}$. For brevity, the first part on the right of \eqref{decompo} is called the fluid part and the second part is called the electromagnetic part, and we also write
\begin{equation*}
U_\parallel  =  [\rho_i,\rho_e,u_{i,\parallel},u_{e,\parallel}], \quad
U_\perp  = [u_{i,\perp},u_{e,\perp},E_{\perp},B].
\end{equation*}
Notice that to the end, $E_{\parallel}$ is always given by
$$
E_{\parallel}=4\pi e \De^{-1}\nabla (\rho_i-\rho_e).
$$

We now derive the equations of $U_{\parallel}$ and $U_\perp$ and their asymptotic equations that one may expect in the large time. Taking the divergence
of the second equation of \eqref{Linear}, it follows that
\begin{equation}\label{Fluid1}
\left\{
  \begin{aligned}
  &\partial_t \rho_{\alpha}+\nabla\cdot u_{\alpha}=0,\\
  &m_{\alpha}\partial_t (\nabla\cdot u_{\alpha})-q_{\alpha}\nabla\cdot E+T_{\alpha}\Delta\rho_{\alpha}+m_{\alpha}\nu_{\alpha}\nabla\cdot u_{\alpha}=0.\\
\end{aligned}\right.
\end{equation}
Applying $\Delta^{-1}\nabla$ to  the second equation of $\eqref{Fluid1}$ and noticing
$
\nabla\cdot u_{\alpha}
=\nabla\cdot
u_{\alpha,\parallel}$,
we see that the fluid part $U_\parallel$
satisfies
\begin{equation}\label{Fluid2}
\left\{
  \begin{aligned}
  &\partial_t \rho_{\alpha}+\nabla\cdot u_{\alpha,\parallel}=0,\\
  &m_{\alpha}\partial_t u_{\alpha,\parallel}-q_{\alpha}E_{\parallel}+T_{\alpha}\nabla\rho_{\alpha}+m_{\alpha}\nu_{\alpha}u_{\alpha,\parallel}=0.\\
\end{aligned}\right.
\end{equation}
Initial data are given by
\begin{equation}\label{Fluid2I}
[\rho_{\alpha},\ u_{\alpha,\parallel}]|_{t=0}=[\rho_{\alpha0},\ u_{\alpha0,\parallel}].
\end{equation}
As seen later on and also in the sense of the Darcy's law, the expected asymptotic profile of the fluid part satisfies
\begin{equation*}
\left\{
  \begin{aligned}
   &\partial_{t}\bar{\rho}+\nabla \cdot \bar{u}_{\parallel}=0,\\
   & T_{\alpha}\nabla \bar{\rho}-q_{\alpha}\bar{E}_{\parallel}+m_{\alpha}\nu_{\alpha}\bar{u}_{\parallel}=0,\\
  \end{aligned}\right.
\end{equation*}
with initial data
\begin{equation*}
\bar{\rho}|_{t=0}=\bar{\rho}_0=\frac{m_{i}\nu_{i}}{m_{i}\nu_{i}+m_{e}\nu_{e}}\rho_{i0}+\frac{m_{e}\nu_{e}}{m_{i}\nu_{i}+m_{e}\nu_{e}}\rho_{e0}.
\end{equation*}
Therefore, $\bar{\rho}$, $\bar{u}_\parallel$ and $\bar{E}_\parallel$ are determined
according to the following equations 
\begin{eqnarray}\label{sol.rhob}
\left\{
\begin{aligned}
&\partial_{t}\bar{\rho}-\dfrac{T_{i}+T_{e}}{m_{i}\nu_{i}+m_{e}\nu_{e}}\Delta
\bar{\rho}=0,\\
&\bar{u}_{\parallel}=-\dfrac{T_{i}+T_{e}}{m_{i}\nu_{i}+m_{e}\nu_{e}}\nabla
\bar{\rho},\\
&\bar{E}_{\parallel}=\dfrac{T_{i}m_{e}\nu_{e}-T_{e}m_{i}\nu_{i}}{e(m_{i}\nu_{i}+m_{e}\nu_{e})}\nabla
\bar{\rho},
\end{aligned}\right.
 \end{eqnarray}
 where initial data $\bar{u}_{0,\parallel}$ and $\bar{E}_{0,\parallel}$ of $\bar{u}_{\parallel}$ and $\bar{E}_{\parallel}$ are determined by $\bar{\rho}_0$ in terms of the last two equations of \eqref{sol.rhob}, respectively.
For later use, let us define $\bar{P}^{1}(ik)$, $\bar{P}^{2}(ik)$, $\bar{P}^{3}(ik)$  to be three row vectors in $\R^8$  by
\begin{eqnarray*}
&&\bar{P}^{1}(ik)=:\left[
\frac{m_{i}\nu_{i}}{m_{i}\nu_{i}+m_{e}\nu_{e}},\ \frac{m_{e}\nu_{e}}{m_{i}\nu_{i}+m_{e}\nu_{e}},\ 0,\  0\right],\\
&&\bar{P}^{2}(ik)=:\left[-\frac{T_{i}+T_{e}}{m_{i}\nu_{i}+m_{e}\nu_{e}}\frac{m_{i}\nu_{i}}{m_{i}\nu_{i}+m_{e}\nu_{e}}ik,\
-\frac{T_{i}+T_{e}}{m_{i}\nu_{i}+m_{e}\nu_{e}}\frac{m_{e}\nu_{e}}{m_{i}\nu_{i}+m_{e}\nu_{e}}ik,\
0,0\right],\\
&&\bar{P}^{3}(ik)=:\left[
\dfrac{T_{i}m_{e}\nu_{e}-T_{e}m_{i}\nu_{i}}{e(m_{i}\nu_{i}+m_{e}\nu_{e})}\frac{m_{i}\nu_{i}}{m_{i}\nu_{i}+m_{e}\nu_{e}}ik,
\ \dfrac{T_{i}m_{e}\nu_{e}-T_{e}m_{i}\nu_{i}}{e(m_{i}\nu_{i}+m_{e}\nu_{e})}\frac{m_{e}\nu_{e}}{m_{i}\nu_{i}+m_{e}\nu_{e}}ik,\ 0,\  0\right].\\
\end{eqnarray*}
Then the large-time asymptotic profile can be expressed in terms of the Fourier transform by
\begin{eqnarray}
&&\label{rhobar}\hat{\bar{\rho}}=\exp{\left(-\frac{T_{i}+T_{e}}{m_{i}\nu_{i}+m_{e}\nu_{e}}|k|^{2}t\right)}\bar{P}^{1}(ik)\hat{U}_{\parallel0}^{T},\\
&&\label{ubar}\hat{\bar{u}}_{\parallel}=\exp{\left(-\frac{T_{i}+T_{e}}{m_{i}\nu_{i}+m_{e}\nu_{e}}|k|^{2}t\right)}\bar{P}^{2}(ik)\hat{U}_{\parallel0}^{T},\\
&&\label{Ebar}\hat{\bar{E}}_{\parallel}=\exp{\left(-\frac{T_{i}+T_{e}}{m_{i}\nu_{i}+m_{e}\nu_{e}}|k|^{2}t\right)}\bar{P}^{3}(ik)\hat{U}_{\parallel0}^{T}.
\end{eqnarray}

The electromagnetic part satisfies the following equations:
\begin{equation}\label{electromagnetic}
\left\{
  \begin{aligned}
  &m_{i}\partial_t u_{i,\perp}-eE_{\perp}+m_{i}\nu_{i}u_{i,\perp}=0,\\
  &m_{e}\partial_t u_{e,\perp}+eE_{\perp}+m_{e}\nu_{e}u_{e,\perp}=0,\\
  &\partial_t E_{\perp}-c\nabla\times B+4\pi(eu_{i,\perp}-eu_{e,\perp})=0,\\
  &\partial_t B+c\nabla\times E_{\perp}=0,\\
\end{aligned}\right.
\end{equation}
with initial data
\begin{equation}\label{electromagnetic2I}
[u_{\alpha,\perp},\ E_{\perp},B]|_{t=0}=[u_{\alpha0,\perp},\
E_{0,\perp},B_{0}].
\end{equation}
The expected large-time asymptotic profile for the electromagnetic part is determined by the following equations in the sense of Darcy's law again:
\begin{equation}\label{perpprofile}
\left\{
  \begin{aligned}
  &-e\bar{E}_{\perp}+m_{i}\nu_{i}\bar{u}_{i,\perp}=0,\\
  &e\bar{E}_{\perp}+m_{e}\nu_{e}\bar{u}_{e,\perp}=0,\\
  &-c\nabla\times \bar{B}+4\pi(e\bar{u}_{i,\perp}-e\bar{u}_{e,\perp})=0,\\
  &\partial_t \bar{B}+c\nabla\times\bar{ E}_{\perp}=0.\\
\end{aligned}\right.
\end{equation}
As before, it is straightforward to obtain that
\begin{equation}\label{perpsolve}
\left\{
  \begin{aligned}
  &\partial_{t} \bar{B}-\dfrac{c^{2}m_{i}\nu_{i}m_{e}\nu_{e}}{4\pi e^{2}(m_{i}\nu_{i}+m_{e}\nu_{e})}\Delta \bar{B}=0,\\
  &\bar{u}_{i,\perp}=\frac{e}{m_{i}\nu_{i}}\bar{E}_{\perp}=\frac{c}{4\pi e}\dfrac{m_{e}\nu_{e}}{m_{i}\nu_{i}+m_{e}\nu_{e}}\nabla\times \bar{B},\\
  &\bar{u}_{e,\perp}=-\frac{e}{m_{e}\nu_{e}}\bar{E}_{\perp}=-\frac{c}{4\pi e}\dfrac{ m_{i}\nu_{i}}{m_{i}\nu_{i}+m_{e}\nu_{e}}\nabla\times \bar{B},\\
  &\bar{E}_{\perp}=\frac{c}{4\pi e^{2}}\dfrac{m_{i}\nu_{i}m_{e}\nu_{e}}{m_{i}\nu_{i}+m_{e}\nu_{e}}\nabla\times \bar{B}.\\
\end{aligned}\right.
\end{equation}
with initial data
\begin{equation*}
\bar{B}|_{t=0}=B_{0},
\end{equation*}
where initial data $\bar{u}_{i0,\perp},\ \bar{u}_{e0,\perp},\ \bar{E}_{0,\perp}$
are given from $\bar{B}_{0}$ according to the last three equations of
\eqref{perpsolve}. Notice that the asymptotic profile $ \bar{B}$ of the magnetic field can be expressed in term of the Fourier transform by
\begin{equation}\label{Bsolve}
\hat{\bar{B}}(t,k)=\exp\left(-\dfrac{c^{2}m_{i}\nu_{i}m_{e}\nu_{e}}{4\pi
e^{2}(m_{i}\nu_{i}+m_{e}\nu_{e})}|k|^{2}t\right)\hat{B}_{0}(k).
\end{equation}

\subsection{Spectral representation for fluid part}

\subsubsection{Asymptotic expansions and expressions}

After taking the Fourier transformation in $x$ for \eqref{Fluid2},
replacing $\hat{E}_{\parallel}$ by
$-4\pi\frac{ik}{|k|^{2}}(e\hat{\rho}_{i}-e\hat{\rho}_{e})$, the fluid part
$\hat{U}_{\parallel}=[\hat{\rho}_{i},\hat{\rho}_{e},\hat{u}_{i,\parallel},\hat{u}_{e,\parallel}]$
satisfies the following system of 1st-order ODEs
\begin{equation}\label{FluidF}
\left\{
  \begin{aligned}
  &\partial_t \hat{\rho}_{i}+ik\cdot \hat{u}_{i,\parallel}=0,\\
  &\partial_t \hat{\rho}_{e}+ik\cdot \hat{u}_{e,\parallel}=0,\\
  &\partial_t \hat{u}_{i,\parallel}+\frac{T_{i}}{m_{i}}ik\hat{\rho}_{i}
  +\frac{4\pi e^{2}}{m_{i}}\frac{ik}{|k|^{2}}\hat{\rho}_{i}-\frac{4\pi e^{2}}{m_{i}}\frac{ik}{|k|^{2}}\hat{\rho}_{e}+\nu_{i} \hat{u}_{i,\parallel}=0,\\
  &\partial_t \hat{u}_{e,\parallel}+\frac{T_{e}}{m_{e}}ik\hat{\rho}_{e}
  +\frac{4\pi e^{2}}{m_{e}}\frac{ik}{|k|^{2}}\hat{\rho}_{e}-\frac{4\pi e^{2}}{m_{e}}\frac{ik}{|k|^{2}}\hat{\rho}_{i}+\nu_{e} \hat{u}_{e,\parallel}=0.\\
\end{aligned}\right.
\end{equation}
Initial data are given as
\begin{equation}\label{FluidFI}
\hat{U}_{\parallel}(t,k)|_{t=0}=\hat{U}_{\parallel
0}(k)=:[\hat{\rho}_{i0},\ \hat{\rho}_{e0},\ \tilde{k}\tilde{k}\cdot
\hat{u}_{i0},\ \tilde{k}\tilde{k}\cdot \hat{u}_{e0}].
\end{equation}
Then the solution to \eqref{FluidF}, \eqref{FluidFI} can be written as
\begin{equation*}
\hat{U}_{\parallel}(t,k)^T={e^{A(ik)t}}\hat{U}_{\parallel
0}(k)^{T},
\end{equation*}
with the matrix $A(ik)$ defined by
\begin{equation*}
A(ik)=:\left(\begin{array} {cccc}
0 & 0 & -\zeta &\ \ \ 0\\
0 & 0 &  0 &\ \ \  -\zeta\\
-\frac{T_{i}}{m_{i}}\zeta-\frac{4\pi e^{2}}{m_{i}}\frac{\zeta}{|\zeta|^{2}} & \frac{4\pi e^{2}}{m_{i}}\frac{\zeta}{|\zeta|^{2}} & -\nu_{i}  &\ \ \ 0\\
\frac{4\pi e^{2}}{m_{e}}\frac{\zeta}{|\zeta|^{2}}& -\frac{T_{e}}{m_{e}}\zeta-\frac{4\pi e^{2}}{m_{e}}\frac{\zeta}{|\zeta|^{2}}  & 0 &-\nu_{e}\\
\end{array} \right),
\end{equation*}
where we have denoted $\zeta=ik$ on the right. In the sequel, for brevity, with a little abuse of notation, for a positive integer $\ell$, we also use $\zeta^\ell$ to denote $|\zeta|^{\ell-1}\zeta$ if $\ell$ is odd, and $|\zeta|^\ell$ if $\ell$ is even. 


By direct computation, we see that the characteristic polynomial of
$A(\zeta)$ is
\begin{eqnarray}
\det(\lambda
I-A(ik)) &=&\lambda^{4}+(\nu_{i}+\nu_{e})\lambda^{3}+\left(\nu_{i}\nu_{e}-\left(\frac{T_{i}}{m_{i}}+\frac{T_{e}}{m_{e}}\right)\zeta^{2}
+4\pi \left(\frac{e^{2}}{m_{i}}+\frac{e^{2}}{m_{e}}\right)\right)\lambda^{2}\notag\\
&&+\left(-\left(\frac{T_{i}}{m_{i}}\nu_{e}+\frac{T_{e}}{m_{e}}\nu_{i}\right)\zeta^{2}
+4\pi
\left(\frac{e^2}{m_{i}}\nu_{e}+\frac{e^{2}}{m_{e}}\nu_{i}\right)\right)\lambda\notag\\
&&+\left(\frac{T_{i}T_{e}}{m_{i}m_{e}}\zeta^{4}
-4\pi \frac{e^2(T_{i}+T_{e})}{m_{i}m_{e}}\zeta^{2}\right).\label{eigenpo}
\end{eqnarray}
It follows from \eqref{eigenpo} that
\begin{eqnarray*}
   &&\sum_{i=1}^{4}\lambda_{j}=-(\nu_{i}+\nu_{e}),
   \\
   &&
  \sum_{1\leq i\neq j\leq 4}\lam_i\lam_j =\nu_{i}\nu_{e}-\left(\frac{T_{i}}{m_{i}}+\frac{T_{e}}{m_{e}}\right)\zeta^{2}+4\pi\left(\frac{e^{2}}{m_{i}}+\frac{e^{2}}{m_{e}}\right),\\
 && \prod_{i=1}^{4}\lambda_{j}=\frac{T_{i}T_{e}}{m_{i}m_{e}}\zeta^{4}
-\frac{4\pi e^2(T_{i}+T_{e})}{m_{i}m_{e}}\zeta^{2}.
\end{eqnarray*}

First, we analyze the roots of the above characteristic equation
\eqref{eigenpo} and their asymptotic properties as $|\zeta|\rightarrow 0$. The
perturbation theory (see \cite{Ho} or \cite{Kato1}) for one-parameter family of
matrix $A(\zeta)$ for $|\zeta|\rightarrow 0$ implies that
$\lambda_{j}(\zeta)$ has the following asymptotic expansions:
\begin{equation*}
\lambda_{j}(\zeta)=\sum_{\ell=0}^{+\infty}\lambda_{j}^{(\ell)}|\zeta|^{\ell}.
\end{equation*}
Notice that $\lambda_{j}^{(0)}$ are the roots of the following
equation:
\begin{equation*}
\lambda g(\lambda)
=0,
\end{equation*}
with
\begin{equation*}
g(\lambda)=\lambda^{3}+(\nu_{i}+\nu_{e})\lambda^{2}+\left(\nu_{i}\nu_{e}
+4\pi\left(\frac{e^{2}}{m_{i}}+\frac{e^{2}}{m_{e}}\right)\right)\lambda+4\pi\left(\frac{e^2}{m_{i}}\nu_{e}+\frac{e^{2}}{m_{e}}\nu_{i}\right).
\end{equation*}
For later use we also set
\begin{equation}\label{deglamda}
\begin{aligned}
g(\lambda)
=\lambda^{3}+c_{2}\lambda^{2}+c_{1}\lambda+c_{0}.
\end{aligned}
\end{equation}
One can list some elementary properties of the function $g(\lambda)$ as follows:
\begin{itemize}
  \item $g(0)=4\pi\left(\frac{e^2}{m_{i}}\nu_{e}+\frac{e^{2}}{m_{e}}\nu_{i}\right)>0;$
  \item $g(-(\nu_{i}+\nu_{e}))=-\nu_{i}^{2}\nu_{e}-\nu_{i}\nu_{e}^{2}-4\pi\left(\frac{e^2}{m_{i}}\nu_{i}+\frac{e^{2}}{m_{e}}\nu_{e}\right)<0;$
  \item $g'(\lambda)=3\lambda^{2}+2(\nu_{i}+\nu_{e})\lambda+\left(\nu_{i}\nu_{e}
+4\pi\left(\frac{e^{2}}{m_{i}}+\frac{e^{2}}{m_{e}}\right)\right)\geq\nu_{i}\nu_{e}+4\pi\left(\frac{e^{2}}{m_{i}}+\frac{e^{2}}{m_{e}}\right)>0
$ for $\lambda\geq 0;$
\item $g'(\lambda)=\lambda^{2}+2(\lambda+\nu_{i}+\nu_{e})\lambda+\left(\nu_{i}\nu_{e}
+4\pi\left(\frac{e^{2}}{m_{i}}+\frac{e^{2}}{m_{e}}\right)\right)\geq\nu_{i}\nu_{e}+4\pi\left(\frac{e^{2}}{m_{i}}+\frac{e^{2}}{m_{e}}\right)>0,$
for $\lambda\leq -(\nu_{i}+\nu_{e});$
\item $g(\lambda)$ is strictly increasing over $\lambda\leq
-(\nu_{i}+\nu_{e})$ or $\lambda\geq 0$.
\end{itemize}
The above properties imply that the equation $g(\lambda)=0$ has at
least one real root denoted by $\sigma$ which satisfies  $-(\nu_{i}+\nu_{e})<\sigma<0$. At this time, although we
have known that there is at least one real root, it is not clear
whether these roots are distinct or not.  We can distinguish several
possible cases using the discriminant,
\begin{equation*}
\Delta=18c_{2}c_{1}c_{0}-4c_{2}^{3}c_{0}+c_{2}^{2}c_{1}^{2}-4c_{1}^{3}-27c_{0}^{2}.
\end{equation*}
\begin{itemize}
  \item  $\Delta>0$, then $g(\lambda)=0$ has three distinct real
  roots;
  \item  $\Delta<0$, then $g(\lambda)=0$ has one real root and two nonreal
complex conjugate roots;
 \item  $\Delta=0$, then $g(\lambda)=0$ has a multiple root and all its roots are
 real.
\end{itemize}
Through the paper, we only consider the first two cases. Note that the third case is much harder to study as Puiseux expansions of the eigenvalues have to be used in that case. Under this assumption, in order to give the asymptotic expressions of ${e^{A(ik)t}}$ as
$|k|\rightarrow 0$,  we see that the solution matrix
${e^{A(ik)t}}$ has the spectral decomposition
\begin{equation*}
{e^{A(ik)t}}=\sum_{j=1}^{4}{\exp({\lambda_{j}(ik)t})}P_{j}(ik),
\end{equation*}
where $\lambda_{j}(\zeta)$ are the eigenvalues of $A(\zeta)$ and
$P_{j}(\zeta)$ are the corresponding eigenprojections. Notice that $P_j(\zeta)$ can be written as
\begin{equation*}
\displaystyle P_{j}(\zeta)=\prod_{\ell\neq
j}\frac{A(\zeta)-\lambda_{\ell}(\zeta)I}{\lambda_{j}(\zeta)-\lambda_{\ell}(\zeta)},
\end{equation*}
where we have assumed that all $\lambda_j(\zeta)$ are distinct to each other. 

%

In terms of
the graph of $g(\lambda)$, one can see when $\Delta>0$,
$g(\lambda)=0$ has three distinct negative real roots. When $\Delta<
0$, assuming that $a+bi$, $a-bi$ are two conjugate complex roots and
plugging $a+bi$ into $g(\lambda)=0$, one has the following two
equations:
\begin{equation*}
  \begin{aligned}
   & \mathbf{\mathrm{Re}}: a^{3}-3ab^{2}+(\nu_{i}+\nu_{e})(a^{2}-b^{2})+\left(\nu_{i}\nu_{e}
+4\pi\left(\frac{e^{2}}{m_{i}}+\frac{e^{2}}{m_{e}}\right)\right)a+4\pi\left(\frac{e^2}{m_{i}}\nu_{e}+\frac{e^{2}}{m_{e}}\nu_{i}\right)=0,\\
   & \mathbf{\mathrm{Im}}:
   3a^{2}b-b^{3}+2(\nu_{i}+\nu_{e})ab+\left(\nu_{i}\nu_{e}
+4\pi\left(\frac{e^{2}}{m_{i}}+\frac{e^{2}}{m_{e}}\right)\right)b=0.
  \end{aligned}
\end{equation*}
Since $b\neq 0$, substituting
$$
b^{2}=3a^{2}+2(\nu_{i}+\nu_{e})a+\left(\nu_{i}\nu_{e}
+4\pi\left(\frac{e^{2}}{m_{i}}+\frac{ e^{2}}{m_{e}}\right)\right)
$$
back into the equation of the real part above, 
we have
\begin{multline*}
(2a)^{3}+2(2a)^{2}(\nu_{i}+\nu_{e})+2a\left(\nu_{i}\nu_{e}
+4\pi \left(\frac{e^{2}}{m_{i}}+\frac{e^{2}}{m_{e}}\right)+(\nu_{i}+\nu_{e})^{2}\right)\\
+\left(\nu_{i}^{2}\nu_{e}+\nu_{e}^{2}\nu_{i}+\frac{4\pi
e^{2}}{m_{i}}\nu_{i}+\frac{4\pi e^{2}}{m_{e}}\nu_{e}\right)=0.
\end{multline*}
Then the above equation must have only one real negative root. By
straightforward computations and using \eqref{eigenpo}, we find that
\begin{equation}\label{roots}
  \begin{aligned}
   &\lambda_{1}(|k|)=\dfrac{T_{i}+T_{e}}{m_{i}\nu_{i}+m_{e}\nu_{e}}\zeta^{2}+\lambda_{1}^{(4)}\zeta^{4}+O(|\zeta|^{5}),\\
   &\lambda_{j}(|k|)=\sigma_{j}+O(|\zeta|^{2}),\ \ \ \ \mathrm{for}\ \
   j=2,3,4,
 \end{aligned}
\end{equation}
where $\sigma_{j}$ $(j=2,3,4)$ are the roots of $g(\lambda)=0$, satisfying
$$
\Re\,\sigma_{j}<0,\quad
\sum\limits_{j=2}^{4}\sigma_{j}=-(\nu_{i}+\nu_{e}),\quad
\sigma_{2}\sigma_{3}\sigma_{4}=-4\pi\left(\frac{e^{2}}{m_{i}}\nu_{e}+\frac{e^{2}}{m_{e}}\nu_{i}\right),
$$
and $\lambda_{1}^{(4)}$ is to be defined later on.

{After checking  the coefficient of $\zeta^{4}$ in \eqref{eigenpo},
we can get some information of $\lambda_{1}^{(4)}$ which is
necessary for the coefficient of $\zeta^{{2}}$ in
$\lambda_{2}\lambda_{3}\lambda_{4}$. In the case $\lambda=\lambda_{1}$ in
\eqref{eigenpo}, the coefficient of $\zeta^{4}$ is}
\begin{equation*}
  \begin{aligned}
 \left(\nu_{i}\nu_{e}
+\frac{4\pi e^{2}}{m_{i}}+\frac{4\pi
e^{2}}{m_{e}}\right)\left(\lambda_{1}^{(2)}\right)^{2}
-&\left(\frac{T_{i}}{m_{i}}\nu_{e}+\frac{T_{e}}{m_{e}}\nu_{i}\right)\lambda_{1}^{(2)}\\
+&4\pi\left(\frac{e^2}{m_{i}}\nu_{e}+\frac{e^{2}}{m_{e}}\nu_{i}\right)\lambda_{1}^{(4)}+\left(\frac{T_{i}T_{e}}{m_{i}m_{e}}\right)=0,
\end{aligned}
\end{equation*}
which implies that
\begin{eqnarray*}
\begin{aligned}
\frac{\lambda_{1}^{(4)}}{\left(\lambda_{1}^{(2)}\right)^{2}}=&-\frac{\left(\nu_{i}\nu_{e}
+4\pi\left(\frac{e^{2}}{m_{i}}+\frac{e^{2}}{m_{e}}\right)\right)-\left(\frac{T_{i}}{m_{i}}\nu_{e}
+\frac{T_{e}}{m_{e}}\nu_{i}\right)\frac{m_{i}\nu_{i}+m_{e}\nu_{e}}{T_{i}+T_{e}}
+\frac{T_{i}T_{e}}{m_{i}m_{e}}\left[\frac{m_{i}\nu_{i}+m_{e}\nu_{e}}{T_{i}+T_{e}}\right]^{2}}{\frac{4\pi e^2}{m_{i}}\nu_{e}+\frac{4\pi e^{2}}{m_{e}}\nu_{i}}\\
=&-\frac{m_{i}\nu_{i}m_{e}\nu_{e} +4\pi e^{2}(m_{i}+m_{e})
-\left(T_{i}m_{e}\nu_{e}
+T_{e}m_{i}\nu_{i}\right)\frac{m_{i}\nu_{i}+m_{e}\nu_{e}}{T_{i}+T_{e}}
+T_{i}T_{e}\left[\frac{m_{i}\nu_{i}+m_{e}\nu_{e}}{T_{i}+T_{e}}\right]^{2}}{4\pi
e^{2}(m_{i}\nu_{i}+m_{e}\nu_{e})},
\end{aligned}
\end{eqnarray*}
and
\begin{eqnarray*}
\begin{aligned}
\lambda_{2}\lambda_{3}\lambda_{4}=&\frac{
\frac{T_{i}T_{e}}{m_{i}m_{e}}\zeta^{4}
-\frac{4\pi e^2(T_{i}+T_{e})}{m_{i}m_{e}}\zeta^{2}}{\frac{T_{i}+T_{e}}{m_{i}\nu_{i}+m_{e}\nu_{e}}\zeta^{2}+\lambda_{1}^{(4)}\zeta^{4}+O(|\zeta|^{5})}\\
=&\left(\frac{T_{i}T_{e}}{m_{i}m_{e}}\zeta^{2} -\frac{4\pi
e^2(T_{i}+T_{e})}{m_{i}m_{e}}\right)
\left(\frac{m_{i}\nu_{i}+m_{e}\nu_{e}}{T_{i}+T_{e}}-\frac{\lambda_{1}^{(4)}}{\left(\lambda_{1}^{(2)}\right)^{2}}\zeta^{2}+O(|\zeta|^{3})\right)\\
=&-4\pi\left(\frac{e^{2}}{m_{i}}\nu_{e}+\frac{e^{2}}{m_{e}}\nu_{i}\right)+\left(\frac{T_{i}T_{e}}{m_{i}m_{e}}\frac{m_{i}\nu_{i}+m_{e}\nu_{e}}{T_{i}
+T_{e}}+\frac{4\pi
e^2(T_{i}+T_{e})}{m_{i}m_{e}}\frac{\lambda_{1}^{(4)}}{\left(\lambda_{1}^{(2)}\right)^{2}}\right)\zeta^{2}+O(|\zeta|^{3}).
\end{aligned}
\end{eqnarray*}

Next, we estimate $P_{1}(\zeta)$ exactly. In the following, we
denote
$$
[A(\zeta)]^{2}\triangleq \left(a_{ij}^{(2)}\right)_{4\times
4},\quad [A(\zeta)]^{3}\triangleq
\left(a_{ij}^{(3)}\right)_{4\times 4}.
$$
One can compute
\begin{equation*}
[A(\zeta)]^{2}=\left(\begin{array} {cccc}
\frac{T_{i}}{m_{i}}\zeta^{2}-\frac{4\pi e^{2}}{m_{i}} & \frac{4\pi e^{2}}{m_{i}} & \nu_{i}\zeta &\ \ \ 0\\[3mm]
\frac{4\pi e^{2}}{m_{e}} & \frac{T_{e}}{m_{e}}\zeta^{2}-\frac{4\pi e^{2}}{m_{e}} &  0 &\ \ \  \nu_{e}\zeta\\[3mm]
\frac{T_{i}}{m_{i}}\nu_{i}\zeta+\frac{4\pi
e^{2}}{m_{i}}\nu_{i}\frac{\zeta}{|\zeta|^{2}} & -\frac{4\pi
e^{2}}{m_{i}}\nu_{i}\frac{\zeta}{|\zeta|^{2}}
 & \frac{T_{i}}{m_{i}}\zeta^{2}-\frac{4\pi e^{2}}{m_{i}}+\nu_{i}^{2} &\ \ \ \frac{4\pi e^{2}}{m_{i}}\\[3mm]
-\frac{4\pi e^{2}}{m_{e}}\nu_{e}\frac{\zeta}{|\zeta|^{2}}&
\frac{T_{e}}{m_{e}}\nu_{e}\zeta+\frac{4\pi
e^{2}}{m_{e}}\nu_{e}\frac{\zeta}{|\zeta|^{2}}
 & \frac{4\pi e^{2}}{m_{e}} &\frac{T_{e}}{m_{e}}\zeta^{2}-\frac{4\pi e^{2}}{m_{e}}+\nu_{e}^{2}
\end{array} \right),
\end{equation*}
and
\begin{equation*}
[A(\zeta)]^{3}=\left(\begin{array} {cccc}
-\frac{T_{i}}{m_{i}}\nu_{i}\zeta^{2}+\frac{4\pi e^{2}}{m_{i}}\nu_{i}
& -\frac{4\pi e^{2}}{m_{i}}\nu_{i} &
-\frac{T_{i}}{m_{i}}\zeta^{3}+\frac{4\pi
e^{2}}{m_{i}}\zeta-\nu_{i}^{2}\zeta &\ \ \
-\frac{4\pi e^{2}}{m_{i}}\zeta\\[3mm]
-\frac{4\pi e^{2}}{m_{e}}\nu_{e} &
-\frac{T_{e}}{m_{e}}\nu_{e}\zeta^{2}+\frac{4\pi e^{2}}{m_{e}}\nu_{e}
& -\frac{4\pi e^{2}}{m_{e}}\zeta \ \ & -\frac{T_{e}}{m_{e}}\zeta^{3}+\frac{4\pi e^{2}}{m_{e}}\zeta-\nu_{e}^{2}\zeta\\[3mm]
 a_{31}^{(3)} & a_{32}^{(3)}
 & a_{33}^{(3)} &\ \ \ a_{34}^{(3)}\\[3mm]
a_{41}^{(3)} & a_{42}^{(3)}
 & a_{43}^{(3)} &\ \ \ a_{44}^{(3)}
\end{array} \right),
\end{equation*}
where
\begin{eqnarray*}
   &&a_{31}^{(3)}=-\left(\dfrac{T_{i}}{m_{i}}\right)^{2}\zeta^{3}
   +\left(2\frac{T_{i}}{m_{i}}\frac{4\pi e^{2}}{m_{i}}-\frac{T_{i}}{m_{i}}\nu_{i}^{2}\right)\zeta
   +\left(\left(\frac{4\pi e^{2}}{m_{i}}\right)^{2}-\frac{4\pi e^{2}}{m_{i}}\nu_{i}^{2}+\frac{4\pi e^{2}}{m_{i}}\frac{4\pi e^{2}}{m_{e}}\right)\frac{\zeta}{|\zeta|^{2}},\\
   &&a_{32}^{(3)}=-\left(\frac{T_{i}}{m_{i}}\frac{4\pi e^{2}}{m_{i}}+\frac{T_{e}}{m_{e}}\frac{4\pi e^{2}}{m_{i}}\right)\zeta
   -\left(\left(\frac{4\pi e^{2}}{m_{i}}\right)^{2}-\frac{4\pi e^{2}}{m_{i}}\nu_{i}^{2}+\frac{4\pi e^{2}}{m_{i}}\frac{4\pi e^{2}}{m_{e}}\right)\frac{\zeta}{|\zeta|^{2}},\\
   &&a_{33}^{(3)}=-2\frac{T_{i}}{m_{i}}\nu_{i}\zeta^{2}+2\frac{4\pi e^{2}}{m_{i}}\nu_{i}-\nu_{i}^{3},\\
  &&a_{34}^{(3)}=-\frac{4\pi e^{2}}{m_{i}}\nu_{i}-\frac{4\pi e^{2}}{m_{i}}\nu_{e},\\
    &&a_{41}^{(3)}=-\left(\frac{T_{i}}{m_{i}}\frac{4\pi e^{2}}{m_{e}}+\frac{T_{e}}{m_{e}}\frac{4\pi e^{2}}{m_{e}}\right)\zeta
   -\left(\left(\frac{4\pi e^{2}}{m_{e}}\right)^{2}-\frac{4\pi e^{2}}{m_{e}}\nu_{e}^{2}+\frac{4\pi e^{2}}{m_{i}}\frac{4\pi e^{2}}{m_{e}}\right)\frac{\zeta}{|\zeta|^{2}},\\
  && a_{42}^{(3)}=-\left(\dfrac{T_{e}}{m_{e}}\right)^{2}\zeta^{3}
   +\left(2\frac{T_{e}}{m_{e}}\frac{4\pi e^{2}}{m_{e}}-\frac{T_{e}}{m_{e}}\nu_{e}^{2}\right)\zeta
   +\left(\left(\frac{4\pi e^{2}}{m_{e}}\right)^{2}-\frac{4\pi e^{2}}{m_{e}}\nu_{e}^{2}+\frac{4\pi e^{2}}{m_{i}}\frac{4\pi e^{2}}{m_{e}}\right)\frac{\zeta}{|\zeta|^{2}},\\
   &&a_{43}^{(3)}=-\frac{4\pi e^{2}}{m_{e}}\nu_{e}-\frac{4\pi e^{2}}{m_{e}}\nu_{i},\\
   &&a_{44}^{(3)}=-2\frac{T_{e}}{m_{e}}\nu_{e}\zeta^{2}+2\frac{4\pi e^{2}}{m_{e}}\nu_{e}-\nu_{e}^{3}.
\end{eqnarray*}
Note that we must deal with terms  involving
$\frac{\zeta}{|\zeta|^{2}}$ carefully, since they contain
singularity as $|k|\rightarrow 0$. By using \eqref{roots}, we
estimate the numerator and denominator of $P_{1}(ik)$, respectively,
in the following way that
\begin{eqnarray*}
\begin{aligned}
&P^{\mathrm{den}}_{1}=:\sum_{\ell=0}^{+\infty}g^{(\ell)}\zeta^{\ell}
=4\pi\left(\nu_{i}\frac{e^{2}}{m_{e}}+\nu_{e}\frac{e^{2}}{m_{i}}\right)+g^{(2)}\zeta^{2}+O(|\zeta|^{3}),
\end{aligned}
\end{eqnarray*}
and
\begin{eqnarray*}
 \begin{aligned}
P^{\mathrm{num}}_{1}=&[A(\zeta)]^{3}-(\lambda_{2}+\lambda_{3}+\lambda_{4})[A(\zeta)]^{2})+(\lambda_{2}\lambda_{3}
   +\lambda_{2}\lambda_{4}+\lambda_{3}\lambda_{4})[A(\zeta)]-\lambda_{2}\lambda_{3}\lambda_{4}I\\
   =&[A(\zeta)]^{3}+(\nu_{i}+\nu_{e}+\lambda_{1})[A(\zeta)]^{2}\\
    &+
   \left(\nu_{i}\nu_{e}-\left(\frac{T_{i}}{m_{i}}+\frac{T_{e}}{m_{e}}\right)\zeta^{2}+\left(\frac{4\pi e^{2}}{m_{i}}+\frac{4\pi e^{2}}{m_{e}}\right)
  -\lambda_{1}(\lambda_{2}+\lambda_{3}+\lambda_{4})\right)[A(\zeta)]-\lambda_{2}\lambda_{3}\lambda_{4}I\\
    =&:(f_{ij})_{4\times 4}.
\end{aligned}
\end{eqnarray*}
Notice that
$$
\frac{1}{P^{\mathrm{den}}_{1}}=
\frac{1}{4\pi\left(\nu_{i}\frac{
e^{2}}{m_{e}}+\nu_{e}\frac{e^{2}}{m_{i}}\right)}-\frac{g^{(2)}}{\left[g^{(0)}\right]^{2}}\zeta^{2}+O(|\zeta|^{3}).
$$
Let us compute $f_{ij}$ ($1\leq i,j\leq 4)$ as follows. For $f_{11}$, one has
\begin{equation*}
 f_{11}(\zeta)
   =-\frac{T_{i}}{m_{i}}\nu_{i}\zeta^{2}+\frac{4\pi e^{2}}{m_{i}}\nu_{i}+(\nu_{i}+\nu_{e}+\lambda_{1})\left(\frac{T_{i}}{m_{i}}\zeta^{2}-\frac{4\pi e^{2}}{m_{i}}\right)
 -\lambda_{2}\lambda_{3}\lambda_{4}=:\sum_{\ell=0}^{+\infty}f_{11}^{(\ell)}|\zeta|^{\ell},
\end{equation*}
where
\begin{eqnarray*}
   \begin{aligned}
   &  f_{11}^{(0)}=\frac{4\pi e^{2}}{m_{i}}\nu_{i}-\frac{4\pi e^{2}}{m_{i}}(\nu_{i}+\nu_{e})
   +\nu_{i}\frac{4\pi e^{2}}{m_{e}}+\nu_{e}\frac{4\pi e^{2}}{m_{i}}=\frac{4\pi e^{2}}{m_{e}}\nu_{i},\\
   &  f_{11}^{(1)}=0,\\
   & \begin{aligned}
   f_{11}^{(2)}=&-\frac{T_{i}}{m_{i}}\nu_{i}+\frac{T_{i}}{m_{i}}(\nu_{i}+\nu_{e})-\frac{4\pi e^{2}}{m_{i}}
   \frac{T_{i}+T_{e}}{m_{i}\nu_{i}+m_{e}\nu_{e}}\\
   &-\left(\frac{T_{i}T_{e}}{m_{i}m_{e}}\frac{m_{i}\nu_{i}+m_{e}\nu_{e}}{T_{i}
+T_{e}}+\frac{4\pi e^2(T_{i}+T_{e})}{m_{i}m_{e}}\frac{\lambda_{1}^{(4)}}{\left(\lambda_{1}^{(2)}\right)^{2}}\right)\\
=&\frac{4\pi e^{2}}{m_{e}}\frac{T_{i}+T_{e}}{m_{i}\nu_{i}
+m_{e}\nu_{e}}+\frac{T_{i}m_{e}\nu_{e}-T_{e}m_{i}\nu_{i}}{m_{i}m_{e}}\frac{m_{i}\nu_{i}}{m_{i}\nu_{i}+m_{e}\nu_{e}},
 \end{aligned}
 \end{aligned}
\end{eqnarray*}
and therefore,
\begin{eqnarray*}
f_{11}(\zeta)=\frac{4\pi e^{2}}{m_{e}}\nu_{i}+ \left(\frac{4\pi
e^{2}}{m_{e}}\frac{T_{i}+T_{e}}{m_{i}\nu_{i}+m_{e}\nu_{e}}
+\frac{T_{i}m_{e}\nu_{e}-T_{e}m_{i}\nu_{i}}{m_{i}m_{e}}\frac{m_{i}\nu_{i}}{m_{i}\nu_{i}+m_{e}\nu_{e}}\right)\zeta^{2}+O(|\zeta|^{3}).
\end{eqnarray*}
 In a similar way, we can get
\begin{eqnarray*}
\begin{aligned} &f_{12}(\zeta)
   =\frac{4\pi e^{2}}{m_{i}}\nu_{e}+\frac{4\pi e^{2}}{m_{i}}\frac{T_{i}+T_{e}}{m_{i}\nu_{i}+m_{e}\nu_{e}}\zeta^{2}+O(|\zeta|^{4}),\\
& f_{13}(\zeta)=-\frac{4\pi e^{2}}{m_{e}}\zeta+O(|\zeta|^{3}),\\
& f_{14}(\zeta)=-\frac{4\pi e^{2}}{m_{i}}\zeta,
 \end{aligned}
\end{eqnarray*}
and
\begin{eqnarray*}
\begin{aligned} f_{21}(\zeta)
   =\frac{4\pi e^{2}}{m_{e}}\nu_{i}+\frac{4\pi e^{2}}{m_{e}}\frac{T_{i}+T_{e}}{m_{i}\nu_{i}+m_{e}\nu_{e}}\zeta^{2}+O(|\zeta|^{4}).
\end{aligned}
\end{eqnarray*}
For $f_{22}(\zeta)$, one has
\begin{eqnarray*}
\begin{aligned} f_{22}(\zeta)
   =-\frac{T_{e}}{m_{e}}\nu_{e}\zeta^{2}+\frac{4\pi e^{2}}{m_{e}}\nu_{e}+(\nu_{i}+\nu_{e}+\lambda_{1})\left(\frac{T_{e}}{m_{e}}\zeta^{2}-\frac{4\pi e^{2}}{m_{e}}\right)
 -\lambda_{2}\lambda_{3}\lambda_{4}=\sum_{\ell=0}^{+\infty}f_{22}^{(\ell)}\zeta^{\ell},
\end{aligned}
\end{eqnarray*}
where
\begin{eqnarray*}
   \begin{aligned}
   &  f_{22}^{(0)}=\frac{4\pi e^{2}}{m_{e}}\nu_{e}-\frac{4\pi e^{2}}{m_{e}}(\nu_{i}+\nu_{e})
   +\nu_{i}\frac{4\pi e^{2}}{m_{e}}+\nu_{e}\frac{4\pi e^{2}}{m_{i}}=\frac{4\pi e^{2}}{m_{i}}\nu_{e}, \\
   &f_{22}^{(1)}=0,\\
   & \begin{aligned}
   f_{22}^{(2)}=&-\frac{T_{e}}{m_{e}}\nu_{e}+\frac{T_{e}}{m_{e}}(\nu_{i}+\nu_{e})-\frac{4\pi e^{2}}{m_{e}}\frac{T_{i}+T_{e}}{m_{i}\nu_{i}+m_{e}\nu_{e}}\\
   &-\left(\frac{T_{i}T_{e}}{m_{i}m_{e}}\frac{m_{i}\nu_{i}+m_{e}\nu_{e}}{T_{i}
+T_{e}}+\frac{4\pi e^2(T_{i}+T_{e})}{m_{i}m_{e}}\frac{\lambda_{1}^{(4)}}{\left(\lambda_{1}^{(2)}\right)^{2}}\right),\\
=&\frac{4\pi e^{2}}{m_{i}}\frac{T_{i}+T_{e}}{m_{i}\nu_{i}
+m_{e}\nu_{e}}-\frac{T_{i}m_{e}\nu_{e}-T_{e}m_{i}\nu_{i}}{m_{i}m_{e}}\frac{m_{e}\nu_{e}}{m_{i}\nu_{i}+m_{e}\nu_{e}}.
 \end{aligned}
 \end{aligned}
\end{eqnarray*}
Therefore,
\begin{eqnarray*}
f_{22}(\zeta)=\frac{4\pi e^{2}}{m_{i}}\nu_{e}+ \left(\frac{4\pi
e^{2}}{m_{i}}\frac{T_{i}+T_{e}}{m_{i}\nu_{i}+m_{e}\nu_{e}}-\frac{T_{i}m_{e}\nu_{e}-T_{e}m_{i}\nu_{i}}{m_{i}m_{e}}\frac{m_{e}\nu_{e}}{m_{i}\nu_{i}+m_{e}\nu_{e}}\right)\zeta^{2}+O(|\zeta|^{3}).
\end{eqnarray*}
Similarly, one has
\begin{eqnarray*}
f_{23}(\zeta)=-\frac{4\pi e^{2}}{m_{e}}\zeta,\quad
f_{24}(\zeta)=-\frac{4\pi e^{2}}{m_{i}}\zeta+O(|\zeta|^{3}).
\end{eqnarray*}
Moreover, it holds that
\begin{eqnarray*}
\begin{aligned} f_{31}(\zeta)
   =&-\left(\dfrac{T_{i}}{m_{i}}\right)^{2}\zeta^{3}
   +\left(2\frac{T_{i}}{m_{i}}\frac{4\pi e^{2}}{m_{i}}-\frac{T_{i}}{m_{i}}\nu_{i}^{2}\right)\zeta
   +\left(\left(\frac{4\pi e^{2}}{m_{i}}\right)^{2}-\frac{4\pi e^{2}}{m_{i}}\nu_{i}^{2}+\frac{4\pi e^{2}}{m_{i}}\frac{4\pi e^{2}}{m_{e}}\right)\frac{\zeta}{|\zeta|^{2}}\\
   &+(\nu_{i}+\nu_{e}+\lambda_{1})\left(\frac{T_{i}}{m_{i}}\nu_{i}\zeta+\frac{4\pi e^{2}}{m_{i}}\nu_{i}\frac{\zeta}{|\zeta|^{2}}\right)\\
&+
\left(\nu_{i}\nu_{e}-\left(\frac{T_{i}}{m_{i}}+\frac{T_{e}}{m_{e}}\right)\zeta^{2}+\left(\frac{4\pi
e^{2}}{m_{i}}+\frac{4\pi e^{2}}{m_{e}}\right)
-\lambda_{1}(\lambda_{2}+\lambda_{3}+\lambda_{4})\right)\left(-\frac{T_{i}}{m_{i}}\zeta-\frac{4\pi
e^{2}}{m_{i}}\frac{\zeta}{|\zeta|^{2}} \right).
\end{aligned}
\end{eqnarray*}
In the expression of  $f_{31}(\zeta)$ above, since the coefficient of $\frac{\zeta}{|\zeta|^{2}}$ is vanishing, i.e.
\begin{eqnarray*}
\left(\left(\frac{4\pi e^{2}}{m_{i}}\right)^{2}-\frac{4\pi
e^{2}}{m_{i}}\nu_{i}^{2}+\frac{4\pi e^{2}}{m_{i}}\frac{4\pi
e^{2}}{m_{e}}\right)
   +(\nu_{i}+\nu_{e})\frac{4\pi e^{2}}{m_{i}}\nu_{i}
- \left(\nu_{i}\nu_{e}+4\pi
\left(\frac{e^{2}}{m_{i}}+\frac{e^{2}}{m_{e}}\right)\right)\frac{4\pi
e^{2}}{m_{i}}=0,
\end{eqnarray*}
and the coefficient of $\zeta$ is given by
\begin{multline*}
\left(2\frac{T_{i}}{m_{i}}\frac{4\pi
e^{2}}{m_{i}}-\frac{T_{i}}{m_{i}}\nu_{i}^{2}\right)+(\nu_{i}+\nu_{e})\frac{T_{i}}{m_{i}}\nu_{i}
-\frac{T_{i}+T_{i}}{m_{i}\nu_{i}+m_{e}\nu_{e}}\frac{4\pi
e^{2}}{m_{i}}\nu_{i}\\
-\left(\nu_{i}\nu_{e}+\left(\frac{4\pi e^{2}}{m_{i}}+\frac{4\pi e^{2}}{m_{e}}\right)\right)\frac{T_{i}}{m_{i}}-\left(\frac{T_{i}}{m_{i}}+\frac{T_{e}}{m_{e}}\right)\frac{4\pi
e^{2}}{m_{i}}+(\nu_{i}+\nu_{e})\frac{T_{i}+T_{i}}{m_{i}\nu_{i}+m_{e}\nu_{e}}\frac{4\pi
e^{2}}{m_{i}} \\
=-\frac{4\pi
e^{2}(T_{i}+T_{e})}{m_{i}m_{e}}\frac{m_{i}\nu_{i}}{m_{i}\nu_{i}+m_{e}\nu_{e}},
\end{multline*}
it follows that
\begin{eqnarray*}
&&f_{31}(\zeta)=-\frac{4\pi
e^{2}(T_{i}+T_{e})}{m_{i}m_{e}}\frac{m_{i}\nu_{i}}{m_{i}\nu_{i}+m_{e}\nu_{e}}\zeta+O(|\zeta|^{3}).
\end{eqnarray*}
Similarly we can calculate $f_{32}(\zeta)$, $f_{33}(\zeta)$,
$f_{34}(\zeta)$ and $f_{4j}(\zeta)$ $(j=1,2,3,4)$ as follows:
\begin{eqnarray*}
&&f_{32}(\zeta)=-\frac{4\pi e^{2}(T_{i}+T_{e})}{m_{i}m_{e}}\frac{m_{e}\nu_{e}}{m_{i}\nu_{i}+m_{e}\nu_{e}}\zeta+O(|\zeta|^3),\\
&&f_{33}(\zeta)=O(|\zeta|^{2}),\\
&&f_{34}(\zeta)=O(|\zeta|^{2}),\\
&&f_{41}(\zeta)=-\frac{4\pi e^{2}(T_{i}+T_{e})}{m_{i}m_{e}}\frac{m_{i}\nu_{i}}{m_{i}\nu_{i}+m_{e}\nu_{e}}\zeta+O(|\zeta|^3),\\
&&f_{42}(\zeta)=-\frac{4\pi e^{2}(T_{i}+T_{e})}{m_{i}m_{e}}\frac{m_{e}\nu_{e}}{m_{i}\nu_{i}+m_{e}\nu_{e}}\zeta+O(|\zeta|^3),\\
&&f_{43}(\zeta)=O(|\zeta|^{2}),\\
&&f_{44}(\zeta)=O(|\zeta|^{2}).
\end{eqnarray*}

Let $P_{j}^{1}(ik),\ P_{j}^{2}(ik),\ P_{j}^{3}(ik),\ P_{j}^{4}(ik)$
be the four row vectors of $P_{j}(ik)$, $j=1,2,3,4$. According to the above computations, we have
\begin{equation*}
P_{1}^{1}(ik)=\frac{1}{P^{\mathrm{den}}_{1}}\left(\begin{array} {c}
\frac{4\pi e^{2}}{m_{e}}\nu_{i}+ \left(\frac{4\pi
e^{2}}{m_{e}}\frac{T_{i}+T_{e}}{m_{i}\nu_{i}+m_{e}\nu_{e}}
+\frac{T_{i}m_{e}\nu_{e}-T_{e}m_{i}\nu_{i}}{m_{i}m_{e}}\frac{m_{i}\nu_{i}}{m_{i}\nu_{i}+m_{e}\nu_{e}}\right)\zeta^{2}+O(|\zeta|^{3})\\
\frac{4\pi e^{2}}{m_{i}}\nu_{e}+\frac{4\pi e^{2}}{m_{i}}\frac{T_{i}+T_{e}}{m_{i}\nu_{i}+m_{e}\nu_{e}}\zeta^{2}+O(|\zeta|^{3})\\
-\frac{4\pi e^{2}}{m_{e}}\zeta+O(|\zeta|^{3})\\
-\frac{4\pi e^{2}}{m_{i}}\zeta
\end{array} \right)^{T},
\end{equation*}
\begin{equation*}
P_{1}^{2}(ik)=\frac{1}{P^{\mathrm{den}}_{1}}\left(\begin{array} {c}
 \frac{4\pi e^{2}}{m_{e}}\nu_{i}+\frac{4\pi e^{2}}{m_{e}}\frac{T_{i}+T_{e}}{m_{i}\nu_{i}+m_{e}\nu_{e}}\zeta^{2}+O(|\zeta|^{3})\\
\frac{4\pi e^{2}}{m_{i}}\nu_{e}+ \left(\frac{4\pi
e^{2}}{m_{i}}\frac{T_{i}+T_{e}}{m_{i}\nu_{i}+m_{e}\nu_{e}}
-\frac{T_{i}m_{e}\nu_{e}-T_{e}m_{i}\nu_{i}}{m_{i}m_{e}}\frac{m_{e}\nu_{e}}{m_{i}\nu_{i}+m_{e}\nu_{e}}\right)\zeta^{2}+O(|\zeta|^{3})\\
-\frac{4\pi e^{2}}{m_{e}}\zeta\\
-\frac{4\pi e^{2}}{m_{i}}\zeta+O(|\zeta|^{3})
\end{array} \right)^{T},
\end{equation*}
\begin{equation*}
P_{1}^{3}(ik)=\frac{1}{P^{\mathrm{den}}_{1}}\left(\begin{array} {c}
 -\frac{4\pi e^{2}(T_{i}+T_{e})}{m_{i}m_{e}}\frac{m_{i}\nu_{i}}{m_{i}\nu_{i}+m_{e}\nu_{e}}\zeta+O(|\zeta|^{3})\\
-\frac{4\pi e^{2}(T_{i}+T_{e})}{m_{i}m_{e}}\frac{m_{e}\nu_{e}}{m_{i}\nu_{i}+m_{e}\nu_{e}}\zeta+O(|\zeta|^3)\\
O(|\zeta|^{2})\\
O(|\zeta|^{2})
\end{array} \right)^{T},
\end{equation*}
and
\begin{equation*}
P_{1}^{4}(ik)=\frac{1}{P^{\mathrm{den}}_{1}}\left(\begin{array} {c}
 -\frac{4\pi e^{2}(T_{i}+T_{e})}{m_{i}m_{e}}\frac{m_{i}\nu_{i}}{m_{i}\nu_{i}+m_{e}\nu_{e}}\zeta+O(|\zeta|^{3})\\
-\frac{4\pi e^{2}(T_{i}+T_{e})}{m_{i}m_{e}}\frac{m_{e}\nu_{e}}{m_{i}\nu_{i}+m_{e}\nu_{e}}\zeta+O(|\zeta|^3)\\
O(|\zeta|^{2})\\
O(|\zeta|^{2})
\end{array} \right)^{T}.
\end{equation*}
Then we have the expressions of $\hat{\rho}_{\alpha}(\zeta)$,
$\hat{u}_{\alpha,\parallel}(\zeta)$ and $\hat{E}_{\parallel}(\zeta)$
for $|k|\rightarrow 0$ as follows:
\begin{eqnarray*}
\begin{aligned}
 \hat{\rho}_{i}(\zeta)
=&\frac{\exp{(\lambda_{1}(ik)t)}\hat{U}_{\parallel
0}(k)}{P^{\mathrm{den}}_{1}} \left(\begin{array} {c} \frac{4\pi
e^{2}}{m_{e}}\nu_{i}+ \left(\frac{4\pi
e^{2}}{m_{e}}\frac{T_{i}+T_{e}}{m_{i}\nu_{i}+m_{e}\nu_{e}}
+\frac{T_{i}m_{e}\nu_{e}-T_{e}m_{i}\nu_{i}}{m_{i}m_{e}}\frac{m_{i}\nu_{i}}{m_{i}\nu_{i}+m_{e}\nu_{e}}\right)\zeta^{2}+O(|\zeta|^{3})\\
\frac{4\pi e^{2}}{m_{i}}\nu_{e}+\frac{4\pi e^{2}}{m_{i}}\frac{T_{i}+T_{e}}{m_{i}\nu_{i}+m_{e}\nu_{e}}\zeta^{2}+O(|\zeta|^{4})\\
-\frac{4\pi e^{2}}{m_{e}}\zeta+O(|\zeta|^{3})\\
-\frac{4\pi e^{2}}{m_{i}}\zeta
\end{array} \right)\\
&+\sum_{j=2}^{4}\exp{(\lambda_{j}(ik)t)}P_{j}^{1}(ik)\hat{U}_{\parallel
0}(k)^{T}\\
=&\exp{(\lambda_{1}(ik)t)}\bar{P}^{1}\hat{U}_{\parallel
0}(k)^{T}+O(|k|)\exp{(\lambda_{1}(ik)t)}\left|\hat{U}_{\parallel
0}(k)\right|+\sum_{j=2}^{4}\exp{(\lambda_{j}(ik)t)}P_{j}^{1}(ik)\hat{U}_{\parallel
0}(k)^{T},
\end{aligned}
\end{eqnarray*}
\begin{eqnarray*}
\begin{aligned}
 \hat{\rho}_{e}(\zeta)
=&\frac{\exp{(\lambda_{1}(ik)t)}\hat{U}_{\parallel
0}(k)}{P^{\mathrm{den}}_{1}} \left(\begin{array} {c}
 \frac{4\pi e^{2}}{m_{e}}\nu_{i}+\frac{4\pi e^{2}}{m_{e}}\frac{T_{i}+T_{e}}{m_{i}\nu_{i}+m_{e}\nu_{e}}\zeta^{2}+O(|\zeta|^{4})\\
\frac{4\pi e^{2}}{m_{i}}\nu_{e}+ \left(\frac{4\pi
e^{2}}{m_{i}}\frac{T_{i}+T_{e}}{m_{i}\nu_{i}+m_{e}\nu_{e}}
-\frac{T_{i}m_{e}\nu_{e}-T_{e}m_{i}\nu_{i}}{m_{i}m_{e}}\frac{m_{e}\nu_{e}}{m_{i}\nu_{i}+m_{e}\nu_{e}}\right)\zeta^{2}+O(|\zeta|^{3})\\
-\frac{4\pi e^{2}}{m_{e}}\zeta\\
-\frac{4\pi e^{2}}{m_{i}}\zeta+O(|\zeta|^{3})
\end{array} \right)\\
&+\sum_{j=2}^{4}\exp{(\lambda_{j}(ik)t)}P_{j}^{2}(ik)\hat{U}_{\parallel
0}(k)^T\\
=&\exp{(\lambda_{1}(ik)t)}\bar{P}^{1}\hat{U}_{\parallel
0}(k)^{T}+O(|k|)\exp{(\lambda_{1}(ik)t)}|\hat{U}_{\parallel
0}(k)|+\sum_{j=2}^{4}\exp{(\lambda_{j}(ik)t)}P_{j}^{2}(ik)\hat{U}_{\parallel
0}(k)^{T},
\end{aligned}
\end{eqnarray*}
\begin{eqnarray*}
\begin{aligned}
 \hat{u}_{i,\parallel}(\zeta)
=&\frac{\exp{(\lambda_{1}(ik)t)}\hat{U}_{\parallel
0}(k)}{P^{\mathrm{den}}_{1}} \left(\begin{array} {c}
 -\frac{4\pi e^{2}(T_{i}+T_{e})}{m_{i}m_{e}}\frac{m_{i}\nu_{i}}{m_{i}\nu_{i}+m_{e}\nu_{e}}\zeta+O(|\zeta|^{3})\\
-\frac{4\pi e^{2}(T_{i}+T_{e})}{m_{i}m_{e}}\frac{m_{e}\nu_{e}}{m_{i}\nu_{i}+m_{e}\nu_{e}}\zeta+O(|\zeta|^3)\\
O(|\zeta|^{2})\\
O(|\zeta|^{2})
\end{array} \right)\\
&+\sum_{j=2}^{4}\exp{(\lambda_{j}(ik)t)}P_{j}^{3}(ik)\hat{U}_{\parallel
0}(k)^T\\
=&\exp{(\lambda_{1}(ik)t)}\bar{P}^{2}\hat{U}_{\parallel
0}(k)^{T}+O(|k|^{2})\exp{(\lambda_{1}(ik)t)}|\hat{U}_{\parallel
0}(k)|+\sum_{j=2}^{4}\exp{(\lambda_{j}(ik)t)}P_{j}^{3}(ik)\hat{U}_{\parallel
0}(k)^T,
\end{aligned}
\end{eqnarray*}
\begin{eqnarray*}
\begin{aligned}
 \hat{u}_{e,\parallel}(\zeta)
=&\frac{\exp{(\lambda_{1}(ik)t)}\hat{U}_{\parallel
0}(k)}{P^{\mathrm{den}}_{1}} \left(\begin{array} {c}
 -\frac{4\pi e^{2}(T_{i}+T_{e})}{m_{i}m_{e}}\frac{m_{i}\nu_{i}}{m_{i}\nu_{i}+m_{e}\nu_{e}}\zeta+O(|\zeta|^{3})\\
-\frac{4\pi e^{2}(T_{i}+T_{e})}{m_{i}m_{e}}\frac{m_{e}\nu_{e}}{m_{i}\nu_{i}+m_{e}\nu_{e}}\zeta+O(|\zeta|^3)\\
O(|\zeta|^{2})\\
O(|\zeta|^{2})
\end{array} \right)\\
&+\sum_{j=2}^{4}\exp{(\lambda_{j}(ik)t)}P_{j}^{4}(ik)\hat{U}_{\parallel
0}(k)^T\\
=&\exp{(\lambda_{1}(ik)t)}\bar{P}^{2}\hat{U}_{\parallel
0}(k)^{T}+O(|k|^{2})\exp{(\lambda_{1}(ik)t)}|\hat{U}_{\parallel
0}(k)|+\sum_{j=2}^{4}\exp{(\lambda_{j}(ik)t)}P_{j}^{4}(ik)\hat{U}_{\parallel
0}(k)^T.
\end{aligned}
\end{eqnarray*}
Finally, noticing
\begin{equation*}
P_{1}^{1}(ik)-P_{1}^{2}(ik)=\frac{1}{P^{\mathrm{den}}_{1}}\left(\begin{array}
{c}
 \frac{T_{i}m_{e}\nu_{e}-T_{e}m_{i}\nu_{i}}{m_{i}m_{e}}\frac{m_{i}\nu_{i}}{m_{i}\nu_{i}+m_{e}\nu_{e}}\zeta^{2}+O(|\zeta|^{4})\\
\frac{T_{i}m_{e}\nu_{e}-T_{e}m_{i}\nu_{i}}{m_{i}m_{e}}\frac{m_{e}\nu_{e}}{m_{i}\nu_{i}+m_{e}\nu_{e}}\zeta^{2}+O(|\zeta|^{3})\\
O(|\zeta|^{3})\\
O(|\zeta|^{3})
\end{array} \right)^{T},
\end{equation*}
we also have
\begin{eqnarray}\label{Eparallel}
\begin{aligned}
\hat{E}_{\parallel}=&-4\pi \frac{ik}{|k|^{2}}\sum_{\alpha}q_{\alpha}\rho_{\alpha}=-4\pi \frac{ik}{|k|^{2}}(e\hat{\rho}_{i}-e\hat{\rho}_{e})\\
                   =&-4\pi e\frac{ik}{|k|^{2}}\exp(\lambda_{1}(ik) t)\left(P_{1}^{1}(ik)-P_{1}^{2}(ik)\right)\hat{U}_{\parallel
0}(k)^{T}\\
&-4\pi e
\frac{ik}{|k|^{2}}\sum_{j=2}^{4}\exp{(\lambda_{j}(ik)t)}\left(P_{j}^{1}(ik)-P_{j}^{2}(ik)\right)\hat{U}_{\parallel
0}(k)^{T}\\
=&\exp(\lambda_{1}(ik) t)\bar{P}^{3}\hat{U}_{\parallel0}^{T}+O(|k|^{2})\exp(\lambda_{1}(ik) t)\left|\hat{U}_{\parallel
0}(k)\right|\\
&-4\pi e
\frac{ik}{|k|^{2}}\sum_{j=2}^{4}\exp{(\lambda_{j}(ik)t)}\left(P_{j}^{1}(ik)-P_{j}^{2}(ik)\right)\hat{U}_{\parallel
0}(k)^{T}.
\end{aligned}
\end{eqnarray}

\subsubsection{Error estimates}
 \begin{lemma}\label{errorL}
 There is $r_{0}>0$ such that for $|k|\leq r_{0}$ and $t\geq 0$, the error term $|U_{\parallel}-\overline{U}_{\parallel}|$ can be bounded as
\begin{eqnarray}
&&|\hat{\rho}_{\alpha}(t,k)-\hat{\bar{\rho}}(t,k)|\leq C
|k|\exp{\left(-\lambda|k|^{2}t\right)}\left|\hat{U}_{\parallel
0}(k)\right|+C\exp{\left(-\lambda t\right)}\left|\hat{U}_{\parallel
0}(k)\right|,\label{error1}\\
&&|\hat{u}_{\alpha,\parallel}(t,k)-\hat{\bar{u}}_{\parallel}(t,k)|\leq C
|k|^{2}\exp{\left(-\lambda|k|^{2}t\right)}\left|\hat{U}_{\parallel
0}(k)\right|\notag\\
&&\qquad\qquad\qquad\qquad\qquad\qquad\qquad\qquad+C\exp{\left(-\lambda
t\right)}\left(\left|\hat{U}_{\parallel
0}(k)\right|+\left|\hat{E}_{\parallel 0}(k)\right|\right),\label{error2}\\
&&|\hat{E}_{\parallel}(t,k)-\hat{\bar{E}}_{\parallel}(t,k)|\leq C
|k|^{2}\exp{\left(-\lambda|k|^{2}t\right)}\left|\hat{U}_{\parallel
0}(k)\right|+C\exp{\left(-\lambda
t\right)}\left(\left|\hat{U}_{\parallel
0}(k)\right|+\left|\hat{E}_{\parallel 0}(k)\right|\right),\label{error3}
\end{eqnarray}
where $C$ and $\lambda$ are positive constants.
\end{lemma}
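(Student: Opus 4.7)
The plan is to start from the three asymptotic spectral representations for $\hat{\rho}_{\alpha}$, $\hat{u}_{\alpha,\parallel}$, $\hat{E}_{\parallel}$ that are displayed just before the lemma, and subtract the Fourier expressions \eqref{rhobar}, \eqref{ubar}, \eqref{Ebar} of the asymptotic profile term by term. Each resulting difference then breaks into three structural pieces: (a) a ``semigroup discrepancy'' piece $[\exp(\lam_1(ik)t)-\exp(-\mu_1|k|^2 t)]\bar P^{\ell}\hat U_{\parallel 0}^T$; (b) a ``projection remainder'' piece $O(|k|^m)\exp(\lam_1(ik)t)|\hat U_{\parallel 0}|$ with $m=1$ for $\hat\rho_\al$ and $m=2$ for $\hat u_{\al,\parallel}$, $\hat E_\parallel$, which is read off from the explicit asymptotic formulas for $P_1^j(ik)$ already computed; (c) the sum over the non-principal modes $\sum_{j=2}^{4}\exp(\lam_j(ik)t)P_j^{\ell}(ik)\hat U_{\parallel 0}^T$.

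For piece (a) I would apply the elementary complex bound $|e^a-e^b|\leq |a-b|\exp(\max\{\Re a,\Re b\})$, which together with $\lam_1(ik)+\mu_1|k|^2=O(|k|^4)$ from \eqref{roots} and $\Re \lam_1(ik)\leq -\lam|k|^2$ (valid for $|k|\leq r_0$ by continuity) gives
\[
|\exp(\lam_1(ik)t)-\exp(-\mu_1|k|^2 t)|\leq C|k|^4 t\exp(-\lam|k|^2 t)\leq C|k|^2\exp(-\tfrac{\lam}{2}|k|^2 t),
\]
after absorbing the factor $|k|^2 t$ into the exponential. Since $\bar P^2$ and $\bar P^3$ already carry a factor $|k|$, the combination with (b) yields the announced $|k|$ or $|k|^2$ polynomial prefactors multiplying $\exp(-\lam|k|^2 t)|\hat U_{\parallel 0}|$. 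For piece (c), once $r_0$ is chosen small enough, the roots $\lam_j(ik)$ for $j=2,3,4$ stay in a fixed left half-plane $\Re\lam_j(ik)\leq -\lam$ by continuous perturbation of the simple roots $\sigma_j$ of $g$, so $|\exp(\lam_j(ik)t)|\leq \exp(-\lam t)$.

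The single point requiring genuine care, and the main obstacle, is controlling the ``density columns'' of the non-principal projections $P_j^{3}(ik)$, $P_j^{4}(ik)$ and the extra factor $-4\pi e\tfrac{ik}{|k|^2}$ appearing in the representation \eqref{Eparallel} of $\hat E_\parallel$. Indeed, the matrix $A(ik)$ has $\tfrac{\zeta}{|\zeta|^2}$ singularities in the lower two rows, and the corresponding projections $P_j^{3}, P_j^{4}$ for $j=2,3,4$ inherit a potential $|k|^{-1}$ blow-up on the first two components when acting on a generic $\hat U_{\parallel 0}$. The fix is to use the compatibility condition \eqref{LNC}, which in Fourier variables reads
\[
\tfrac{ik}{|k|^{2}}\bigl(\hat\rho_{i0}-\hat\rho_{e0}\bigr)=-\tfrac{1}{4\pi e}\hat E_{\parallel 0}.
\]
Concretely, I would isolate in $(P_j^3(ik),P_j^4(ik))\hat U_{\parallel 0}^T$ and in $\tfrac{ik}{|k|^2}(P_j^1-P_j^2)(ik)\hat U_{\parallel 0}^T$ the singular part proportional to $\hat\rho_{i0}-\hat\rho_{e0}$, and replace it via the above identity by a bounded multiple of $\hat E_{\parallel 0}$; the complementary regular part is $O(1)$ and contributes to the $|\hat U_{\parallel 0}|$ term. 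Combined with the exponential factor from (c), this produces exactly the extra $C\exp(-\lam t)|\hat E_{\parallel 0}|$ appearing in \eqref{error2}, \eqref{error3}. The density estimate \eqref{error1} needs no such extra term because $P_j^{1}(ik)$, $P_j^{2}(ik)$ are regular at $k=0$.

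Putting the three pieces together for each of $\hat\rho_\al-\hat{\bar\rho}$, $\hat u_{\al,\parallel}-\hat{\bar u}_{\parallel}$, $\hat E_\parallel-\hat{\bar E}_\parallel$ yields \eqref{error1}--\eqref{error3} with $\lam>0$ determined by the smaller of the gap to the imaginary axis for $\sigma_j$ and the $|k|^2$-coefficient $\mu_1$.
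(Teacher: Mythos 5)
Your proposal reproduces the paper's proof essentially verbatim: the same three-piece decomposition into semigroup discrepancy, projection remainder, and non-principal modes; the same use of $\lambda_1(ik)+\mu_1|k|^2=O(|k|^4)$ and $\mathrm{Re}\,\lambda_1(ik)\leq-\lambda|k|^2$ for pieces (a) and (b); and the same key observation that the $\zeta/|\zeta|^2$ singularities in $P_j^3,P_j^4$ and in $\tfrac{ik}{|k|^2}(P_j^1-P_j^2)$ must be resolved via the compatibility condition $\hat E_{\parallel 0}=-4\pi e\tfrac{ik}{|k|^2}(\hat\rho_{i0}-\hat\rho_{e0})$, yielding the $\exp(-\lambda t)|\hat E_{\parallel 0}|$ term. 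The only thing left implicit in your sketch is the explicit verification (which the paper carries out by computing the $\zeta/|\zeta|^2$ coefficients of $g^j_{31},g^j_{32}$, etc.) that those singular coefficients in the first two columns are indeed exact negatives of each other, so that they really do combine into a multiple of $\hat\rho_{i0}-\hat\rho_{e0}$.
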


\begin{proof} It follows from the expressions of
$\hat{\rho}_{\alpha}(\zeta)$ and $\hat{\bar{\rho}}(\zeta)$ that
\begin{eqnarray*}
&&\begin{aligned}
  &\hat{\rho}_{i}(\zeta)-\hat{\bar{\rho}}(\zeta)\\
= &\exp{(\lambda_{1}(ik)t)}\bar{P}^{1}\hat{U}_{\parallel 0}(k)^{T}-
\exp{\left(-\frac{T_{i}+T_{e}}{m_{i}\nu_{i}+m_{e}\nu_{e}}|k|^{2}t\right)}\bar{P}^{1}\hat{U}_{\parallel
0}(k)^{T}\\
&+O(|k|)\exp{(\lambda_{1}(ik)t)}\left|\hat{U}_{\parallel
0}(k)\right|+\sum_{j=2}^{4}\exp{(\lambda_{j}(ik)t)}P_{j}^{1}(ik)\hat{U}_{\parallel
0}(k)^{T}\\
               :=&\hat{R}_{11}(ik)+\hat{R}_{12}(ik)+\hat{R}_{13}(ik).
\end{aligned}
\end{eqnarray*}
We have from \eqref{roots} that
\begin{equation*}
\lambda_{1}(ik)+\frac{T_{i}+T_{e}}{m_{i}\nu_{i}+m_{e}\nu_{e}}|k|^{2}=O(|k|^{4}),
\end{equation*}
and
\begin{eqnarray*}
\begin{aligned}
&\left|\exp{(\lambda_{1}(ik)t)}-\exp{\left(-\frac{T_{i}+T_{e}}{m_{i}\nu_{i}+m_{e}\nu_{e}}|k|^{2}t\right)}\right|\\
&=\exp{\left(-\frac{T_{i}+T_{e}}{m_{i}\nu_{i}+m_{e}\nu_{e}}|k|^{2}t\right)}
\left|\exp{\left(\lambda_{1}(ik)t+\frac{T_{i}+T_{e}}{m_{i}\nu_{i}+m_{e}\nu_{e}}|k|^{2}t\right)}-1\right|\\
&\leq
C\exp{\left(-\frac{T_{i}+T_{e}}{m_{i}\nu_{i}+m_{e}\nu_{e}}|k|^{2}t\right)}
|k|^{4}t \exp{(C|k|^{4}t)}\\
&\leq  C|k|^{2}\exp{\left(-\lambda|k|^{2}t\right)},
\end{aligned}
\end{eqnarray*}
as $|k|\to 0$.
 Therefore, we obtain that
\begin{eqnarray*}
\left|\hat{R}_{11}(ik)\right|\leq
C|k|^{2}\exp{\left(-\lambda|k|^{2}t\right)}\left|\hat{U}_{\parallel
0}(k)\right|\ \ \ \mathrm{as}\ \ |k|\rightarrow 0.
\end{eqnarray*}
Note that $\Re\, \lambda_{1}(ik)\leq -\lambda|k|^2$ and
$|{\exp({\lambda_{1}(ik)t})}|\leq {\exp({-\lambda|k|^2t})}$
as $|k|\rightarrow 0$. Consequently, we find that
\begin{eqnarray*}
\left|\hat{R}_{12}(ik)\right|\leq C|k|
{\exp({-\lambda|k|^2t})}\quad \mathrm{as}\ \ |k|\rightarrow 0.
\end{eqnarray*}
Now it suffices to estimate $\left|\hat{R}_{13}(ik)\right|$. Recall
$\mathrm{Re}\ \sigma_{j}<0$ for $j=2,3,4$. This together with
\eqref{roots} give ${\exp({\lambda_{j}(ik)t})}\leq
{\exp({-\lambda t})}$ as $|k|\rightarrow 0$. Also notice
$P_{j}^{1}(ik)=O(1)$. Thus we have
\begin{eqnarray*}
\left|\hat{R}_{13}(ik)\right|\leq C{\exp({-\lambda
t})}\left|\hat{U}_{\parallel 0}(k)\right|\ \ \ \mathrm{as}\ \
|k|\rightarrow 0.
\end{eqnarray*}
In a similar way, we can get
\begin{equation*}
|\hat{\rho}_{e}(k)-\hat{\bar{\rho}}(k)|\leq C
|k|\exp{\left(-\lambda|k|^{2}t\right)}\left|\hat{U}_{\parallel
0}(k)\right|+C\exp{\left(-\lambda t\right)}\left|\hat{U}_{\parallel
0}(k)\right|.
\end{equation*}
This then proves the desired estimate $\eqref{error1}$.

To consider the rest estimates, one has to prove that
$$
P_{j}^{3}{U}_{\parallel
0}^T(k),\quad
P_{j}^{4}{U}_{\parallel
0}^T(k),\quad
\frac{ik}{|k|^{2}}\sum_{j=2}^{4}\left(P_{j}^{1}(ik)-P_{j}^{2}(ik)\right){U}_{\parallel
0}^T(k),
$$
are all bounded. Notice that those terms include
$\frac{\zeta}{|\zeta|^{2}}$ which  is singular as $|k|\rightarrow
0$.
For $j=2,3,4$, by using \eqref{roots}, we see that
$$
P^{\mathrm{den}}_{j}=\prod\limits_{\ell \neq j
}(\lambda_{j}(\zeta)-\lambda_{l}(\zeta))=O(1),
$$
and
\begin{eqnarray}\label{Pj}
&&\begin{aligned} P^{\mathrm{num}}_{j}
   =&[A(\zeta)]^{3}+(\nu_{i}+\nu_{e}+\lambda_{j})[A(\zeta)]^{2}\\
    &+
   \left(\nu_{i}\nu_{e}-\left(\frac{T_{i}}{m_{i}}+\frac{T_{e}}{m_{e}}\right)\zeta^{2}+4\pi \left(\frac{e^{2}}{m_{i}}+\frac{e^{2}}{m_{e}}\right)
  +\lambda_{j}(\nu_{i}+\nu_{e}+\lambda_{j})\right)A(\zeta)\\
  &-\lambda_{2}\lambda_{3}\lambda_{4}I\\
  :=&(g_{il}^{j})_{4\times4}.
\end{aligned}
\end{eqnarray}
We have to be careful to  treat the third row and the fourth row involving $\frac{\zeta}{|\zeta|^{2}}$. It is straightforward to compute
$g^{j}_{31}$ as
\begin{eqnarray*}
\begin{aligned}g^{j}_{31}(\zeta)
   =&-\left(\dfrac{T_{i}}{m_{i}}\right)^{2}\zeta^{3}
   +\left(2\frac{T_{i}}{m_{i}}\frac{4\pi e^{2}}{m_{i}}-\frac{T_{i}}{m_{i}}\nu_{i}^{2}\right)\zeta
   +\left(\left(\frac{4\pi e^{2}}{m_{i}}\right)^{2}-\frac{4\pi e^{2}}{m_{i}}\nu_{i}^{2}+\frac{4\pi e^{2}}{m_{i}}\frac{4\pi e^{2}}{m_{e}}\right)\frac{\zeta}{|\zeta|^{2}}\\
   &+(\nu_{i}+\nu_{e}+\lambda_{j})\left(\frac{T_{i}}{m_{i}}\nu_{i}\zeta+\frac{4\pi e^{2}}{m_{i}}\nu_{i}\frac{\zeta}{|\zeta|^{2}}\right)\\
&+
\left(\nu_{i}\nu_{e}-\left(\frac{T_{i}}{m_{i}}+\frac{T_{e}}{m_{e}}\right)\zeta^{2}+\left(\frac{4\pi
e^{2}}{m_{i}}+\frac{4\pi e^{2}}{m_{e}}\right)
+\lambda_{j}(\nu_{i}+\nu_{e}+\lambda_{j})\right)\left(-\frac{T_{i}}{m_{i}}\zeta-\frac{4\pi
e^{2}}{m_{i}}\frac{\zeta}{|\zeta|^{2}} \right).
\end{aligned}
\end{eqnarray*}
The coefficient of $\frac{\zeta}{|\zeta|^{2}}$ in the above expression of
$g^{j}_{31}(\zeta)$ is further simplified as
\begin{eqnarray*}
\frac{4\pi
e^{2}}{m_{i}}\nu_{i}\sigma_{j}-\sigma_{j}(\nu_{i}+\nu_{e}+\sigma_{j})\frac{4\pi
e^{2}}{m_{i}}=-\frac{4\pi
e^{2}}{m_{i}}\sigma_{j}(\nu_{e}+\sigma_{j}),
\end{eqnarray*}
where we recall that $\sigma_{j}$ $(j=2,3,4)$ are the three roots of $g(\lambda)=0$. Therefore,
\begin{eqnarray*}
 g^{j}_{31}=-\frac{4\pi e^{2}}{m_{i}}\sigma_{j}(\nu_{e}+\sigma_{j})\frac{\zeta}{|\zeta|^{2}}+O(|\zeta|)
       =-\frac{4\pi e^{2}}{m_{i}}\sigma_{j}(\nu_{e}+\sigma_{j})\frac{ik}{|k|^{2}}+O(|k|).
\end{eqnarray*}
We now turn to estimate $g^{j}_{32}$. It follows that
\begin{eqnarray*}
\begin{aligned}g^{j}_{32}(\zeta)
   =&-\left(\frac{T_{i}}{m_{i}}\frac{4\pi e^{2}}{m_{i}}+\frac{T_{e}}{m_{e}}\frac{4\pi e^{2}}{m_{i}}\right)\zeta
   -\left(\left(\frac{4\pi e^{2}}{m_{i}}\right)^{2}-\frac{4\pi e^{2}}{m_{i}}\nu_{i}^{2}+\frac{4\pi e^{2}}{m_{i}}\frac{4\pi e^{2}}{m_{e}}\right)\frac{\zeta}{|\zeta|^{2}}\\
&+(\nu_{i}+\nu_{e}+\lambda_{j})\left(-\frac{4\pi e^{2}}{m_{i}}\nu_{i}\frac{\zeta}{|\zeta|^{2}}\right)\\
&+
\left(\nu_{i}\nu_{e}-\left(\frac{T_{i}}{m_{i}}+\frac{T_{e}}{m_{e}}\right)\zeta^{2}+\left(\frac{4\pi
e^{2}}{m_{i}}+\frac{4\pi e^{2}}{m_{e}}\right)
+\lambda_{j}(\nu_{i}+\nu_{e}+\lambda_{j})\right)\frac{4\pi
e^{2}}{m_{i}}\frac{\zeta}{|\zeta|^{2}}.
\end{aligned}
\end{eqnarray*}
The coefficient of $\frac{\zeta}{|\zeta|^{2}}$ in the above expression is further simplified as
\begin{eqnarray*}
-\frac{4\pi
 e^{2}}{m_{i}}\nu_{i}\sigma_{j}+\sigma_{j}(\nu_{i}+\nu_{e}+\sigma_{j})\frac{4\pi  e^{2}}{m_{i}}=\frac{4\pi
 e^{2}}{m_{i}}\sigma_{j}(\nu_{e}+\sigma_{j}),
\end{eqnarray*}
which hence implies that
\begin{eqnarray*}
 g^{j}_{32}=\frac{4\pi e^{2}}{m_{i}}\sigma_{j}(\nu_{e}+\sigma_{j})\frac{\zeta}{|\zeta|^{2}}+O(|\zeta|)
       =\frac{4\pi e^{2}}{m_{i}}\sigma_{j}(\nu_{e}+\sigma_{j})\frac{ik}{|k|^{2}}+O(|k|).
\end{eqnarray*}
Checking the third row  of $A(ik)$, $[A(ik)]^{2}$ and
$[A(ik)]^{3}$, we can obtain that
\begin{eqnarray*}
 g^{j}_{33}=O(1),\ \ \  g^{j}_{34}=O(1),
\end{eqnarray*}
as $|k|\to 0$.
It is direct to verify that
\begin{eqnarray*}
\begin{aligned}
P_{j}^{3}(ik)\hat{U}_{\parallel 0}(k)^{T}=&
g^{j}_{31}\hat{\rho}_{i0}+g^{j}_{32}\hat{\rho}_{e0}+g^{j}_{33}\hat{u}_{i0,\parallel}+g^{j}_{34}\hat{u}_{e0,\parallel}\\
=&-\frac{4\pi
 e^{2}}{m_{i}}\sigma_{j}(\nu_{e}+\sigma_{j})\frac{ik}{|k|^{2}}(\hat{\rho}_{i0}-\hat{\rho}_{e0})+O(1)\left|\hat{U}_{\parallel
0}(k)\right|\\
=& \frac{
 e}{m_{i}}\sigma_{j}(\nu_{e}+\sigma_{j})\hat{E}_{\parallel
0}(k)+O(1)\left|\hat{U}_{\parallel 0}(k)\right|,
\end{aligned}
\end{eqnarray*}
where we have used the compatible condition $\hat{E}_{\parallel
0}=-4\pi\frac{ik}{|k|^{2}}(e\hat{\rho}_{i0}-e\hat{\rho}_{e0})$. Then
the expressions of $\hat{u}_{\alpha,\parallel}(\zeta)$ and
$\hat{\bar{u}}_{\parallel}(\zeta)$ imply that
\begin{eqnarray*}
\begin{aligned}
  |\hat{u}_{i,\parallel}(\zeta)-\hat{\bar{u}}_{\parallel}(\zeta)|
= &\exp{(\lambda_{1}(ik)t)}\bar{P}^{2}\hat{U}_{\parallel 0}(k)^{T}-
\exp{\left(-\frac{T_{i}+T_{e}}{m_{i}\nu_{i}+m_{e}\nu_{e}}|k|^{2}t\right)}\bar{P}^{2}\hat{U}_{\parallel
0}(k)^{T}\\
&+O(|k|^{2})\exp{(\lambda_{1}(ik)t)}|\hat{U}_{\parallel
0}(k)|+\sum_{j=2}^{4}\exp{(\lambda_{j}(ik)t)}P_{j}^{3}(ik)\hat{U}_{\parallel
0}(k)\\
\leq &C|k|^{2}\exp{\left(-\lambda|k|^{2}t\right)}|\hat{U}_{\parallel
0}(k)|+C{\exp({-\lambda t})}\left(\left|\hat{U}_{\parallel
0}(k)\right|+\left|\hat{E}_{\parallel 0}(k)\right|\right).
\end{aligned}
\end{eqnarray*}
In a similar way, we  can get
\begin{equation*}
|\hat{u}_{e,\parallel}(k)-\hat{\bar{u}}(k)|\leq C
|k|^{2}\exp{\left(-\lambda|k|^{2}t\right)}\left|\hat{U}_{\parallel
0}(k)\right|+C\exp{\left(-\lambda
t\right)}\left(\left|\hat{U}_{\parallel
0}(k)\right|+\left|\hat{E}_{\parallel 0}(k)\right|\right).
\end{equation*}
This proves \eqref{error2}.

It now remains to estimate
\begin{equation}
\label{du.p01}
\frac{ik}{|k|^{2}}\sum_{j=2}^{4}\left(P_{j}^{1}(ik)-P_{j}^{2}(ik)\right)\hat{U}_{\parallel
0}(k)^{T},
\end{equation}
appearing in \eqref{Eparallel}.
Since the first row minus the second row of
$I$, $A(ik)$, $[A(ik)]^{2}$ and $[A(ik)]^{3}$ are respectively given by
\begin{eqnarray*}
&&(1,-1,0,0),\\
&&(0,0,-\zeta,\zeta),\\
&&\left(-\frac{T_{i}}{m_{i}}\zeta^{2}-\frac{4\pi
e^{2}}{m_{i}}-\frac{4\pi  e^{2}}{m_{e}},
-\frac{T_{e}}{m_{e}}\zeta^{2}+\frac{4\pi
 e^{2}}{m_{i}}+\frac{4\pi  e^{2}}{m_{e}},\nu_{i}\zeta,-\nu_{e}\zeta\right),
\end{eqnarray*}
and
\begin{multline*}
\left(-\frac{T_{i}}{m_{i}}\nu_{i}\zeta^{2}+\frac{4\pi
 e^{2}}{m_{i}}\nu_{i}+\frac{4\pi  e^{2}}{m_{e}}\nu_{e},
\frac{T_{e}}{m_{e}}\nu_{e}\zeta^{2}-\frac{4\pi  e^{2}}{m_{i}}\nu_{i}-\frac{4\pi  e^{2}}{m_{e}}\nu_{e},\right.\\
\left. -\frac{T_{i}}{m_{i}}\zeta^{3}+\left(\frac{4\pi
e^{2}}{m_{i}}+\frac{4\pi e^{2}}{m_{e}}\right)\zeta-\nu_{i}^{2}\zeta,
\frac{T_{e}}{m_{e}}\zeta^{3}-\left(\frac{4\pi
 e^{2}}{m_{i}}+\frac{4\pi
 e^{2}}{m_{e}}\right)\zeta+\nu_{e}^{2}\zeta\right),
\end{multline*}
one can compute \eqref{du.p01} by  \eqref{Pj}  as
\begin{equation*}
\begin{aligned}
&\frac{ik}{|k|^{2}}\left(P_{j}^{1}(ik)-P_{j}^{2}(ik)\right)\hat{U}_{\parallel
0}(k)^{T}\\
=&\frac{1}{P_{j}^{\mathrm{den}}}\frac{ik}{|k|^{2}}\left((g^{j}_{11}-g^{j}_{21})\hat{\rho}_{i0}+(g^{j}_{12}-g^{j}_{22})\hat{\rho}_{e0}
+(g^{j}_{13}-g^{j}_{23})\hat{u}_{i0,\parallel}+(g^{j}_{14}-g^{j}_{24})\hat{u}_{e0,\parallel}\right)\\
=&-\frac{1}{P_{j}^{\mathrm{den}}}4\pi\left(\frac{e^{2}}{m_{i}}+\frac{e^{2}}{m_{e}}\right)\sigma_{j}\frac{ik}{|k|^{2}}(\hat{\rho}_{i0}-\hat{\rho}_{e0})+O(1)\left|\hat{U}_{\parallel
0}(k)\right|\\
=& \frac{1}{P_{j}^{\mathrm{den}}}\left(\frac{e}{m_{i}}+\frac{e
}{m_{e}}\right)\sigma_{j}\hat{E}_{\parallel
0}(k)+O(1)\left|\hat{U}_{\parallel 0}(k)\right|,
\end{aligned}
\end{equation*}
which is bounded when $|k|\rightarrow 0$. Then the  expressions of
$\hat{E}_{\parallel}(\zeta)$ and $\hat{\bar{E}}_{\parallel}(\zeta)$
imply that
\begin{eqnarray*}
&&\begin{aligned}
  |\hat{E}(\zeta)-\hat{\bar{E}}_{\parallel}(\zeta)|
=&\exp(\lambda_{1}(ik))\bar{P}^{3}\hat{U}_{\parallel0}^{T}-
\exp{\left(-\frac{T_{i}+T_{e}}{m_{i}\nu_{i}+m_{e}\nu_{e}}|k|^{2}t\right)}\bar{P}^{3}\hat{U}_{\parallel
0}(k)^{T}\\
&+O(|k|^{2})\exp(\lambda_{1}(ik))\left|\hat{U}_{\parallel
0}(k)\right|\\
&-4\pi
e\frac{ik}{|k|^{2}}\sum_{j=2}^{4}\exp{(\lambda_{j}(ik)t)}\left(P_{j}^{1}(ik)-P_{j}^{2}(ik)\right)\hat{U}_{\parallel
0}(k)^{T}\\
\leq & C
|k|^{2}\exp{\left(-\lambda|k|^{2}t\right)}\left|\hat{U}_{\parallel
0}(k)\right|+C\exp{\left(-\lambda
t\right)}\left(\left|\hat{U}_{\parallel
0}(k)\right|+\left|\hat{E}_{\parallel 0}(k)\right|\right).
\end{aligned}
\end{eqnarray*}
This proves \eqref{error3} and then completes the proof of Lemma \ref{errorL}.
\end{proof}

Next, we consider the properties of $\hat{\rho}_{\alpha}(\zeta)$,
$\hat{u}_{\alpha,\parallel}(\zeta)$ and $\hat{E}_{\parallel}(\zeta)$
as  $|k|\rightarrow \infty$. It follows from \eqref{s4.j} that
\begin{eqnarray}\label{s4.je}
   |\hat{U}(t,k)|\leq \left\{\begin{aligned}
   &C {\exp({- \lambda|k|^{2}t })}|\hat{U}_{0}(k)|,\ \ \ \ \ \ |k|\leq r_{0},\\
   &C {\exp({- \lambda|k|^{-2}t )}}|\hat{U}_{0}(k)|,\ \ \ \ \ |k|\geq
   r_{0}.
\end{aligned}
\right.
\end{eqnarray}
Here $r_{0}$ is defined in Lemma \ref{errorL}. Combining
\eqref{s4.je} with \eqref{rhobar}, \eqref{ubar} and \eqref{Ebar}, we
have the following pointwise estimate for the error terms
$\hat{\rho}_{\alpha}(k)-\hat{\bar{\rho}}(k)$,
$\hat{u}_{\alpha,\parallel}(k)-\hat{\bar{u}}(k)$ and
$\hat{E}_{\parallel}(k)-\hat{\bar{E}}(k)$ as $|k|\rightarrow
\infty$.

\begin{lemma}\label{errorL2}
Let  $r_{0}>0$ be given in Lemma \ref{errorL}. For $|k|\geq r_0$ and $t\geq 0$, the error $|U_\parallel-\bar{U}_\parallel|$ can be bounded as
\begin{eqnarray*}
&&|\hat{\rho}_{\alpha}(t,k)-\hat{\bar{\rho}}(t,k)|\leq C
 \exp{\left(-\lambda|k|^{-2}t\right)}\left|\hat{U}_{
0}(k)\right|+C\exp{\left(-\lambda t\right)}\left|[\hat{\rho}_{i
0}(k),\hat{\rho}_{e 0}(k)]\right|,
\\
&&|\hat{u}_{\alpha,\parallel}(t,k)-\hat{\bar{u}}_{\parallel}(t,k)|\leq C
\exp{\left(-\lambda|k|^{-2}t\right)}\left|\hat{U}_{
0}(k)\right|+C\exp{\left(-\lambda t\right)} |k|\left|[\hat{\rho}_{i
0}(k),\hat{\rho}_{e 0}(k)]\right|,
\\
&&|\hat{E}_{\parallel}(t,k)-\hat{\bar{E}}_{\parallel}(t,k)|\leq  C
\exp{\left(-\lambda|k|^{-2}t\right)}\left|\hat{U}_{
0}(k)\right|+C\exp{\left(-\lambda t\right)} |k|\left|[\hat{\rho}_{i
0}(k),\hat{\rho}_{e 0}(k)]\right|,
\end{eqnarray*}
where $C$ and $\lambda$ are positive constants.
\end{lemma}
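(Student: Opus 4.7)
The plan is to apply the triangle inequality $|\hat U_\parallel - \hat{\bar U}_\parallel| \leq |\hat U_\parallel| + |\hat{\bar U}_\parallel|$ and then estimate each piece separately in the high-frequency regime $|k|\geq r_0$. This is the natural dual to Lemma \ref{errorL}: for small $|k|$ we exploited the cancellation between $\lambda_1(ik)$ and $-\mu_1|k|^2$, whereas here both terms already decay without cancellation, so only coarse bounds are needed.

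For the solution itself, I would directly invoke the pointwise Fourier energy estimate \eqref{s4.je} derived in Section \ref{sec3}, which on $|k|\geq r_0$ gives $|\hat U(t,k)|\leq C\exp(-\lambda|k|^{-2}t)|\hat U_0(k)|$. Since $\hat\rho_\alpha$, $\hat u_{\alpha,\parallel}$ are components (respectively orthogonal projections of components) of $\hat U$, and $\hat E_\parallel = -4\pi e\,\tfrac{ik}{|k|^2}(\hat\rho_i-\hat\rho_e)$ satisfies $|\hat E_\parallel|\leq |\hat E|\leq |\hat U|$ on $|k|\geq r_0$, this immediately furnishes the first term in each of the three claimed estimates.

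For the asymptotic profile, I would use the explicit Fourier representations \eqref{rhobar}, \eqref{ubar}, \eqref{Ebar}. Setting $\mu_1:=(T_i+T_e)/(m_i\nu_i+m_e\nu_e)$ and noting that $\bar P^1$ is bounded while $\bar P^2$, $\bar P^3$ carry a single factor of $ik$, one gets
\begin{align*}
|\hat{\bar\rho}(t,k)| &\leq C\exp(-\mu_1|k|^2 t)\left|[\hat\rho_{i0},\hat\rho_{e0}]\right|,\\
|\hat{\bar u}_\parallel(t,k)| + |\hat{\bar E}_\parallel(t,k)| &\leq C|k|\exp(-\mu_1|k|^2 t)\left|[\hat\rho_{i0},\hat\rho_{e0}]\right|.
\end{align*}
On the range $|k|\geq r_0$, the splitting $\exp(-\mu_1|k|^2 t) = \exp(-\mu_1 r_0^2 t)\exp(-\mu_1(|k|^2-r_0^2)t) \leq \exp(-\lambda t)$ with $\lambda := \mu_1 r_0^2$ converts the Gaussian decay in $|k|$ into the pure exponential decay in $t$ asked for by the statement, yielding the second term in each estimate. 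Combining the two pieces via triangle inequality gives the three claims.

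I do not anticipate any serious obstacle here; the lemma is essentially a bookkeeping exercise once the dissipative estimate \eqref{s4.je} is in hand. The only point worth flagging is that the additional $|k|$ prefactor in the velocity and electric-field estimates, coming from the symbol $ik$ in $\bar P^2$ and $\bar P^3$, is genuine but harmless: we simply sacrifice part of the super-exponential-in-$|k|$ Gaussian decay to absorb it, since the statement only demands $\exp(-\lambda t)$ behavior in the second term.
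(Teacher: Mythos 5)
Your proposal is correct and follows precisely the route the paper indicates (the paper simply states the lemma after remarking that it follows from ``Combining \eqref{s4.je} with \eqref{rhobar}, \eqref{ubar} and \eqref{Ebar}''). The triangle inequality, the high-frequency Fourier energy estimate for the solution, the explicit Gaussian representation for the profile with its $ik$-factor in $\bar P^2$, $\bar P^3$, and the conversion $\exp(-\mu_1|k|^2 t)\le\exp(-\mu_1 r_0^2 t)$ on $|k|\ge r_0$ are exactly the intended steps, and you have filled in the details the paper leaves implicit.
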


Based on Lemma \ref{errorL} and \ref{errorL2} together with \cite[Theorem 4.3]{Duan}, the time-decay
properties for the difference terms $\rho_{\alpha}-\bar{\rho}$,
$u_{\alpha,\parallel}-\bar{u}_{\parallel}$ and
$E_{\parallel}-\bar{E}_{\parallel}$ are stated as follows.

\begin{theorem}\label{thm.decaypar}
Let $1\leq p,r\leq 2\leq q\leq \infty$, $ \ell \geq 0$, and let $m\geq
1$ be an integer. Suppose that
$[\rho_{\alpha},u_{\alpha,\parallel}]$ is the solution
to the Cauchy problem \eqref{Fluid2}-\eqref{Fluid2I}. Then
$U_{\parallel}=[\rho_{\alpha},u_{\alpha,\parallel}]$ and
$E_{\parallel}$ satisfy the following time-decay property:
\begin{multline*}
\|\nabla ^{m}(\rho_{\alpha}(t)-\bar{\rho}(t))\|_{L^q}\leq
C(1+t)^{-\frac{3}{2}(\frac{1}{p}-\frac{1}{q})-\frac{m+1}{2}}\|U_{0}\|_{L^{p}}+C{\exp({-\lambda
t})}\|U_{0}\|_{L^{p}}\\
+C (1+t)^{-\frac{\ell}{2}}
  \|\nabla^{m+[\ell+3(\frac{1}{r}-\frac{1}{q})]_+}U_{0}\|_{L^r}+C{\exp({-\lambda
t})}\|\nabla^{m+[3(\frac{1}{r}-\frac{1}{q})]_+}[\rho_{i0},\rho_{e0}]\|_{L^{r}},
\end{multline*}
\begin{multline*}
\|\nabla
^{m}(u_{\alpha,\parallel}(t)-\bar{u}_{\parallel}(t))\|_{L^q}\leq
C(1+t)^{-\frac{3}{2}(\frac{1}{p}-\frac{1}{q})-\frac{m+2}{2}}\|U_{0}\|_{L^{p}}+C{\exp({-\lambda
t})}\|U_{0}\|_{L^{p}}\\
+C (1+t)^{-\frac{\ell}{2}}
  \|\nabla^{m+[\ell+3(\frac{1}{r}-\frac{1}{q})]_+}U_{0}\|_{L^r}+C{\exp{(-\lambda
t})}\|\nabla^{m+1+[3(\frac{1}{r}-\frac{1}{q})]_+}[\rho_{i0},\rho_{e0}]\|_{L^{r}},
\end{multline*}
and
\begin{multline*}
\|\nabla ^{m}(E_{\parallel}(t)-\bar{E}_{\parallel}(t))\|_{L^q}\leq
C(1+t)^{-\frac{3}{2}(\frac{1}{p}-\frac{1}{q})-\frac{m+2}{2}}\|U_{0}\|_{L^{p}}+C{\exp({-\lambda
t})}\|U_{0}\|_{L^{p}}\\
+C (1+t)^{-\frac{\ell}{2}}
  \|\nabla^{m+[\ell+3(\frac{1}{r}-\frac{1}{q})]_+}U_{0}\|_{L^r}+C{\exp({-\lambda
t})}\|\nabla^{m+1+[3(\frac{1}{r}-\frac{1}{q})]_+}[\rho_{i0},\rho_{e0}]\|_{L^{r}},
\end{multline*}
for any $t\geq 0$, where  $C=C(m,p,r,q,\ell)$ and
$[\ell+3(\tfrac{1}{r}-\tfrac{1}{q})]_+$ is defined in
\eqref{thm.decay.1}.
\end{theorem}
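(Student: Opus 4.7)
The plan is to reduce Theorem \ref{thm.decaypar} to a direct corollary of the pointwise frequency estimates in Lemma \ref{errorL} (low frequencies $|k|\le r_0$) and Lemma \ref{errorL2} (high frequencies $|k|\ge r_0$), combined with the $L^p$-$L^q$ machinery already encapsulated in \cite[Theorem 4.3]{Duan}. First I would fix a dyadic cutoff at $r_0$ and write the quantity to be bounded, say $\widehat{\rho_\alpha-\bar\rho}$, as a sum of its restriction to $\{|k|\le r_0\}$ and $\{|k|\ge r_0\}$. For the low-frequency piece, Lemma \ref{errorL} yields
\begin{eqnarray*}
|k|^m|\hat{\rho}_\alpha(t,k)-\hat{\bar{\rho}}(t,k)|
&\leq& C|k|^{m+1}e^{-\lambda|k|^2 t}|\hat U_{\parallel 0}(k)|
+ C|k|^m e^{-\lambda t}|\hat U_{\parallel 0}(k)|,
\end{eqnarray*}
and the extra factor $|k|$ (respectively $|k|^2$ for the $u_{\alpha,\parallel}$ and $E_\parallel$ estimates) gives precisely the improved power of $t$ appearing in the theorem after integrating $|k|^{2(m+1)}e^{-2\lambda|k|^2 t}$ or $|k|^{2(m+2)}e^{-2\lambda|k|^2 t}$ against $L^1(\mathbb{R}^3)$.

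For the high-frequency piece, Lemma \ref{errorL2} together with the standard regularity-loss trick gives
\begin{eqnarray*}
|k|^m|\hat{\rho}_\alpha(t,k)-\hat{\bar{\rho}}(t,k)|
&\leq& \frac{C|k|^{m+\ell+3(\frac{1}{r}-\frac{1}{q})}}{(|k|^{\ell+3(\frac{1}{r}-\frac{1}{q})})}e^{-\lambda|k|^{-2}t}|\hat U_{0}(k)|
+ Ce^{-\lambda t}|k|^m|[\hat\rho_{i0},\hat\rho_{e0}](k)|,
\end{eqnarray*}
and using $|k|^{-2s}e^{-\lambda|k|^{-2}t}\leq C(1+t)^{-s}$ for $|k|\ge r_0$ (with $s=\ell/2$), one produces the factor $(1+t)^{-\ell/2}$ at the cost of $[\ell+3(\tfrac{1}{r}-\tfrac{1}{q})]_+$ extra derivatives on the data, exactly matching the second line of each asserted estimate. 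The analogous computations for $u_{\alpha,\parallel}-\bar u_\parallel$ and $E_\parallel-\bar E_\parallel$ are identical in structure, the only change being that the low-frequency error picks up an extra power of $|k|$ (which produces the sharper rate $\frac{m+2}{2}$) and the high-frequency exponentially decaying $t$ term comes with one extra derivative on $[\rho_{i0},\rho_{e0}]$, inherited from the $|k|$ factor in Lemma \ref{errorL2}.

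To convert the pointwise Fourier bounds into the claimed $L^q$ bounds, I would invoke exactly the mechanism of \cite[Theorem 4.3]{Duan}: for the first term on each right-hand side one applies Hausdorff-Young together with the convolution-type heat-kernel estimate $\|\,|k|^a e^{-\lambda|k|^2 t}\,\|_{L^{p^*}} \le C(1+t)^{-\frac{3}{2}(\frac{1}{p}-\frac{1}{q})-\frac{a}{2}}$ (with $1/p+1/p^*=1+1/q$), which accounts for the factor $(1+t)^{-\frac{3}{2}(\frac{1}{p}-\frac{1}{q})-\frac{m+j}{2}}$ with $j=1$ or $2$; the purely exponentially decaying-in-$t$ terms give the $Ce^{-\lambda t}\|U_0\|_{L^p}$ and $Ce^{-\lambda t}\|\nabla^{m+j+[\cdots]_+}[\rho_{i0},\rho_{e0}]\|_{L^r}$ contributions after the same Hausdorff-Young/Riesz-Thorin step on the high-frequency regime. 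Summing the low- and high-frequency contributions produces the three inequalities of the theorem.

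The main obstacle, and the reason the statement looks technical, is book-keeping the regularity-loss on the high-frequency piece: the factor $e^{-\lambda|k|^{-2}t}$ does not decay in $|k|$, so one cannot gain time decay at high frequencies for free—it must be purchased with extra derivatives on the initial data, and the bracket $[\ell+3(\tfrac{1}{r}-\tfrac{1}{q})]_+$ is precisely the minimal amount needed to both absorb the Hausdorff-Young loss from $L^r$ to $L^{q}$ and to use the interpolation inequality $|k|^{-2s}e^{-\lambda |k|^{-2}t}\le C(1+t)^{-s}$. Once this accounting is made rigorous, and combined with the simpler low-frequency analysis which is essentially heat-kernel in character (since $\lambda_1(ik) = -\mu_1|k|^2 + O(|k|^4)$ while the other three eigenbranches are bounded away from the imaginary axis), the proof becomes a direct adaptation of the argument for \cite[Theorem 4.3]{Duan} and needs no further novel ingredient.
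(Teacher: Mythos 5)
Your proposal is correct and follows exactly the route the paper takes: the paper gives no written proof of Theorem~\ref{thm.decaypar}, merely noting that it follows from Lemma~\ref{errorL}, Lemma~\ref{errorL2}, and the $L^p$--$L^q$ machinery of \cite[Theorem~4.3]{Duan}, and your low/high-frequency split at $r_0$, the factorization $|k|^m\le|k|^{-\ell}\cdot|k|^{m+\ell}$ combined with $|k|^{-2s}e^{-\lambda|k|^{-2}t}\le C(1+t)^{-s}$, and the Hausdorff--Young/H\"older accounting that produces the loss $[\ell+3(\tfrac1r-\tfrac1q)]_+$ are precisely that mechanism. One small omission worth noting: the low-frequency bounds of Lemma~\ref{errorL} for $u_{\alpha,\parallel}$ and $E_\parallel$ also carry a $Ce^{-\lambda t}|\hat E_{\parallel 0}(k)|$ term, which you silently fold into $Ce^{-\lambda t}\|U_0\|_{L^p}$; this is harmless because $E_{\parallel 0}$ is a component of $U_0$, but the step deserves a sentence.
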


\subsection{Spectral representation for electromagnetic part}

\subsubsection{Asymptotic expansions and expressions for $B$}

 Taking the curl for the equations of
$\partial_t u_{i,\perp}$, $\partial_t u_{e,\perp}$, $\partial_t
E_{\perp}$ in \eqref{electromagnetic} and using $\Delta
B=-\nabla\times (\nabla \times B)$, it follows that
\begin{equation}\label{electrcurl}
\left\{
  \begin{aligned}
  &m_{i}\partial_t(\nabla\times u_{i,\perp})-e\nabla\times E_{\perp}+m_{i}\nu_{i}(\nabla\times u_{i,\perp})=0,\\
  &m_{e}\partial_t (\nabla\times u_{e,\perp})+e\nabla\times E_{\perp}+m_{e}\nu_{e}(\nabla\times u_{e,\perp})=0,\\
  &\partial_t (\nabla\times E_{\perp})+c\Delta B+4\pi e(\nabla\times u_{i,\perp}-\nabla\times u_{e,\perp})=0,\\
  &\partial_t B+c\nabla\times E_{\perp}=0.\\
\end{aligned}\right.
\end{equation}
Taking the time derivative for the fourth equation of
\eqref{electrcurl} and then using the third equations to
replace $\partial_t (\nabla\times E_{\perp})$ gives
\begin{equation}\label{tB}
\partial_{tt}B-c^{2}\Delta B -4\pi c e(\nabla\times u_{i,\perp}-\nabla\times
u_{e,\perp})=0.
\end{equation}
Further taking the time derivative for \eqref{tB} and replacing
$\partial_t(\nabla\times u_{i,\perp})$ and $\partial_t (\nabla\times
u_{e,\perp})$ give
\begin{equation}\label{ttB}
\partial_{ttt}B-c^{2}\Delta B_{t}+4\pi\left(\frac{e^{2}}{m_{i}}+\frac{e^{2}}{m_{e}}\right)\partial_{t}B+4\pi c(e\nu_{i}\nabla\times u_{i,\perp}-e\nu_{e}\nabla\times
u_{e,\perp})=0.
\end{equation}
Here we have replaced $\nabla\times E_{\perp}$ by
$-\frac{1}{c}\partial_{t} B$. Further taking the time derivative for
\eqref{ttB} and replacing $\partial_t(\nabla\times u_{i,\perp})$ and
$\partial_t (\nabla\times u_{e,\perp})$ gives
\begin{multline}\label{tttB}
\partial_{tttt}B-c^{2}\Delta
B_{tt}+4\pi\left(\frac{e^{2}}{m_{i}}+\frac{e^{2}}{m_{e}}\right)\partial_{tt}B
-4\pi\left(\frac{e^{2}}{m_{i}}\nu_{i}+\frac{e^{2}}{m_{e}}\nu_{e}\right)\partial_{t}B\\-4\pi
c(e\nu_{i}^{2}\nabla\times u_{i,\perp}-e\nu_{e}^{2}\nabla\times
u_{e,\perp})=0.
\end{multline}
Taking the summation of $\eqref{tttB}$, $\eqref{ttB}\times
(\nu_{i}+\nu_{e})$ and $\eqref{tB}\times \nu_{i}\nu_{e}$ yields
\begin{multline*}
\partial_{tttt}B+(\nu_{i}+\nu_{e})\partial_{ttt}B-c^{2}\Delta
B_{tt}+4\pi\left(\frac{e^{2}}{m_{i}}+\frac{e^{2}}{m_{e}}\right)\partial_{tt}B+\nu_{i}\nu_{e}\partial_{tt}B\\
-c^{2}(\nu_{i}+\nu_{e})\Delta
B_{t}+4\pi\left(\frac{e^{2}}{m_{i}}\nu_{e}+\frac{e^{2}}{m_{e}}\nu_{i}\right)\partial_{t}B-\nu_{i}\nu_{e}c^{2}\Delta
B=0.
\end{multline*}
In terms of the Fourier transform in $x$ of the above equation, one
has
\begin{multline}\label{tttBF}
\partial_{tttt}\hat{B}+(\nu_{i}+\nu_{e})\partial_{ttt}\hat{B}+\left(c^{2}|k|^{2}
+\frac{4\pi e^{2}}{m_{i}}+\frac{4\pi e^{2}}{m_{e}}+\nu_{i}\nu_{e}\right)\partial_{tt}\hat{B}\\
+\left((\nu_{i}+\nu_{e})c^{2}|k|^{2}+\frac{4\pi
e^{2}}{m_{i}}\nu_{e}+\frac{4\pi
e^{2}}{m_{e}}\nu_{i}\right)\partial_{t}\hat{B}+\nu_{i}\nu_{e}|k|^{2}c^{2}
\hat{B}=0.
\end{multline}
Initial data are given as
\begin{equation}\label{MMM2i}
\left\{
  \begin{aligned}
   &\hat{B}|_{t=0}=\hat{B}_{0},\\
   &\partial_t \hat{B}|_{t=0} = -cik\times \hat{E}_{0,\perp},\\
   &\partial_{tt} \hat{B}|_{t=0} =-c^{2}|k|^{2}\hat{B}_{0}+4\pi c\left(eik\times \hat{u}_{i0,\perp}-eik\times \hat{u}_{e0,\perp}\right),\\
   &\partial_{ttt} \hat{B}|_{t=0} =\left(c^{2}|k|^{2}+4\pi\left(\frac{e^{2}}{m_{i}}+\frac{e^{2}}{m_{e}}\right)\right)cik\times \hat{E}_{0,\perp}
   -4\pi ce\nu_{i}ik\times \hat{u}_{i0,\perp}+4\pi ce\nu_{e}ik\times \hat{u}_{e0,\perp}.\\
 \end{aligned}\right.
\end{equation}
The characteristic equation of  $\eqref{tttBF}$ reads
\begin{equation*}
\begin{aligned}
\lambda^{4}+(\nu_{i}+\nu_{e})\lambda^{3}+&\left(c^{2}|k|^{2}
+4\pi\left(\frac{e^{2}}{m_{i}}+\frac{e^{2}}{m_{e}}\right)+\nu_{i}\nu_{e}\right)\lambda^{2}\\
&+\left(c^{2}(\nu_{i}+\nu_{e})|k|^{2}+4\pi\left(\frac{e^{2}}{m_{i}}\nu_{e}+\frac{e^{2}}{m_{e}}\nu_{i}\right)\right)\lambda+\nu_{i}\nu_{e}c^{2}|k|^{2}=0.
\end{aligned}
\end{equation*}
For the roots of the above characteristic equation and their basic
properties, one has
\begin{equation}\label{roots1}
  \begin{aligned}
   &\lambda_{1}(|k|)=-\dfrac{c^{2}m_{i}\nu_{i}m_{e}\nu_{e}}{4\pi e^{2}(m_{i}\nu_{i}+m_{e}\nu_{e})}|k|^{2}+O(|k|^{4}),\\
   &\lambda_{j}(|k|)=\sigma_{j}+O(|k|^{2}),\ \ \ \ \mathrm{for}\ \
   j=2,3,4,
 \end{aligned}
\end{equation}
as $|k|\rightarrow 0$. Here we note that $\sigma_j$ $(j=2,3,4)$ with $\Re\,\sigma_{j}<0$
are the solutions to $g(\lambda)=0$ with $g(\lambda)$ still defined in
\eqref{deglamda}. One can set the solution of $\eqref{tttBF}$ to be
\begin{equation}\label{sol.M2}
\hat{B}=\sum_{j=1}^{4}c_{j}(ik) \exp\{\lambda_{j}(ik)t\},
\end{equation}
where $c_{i}\ (1\leq i\leq 4)$ are to be determined by $
\eqref{MMM2i}$ later. In fact, $\eqref{sol.M2}$ implies
\begin{eqnarray}\label{determine.c}
&&M\left[
  \begin{array}{c}
   c_{1}\\
   c_{2}\\
    c_{3}\\
    c_{4}
 \end{array}\right]:=
 \left[
  \begin{array}{cccc}
    1         &  1    & 1 & 1\\
  \lambda_{1}         &    \lambda_{2}     &   \lambda_{3}   &   \lambda_{4}  \\
   \lambda_{1}^{2}        &    \lambda_{2}^{2}    &   \lambda_{3}^{2}   &   \lambda_{4}^{2}  \\
   \lambda_{1}^{3}        &    \lambda_{2}^{3}    &   \lambda_{3}^{3}   &   \lambda_{4}^{3}  \\
 \end{array}\right]
 \left[
  \begin{array}{c}
   c_{1}\\
   c_{2}\\
    c_{3}\\
    c_{4}
 \end{array}\right]=
 \left[
  \begin{array}{c}
    \hat{B}|_{t=0}\\
   \partial_t\hat{B}|_{t=0} \\
   \partial_{tt} \hat{B}|_{t=0} \\
   \partial_{ttt} \hat{B}|_{t=0}
 \end{array}\right],\end{eqnarray}
where the right-hand term is given in terms of \eqref{MMM2i} by
\begin{equation}\label{MMM2iJ}
\left[
  \begin{array}{c}
    \hat{B}|_{t=0}\\
   \partial_t\hat{B}|_{t=0} \\
   \partial_{tt} \hat{B}|_{t=0} \\
   \partial_{ttt} \hat{B}|_{t=0}
 \end{array}\right]
 =\left[
  \begin{array}{cccc}
  0       &  0    & 0 & 1\\
0       &    0     &   -c ik \times  &   0  \\
4\pi c eik\times       &    -4\pi c eik\times     &   0   &   -c^{2}|k|^{2}  \\
-4\pi c e\nu_{i}ik\times       &   4\pi c e\nu_{e}ik\times    &   \left(c^{2}|k|^{2}+4\pi\left(\frac{e^{2}}{m_{i}}+\frac{e^{2}}{m_{e}}\right)\right)cik\times     &   0  \\
 \end{array}\right]
 \left[
  \begin{array}{c}
  \hat{u}_{i0,\perp}\\
  \hat{u}_{e0,\perp}\\
  \hat{ E}_{0,\perp} \\
   \hat{B}_{0}
 \end{array}\right].
\end{equation}
It is straightforward to check that

\begin{equation*}
\det M=\prod_{1\leq j<i\leq 4}(\lambda_{i}-\lambda_{j})\neq 0,
\end{equation*}
as long as $\lambda_{j}(|k|)$ are distinct to each other, and
\begin{equation*}
M^{-1}=\dfrac{1}{\det M} \left[
  \begin{array}{cccc}
  M_{11}  &   M_{21} &   M_{31} &    M_{41}  \\
    M_{12}  &   M_{22}  &    M_{32}  &   M_{42}  \\
    M_{13}  &   M_{23}  &    M_{33} &    M_{43}  \\
      M_{14}  &   M_{24} &   M_{34}  &    M_{44}
 \end{array}\right],
\end{equation*}
where $M_{ij}$ is the corresponding algebraic complement of $M$.
Notice that $\eqref{determine.c}$ together with $ \eqref{MMM2iJ}$
give
\begin{eqnarray*}
\left[
  \begin{array}{c}
   c_{1}\\
   c_{2}\\
    c_{3}\\
    c_{4}
 \end{array}\right]\
 = M^{-1}
 \left[
  \begin{array}{cccc}
  0       &  0    & 0 & 1\\
0       &    0     &   -c ik \times  &   0  \\
4\pi c eik\times       &    -4\pi c eik\times     &   0   &   -c^{2}|k|^{2}  \\
-4\pi c e\nu_{i}ik\times       &   4\pi c e\nu_{e}ik\times    &   \left(c^{2}|k|^{2}+4\pi\left(\frac{e^{2}}{m_{i}}+\frac{e^{2}}{m_{e}}\right)\right)cik\times     &   0  \\
 \end{array}\right]
 \left[
  \begin{array}{c}
  \hat{u}_{i0,\perp}\\
  \hat{u}_{e0,\perp}\\
  \hat{ E}_{0,\perp} \\
   \hat{B}_{0}
 \end{array}\right],
\end{eqnarray*}
which after plugging $ M^{-1}$, implies
\begin{equation*}
\begin{aligned}
c_{1}=&\frac{1}{\prod\limits_{1\leq j<i\leq
4}(\lambda_{i}-\lambda_{j})}\left[(4\pi c eM_{31}-4\pi
ceM_{41}\nu_{i})ik\times
\hat{u}_{i0,\perp}+(-4\pi c eM_{31}+4\pi c eM_{41}\nu_{e})ik\times\hat{u}_{e0,\perp}\right.\\
&\left.+\left(-M_{21}+M_{41}\left(c^{2}|k|^{2}+4\pi
\left(\frac{e^{2}}{m_{i}}+\frac{e^{2}}{m_{e}}\right)\right)\right)cik\times
 \hat{ E}_{0,\perp}+\left(M_{11}-c^{2}|k|^{2}M_{31}\right)\hat{B}_{0}\right]\\
 =&\frac{M_{11}\hat{B}_{0}}{\prod\limits_{1\leq j<i\leq
4}(\lambda_{i}-\lambda_{j})}+O(|k|)|\hat{U}_{0,\perp}|.
\end{aligned}
\end{equation*}
We deduce that $\frac{M_{11}}{\prod\limits_{1\leq j<i\leq
4}(\lambda_{i}-\lambda_{j})}$ has the following asymptotic
expansion as $|k|\rightarrow 0$:
\begin{equation*}
\frac{M_{11}}{\prod\limits_{1\leq j<i\leq
4}(\lambda_{i}-\lambda_{j})}=\sum_{\ell=0}^{+\infty}c_{1}^{\ell}|k|^{\ell},
\end{equation*}
where
\begin{equation*}
M_{11}=\left|
  \begin{array}{ccc}
  \lambda_{2}     &   \lambda_{3}   &   \lambda_{4}  \\
   \lambda_{2}^{2}    &   \lambda_{3}^{2}   &   \lambda_{4}^{2}  \\
  \lambda_{2}^{3}    &   \lambda_{3}^{3}   &   \lambda_{4}^{3}  \\
 \end{array}\right|= \lambda_{2}  \lambda_{3}  \lambda_{4}\prod\limits_{2\leq j<i\leq
4}(\lambda_{i}-\lambda_{j}).
\end{equation*}
By straightforward computations, $c_{1}^{0}=1$ holds true and this implies that
\begin{equation}\label{c1}
c_{1}(ik)=\hat{B}_{0}+O(|k|)|\hat{U}_{0,\perp}|[1,1,1]^{T}.
\end{equation}

\subsubsection{Error estimates}
In this section, we first give the error estimates for ${B}-\bar{B}$, and then apply the energy method in the Fourier space
to the difference problem for $\eqref{electromagnetic}$ and
$\eqref{perpprofile}$ to get the error estimates for
$u_{\alpha,\perp}-\bar{u}_{\alpha,\perp}$ and $E_{\perp}-\bar{E}_{\perp}$. It should be pointed out that it is also possible to carry out the same strenuous procedure as in the previous section to obtain the error estimates on $u_{\alpha,\perp}-\bar{u}_{\alpha,\perp}$ and $E_{\alpha,\perp}-\bar{E}_{\alpha,\perp}$. The reason why we choose the Fourier energy method is just for the simplicity of representation, since the estimates on  $u_{\alpha,\perp}-\bar{u}_{\alpha,\perp}$ and $E_{\alpha,\perp}-\bar{E}_{\alpha,\perp}$ can be directly obtained basing on the estimate on ${B}-\bar{B}$.

 \begin{lemma}\label{errorB}
 There is $r_{0}>0$ such that for $|k|\leq r_{0}$ and $t\geq 0$,
\begin{eqnarray}\label{errorB1}
|\hat{B}(t,k)-\hat{\bar{B}}(t,k)|\leq
C\left(|k|{\exp({-\lambda|k|^{2}t})+\exp({-\lambda
t})}\right)|\hat{U}_{0,\perp}|,
\end{eqnarray}
where $C$ and $\lambda$ are positive constants.
\end{lemma}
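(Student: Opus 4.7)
The plan is to mimic the argument from Lemma \ref{errorL}, using the spectral representation \eqref{sol.M2} of $\hat B$ derived in the previous subsection. The starting point is the decomposition
\begin{eqnarray*}
\hat B(t,k)-\hat{\bar B}(t,k) &=& \bigl(c_1(ik)-\hat B_0\bigr)\exp(\lambda_1(ik)t) \\
&& + \hat B_0\bigl(\exp(\lambda_1(ik)t)-\exp(-\mu_2|k|^2 t)\bigr) \\
&& + \sum_{j=2}^{4} c_j(ik)\exp(\lambda_j(ik)t),
\end{eqnarray*}
where $\mu_2$ is the magnetic diffusive coefficient in \eqref{def.heat}. Each of the three pieces will be shown to satisfy a bound of the form $C\bigl(|k|\exp(-\lambda|k|^2 t)+\exp(-\lambda t)\bigr)|\hat U_{0,\perp}|$, provided $r_0>0$ is chosen sufficiently small.

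For the first piece, the asymptotic expansion \eqref{c1} yields $|c_1(ik)-\hat B_0|\leq C|k||\hat U_{0,\perp}|$, while \eqref{roots1} together with shrinking $r_0$ gives $\Re\,\lambda_1(ik)\leq -\lambda|k|^2$, producing the bound $C|k|\exp(-\lambda|k|^2 t)|\hat U_{0,\perp}|$. For the second piece, \eqref{roots1} implies $\lambda_1(ik)+\mu_2|k|^2=O(|k|^4)$, so that
\begin{eqnarray*}
\bigl|\exp(\lambda_1(ik)t)-\exp(-\mu_2|k|^2 t)\bigr| &\leq& \exp(-\mu_2|k|^2 t)\,\bigl|\exp\bigl((\lambda_1(ik)+\mu_2|k|^2)t\bigr)-1\bigr| \\
&\leq& C|k|^4\, t\,\exp(-\lambda|k|^2 t) \leq C|k|^2\exp(-\lambda|k|^2 t),
\end{eqnarray*}
after absorbing the time factor into a slightly smaller exponent; since $|k|^2\leq |k|$ for $|k|\leq r_0\leq 1$, this fits into the target bound.

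The main technical point is the third piece, where one must verify that each $c_j(ik)$ remains bounded as $|k|\to 0$ for $j=2,3,4$. Applying Cramer's rule to \eqref{determine.c}-\eqref{MMM2iJ} produces the Vandermonde-type determinant $\prod_{1\le \ell<i\le 4}(\lambda_i-\lambda_\ell)$ in the denominator; since $\lambda_1(ik)\to 0$ while $\lambda_j(ik)\to \sigma_j\neq 0$ for $j\ge 2$ and the $\sigma_j$ are distinct (the only cases treated here, cf.~\eqref{roots1}), this determinant is bounded away from zero for $|k|\leq r_0$, and the numerators stay of order $O(1)|\hat U_{0,\perp}|$. Combined with the uniform bound $\Re\,\lambda_j(ik)\leq -\lambda$ for $j=2,3,4$, coming again from \eqref{roots1}, this yields the contribution $C\exp(-\lambda t)|\hat U_{0,\perp}|$, and summing the three pieces completes the proof. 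The most delicate step will be ensuring that the Vandermonde denominator and the algebraic cofactors in the numerator remain uniformly nondegenerate as $|k|\to 0$, which is exactly where the hypothesis that the roots of $g(\lambda)=0$ are simple (the two cases $\Delta>0$ or $\Delta<0$) enters the argument.
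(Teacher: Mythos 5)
Your proposal is correct and follows exactly the paper's decomposition into $\hat R_{21}+\hat R_{22}+\hat R_{23}$, with the same use of the expansions \eqref{roots1} and \eqref{c1} for each piece. The paper states the bound on $\hat R_{23}$ without comment; your added explanation that boundedness of $c_j(ik)$ for $j\geq 2$ rests on the non-vanishing Vandermonde determinant and the boundedness of the entries of the matrix in \eqref{MMM2iJ} is the correct justification, consistent with the restriction to the cases $\Delta>0$ or $\Delta<0$ where the $\sigma_j$ are simple.
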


\begin{proof}
It follows from \eqref{sol.M2} and \eqref{Bsolve} that
\begin{eqnarray*}
\begin{aligned}
\hat{B}(t,k)-\hat{\bar{B}}(t,k)= &
               \sum_{j=1}^{4}c_{j}(ik) \exp\{\lambda_{j}(ik)t\}-\exp\left(-\dfrac{c^{2} m_{i}\nu_{i}m_{e}\nu_{e}}{4\pi e^{2}(m_{i}\nu_{i}+m_{e}\nu_{e})}|k|^{2}t\right)\hat{B}_{0}(k)\\
               =&(c_{1}(ik)-\hat{B}_{0}){\exp\{\lambda_{1}(ik)t\}}
               +\hat{B}_{0}\left(\exp\{\lambda_{1}(ik)t\}-\exp\left\{-\dfrac{c^{2} m_{i}\nu_{i}m_{e}\nu_{e}}{4\pi e^{2}(m_{i}\nu_{i}+m_{e}\nu_{e})}|k|^{2}t\right\}\right)\\
               &+\sum_{j=2}^{4}\exp\{\lambda_{j}(ik)t\}c_{j}(ik)\\
               :=&\hat{R}_{21}(ik)+\hat{R}_{22}(ik)+\hat{R}_{23}(ik).
\end{aligned}
\end{eqnarray*}
Using \eqref{roots1} and \eqref{c1}, one has
\begin{eqnarray*}
|\hat{R}_{21}(ik)|&\leq& C|k|
{\exp({-\lambda|k|^2t})}|\hat{U}_{0,\perp}|,\\
|\hat{R}_{22}(ik)|&\leq&
C|k|^{2}{\exp({-\lambda|k|^{2}t})}|\hat{B}_{0}|,\\
|\hat{R}_{23}(ik)|&\leq& C{\exp({-\lambda t})}|\hat{U}_{0,\perp}|.
\end{eqnarray*}
This proves $\eqref{errorB1}$ and then completes the proof of Lemma \ref{errorB}.
\end{proof}

Next, in order to get the error estimates for  $u_{\alpha,\perp}-\bar{u}_{\alpha,\perp}$ and $E_{\perp}-\bar{E}_{\perp}$, we write
\begin{eqnarray*}
\tilde{u}_{\alpha}=u_{\alpha,\perp}-\bar{u}_{\alpha,\perp},\ \
\tilde{E}=E_{\perp}-\bar{E}_{\perp},\ \ \tilde{B}=B-\bar{B}.
\end{eqnarray*}
Combining \eqref{electromagnetic} with \eqref{perpprofile}, then
$[\tilde{u}_{\alpha},\tilde{E}]$
satisfies
\begin{equation}\label{sta.equ}
\left\{
  \begin{aligned}
  &m_{\alpha}\partial_t \tilde{u}_{\alpha}-q_{\alpha}\tilde{E}+m_{\alpha}\nu_{\alpha}\tilde{u}_{\alpha}=-m_{\alpha}\partial_{t}\bar{u}_{\alpha,\perp},\\
  &\partial_t \tilde{E}-c\nabla\times \tilde{B}+4\pi\sum_{\alpha=i,e}q_{\alpha}\tilde{u}_{\alpha}=-\partial_{t}\bar{E}_{\perp}.\\
\end{aligned}\right.
\end{equation}

 \begin{lemma}\label{errorUE}
 There is $r_{0}>0$ such that
\begin{eqnarray}\label{errorU}
|\hat{u}_{\alpha,\perp}(t,k)-\hat{\bar{u}}_{\alpha,\perp}(t,k)|\leq
\left\{
  \begin{aligned}
&\left(C|k|^{2}{\exp({-\lambda|k|^{2}t})}+{\exp({-\lambda
t})}\right)|\hat{U}_{0,\perp}|,\ \ \ \mathrm{for}\ |k|\leq r_{0},\\
&  \begin{aligned} &C
 \exp{\left(-\lambda|k|^{-2}t\right)}\left|\hat{U}_{
0}(k)\right|\\
&\ \ \ \ \ \ \ \ \ +C\exp{\left(-\lambda |k|^2
t\right)}|k|\left|\hat{B}_{0}(k)\right|,\ \mathrm{for}\ |k|\geq
r_{0},
\end{aligned}
\end{aligned}
\right.
\end{eqnarray}
and
\begin{eqnarray}\label{errorE}
|\hat{E}_{\perp}(t,k)-\hat{\bar{E}}_{\perp}(t,k)|\leq \left\{
  \begin{aligned}
&\left(C|k|^{2}{\exp({-\lambda|k|^{2}t})}+{\exp({-\lambda
t})}\right)|\hat{U}_{0,\perp}|,\ \ \ \mathrm{for}\ |k|\leq r_{0},\\
&  \begin{aligned} &C
 \exp{\left(-\lambda|k|^{-2}t\right)}\left|\hat{U}_{
0}(k)\right|\\
&\ \ \ \ \ \ \ \ \ +C\exp{\left(-\lambda |k|^2
t\right)}|k|\left|\hat{B}_{0}(k)\right|,\ \mathrm{for}\ |k|\geq
r_{0},
\end{aligned}
\end{aligned}
\right.
\end{eqnarray}
where $C$ and $\lambda$ are positive constants.
\end{lemma}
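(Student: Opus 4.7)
The plan is to treat the low-frequency regime $|k|\leq r_0$ and the high-frequency regime $|k|\geq r_0$ separately, using different mechanisms, as hinted by the author's remark that the estimates can be derived from the bound on $B-\bar B$.

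\textbf{Low-frequency regime.} I would apply a Fourier energy argument directly to the Fourier transform of the difference system \eqref{sta.equ}, mimicking the $U_\perp$ portion of the Lyapunov construction in Theorem \ref{lyapunov}. Concretely, I would build a functional equivalent to $\sum_{\alpha=i,e}m_\alpha|\hat{\tilde u}_\alpha|^2+\frac{1}{4\pi}|\hat{\tilde E}|^2$ augmented by a small multiple of $\mathfrak{R}(\hat{\tilde E}\,|\,-ik\times\hat{\tilde B})/(1+|k|^2)^2$, exploiting that over $|k|\leq r_0$ the collision terms $m_\alpha\nu_\alpha\hat{\tilde u}_\alpha$ and the coupling $4\pi\sum_\alpha q_\alpha\hat{\tilde u}_\alpha$ together produce dissipation on all three of $\hat{\tilde u}_i,\hat{\tilde u}_e,\hat{\tilde E}$ with a rate $\lambda>0$ that is uniform in $k$ (since $|k|$ is bounded away from both $0$ and $\infty$, the regularity-loss factor $|k|^2/(1+|k|^2)^2$ is bounded below).

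The two forcings must then be controlled. Using \eqref{perpsolve} and the heat equation $\partial_t\bar B=\mu_2\Delta\bar B$, the inhomogeneities $\partial_t\bar u_{\alpha,\perp}$ and $\partial_t\bar E_\perp$ are both constant multiples of $\nabla\times\Delta\bar B$, so in Fourier they are bounded by $C|k|^3\exp(-\mu_2|k|^2 t)|\hat B_0|$. The term $c\nabla\times\tilde B$ in Fourier is $cik\times\hat{\tilde B}$, bounded by Lemma \ref{errorB} as $C|k|(|k|\exp(-\lambda|k|^2 t)+\exp(-\lambda t))|\hat U_{0,\perp}|$. The initial data $\tilde u_\alpha(0)$ and $\tilde E(0)$ are bounded by $|\hat U_{0,\perp}|$, since $\bar u_{\alpha 0,\perp}$ and $\bar E_{0,\perp}$ are defined through $\nabla\times B_0$ which is itself at most $|k||\hat B_0|\leq r_0|\hat B_0|$. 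Duhamel-integrating the forced Lyapunov inequality and using that the convolution of $e^{-\lambda(t-s)}$ with $|k|^3 e^{-\mu_2|k|^2 s}$ produces a contribution of order $|k|^2 \exp(-\lambda|k|^2 t)$ (after shrinking $\lambda$ if necessary) then yields the stated bounds \eqref{errorU} and \eqref{errorE} over $|k|\leq r_0$.

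\textbf{High-frequency regime.} Here direct energy estimates on \eqref{sta.equ} are inconvenient because the source $c\nabla\times\tilde B$ carries a factor $|k|$ that the regularity-loss dissipation $|k|^2/(1+|k|^2)^2\sim |k|^{-2}$ cannot absorb efficiently. Instead I would use the triangle inequality
\begin{equation*}
|\hat u_{\alpha,\perp}-\hat{\bar u}_{\alpha,\perp}|\leq |\hat u_{\alpha,\perp}|+|\hat{\bar u}_{\alpha,\perp}|,
\end{equation*}
controlling the first term by the pointwise bound \eqref{s4.je}, which gives $C\exp(-\lambda|k|^{-2}t)|\hat U_0(k)|$, and the second term by the explicit formulas \eqref{perpsolve}, \eqref{Bsolve}, which give $|\hat{\bar u}_{\alpha,\perp}|\leq C|k|\exp(-\mu_2|k|^2 t)|\hat B_0|$. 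The very same argument, with $\hat E_\perp$ in place of $\hat u_{\alpha,\perp}$ and the corresponding identity from \eqref{perpsolve}, produces the matching bound in \eqref{errorE}.

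\textbf{Main obstacle.} The delicate step is the low-frequency energy estimate, where one must simultaneously extract dissipation on $\hat{\tilde E}$ (not directly damped by \eqref{sta.equ}) and verify that the crucial factor $|k|^2$ in the final bound, rather than a bare $|k|$, genuinely comes out of Duhamel. This quantitative gain rests on the fact that $\bar B$ solves a \emph{pure} heat equation so that $\partial_t\bar B$ carries two extra powers of $|k|$ in Fourier; any looser description of the profile would degrade the rate from $|k|^2\exp(-\lambda|k|^2 t)$ down to $|k|\exp(-\lambda|k|^2 t)$, which would not suffice for the $L^2$ convergence rate $(1+t)^{-7/4}$ claimed in Theorem \ref{thm.main}.
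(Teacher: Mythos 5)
Your high-frequency argument matches the paper exactly (triangle inequality, \eqref{s4.j}/\eqref{s4.je}, and the explicit formulas \eqref{perpsolve}, \eqref{Bsolve}), and your low-frequency strategy — a Fourier energy estimate on the difference system with $\tilde B$, $\partial_t\bar u_{\alpha,\perp}$, $\partial_t\bar E_\perp$ treated as sources, followed by Duhamel — is also what the paper does, including the key quantitative point that the heat-equation form of $\bar B$ yields $|k|^3$ in the Fourier transform of the forcing and hence $|k|^2$ in the final bound. However, two details in your low-frequency Lyapunov functional are wrong and would prevent the argument from closing.

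First, the cross term. You propose to augment $\sum_\alpha m_\alpha|\hat{\tilde u}_\alpha|^2+\frac{1}{4\pi}|\hat{\tilde E}|^2$ by a small multiple of $\mathfrak{R}(\hat{\tilde E}\,|\,-ik\times\hat{\tilde B})/(1+|k|^2)^2$. This is the $E$-$B$ cross term from the full-system Lyapunov functional (used there to extract dissipation on $B$), but $\tilde B$ is not an unknown of the difference system \eqref{sta.equ} — it is a known source already controlled by Lemma \ref{errorB} — and this cross term produces dissipation on $|k\times\hat{\tilde B}|^2$, not on $|\hat{\tilde E}|^2$. The term that is actually needed to make $|\hat{\tilde E}|^2$ appear on the left-hand side is the $u$-$E$ cross term $\mathfrak{R}(m_\alpha\hat{\tilde u}_\alpha\,|\,-q_\alpha\hat{\tilde E})$: dotting the $\hat{\tilde u}_\alpha$-equation with $-q_\alpha\hat{\tilde E}$ and replacing $\partial_t\hat{\tilde E}$ from the second equation of \eqref{sta.equF} yields $\sum_\alpha q_\alpha^2|\hat{\tilde E}|^2$ plus controllable remainders, which is exactly what the paper uses. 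Without this choice the Lyapunov inequality $\partial_t\mathcal E+\lambda\mathcal E\leq(\text{forcing})$ does not hold, since the basic estimate only damps $\hat{\tilde u}_\alpha$.

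Second, your stated reason for a $k$-uniform dissipation rate — "since $|k|$ is bounded away from both $0$ and $\infty$, the regularity-loss factor $|k|^2/(1+|k|^2)^2$ is bounded below" — is incorrect: in the low-frequency regime $|k|\leq r_0$, $|k|$ can tend to $0$ and that factor degenerates. The reason the rate is genuinely uniform is different and more structural: the $[\tilde u_\alpha,\tilde E]$ subsystem, viewed as an ODE system in $t$ with $\tilde B$ as source, is damped purely through the collisional friction $m_\alpha\nu_\alpha\hat{\tilde u}_\alpha$ and the charge coupling $\sum_\alpha q_\alpha^2|\hat{\tilde E}|^2$ (once the $u$-$E$ cross term is in place); none of this involves any factor of $|k|$, so no regularity-loss mechanism is invoked at all and $\lambda$ is independent of $k$. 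With the correct cross term and this corrected rationale, the rest of your Duhamel computation goes through and produces the claimed $|k|^2\exp(-\lambda|k|^2 t)$.
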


\begin{proof}
It is straightforward to obtain the error estimates for
$|k|\geq r_{0}$ due to
 \eqref{s4.j}, \eqref{perpsolve} and \eqref{Bsolve}. In the case $|k|\leq r_{0}$, the desired result can follow from  the
Fourier energy estimate on the system $\eqref{sta.equ}$. Indeed, after
taking the Fourier transform in $x$, $\eqref{sta.equ}$
gives
\begin{equation}\label{sta.equF}
\left\{
  \begin{aligned}
  &m_{\alpha}\partial_t \hat{\tilde{u}}_{\alpha}-q_{\alpha}\hat{\tilde{E}}+m_{\alpha}\nu_{\alpha}\hat{\tilde{u}}_{\alpha}=
  -m_{\alpha}\partial_{t}\hat{\bar{u}}_{\alpha,\perp},\\
  &\partial_t \hat{\tilde{E}}-cik\times \hat{\tilde{B}}+4\pi\sum_{\alpha=i,e}q_{\alpha}\hat{\tilde{u}}_{\alpha}=-\partial_{t}\hat{\bar{E}}_{\perp}.\\
\end{aligned}\right.
\end{equation}
By taking the complex dot product of the first equation of
 $\eqref{sta.equF}$ with $\hat{\tilde{u}}_{\alpha}$, taking the complex dot product of the second equation of
 $\eqref{sta.equF}$ with $\hat{\tilde{E}}$, and taking the real part, one has
\begin{eqnarray*}
\begin{aligned}
   &\frac{1}{2}\frac{d}{dt}\sum_{\alpha=i,e}m_{\alpha}|\hat{\tilde{u}}_{\alpha}|^{2}+\frac{1}{4\pi}\frac{1}{2}\frac{d}{dt}|\hat{\tilde{E}}|^{2}
   +\sum_{\alpha=i,e}m_{\alpha}\nu_{\alpha}|\hat{\tilde{u}}_{\alpha}|^{2}\\
   =&-\sum_{\alpha=i,e}m_{\alpha}\mathfrak{R}(\partial_{t}\hat{\bar{u}}_{\alpha,\perp}|\hat{\tilde{u}}_{\alpha})
   -\frac{1}{4\pi}\mathfrak{R}\left(\partial_{t}\hat{\bar{E}}_{\perp}|\hat{\tilde{E}}\right)+\frac{1}{4\pi}\mathfrak{R}(cik\times \hat{\tilde{B}}|\hat{\tilde{E}}),
   \end{aligned}
\end{eqnarray*}
which by using the Cauchy-Schwarz inequality with $0<\epsilon<1$,
implies
\begin{eqnarray}\label{Bstep1}
\begin{aligned}
   &\frac{1}{2}\frac{d}{dt}\left(\sum_{\alpha=i,e}m_{\alpha}|\hat{\tilde{u}}_{\alpha}|^{2}+\frac{1}{4\pi}|\hat{\tilde{E}}|^{2}\right)
   +\sum_{\alpha=i,e}m_{\alpha}\nu_{\alpha}|\hat{\tilde{u}}_{\alpha}|^{2}\\
   \leq & \epsilon \sum_{\alpha=i,e}|\hat{\tilde{u}}_{\alpha}|^{2}+\epsilon|\hat{\tilde{E}}|^{2} +C_{\epsilon}\sum_{\alpha=i,e}|\partial_{t}\hat{\bar{u}}_{\alpha,\perp}|^{2}+ C_{\epsilon}|ik\times
   \hat{\tilde{B}}|^{2}+C_{\epsilon}|\partial_{t}\hat{\bar{E}}_{\perp}|^{2}.
   \end{aligned}
\end{eqnarray}
By taking the complex dot product of the first equation of
 $\eqref{sta.equF}$ with $-q_{\alpha}\hat{\tilde{E}}$, replacing
 $\partial_{t}\hat{\tilde{E}}$ by the second equation of
 $\eqref{sta.equF}$ and taking the real part, one has
\begin{eqnarray*}
\begin{aligned}
   &\partial_{t}\sum_{\alpha=i,e}\mathfrak{R}(m_{\alpha}\hat{\tilde{u}}_{\alpha}|-q_{\alpha}\hat{\tilde{E}})+
   \sum_{\alpha=i,e}q_{\alpha}^2\left|\hat{\tilde{E}}\right|^{2}\\
   =&-\sum_{\alpha=i,e}q_{\alpha}\mathfrak{R}(
   m_{\alpha}\hat{\tilde{u}}_{\alpha}|cik\times \hat{\tilde{B}})
   +4\pi\sum_{\alpha=i,e}q_{\alpha}\mathfrak{R}\left(m_{\alpha}\hat{\tilde{u}}_{\alpha}|\sum_{\alpha=i,e}q_{\alpha}\hat{\tilde{u}}_{\alpha}\right)\\
   &+\sum_{\alpha=i,e}q_{\alpha}\mathfrak{R}\left(m_{\alpha}\hat{\tilde{u}}_{\alpha}|\partial_{t}\hat{\bar{E}}_{\perp}\right)
   +\sum_{\alpha=i,e}\mathfrak{R}(m_{\alpha}\nu_{\alpha}\hat{\tilde{u}}_{\alpha}|q_{\alpha}\hat{\tilde{E}})
   +\sum_{\alpha=i,e}\mathfrak{R}(m_{\alpha}\partial_{t}\hat{\bar{u}}_{\alpha,\perp}|q_{\alpha}\hat{\tilde{E}}),
   \end{aligned}
\end{eqnarray*}
which by using the Cauchy-Schwarz inequality with $0<\eps<1$, implies
\begin{eqnarray}\label{Bstep2}
\begin{aligned}
   &\partial_{t}\sum_{\alpha=i,e}\mathfrak{R}(m_{\alpha}\hat{\tilde{u}}_{\alpha}|-q_{\alpha}\hat{\tilde{E}})+
   \sum_{\alpha=i,e}q_{\alpha}^2\left|\hat{\tilde{E}}\right|^{2}\\
  \leq & C_\epsilon \sum_{\alpha=i,e}|\hat{\tilde{u}}_{\alpha}|^{2}+\epsilon|\hat{\tilde{E}}|^{2} +C_{\epsilon}\sum_{\alpha=i,e}|\partial_{t}\hat{\bar{u}}_{\alpha,\perp}|^{2}+ C_{\epsilon}|ik\times
   \hat{\tilde{B}}|^{2}+C_{\epsilon}|\partial_{t}\hat{\bar{E}}_{\perp}|^{2},
\end{aligned}
\end{eqnarray}
for $0<\eps<1$. We now define
 \begin{equation*}
   \mathcal{E}(t)=\sum_{\alpha=i,e}m_{\alpha}|\hat{\tilde{u}}_{\alpha}|^{2}+\frac{1}{4\pi}|\hat{\tilde{E}}|^{2}+
   \kappa \sum_{\alpha=i,e}\mathfrak{R}(m_{\alpha}\hat{\tilde{u}}_{\alpha}|-q_{\alpha}\hat{\tilde{E}}),
\end{equation*}
for a constant  $0<\kappa \ll 1$ to be determined. Notice that as long
as $ 0<\kappa\ll 1$ is small enough, then
\begin{equation}
\label{du.p02}
\mathcal{E}(t)\sim
\sum\limits_{\alpha=i,e}|\hat{\tilde{u}}_{\alpha}|^{2}+|\hat{\tilde{E}}|^{2}
\end{equation}
holds true.  On the other hand, the sum of $\eqref{Bstep1}$ and $\eqref{Bstep2}\times
\kappa$ gives
\begin{multline}\label{DJF11}
\partial_{t}\mathcal{E}(t)+\lambda\left(\sum\limits_{\alpha=i,e}|\hat{\tilde{u}}_{\alpha}|^{2}+|\hat{\tilde{E}}|^{2}\right)
   \leq C\sum_{\alpha=i,e}|\partial_{t}\hat{\bar{u}}_{\alpha,\perp}|^{2}+ C|ik\times
   \hat{\tilde{B}}|^{2}+C|\partial_{t}\hat{\bar{E}}_{\perp}|^{2}\\
   \leq   C|k|^{6}{\exp\{-2\lambda
   |k|^{2}t\}}\left|\hat{\bar{B}}_{0}\right|^{2}+C|k|^{2}\left(C|k|^{2}{\exp\{-2\lambda|k|^{2}t\}}+{\exp\{-2\lambda
t\}}\right)|\hat{U}_{0,\perp}|^{2}\\
\leq
C|k|^{4}{\exp\{-2\lambda|k|^{2}t\}}|\hat{U}_{0,\perp}|^{2}+C|k|^{2}{\exp\{-2\lambda
t\}}|\hat{U}_{0,\perp}|^{2},
\end{multline}
for $|k|\leq r_{0}$, where we have used the expressions of
$\bar{u}_{\alpha,\perp}$, $ \bar{E}_{\alpha,\perp}$ in
\eqref{perpsolve}, the expression of $\hat{\bar{B}}$ in
$\eqref{Bsolve}$ and Lemma \ref{errorB}. Multiplying \eqref{DJF11}
by $\exp({\lambda t})$ and integrating the resulting inequality over
$(0,t)$ yield that
\begin{multline}\label{conclule}
\mathcal{E}(t)\leq {\exp(-{\lambda
t})}\left(\sum\limits_{\alpha=i,e}|\hat{\tilde{u}}_{\alpha0}|^{2}+|\hat{\tilde{E}}_{0}|^{2}\right)\\
+C{\exp({-\lambda t})}\int_{0}^{t}{\exp({\lambda
s})}\left(|k|^{4}{\exp({-2\lambda|k|^{2}s})}|\hat{U}_{0,\perp}|^{2}+C|k|^{2}{\exp({-2\lambda
s})}|\hat{U}_{0,\perp}|^{2}\right)ds\\
\leq C{\exp({-\lambda
t})}|\hat{U}_{0,\perp}|^{2}+C|k|^{4}{\exp({-2\lambda|k|^{2}t})}|\hat{U}_{0,\perp}|^{2}.
\end{multline}
Therefore, \eqref{errorU} and $\eqref{errorE}$ follows from
\eqref{conclule} by noticing \eqref{du.p02}.
This then completes the proof of Lemma \ref{errorUE}.
\end{proof}

From Lemma \ref{errorUE} together with \cite[Theorem 4.3]{Duan}, one has

\begin{theorem}\label{thm.decayperp}
Let $1\leq p,r\leq 2\leq q\leq \infty$, $ \ell \geq 0$, and let $m\geq
1$ be an integer. Suppose that $U_{\perp}=[u_{\alpha,\perp},E_{\perp},B]$ is the
solution to the Cauchy problem
\eqref{electromagnetic}-\eqref{electromagnetic2I}. Then
one has the following
time-decay property:
\begin{multline*}
\|\nabla
^{m}(u_{\alpha,\perp}(t)-\bar{u}_{\alpha,\perp}(t))\|_{L^q}\leq
C(1+t)^{-\frac{3}{2}(\frac{1}{p}-\frac{1}{q})-\frac{m+2}{2}}\|U_{0}\|_{L^{p}}+C{\exp({-\lambda
t})}\|U_{0}\|_{L^{p}}\\
+C (1+t)^{-\frac{\ell}{2}}
  \|\nabla^{m+[\ell+3(\frac{1}{r}-\frac{1}{q})]_+}U_{0}\|_{L^r}+C{\exp({-\lambda
t})}\|\nabla^{m+1+[3(\frac{1}{r}-\frac{1}{q})]_+}B_{0}\|_{L^{r}},
\end{multline*}
\begin{multline*}
\|\nabla ^{m}(E_{\perp}(t)-\bar{E}_{\perp}(t))\|_{L^q}\leq
C(1+t)^{-\frac{3}{2}(\frac{1}{p}-\frac{1}{q})-\frac{m+2}{2}}\|U_{0}\|_{L^{p}}+C{\exp({-\lambda
t})}\|U_{0}\|_{L^{p}}\\
+C (1+t)^{-\frac{\ell}{2}}
  \|\nabla^{m+[\ell+3(\frac{1}{r}-\frac{1}{q})]_+}U_{0}\|_{L^r}+C{\exp({-\lambda
t})}\|\nabla^{m+1+[3(\frac{1}{r}-\frac{1}{q})]_+}B_{0}\|_{L^{r}},
\end{multline*}
and
\begin{multline*}
\|\nabla ^{m}(B(t)-\bar{B}(t))\|_{L^q}\leq
C(1+t)^{-\frac{3}{2}(\frac{1}{p}-\frac{1}{q})-\frac{m+1}{2}}\|U_{0}\|_{L^{p}}+C{\exp({-\lambda
t})}\|U_{0}\|_{L^{p}}\\
+C (1+t)^{-\frac{\ell}{2}}
  \|\nabla^{m+[\ell+3(\frac{1}{r}-\frac{1}{q})]_+}U_{0}\|_{L^r}+C{\exp({-\lambda
t})}\|\nabla^{m+[3(\frac{1}{r}-\frac{1}{q})]_+}B_{0}\|_{L^{r}},
\end{multline*}
for any $t\geq 0$, where  $C=C(m,p,r,q,\ell)$ and
$[\ell+3(\tfrac{1}{r}-\tfrac{1}{q})]_+$ is defined in
\eqref{thm.decay.1}.
\end{theorem}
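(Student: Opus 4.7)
The plan is to prove Theorem \ref{thm.decayperp} by combining the pointwise Fourier-space estimates on the error terms---provided by Lemma \ref{errorUE} for $u_{\alpha,\perp}-\bar{u}_{\alpha,\perp}$ and $E_{\perp}-\bar{E}_{\perp}$, by Lemma \ref{errorB} together with \eqref{s4.j} and \eqref{Bsolve} for $B-\bar{B}$---with the general $L^p$-$L^q$ decay mechanism for regularity-loss systems developed in \cite[Theorem 4.3]{Duan}. This is the same structural strategy already used to deduce Theorem \ref{thm.decaypar} from Lemmas \ref{errorL} and \ref{errorL2}, so the overall scheme is standard; only the exponents need to be tracked carefully.

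First I would split the Fourier variable $k$ into the low-frequency region $|k|\leq r_{0}$ and the high-frequency region $|k|\geq r_{0}$, and estimate each piece separately. In the low-frequency regime I substitute the bounds $|k|^{2}\exp(-\lambda|k|^{2}t)|\hat{U}_{0,\perp}|$ (for $u_{\alpha,\perp}$ and $E_{\perp}$) and $|k|\exp(-\lambda|k|^{2}t)|\hat{U}_{0,\perp}|$ (for $B$) coming from Lemmas \ref{errorUE} and \ref{errorB} into Hausdorff--Young. The resulting integrals against Gaussian weights reduce to standard $L^{p}$-$L^{q}$ heat-type estimates that produce the factor $(1+t)^{-\frac{3}{2}(\frac{1}{p}-\frac{1}{q})-\frac{m+a}{2}}\|U_{0}\|_{L^{p}}$ with $a=1$ for $B$ and $a=2$ for $u_{\alpha,\perp}$, $E_{\perp}$, accounting for the leading rate. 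The secondary low-frequency contributions carrying a factor $\exp(-\lambda t)$ produce purely exponential decay and fit directly into the claimed exponential terms.

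Second, in the high-frequency regime the error bound contains the regularity-loss factor $\exp(-\lambda|k|^{-2}t)|\hat{U}_{0}(k)|$ together with the cleaner term $\exp(-\lambda t)|k|\,|\hat{B}_{0}(k)|$ (or the analog for $\bar{B}$ using \eqref{Bsolve}). Exploiting the elementary bound $\sup_{|k|\geq r_{0}}|k|^{-2\ell}\exp(-\lambda|k|^{-2}t)\leq C(1+t)^{-\ell}$ and paying $|k|^{-2\ell}$ by extra derivatives on the initial datum, together with Hausdorff--Young which is responsible for the $3(\frac{1}{r}-\frac{1}{q})$ piece in the bracket, yields the $(1+t)^{-\ell/2}\|\nabla^{m+[\ell+3(\frac{1}{r}-\frac{1}{q})]_{+}}U_{0}\|_{L^{r}}$ contribution. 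The $\exp(-\lambda t)|k|\,|\hat{B}_{0}|$ piece is treated in the same manner after freezing $|k|\geq r_{0}$ and gives the final exponential term with the appropriate derivative count on $B_{0}$.

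The only real bookkeeping obstacle is to reconcile the slight asymmetry among the three assertions: $u_{\alpha,\perp}$ and $E_{\perp}$ decay at rate $\frac{m+2}{2}$ with loss $m+1+[\,\cdot\,]_{+}$ on $B_{0}$, whereas $B$ decays at the slower rate $\frac{m+1}{2}$ but only loses $m+[\,\cdot\,]_{+}$ derivatives of $B_{0}$. This asymmetry mirrors exactly the extra factor of $|k|$ appearing in the $u$-, $E$-estimates of Lemma \ref{errorUE} when compared with the $B$-estimate of Lemma \ref{errorB}, both in the low-frequency region (through the Gaussian exponent) and in the high-frequency region (through the $|k|\,|\hat{B}_{0}|$ prefactor). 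Once this counting is pinned down, invoking \cite[Theorem 4.3]{Duan} directly yields the three claimed decay inequalities without any further nonlinear input, since the present theorem lives entirely at the linearized level.
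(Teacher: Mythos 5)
Your proposal is correct and follows the same route the paper uses: the statement is obtained by feeding the pointwise Fourier estimates of Lemma \ref{errorUE} (and, for $B$, Lemma \ref{errorB} combined with \eqref{s4.j} and \eqref{Bsolve} over high frequencies) into the $L^p$--$L^q$ machinery of \cite[Theorem 4.3]{Duan}, exactly as was done for Theorem \ref{thm.decaypar}. The exponent bookkeeping you describe---$|k|^{2}\exp(-\lambda|k|^{2}t)$ versus $|k|\exp(-\lambda|k|^{2}t)$ at low frequency producing the $(m+2)/2$ versus $(m+1)/2$ rates, and the extra $|k|$ in front of $|\hat B_{0}|$ at high frequency producing the extra derivative on $B_{0}$---matches the source lemmas and the stated conclusions.
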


We now define the expected time-asymptotic profile of
$[\rho_{\alpha},u_{\alpha},E,B]$ to be
$[\bar{\rho},\bar{u}_{\alpha},\bar{E},\bar{B}]$, where $\bar{\rho}$ and $\bar{B}$ are diffusion waves, and
$[\bar{u}_{\alpha},\bar{E}]$ is given by
\begin{eqnarray*}
\bar{u}_{\alpha}=\bar{u}_{\parallel}+\bar{u}_{\alpha,\perp},\ \ \ \
\ \bar{E}=\bar{E}_{\parallel}+\bar{E}_{\perp}.
\end{eqnarray*}
Combining Theorem $\ref{thm.decaypar}$ with Theorem
$\ref{thm.decayperp}$, one has

\begin{corollary}\label{corollary.decay}
Let $1\leq p,r\leq 2\leq q\leq \infty$, $ \ell \geq 0$, and let $m\geq
1$ be an integer. Suppose that $U(t)=e^{tL}U_{0}$ is the solution to the
Cauchy problem $\eqref{Linear}$-$\eqref{LNC}$ with initial data
$U_{0}=[\rho_{\alpha0},u_{\alpha0},E_{0},B_{0}] $ satisfying
\eqref{LNC}. Then $U=[\rho_{\alpha},u_{\alpha},E,B]$ satisfies the
following time-decay property:
\begin{multline*}
\|\nabla ^{m}(\rho_{\alpha}(t)-\bar{\rho}(t))\|_{L^q}\leq
C(1+t)^{-\frac{3}{2}(\frac{1}{p}-\frac{1}{q})-\frac{m+1}{2}}\|U_{0}\|_{L^{p}}+C{\exp({-\lambda
t})}\|U_{0}\|_{L^{p}}\\
+C (1+t)^{-\frac{\ell}{2}}
  \|\nabla^{m+[\ell+3(\frac{1}{r}-\frac{1}{q})]_+}U_{0}\|_{L^r}+C{\exp({-\lambda
t})}\|\nabla^{m+[3(\frac{1}{r}-\frac{1}{q})]_+}[\rho_{i0},\rho_{e0}]\|_{L^{r}},
\end{multline*}
\begin{multline*}
\|\nabla ^{m}(u_{\alpha}(t)-\bar{u}_{\alpha}(t))\|_{L^q}\leq
C(1+t)^{-\frac{3}{2}(\frac{1}{p}-\frac{1}{q})-\frac{m+2}{2}}\|U_{0}\|_{L^{p}}+C{\exp({-\lambda
t})}\|U_{0}\|_{L^{p}}\\
+C (1+t)^{-\frac{\ell}{2}}
  \|\nabla^{m+[\ell+3(\frac{1}{r}-\frac{1}{q})]_+}U_{0}\|_{L^r}+C{\exp({-\lambda
t})}\|\nabla^{m+1+[3(\frac{1}{r}-\frac{1}{q})]_+}[\rho_{i0},\rho_{e0},B_{0}]\|_{L^{r}},
\end{multline*}
\begin{multline*}
\|\nabla ^{m}(E(t)-\bar{E}(t))\|_{L^q}\leq
C(1+t)^{-\frac{3}{2}(\frac{1}{p}-\frac{1}{q})-\frac{m+2}{2}}\|U_{0}\|_{L^{p}}+C{\exp({-\lambda
t})}\|U_{0}\|_{L^{p}}\\
+C (1+t)^{-\frac{\ell}{2}}
  \|\nabla^{m+[\ell+3(\frac{1}{r}-\frac{1}{q})]_+}U_{0}\|_{L^r}+C{\exp({-\lambda
t})}\|\nabla^{m+1+[3(\frac{1}{r}-\frac{1}{q})]_+}[\rho_{i0},\rho_{e0},B_{0}]\|_{L^{r}},
\end{multline*}
and
\begin{multline*}
\|\nabla ^{m}(B(t)-\bar{B}(t))\|_{L^q}\leq
C(1+t)^{-\frac{3}{2}(\frac{1}{p}-\frac{1}{q})-\frac{m+1}{2}}\|U_{0}\|_{L^{p}}+C{\exp({-\lambda
t})}\|U_{0}\|_{L^{p}}\\
+C (1+t)^{-\frac{\ell}{2}}
  \|\nabla^{m+[\ell+3(\frac{1}{r}-\frac{1}{q})]_+}U_{0}\|_{L^r}+C{\exp({-\lambda
t})}\|\nabla^{m+[3(\frac{1}{r}-\frac{1}{q})]_+}B_{0}\|_{L^{r}},
\end{multline*}
for any $t\geq 0$, where  $C=C(m,p,r,q,\ell)$ and
$[\ell+3(\tfrac{1}{r}-\tfrac{1}{q})]_+$ is defined in
\eqref{thm.decay.1}.
\end{corollary}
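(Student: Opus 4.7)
The plan is to obtain Corollary \ref{corollary.decay} as an immediate consequence of Theorems \ref{thm.decaypar} and \ref{thm.decayperp} together with the orthogonal decomposition of the solution set up in Section \ref{sec4}. First, I would recall that any solution $U=[\rho_\alpha,u_\alpha,E,B]$ of the linearized system splits cleanly as $U=U_\parallel+U_\perp$ with
\[
U_\parallel=[\rho_\alpha,u_{\alpha,\parallel},E_\parallel,0],\qquad U_\perp=[0,u_{\alpha,\perp},E_\perp,B],
\]
and that the asymptotic profile splits in the same way: $\bar{u}_\alpha=\bar{u}_\parallel+\bar{u}_{\alpha,\perp}$, $\bar{E}=\bar{E}_\parallel+\bar{E}_\perp$, while $\bar{\rho}$ lives entirely in the parallel part and $\bar{B}$ entirely in the perpendicular part. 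So the four estimates claimed by the corollary correspond to four slightly different bookkeeping tasks against these two theorems.

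Concretely, for $\rho_\alpha-\bar{\rho}$ the quantity is already a purely parallel object, so I would simply invoke Theorem \ref{thm.decaypar} and read off the bound verbatim, noting that the parallel initial data $U_{\parallel 0}$ appearing in that theorem is dominated by the full data $U_0$ in every norm of interest. For $B-\bar{B}$ the quantity is purely perpendicular and Theorem \ref{thm.decayperp} gives the result directly. For $u_\alpha-\bar{u}_\alpha$ and $E-\bar{E}$ I would use the triangle inequality
\[
\|u_\alpha-\bar{u}_\alpha\|\leq \|u_{\alpha,\parallel}-\bar{u}_\parallel\|+\|u_{\alpha,\perp}-\bar{u}_{\alpha,\perp}\|,\qquad \|E-\bar{E}\|\leq \|E_\parallel-\bar{E}_\parallel\|+\|E_\perp-\bar{E}_\perp\|,
\]
applied to each $\|\nabla^m\cdot\|_{L^q}$, and observe that the decay exponents supplied by the two theorems match (both yield the factor $(m+2)/2$ for the velocity/electric-field differences). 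Adding up the error terms produced on the right-hand sides gives exactly the combined remainder $\|\nabla^{m+1+[\cdots]_+}[\rho_{i0},\rho_{e0},B_0]\|_{L^r}$ advertised in the statement.

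The only point requiring any care is that the norms controlling the initial data in Theorems \ref{thm.decaypar}--\ref{thm.decayperp} are phrased in terms of the full datum $U_0$ rather than its projections, which is licit because the parallel/perpendicular splittings are given by zeroth-order Fourier multipliers and in particular the Fourier magnitudes satisfy $|\hat{U}_{\parallel 0}(k)|,|\hat{U}_{\perp 0}(k)|\leq|\hat{U}_0(k)|$ pointwise in $k$. Together with Hausdorff--Young for the low-frequency/$L^1$ inputs and Plancherel for $L^2$, this pointwise majorization is exactly what the proofs of those theorems already use, so no additional work is required here. Thus there is no genuine obstacle in this step; the whole exercise is to collect the four separate bounds and verify that the slowest remainder on the right is the one written in the corollary.
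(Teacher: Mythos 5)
Your argument is exactly the paper's: the corollary is stated immediately after the sentence ``Combining Theorem \ref{thm.decaypar} with Theorem \ref{thm.decayperp}, one has,'' and the decomposition $U=U_\parallel+U_\perp$ with $\bar{u}_\alpha=\bar{u}_\parallel+\bar{u}_{\alpha,\perp}$, $\bar{E}=\bar{E}_\parallel+\bar{E}_\perp$ followed by the triangle inequality is precisely what ``combining'' means here. Your remark that the pointwise Fourier bounds $|\hat U_{\parallel 0}(k)|,|\hat U_{\perp 0}(k)|\le|\hat U_0(k)|$ justify writing the right-hand sides in terms of $U_0$ rather than its projections is the correct observation and consistent with how the underlying $L^p$--$L^q$ estimates are proved, so the proposal is complete and matches the paper.
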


\begin{corollary}\label{corollary.decayL}
Under the same assumptions of Corollary \ref{corollary.decay}, it holds that
\begin{multline*}
\|\nabla ^{m}\rho_{\alpha}(t)\|_{L^q}\leq
C(1+t)^{-\frac{3}{2}(\frac{1}{p}-\frac{1}{q})-\frac{m+1}{2}}\|U_{0}\|_{L^{p}}+C{\exp({-\lambda
t})}\|U_{0}\|_{L^{p}}\\
+C (1+t)^{-\frac{\ell}{2}}
  \|\nabla^{m+[\ell+3(\frac{1}{r}-\frac{1}{q})]_+}U_{0}\|_{L^r}+C{\exp({-\lambda
t})}\|\nabla^{m+[3(\frac{1}{r}-\frac{1}{q})]_+}[\rho_{i0},\rho_{e0}]\|_{L^{r}}\\
+C(1+t)^{-\frac{3}{2}(\frac{1}{p}-\frac{1}{q})-\frac{m}{2}}\|[\rho_{i0},\rho_{e0}]\|_{L^{p}},
\end{multline*}
\begin{multline*}
\|\nabla ^{m}u_{\alpha}(t)\|_{L^q}\leq
C(1+t)^{-\frac{3}{2}(\frac{1}{p}-\frac{1}{q})-\frac{m+2}{2}}\|U_{0}\|_{L^{p}}+C{\exp({-\lambda
t})}\|U_{0}\|_{L^{p}}\\
+C (1+t)^{-\frac{\ell}{2}}
  \|\nabla^{m+[\ell+3(\frac{1}{r}-\frac{1}{q})]_+}U_{0}\|_{L^r}+C{\exp({-\lambda
t})}\|\nabla^{m+1+[3(\frac{1}{r}-\frac{1}{q})]_+}[\rho_{i0},\rho_{e0},B_{0}]\|_{L^{r}}\\
+C(1+t)^{-\frac{3}{2}(\frac{1}{p}-\frac{1}{q})-\frac{m+1}{2}}\|[\rho_{i0},\rho_{e0},B_{0}]\|_{L^{p}},
\end{multline*}
\begin{multline*}
\|\nabla^{m}E(t)\|_{L^q}\leq
C(1+t)^{-\frac{3}{2}(\frac{1}{p}-\frac{1}{q})-\frac{m+2}{2}}\|U_{0}\|_{L^{p}}+C{\exp({-\lambda
t})}\|U_{0}\|_{L^{p}}\\
+C (1+t)^{-\frac{\ell}{2}}
  \|\nabla^{m+[\ell+3(\frac{1}{r}-\frac{1}{q})]_+}U_{0}\|_{L^r}+C{\exp({-\lambda
t})}\|\nabla^{m+1+[3(\frac{1}{r}-\frac{1}{q})]_+}[\rho_{i0},\rho_{e0},B_{0}]\|_{L^{r}}\\
+C(1+t)^{-\frac{3}{2}(\frac{1}{p}-\frac{1}{q})-\frac{m+1}{2}}\|[\rho_{i0},\rho_{e0},B_{0}]\|_{L^{p}},
\end{multline*}
and
\begin{multline*}
\|\nabla ^{m}B(t)\|_{L^q}\leq
C(1+t)^{-\frac{3}{2}(\frac{1}{p}-\frac{1}{q})-\frac{m+1}{2}}\|U_{0}\|_{L^{p}}+C{\exp({-\lambda
t})}\|U_{0}\|_{L^{p}}\\
+C (1+t)^{-\frac{\ell}{2}}
  \|\nabla^{m+[\ell+3(\frac{1}{r}-\frac{1}{q})]_+}U_{0}\|_{L^r}+C{\exp({-\lambda
t})}\|\nabla^{m+[3(\frac{1}{r}-\frac{1}{q})]_+}B_{0}\|_{L^{r}}\\
+C(1+t)^{-\frac{3}{2}(\frac{1}{p}-\frac{1}{q})-\frac{m}{2}}\|B_{0}\|_{L^{p}},
\end{multline*}
for any $t\geq 0$, where  $C=C(m,p,r,q,\ell)$ and
$[\ell+3(\tfrac{1}{r}-\tfrac{1}{q})]_+$ is defined in
\eqref{thm.decay.1}.
\end{corollary}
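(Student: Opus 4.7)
The plan is to combine the triangle inequality with Corollary \ref{corollary.decay} (which already controls $U-\bar U$) and with standard heat kernel estimates applied to the explicit asymptotic profiles defined in \eqref{def.pp1}--\eqref{def.pp4}. Concretely, for each component $V\in\{\rho_\alpha,u_\alpha,E,B\}$ with associated profile $\bar V$, we write
\begin{equation*}
\|\nabla^{m}V(t)\|_{L^q}\leq \|\nabla^{m}(V(t)-\bar V(t))\|_{L^q}+\|\nabla^{m}\bar V(t)\|_{L^q},
\end{equation*}
and bound the first term directly by the corresponding inequality in Corollary \ref{corollary.decay}. The corollary thus reduces to giving the appropriate $L^p$-$L^q$ time-decay bounds for the profiles themselves.

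The second step is a routine heat kernel computation. Since $\bar\rho=\sum_{\alpha}\tfrac{m_\alpha\nu_\alpha}{m_i\nu_i+m_e\nu_e}\,G_{\mu_1}(t,\cdot)\ast\rho_{\alpha 0}$ with $\rho_{\alpha 0}=n_{\alpha 0}-1$, Young's inequality together with the standard bound
\begin{equation*}
\|\nabla^{m}G_\mu(t,\cdot)\|_{L^s}\leq C(1+t)^{-\frac{3}{2}(1-\frac{1}{s})-\frac{m}{2}},\quad 1\leq s\leq\infty,
\end{equation*}
yields $\|\nabla^{m}\bar\rho(t)\|_{L^q}\leq C(1+t)^{-\frac{3}{2}(\frac{1}{p}-\frac{1}{q})-\frac{m}{2}}\|[\rho_{i0},\rho_{e0}]\|_{L^p}$, which accounts for the last term in the estimate of $\rho_\alpha$. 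The same computation for $\bar B=G_{\mu_2}(t,\cdot)\ast B_0$ gives the final term in the estimate of $B$. For $\bar u_\alpha$ and $\bar E$, formulas \eqref{def.pp3}--\eqref{def.pp4} express them as linear combinations of $\nabla\bar\rho$ and $\nabla\times\bar B$, so we simply absorb one derivative into the heat kernel to obtain
\begin{equation*}
\|\nabla^{m}\bar u_\alpha(t)\|_{L^q}+\|\nabla^{m}\bar E(t)\|_{L^q}\leq C(1+t)^{-\frac{3}{2}(\frac{1}{p}-\frac{1}{q})-\frac{m+1}{2}}\|[\rho_{i0},\rho_{e0},B_0]\|_{L^p},
\end{equation*}
which provides the last term in the estimates for $u_\alpha$ and $E$.

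Adding the two contributions termwise yields exactly the four inequalities stated in the corollary. There is no genuine obstacle here: the decay exponents for the differences $V-\bar V$ supplied by Corollary \ref{corollary.decay} are already at least as fast as those of the profiles themselves (indeed strictly faster by one power of $(1+t)^{-1/2}$ for the low-frequency part), so the dominant contribution to $\|\nabla^{m}V\|_{L^q}$ comes from $\|\nabla^{m}\bar V\|_{L^q}$ and is captured by the heat kernel term. The mild point to be careful about is ensuring that the regularity-loss exponential term $e^{-\lambda t}\|\nabla^{m+1+[3(\frac{1}{r}-\frac{1}{q})]_+}[\cdots]\|_{L^r}$ inherited from Corollary \ref{corollary.decay} is retained in the final statement unchanged, which is automatic once the triangle inequality is applied.
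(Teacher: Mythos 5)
Your plan—decompose $V=(V-\bar V)+\bar V$, bound the difference by Corollary~\ref{corollary.decay}, and bound the profile by heat-semigroup $L^p$-$L^q$ estimates—is the right one and is exactly what the paper implicitly does (no separate proof is given for this corollary). However, the heat-kernel estimate you quote,
\[
\|\nabla^m G_\mu(t,\cdot)\|_{L^s}\leq C(1+t)^{-\frac{3}{2}(1-\frac{1}{s})-\frac{m}{2}},
\]
is false for small $t$ whenever $m\geq1$ or $s>1$: by scaling the correct bound is $Ct^{-\frac{3}{2}(1-\frac{1}{s})-\frac{m}{2}}$, which diverges at $t=0$. In particular the one-term conclusion you derive from it, $\|\nabla^m\bar\rho(t)\|_{L^q}\leq C(1+t)^{-\frac{3}{2}(\frac{1}{p}-\frac{1}{q})-\frac{m}{2}}\|[\rho_{i0},\rho_{e0}]\|_{L^p}$, cannot hold at $t=0$, since $\nabla^m\bar\rho_0$ is a fixed linear combination of $\nabla^m\rho_{\alpha0}$ and is not controlled by an $L^p$ norm of the data.

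The fix is standard and the conclusion survives it: use the two-term $L^p$-$L^q$ bound for the heat semigroup of the same form as in the unnamed Corollary following Theorem~\ref{lyapunov}, namely a main term $C(1+t)^{-\frac{3}{2}(\frac{1}{p}-\frac{1}{q})-\frac{m}{2}}\|f\|_{L^p}$ valid for $t\geq1$, plus a regularity-loss term $C(1+t)^{-\frac{\ell}{2}}\|\nabla^{m+[\ell+3(\frac{1}{r}-\frac{1}{q})]_+}f\|_{L^r}$ that controls $0\leq t\leq1$ via Sobolev embedding. Applying this to $\bar\rho$, $\bar B$ and, with one extra derivative, to $\bar u_\alpha$ and $\bar E$ produces the displayed last terms with the $L^p$ norms of $[\rho_{i0},\rho_{e0}]$, $B_0$, $[\rho_{i0},\rho_{e0},B_0]$ together with extra $L^r$ regularity-loss terms. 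Those extra terms are then absorbed into the terms already present in the corollary: $\rho_{\alpha0}$ and $B_0$ are components of $U_0$, so they are dominated by the $\|\nabla^{m+[\ell+3(\frac{1}{r}-\frac{1}{q})]_+}U_0\|_{L^r}$ term, or alternatively by the $e^{-\lambda t}\|\nabla^{\cdots}[\cdots]\|_{L^r}$ term since $e^{-\lambda t}\sim1\gtrsim(1+t)^{-\ell/2}$ on $[0,1]$. With that adjustment the triangle-inequality argument closes.
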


\subsection{Extra time-decay for special initial
data}\label{sec.spec.}

Recall that the solution
$U=[\rho_{\alpha},u_{\alpha},E,B]$ to the Cauchy problem
\eqref{2.5}-\eqref{NI} with initial data
$U_{0}=[\rho_{\alpha0},u_{\alpha0},E_{0},B_{0}]$ satisfying
$\eqref{NC}$ can be formally written as
\begin{eqnarray}\label{sec5.U}
\begin{aligned}
U(t)=&e^{tL}U_{0}+\int_{0}^{t}e^{(t-s)L}[g_{1\alpha}(s),g_{2\alpha}(s),g_{3}(s),0]d
s\\
=&e^{tL}U_{0}+\int_{0}^{t}e^{(t-s)L}[\nabla \cdot
f_{\alpha}(s),g_{2\alpha}(s),g_{3}(s),0]d s,
\end{aligned}
\end{eqnarray}
where $e^{tL}$ is the linearized solution operator. We expect that the
nonlinear Cauchy problem \eqref{2.5}-\eqref{NC} can be approximated
by the corresponding linearized problem
$\eqref{Linear}$-$\eqref{LNC}$ in large time with a faster time-rate, namely the difference $U(t)-e^{tL}U_{0}$ should decay in time faster than both $U(t)$ and $e^{tL}U_{0}$.  Therefore the nonlinear term
$$
\int_{0}^{t}e^{(t-s)L}[\nabla \cdot
f_{\alpha}(s),g_{2\alpha}(s),g_{3}(s),0]d s
$$
is expected to decay in time with an extra time rate.
For this purpose, let's consider the linearized problem
$\eqref{Linear}$ with the following initial data in the special form:
\begin{eqnarray}\label{specialI}
N_{0}:=[\nabla \cdot f_{\alpha},g_{2\alpha},g_{3},0]|_{t=0}.
\end{eqnarray}
Notice that the diffusion wave $[\bar{\rho},\bar{u}_{\parallel},
\bar{E}_{\parallel}]$ given by \eqref{sol.rhob} with the corresponding initial data
\begin{eqnarray}\label{special1I}
\begin{aligned}
\bar{\rho}|_{t=0}=&\frac{m_{i}\nu_{i}}{m_{i}\nu_{i}+m_{e}\nu_{e}}\nabla
\cdot f_{i0}+\frac{m_{e}\nu_{e}}{m_{i}\nu_{i}+m_{e}\nu_{e}}\nabla
\cdot f_{e0}\\
=&\nabla\cdot\left[\frac{m_{i}\nu_{i}}{m_{i}\nu_{i}+m_{e}\nu_{e}}
f_{i0}+\frac{m_{e}\nu_{e}}{m_{i}\nu_{i}+m_{e}\nu_{e}}
f_{e0}\right],\\
\bar{u}_{\parallel}|_{t=0}=&-\dfrac{T_{i}+T_{e}}{m_{i}\nu_{i}+m_{e}\nu_{e}}\nabla
\bar{\rho}|_{t=0}\\
=&-\dfrac{T_{i}+T_{e}}{m_{i}\nu_{i}+m_{e}\nu_{e}}\nabla\nabla\cdot\left[\frac{m_{i}\nu_{i}}{m_{i}\nu_{i}+m_{e}\nu_{e}}
f_{i0}+\frac{m_{e}\nu_{e}}{m_{i}\nu_{i}+m_{e}\nu_{e}}
f_{e0}\right],\\
\bar{E}_{\parallel}|_{t=0}=&\dfrac{T_{i}m_{e}\nu_{e}-T_{e}m_{i}\nu_{i}}{e(m_{i}\nu_{i}+m_{e}\nu_{e})}\nabla
\nabla\cdot\left[\frac{m_{i}\nu_{i}}{m_{i}\nu_{i}+m_{e}\nu_{e}}
f_{i0}+\frac{m_{e}\nu_{e}}{m_{i}\nu_{i}+m_{e}\nu_{e}} f_{e0}\right],
\end{aligned}
\end{eqnarray}
should have the following  $L^p$-$L^q$ time-decay property:
\begin{equation}\label{lplqs}
\begin{aligned} \|\bar{\rho}\|_{L^{q}}&\leq
C(1+t)^{-\frac{3}{2}(\frac{1}{p}-\frac{1}{q})-\frac{m+1}{2}}\|[f_{i0},f_{e0}]\|_{L^{p}}+C{\exp({-\lambda
t})}\|\nabla^{m+1+[3(\frac{1}{r}-\frac{1}{q})]_+}[f_{i0},f_{e0}]\|_{L^{r}},\\
\left\|[\bar{u}_{\parallel},\bar{E}_{\parallel}]\right\|_{L^{q}}&\leq
C(1+t)^{-\frac{3}{2}(\frac{1}{p}-\frac{1}{q})-\frac{m+2}{2}}\|[f_{i0},f_{e0}]\|_{L^{p}}+C{\exp({-\lambda
t})}\|\nabla^{m+2+[3(\frac{1}{r}-\frac{1}{q})]_+}[f_{i0},f_{e0}]\|_{L^{r}},
\end{aligned}
\end{equation}
where the indices are chosen as in Theorem \ref{thm.decayperp}.
On the other hand, the solution
$[\bar{u}_{\alpha,\perp},\bar{E}_{\perp},\bar{B}]$ to
\eqref{perpsolve} with special initial data $\bar{B}|_{t=0}=0$
corresponding to \eqref{specialI} must be zero. i.e.,
\begin{eqnarray*}
\bar{u}_{\alpha,\perp}=0,\ \ \ \bar{E}_{\perp}=0,\ \ \ \bar{B}=0.
\end{eqnarray*}

Based on  $L^p$-$L^q$ time-decay property \eqref{lplqs} of diffusion
wave $[\bar{\rho},\bar{u}_{\parallel}, \bar{E}_{\parallel}]$ with
special initial data  \eqref{special1I} and Corollary
\ref{corollary.decay}, we obtain the extra time-decay for the solution
to the linearized problem \eqref{Linear} with special initial data
\eqref{special1I} in the following

\begin{theorem}\label{thm.special}
Let $1\leq p,r\leq 2\leq q\leq \infty$, $ \ell \geq 0$, and let $m\geq
1$ be an integer. Suppose that $e^{tL}N_{0}$ is the solution to the
Cauchy problem $\eqref{Linear}$ with initial data $N_{0}=[\nabla
\cdot f_{\alpha},g_{2\alpha},g_{3},0]|_{t=0}$ satisfying
\eqref{LNC}. Then
one has the following time-decay property:
\begin{multline*}
\|\nabla ^{m}\FP_{1\al} e^{tL}N_{0}\|_{L^q}\leq
C(1+t)^{-\frac{3}{2}(\frac{1}{p}-\frac{1}{q})-\frac{m+1}{2}}\|N_{0}\|_{L^{p}}+C{\exp({-\lambda
t})}\|N_{0}\|_{L^{p}}\\
+C (1+t)^{-\frac{\ell}{2}}
  \|\nabla^{m+[\ell+3(\frac{1}{r}-\frac{1}{q})]_+}N_{0}\|_{L^r}+C{\exp({-\lambda
t})}\|\nabla^{m+1+[3(\frac{1}{r}-\frac{1}{q})]_+}[f_{i0},f_{e0}]\|_{L^{r}}\\
+C(1+t)^{-\frac{3}{2}(\frac{1}{p}-\frac{1}{q})-\frac{m+1}{2}}\|[f_{i0},f_{e0}]\|_{L^{p}}.
\end{multline*}
\begin{multline*}
\|\nabla ^{m} \FP_{2\al}e^{tL }N_{0}\|_{L^q}\leq
C(1+t)^{-\frac{3}{2}(\frac{1}{p}-\frac{1}{q})-\frac{m+2}{2}}\|N_{0}\|_{L^{p}}+C{\exp({-\lambda
t})}\|N_{0}\|_{L^{p}}\\
+C (1+t)^{-\frac{\ell}{2}}
  \|\nabla^{m+[\ell+3(\frac{1}{r}-\frac{1}{q})]_+}N_{0}\|_{L^r}+C{\exp({-\lambda
t})}\|\nabla^{m+2+[3(\frac{1}{r}-\frac{1}{q})]_+}[f_{i0},f_{e0}]\|_{L^{r}}\\
+C(1+t)^{-\frac{3}{2}(\frac{1}{p}-\frac{1}{q})-\frac{m+2}{2}}\|[f_{i0},f_{e0}]\|_{L^{p}},
\end{multline*}
\begin{multline*}
\|\nabla ^{m}\FP_3 e^{tL}N_{0}\|_{L^q}\leq
C(1+t)^{-\frac{3}{2}(\frac{1}{p}-\frac{1}{q})-\frac{m+2}{2}}\|N_{0}\|_{L^{p}}+C{\exp({-\lambda
t})}\|U_{0}\|_{L^{p}}\\
+C (1+t)^{-\frac{\ell}{2}}
  \|\nabla^{m+[\ell+3(\frac{1}{r}-\frac{1}{q})]_+}N_{0}\|_{L^r}+C{\exp({-\lambda
t})}\|\nabla^{m+2+[3(\frac{1}{r}-\frac{1}{q})]_+}[f_{i0},f_{e0}]\|_{L^{r}}\\
+C(1+t)^{-\frac{3}{2}(\frac{1}{p}-\frac{1}{q})-\frac{m+2}{2}}\|[f_{i0},f_{e0}]\|_{L^{p}}.
\end{multline*}
\begin{multline*}
\|\nabla ^{m}\FP_4 e^{tL}N_{0}\|_{L^q}\leq
C(1+t)^{-\frac{3}{2}(\frac{1}{p}-\frac{1}{q})-\frac{m+1}{2}}\|N_{0}\|_{L^{p}}+C{\exp({-\lambda
t})}\|N_{0}\|_{L^{p}}\\
+C (1+t)^{-\frac{\ell}{2}}
  \|\nabla^{m+[\ell+3(\frac{1}{r}-\frac{1}{q})]_+}N_{0}\|_{L^r},
\end{multline*}
for any $t\geq 0$, where  $C=C(m,p,r,q,\ell)$,
$[\ell+3(\tfrac{1}{r}-\tfrac{1}{q})]_+$ is defined in
\eqref{thm.decay.1}, and  $\FP_{1\al}$, $\FP_{2\al}$, $\FP_3$, $\FP_4$ are the projection operators along the component $\rho_\al$, $u_\al$, $E$, $B$ of the solution $e^{tL}N_0$, respectively.
\end{theorem}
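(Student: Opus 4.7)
The plan is to exploit the decomposition $e^{tL}N_0 = \overline{U}_{N_0}(t) + (e^{tL}N_0 - \overline{U}_{N_0}(t))$, where $\overline{U}_{N_0}=[\overline{\rho},\overline{u}_\alpha,\overline{E},\overline{B}]$ denotes the large-time profile associated with the special initial data $N_0=[\nabla\cdot f_\alpha, g_{2\alpha}, g_3, 0]$, constructed as in \eqref{sol.rhob} and \eqref{perpsolve}. The point is that both ingredients on the right-hand side gain extra time-decay compared to the generic decay of $e^{tL}U_0$: the difference $e^{tL}N_0 - \overline{U}_{N_0}(t)$ is controlled by Corollary \ref{corollary.decay} (applied with $U_0$ replaced by $N_0$), while $\overline{U}_{N_0}(t)$ inherits an extra half-power of $t$ from the divergence structure of $\nabla\cdot f_\alpha$ and from the vanishing of the magnetic initial datum.

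First, I would compute $\overline{U}_{N_0}$ explicitly. Since the $B$-component of $N_0$ is zero, the electromagnetic profile from \eqref{perpsolve} is identically zero: $\overline{u}_{\alpha,\perp}=\overline{E}_\perp=\overline{B}=0$. For the fluid profile, \eqref{special1I} gives $\overline{\rho}|_{t=0}=\nabla\cdot F_0$ with $F_0:=\tfrac{m_i\nu_i}{m_i\nu_i+m_e\nu_e}f_{i0}+\tfrac{m_e\nu_e}{m_i\nu_i+m_e\nu_e}f_{e0}$, so $\overline{\rho}(t)=\nabla\cdot(G_{\mu_1}(t)\ast F_0)$ and $\overline{u}_\parallel, \overline{E}_\parallel$ are constant multiples of $\nabla\overline{\rho}$. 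The standard $L^p$--$L^q$ heat kernel estimate then yields
\begin{equation*}
\|\nabla^m\overline{\rho}\|_{L^q}\leq C(1+t)^{-\tfrac{3}{2}(\tfrac{1}{p}-\tfrac{1}{q})-\tfrac{m+1}{2}}\|[f_{i0},f_{e0}]\|_{L^p},
\end{equation*}
and an analogous bound with $(m+2)/2$ for $\nabla^m\overline{u}_\parallel$ and $\nabla^m\overline{E}_\parallel$, matching \eqref{lplqs}. These are precisely the extra algebraic terms appearing in the theorem.

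Next, I would apply Corollary \ref{corollary.decay} to the difference $e^{tL}N_0-\overline{U}_{N_0}(t)$, replacing the initial datum $U_0$ in that statement with $N_0$ (which satisfies the compatibility condition \eqref{LNC} by construction, since the constraint equations are preserved under the linearized flow). This provides estimates of the form
\begin{equation*}
\|\nabla^m\FP_{1\alpha}e^{tL}N_0-\nabla^m\overline{\rho}\|_{L^q}\leq C(1+t)^{-\tfrac{3}{2}(\tfrac{1}{p}-\tfrac{1}{q})-\tfrac{m+1}{2}}\|N_0\|_{L^p}+\cdots,
\end{equation*}
with the $\|\nabla^{m+[\cdots]}[\rho_{i0},\rho_{e0}]\|_{L^r}$ exponentially-decaying terms in the corollary becoming $\|\nabla^{m+1+[\cdots]}[f_{i0},f_{e0}]\|_{L^r}$ since the $\rho$-component of $N_0$ is $\nabla\cdot f_{\alpha 0}$, which absorbs one extra derivative. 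Similar estimates hold for the $u_\alpha$, $E$ and $B$ components. Adding the two contributions by the triangle inequality and using that $\overline{B}=\overline{u}_{\alpha,\perp}=\overline{E}_\perp=0$ (so the $B$-estimate has no algebraic remainder from the profile) gives exactly the four bounds asserted in Theorem \ref{thm.special}.

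The main technical issue is bookkeeping rather than analysis: one must verify that replacing $U_0$ by $N_0$ in Corollary \ref{corollary.decay} is legitimate (it is, because that corollary is a statement about the linear semigroup $e^{tL}$ and any initial datum satisfying \eqref{LNC}), and that the divergence form $\rho_{\alpha 0}=\nabla\cdot f_{\alpha 0}$ can be tracked through the Fourier-side estimates in Lemmas \ref{errorL}--\ref{errorUE} to yield the extra factor of $|k|$ that upgrades $\|\rho_{i0},\rho_{e0}\|$-terms to $\|\nabla[f_{i0},f_{e0}]\|$-terms and supplies the additional half-power decay. Once this is done, no new Fourier-side analysis is required and the theorem follows by combining Corollary \ref{corollary.decay} with \eqref{lplqs}.
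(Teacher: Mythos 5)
Your proposal is correct and follows the paper's own argument essentially verbatim: you split $e^{tL}N_0$ into the profile $\overline{U}_{N_0}$ and the remainder, observe that the electromagnetic profile vanishes since the $B$-component of $N_0$ is zero and that the fluid profile gains a half-power of decay from the divergence structure $\nabla\cdot f_{\alpha}$ (precisely the content of \eqref{lplqs}), apply Corollary \ref{corollary.decay} with $U_0$ replaced by $N_0$ to bound the remainder (which converts $\|\nabla^{m+[\cdots]}[\rho_{i0},\rho_{e0}]\|_{L^r}$ into $\|\nabla^{m+1+[\cdots]}[f_{i0},f_{e0}]\|_{L^r}$), and conclude by the triangle inequality. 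This is exactly how the paper derives Theorem \ref{thm.special}, and no further remarks are needed.
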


\section{Asymptotic behaviour of the nonlinear system}\label{sec5}

\subsection{Global existence}
To the end, we  assume the integer $N \geq 3$. For
$U=[\rho_{\alpha},u_{\alpha},E,B]$, we define the full instant
energy functional $\mathcal {E}_{N}(U(t))$ and  the high-order instant
by
\begin{equation}\label{de.E}
\arraycolsep=1.5pt
\begin{array}{rl}
\mathcal {E}_{N}(U(t))=&\displaystyle\sum_{|l|\leq N}\sum_{
\alpha=i,e}\int_{\mathbb{R}^3}\frac{p'_{\alpha}(\rho_{\alpha}+1)}{\rho_{\alpha}+1}
   |\partial^{l}\rho_{\alpha}|^2 +m_{\alpha}(\rho_{\alpha}+1)
   |\partial^{l}u_{\alpha}|^2)dx+\frac{1}{4\pi}\|[E,B]\|_{N}^{2}\\[3mm]
&\displaystyle+\kappa_{1}\sum_{|l|\leq
N-1}\sum_{\alpha=i,e}m_{\alpha}\langle
\partial^{l}u_{\alpha},
\partial^{l}\nabla\rho_{\alpha}\rangle+\kappa_{2}\sum_{|l|\leq
N-1}\sum_{\alpha=i,e}m_{\alpha}\left\langle
\partial^{l}u_{\alpha},-\frac{q_{\alpha}}{T_{\alpha}}\partial^{l}E\right\rangle\\[3mm]
&\displaystyle-\kappa_{3}\sum_{|l|\leq N-2}\langle
\partial^{l}E,\nabla \times \partial^{l}B\rangle,
\end{array}
\end{equation}
and
\begin{equation}\label{de.Eh}
\begin{aligned}
\mathcal {E}_{N}^{h}(U(t))=&\displaystyle\sum_{1\leq |l|\leq
N}\sum_{
\alpha=i,e}\int_{\mathbb{R}^3}\frac{p'_{\alpha}(\rho_{\alpha}+1)}{\rho_{\alpha}+1}
   |\partial^{l}\rho_{\alpha}|^2 +m_{\alpha}(\rho_{\alpha}+1)
   |\partial^{l}u_{\alpha}|^2)dx+\frac{1}{4\pi}\|\nabla[E,B]\|_{N-1}^{2}\\[3mm]
&\displaystyle+\kappa_{1}\sum_{1\leq|l|\leq
N-1}\sum_{\alpha=i,e}m_{\alpha}\langle
\partial^{l}u_{\alpha},
\partial^{l}\nabla\rho_{\alpha}\rangle+\kappa_{2}\sum_{1\leq |l|\leq
N-1}\sum_{\alpha=i,e}m_{\alpha}\left\langle
\partial^{l}u_{\alpha},-\frac{q_{\alpha}}{T_{\alpha}}\partial^{l}E\right\rangle\\[3mm]
&\displaystyle-\kappa_{3}\sum_{1\leq |l|\leq N-2}\langle
\partial^{l}E,\nabla \times \partial^{l}B\rangle,
\end{aligned}
\end{equation}
respectively, where $0<\kappa_{3}\ll\kappa_{2}\ll\kappa_{1}\ll 1$
are constants to be properly chosen later on. Notice that
since all constants $\kappa_i$ $(i=1,2,3)$ are small enough, one has
\begin{equation*}
    \mathcal {E}_{N}(U(t))\sim
\|[\rho_{\alpha},u_{\alpha},E,B] \|_{N}^{2},\quad \mathcal
{E}_{N}^{h}(U(t))\sim \|\nabla [\rho_{\alpha},u_{\alpha},E,B]
\|_{N-1}^{2}.
\end{equation*}
We further define the corresponding dissipation rates $\mathcal {D}_{N}(U(t))$,
$\mathcal {D}_{N}^{h}(U(t))$ by
\begin{equation}\label{de.D}
\mathcal {D}_{N}(U(t))=\displaystyle \sum_{|l|\leq
N}\int_{\mathbb{R}^3}\sum_{\alpha=i,e} m_{\alpha}(\rho_{\alpha}+1)
   |\partial^{l}u_{\alpha}|^2dx+\sum_{\alpha=i,e}\|\nabla\rho_{\alpha}\|_{N-1}^{2}
+\|\nabla[E,B]\|_{N-2}^{2}+\|E\|^{2},
\end{equation}
and
\begin{multline}\label{de.Dh}
\mathcal {D}_{N}^{h}(U(t))=\displaystyle \displaystyle \sum_{1\leq
|l|\leq N}\int_{\mathbb{R}^3}\sum_{\alpha=i,e}
m_{\alpha}(\rho_{\alpha}+1)
   |\partial^{l}u_{\alpha}|^2dx+\sum_{\alpha=i,e}\|\nabla^{2}\rho_{\alpha}\|_{N-2}^{2}\\
+\|\nabla^{2}[E,B]\|_{N-3}^{2}+\|\nabla E\|^{2},
\end{multline}
respectively.
Then, the global existence of  the reformulated Cauchy problem
\eqref{2.5}-\eqref{sec5.ggg} with small smooth initial data can be stated as follows.

\begin{theorem}\label{pro.2.1}
There is
$\mathcal {E}_{N}(\cdot) $  in the form of
$\eqref{de.E} $ such that the following holds
true. If $\mathcal {E}_{N}(U_{0})>0$ is small enough,  the Cauchy
problem $\eqref{2.5}$-$\eqref{sec5.ggg}$ admits a unique global
solution $U=[\rho_{\alpha},u_{\alpha}, E,B]$ satisfying
\begin{eqnarray*}
U \in C([0,\infty);H^{N}(\mathbb{R}^{3}))\cap {\rm
Lip}([0,\infty);H^{N-1}(\mathbb{R}^{3})),
\end{eqnarray*}
and
\begin{eqnarray*}
\mathcal {E}_{N}(U(t))+\lambda\int_{0}^{t}\mathcal
{D}_{N}(U(s))ds\leq \mathcal {E}_{N}(U_{0}),
\end{eqnarray*}
for any $t\geq 0$.
\end{theorem}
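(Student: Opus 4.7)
The plan is to establish Theorem \ref{pro.2.1} through the standard continuation argument: local existence plus uniform a priori estimates. Local existence in $C([0,T];H^N)\cap \mathrm{Lip}([0,T];H^{N-1})$ for the symmetric hyperbolic system \eqref{2.5} under the compatibility condition \eqref{NC} is classical (the constraint $\nabla\cdot E=4\pi\sum q_\alpha\rho_\alpha$, $\nabla\cdot B=0$ propagates automatically from \eqref{2.5}). The heart of the argument is the closed a priori estimate
\begin{equation*}
\frac{d}{dt}\mathcal{E}_N(U(t))+\lambda\,\mathcal{D}_N(U(t))\leq C\sqrt{\mathcal{E}_N(U(t))}\,\mathcal{D}_N(U(t)),
\end{equation*}
valid whenever $\sup_{[0,T]}\mathcal{E}_N(U(t))$ is sufficiently small, so that the right-hand side is absorbed into $\lambda\mathcal{D}_N$ and the stated energy inequality follows by integration in $t$ together with the equivalence $\mathcal{E}_N(U(t))\sim\|[\rho_\alpha,u_\alpha,E,B]\|_N^2$.

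The a priori estimate is built by assembling four types of identities, mirroring the Fourier-side construction of Theorem \ref{lyapunov}. First, for $|l|\leq N$ apply $\partial^l$ to \eqref{2.5}, test the $\rho$-equation against $\partial^l\rho_\alpha$ weighted by $p_\alpha'(\rho_\alpha+1)/(\rho_\alpha+1)$, the $u$-equation against $\partial^l u_\alpha$ weighted by $m_\alpha(\rho_\alpha+1)$, and the Maxwell equations against $[\partial^l E,\partial^l B]/(4\pi)$. The symmetrization cancels the linear cross-terms (using $q_i=-q_e=e$ and summing in $\alpha$), which yields the basic identity
\begin{equation*}
\tfrac{1}{2}\tfrac{d}{dt}\Big(\sum_{|l|\leq N}\sum_\alpha\int\tfrac{p_\alpha'(\rho_\alpha+1)}{\rho_\alpha+1}|\partial^l\rho_\alpha|^2+m_\alpha(\rho_\alpha+1)|\partial^l u_\alpha|^2\,dx+\tfrac{1}{4\pi}\|[E,B]\|_N^2\Big)+\sum_\alpha m_\alpha\nu_\alpha\|u_\alpha\|_N^2\lesssim \sqrt{\mathcal{E}_N}\mathcal{D}_N,
\end{equation*}
giving dissipation only for $u_\alpha$. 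Second, test $\partial^l(m_\alpha\partial_t u_\alpha+\cdots)$ against $\partial^l\nabla\rho_\alpha$ and use $\partial_t\rho_\alpha=-\nabla\cdot u_\alpha+g_{1\alpha}$ to produce $\|\nabla\rho_\alpha\|_{N-1}^2$ dissipation (modulo a $|\sum_\alpha q_\alpha\partial^l\rho_\alpha|^2$ term controlled by the Poisson relation). Third, test against $-q_\alpha\partial^l E/T_\alpha$, using the $E$-equation to replace $\partial_t E$, to produce dissipation of $\|E\|_{N-1}^2$. Fourth, for $|l|\leq N-2$ test $\partial^l E$ against $\nabla\times\partial^l B$ to generate $\|\nabla B\|_{N-2}^2$ dissipation, at the cost of $\|\nabla E\|_{N-2}^2$ which is already controlled by the previous step. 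Combining these four families with the weights $\kappa_1,\kappa_2,\kappa_3$ as in \eqref{de.E}, chosen successively with $0<\kappa_3\ll\kappa_2\ll\kappa_1\ll 1$ so that each absorption step is consistent, gives exactly the dissipation $\mathcal{D}_N$ of \eqref{de.D}.

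The remaining work is to bound the nonlinear contributions coming from $g_{1\alpha}$, $g_{2\alpha}$, $g_3$ in \eqref{sec5.ggg} and from the quasilinear weights $p_\alpha'(\rho_\alpha+1)/(\rho_\alpha+1)$ and $m_\alpha(\rho_\alpha+1)$. These are of the form $\sum\partial^l(A(U)\partial U)$ type and are handled by Moser-type commutator estimates in $H^N$ together with the Sobolev embedding $H^2(\R^3)\hookrightarrow L^\infty$ (here $N\geq 3$); the outcome is that every nonlinear term is bounded by $C\sqrt{\mathcal{E}_N}\,\mathcal{D}_N$, hence absorbable under smallness. The time derivatives of the cross-product correction terms also produce commutator pieces of the same quadratic form, and the weight $\rho_\alpha+1\sim 1$ keeps the energy equivalent to the flat Sobolev norm.

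The main obstacle is the dissipation for $\rho_\alpha$ and for the electromagnetic field $[E,B]$: the linear system has no direct dissipative mechanism for these variables, and they must be recovered indirectly through the hypocoercive cross-terms of $\mathcal{E}_N$, precisely paralleling the Fourier construction that led to \eqref{Lyainequlity1}. The delicate point is the hierarchical smallness $\kappa_3\ll\kappa_2\ll\kappa_1\ll 1$: each new cross-term generates an error proportional to $\|u_\alpha\|_N^2$ or the lower-level dissipation, and one must verify the order of magnitude so that every error is absorbed by an earlier, larger dissipative contribution. Once this linear hypocoercive assembly is in place, the nonlinear closure is routine, the standard continuation argument gives global existence, and the uniform energy inequality of Theorem \ref{pro.2.1} follows.
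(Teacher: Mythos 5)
Your proposal coincides with the paper's own proof: local existence by Kato's theory, combined with the a priori estimate of Lemma~\ref{estimate}, which the paper establishes by exactly the same four families of energy identities (the symmetrized zeroth-order estimate and the three hypocoercive cross-terms $\langle\partial^l u_\alpha,\partial^l\nabla\rho_\alpha\rangle$, $\langle\partial^l u_\alpha,-q_\alpha\partial^l E/T_\alpha\rangle$, $-\langle\partial^l E,\nabla\times\partial^l B\rangle$) assembled with the hierarchical weights $\kappa_1,\kappa_2,\kappa_3$. The only under-specified detail is that the weights cannot be chosen freely subject only to $\kappa_3\ll\kappa_2\ll\kappa_1\ll1$: absorbing the cross-term $\kappa_2\|u_\alpha\|_N\|\nabla B\|_{N-2}$ simultaneously into the $\|u_\alpha\|_N^2$ dissipation from the first step and the $\kappa_3\|\nabla B\|_{N-2}^2$ dissipation from the fourth forces a two-sided constraint such as $\kappa_2^{3/2}\ll\kappa_3\ll\kappa_2$, which the paper enforces explicitly.
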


To prove Theorem \ref{pro.2.1} it suffices to show the a priori estimates in the following lemma, cf.~\cite{Kato}. For completeness, we also give all the details of the proof.

\begin{lemma}[a priori estimates]\label{estimate}
Suppose that $U=[\rho_{\alpha},u_{\alpha},E,B]\in
C([0,T);H^{N}(\mathbb{R}^{3}))$ is smooth for $T>0$ with
\begin{eqnarray*}
\sup_{0\leq t<T}\|U(t)\|_{N}\leq 1,
\end{eqnarray*}
and that $U$ solves the system (\ref{2.5}) over $0\leq t<T$.
Then, there is $\mathcal {E}_{N}(\cdot) $  in the form $\eqref{de.E} $ such
that
\begin{eqnarray}\label{3.2}
&& \frac{d}{dt}\mathcal {E}_{N}(U(t))+\lambda\mathcal
{D}_{N}(U(t))\leq
 C[\mathcal {E}_{N}(U(t))^{\frac{1}{2}}+\mathcal {E}_{N}(U(t))]\mathcal {D}_{N}(U(t))
\end{eqnarray}
for any $0\leq t<T$.
\end{lemma}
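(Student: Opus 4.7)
The plan is to derive \eqref{3.2} by combining four classes of energy-type identities that match, at the physical level, the four pieces of the Fourier Lyapunov functional built in the proof of Theorem \ref{lyapunov}. The basic (uncrossed) energy captures $\|[\rho_\alpha,u_\alpha,E,B]\|_N^2$ with dissipation only on $u_\alpha$ coming from the friction; three carefully weighted cross terms then extract, successively, the dissipation on $\nabla\rho_\alpha$, on $E$, and on $\nabla B$, and Poisson's equation $\nabla\cdot E=4\pi\sum q_\alpha\rho_\alpha$ is used to upgrade $\|\rho_\alpha\|$ control to the missing $\|E\|$ part of $\mathcal{D}_N$.

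First, for each $|l|\leq N$ I would apply $\partial^l$ to \eqref{2.5}, test the density equation by $\tfrac{p'_\alpha(\rho_\alpha+1)}{\rho_\alpha+1}\partial^l\rho_\alpha$ and the momentum equation by $m_\alpha(\rho_\alpha+1)\partial^l u_\alpha$, and pair the Maxwell pair $(\partial^l E,\partial^l B)$ in $L^2$ via the last two equations. The coupling $q_\alpha E\cdot u_\alpha$ telescopes against $E\cdot\sum q_\alpha u_\alpha$ in Ampère's law and the curls cancel, producing the time derivative of the top line of \eqref{de.E} plus dissipation $\sum_\alpha\int(\rho_\alpha+1)|\partial^l u_\alpha|^2\,dx$, modulo commutators. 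Next, for $|l|\leq N-1$, testing $\partial^l$(momentum) against $\partial^l\nabla\rho_\alpha$ and substituting $\partial^l\partial_t\rho_\alpha$ from $\partial^l$(continuity) yields a positive $\lambda\|\nabla\rho_\alpha\|_{N-1}^2$ contribution modulo $\eps\|\nabla u_\alpha\|_{N-1}^2+C_\eps\|E\|_{N-1}^2$; this furnishes the $\kappa_1$-cross term. Then, testing $\partial^l$(momentum) against $-\tfrac{q_\alpha}{T_\alpha}\partial^l E$ and replacing $\partial_t\partial^l E$ via Ampère, the pressure gradient pairs into the Poisson relation and the non-degeneracy of $\sum(q_\alpha^2/T_\alpha)$ yields $\lambda\|\partial^l E\|^2$ dissipation up to $\eps\|\nabla\times\partial^l B\|^2+C_\eps\sum_\alpha\|\partial^l u_\alpha\|^2$; this is the $\kappa_2$-cross term. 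Finally, for $|l|\leq N-2$, pairing $-\langle\partial^l E,\nabla\times\partial^l B\rangle$ and using Faraday's law extracts $\lambda\|\nabla\times\partial^l B\|^2=\lambda\|\nabla\partial^l B\|^2$ (by $\nabla\cdot B=0$) at the cost of $\eps\|\nabla E\|_{N-2}^2+C_\eps\sum_\alpha\|\partial^l u_\alpha\|^2$; this produces the $\kappa_3$-cross term.

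With the hierarchy $0<\kappa_3\ll\kappa_2\ll\kappa_1\ll 1$, each successive absorption is dominated by the preceding dissipation, and $\mathcal{E}_N$ is manifestly equivalent to $\|U\|_N^2$. Adding the four estimates produces $\tfrac{d}{dt}\mathcal{E}_N+\lambda\mathcal{D}_N\leq(\text{nonlinear terms})$, where $\mathcal{D}_N$ has the form \eqref{de.D}. The sources \eqref{sec5.ggg} are (at least) quadratic in $U$ and either carry a derivative (as in $\nabla\cdot(\rho_\alpha u_\alpha)$, $u_\alpha\cdot\nabla u_\alpha$, and the pressure nonlinearity) or come from $(u_\alpha/c)\times B$. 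For each $|l|\leq N$, Moser-type product estimates, integration by parts, $H^2\hookrightarrow L^\infty$, and the smallness $\|U\|_N\leq 1$ (so $\tfrac12\leq\rho_\alpha+1\leq\tfrac32$) enable bounding every commutator, variable-coefficient correction, and product contribution by $C[\mathcal{E}_N(U)^{1/2}+\mathcal{E}_N(U)]\mathcal{D}_N(U)$, using that $\mathcal{D}_N$ controls all $\|\partial^l u_\alpha\|$ ($|l|\leq N$), all $\|\nabla\partial^l\rho_\alpha\|$ ($|l|\leq N-1$), $\|\nabla[E,B]\|_{N-2}$, and $\|E\|$.

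The main obstacle is the regularity-loss character of the system: $B$ enters $\mathcal{D}_N$ only through $\|\nabla B\|_{N-2}$, so top-order nonlinear terms involving $B$, most delicately $\partial^N((u_\alpha/c)\times B)$, cannot be absorbed directly into $\mathcal{D}_N$ but must be split as $\|\partial^N u_\alpha\|\,\|B\|_{L^\infty}+\|u_\alpha\|_{L^\infty}\|\partial^N B\|$, exploiting the full energy $\mathcal{E}_N$ on $B$ paired against the friction dissipation on $u_\alpha$. Tracking this discrepancy rigorously for each cross-term identity, and simultaneously ensuring that the (nonlinear) cross-term contributions neither destroy the positive-definiteness of $\mathcal{E}_N$ nor the dissipation structure of $\mathcal{D}_N$, is where the bookkeeping is most delicate.
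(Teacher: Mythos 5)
Your overall architecture matches the paper's proof step by step: the same basic energy with multipliers $\tfrac{p'_\alpha(\rho_\alpha+1)}{\rho_\alpha+1}\partial^l\rho_\alpha$ and $m_\alpha(\rho_\alpha+1)\partial^l u_\alpha$, the same three cross terms $\langle\partial^l u_\alpha,\partial^l\nabla\rho_\alpha\rangle$, $\langle\partial^l u_\alpha,-\tfrac{q_\alpha}{T_\alpha}\partial^l E\rangle$, $-\langle\partial^l E,\nabla\times\partial^l B\rangle$, the same hierarchy $0<\kappa_3\ll\kappa_2\ll\kappa_1\ll 1$, and the same handling of the regularity-loss term by splitting $\partial^N(u_\alpha\times B)$ as $\|B\|_{L^\infty}\|\partial^N u_\alpha\|+\|u_\alpha\|_{L^\infty}\|\partial^N B\|$ so that the $\partial^N B$ factor is charged to $\mathcal{E}_N^{1/2}$ and the $u_\alpha$ factors to $\mathcal{D}_N$.

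There is one place, however, where your bookkeeping as written would break down. In the $\kappa_1$-cross step you treat the coupling $-q_\alpha\langle\partial^l E,\partial^l\nabla\rho_\alpha\rangle$ as a bad term to be absorbed by $C_\eps\|E\|_{N-1}^2$. But the only source of $\|E\|^2$ dissipation in the scheme is the $\kappa_2$-cross term, and since $\kappa_2\ll\kappa_1$ the quantity $\kappa_1 C_\eps\|E\|_{N-1}^2$ cannot be dominated by $\kappa_2\,\lambda\|E\|_{N-1}^2$; no rearrangement of the hierarchy rescues this, because the $\kappa_2$- and $\kappa_3$-steps each force $\kappa_3\ll\kappa_2\ll\kappa_1$. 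The paper's resolution is that this term is not a bad term at all: after summing over $\alpha=i,e$, integrating by parts, and invoking the Poisson constraint $\nabla\cdot E=4\pi\sum_\alpha q_\alpha\rho_\alpha$, one finds
\begin{equation*}
-\sum_{\alpha=i,e} q_\alpha\langle\partial^l E,\partial^l\nabla\rho_\alpha\rangle
=\Big\langle\nabla\cdot\partial^l E,\ \sum_{\alpha=i,e} q_\alpha\partial^l\rho_\alpha\Big\rangle
=4\pi\Big\|\sum_{\alpha=i,e} q_\alpha\partial^l\rho_\alpha\Big\|^2\geq 0,
\end{equation*}
which is an extra dissipative contribution and requires no absorption. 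With that correction (you already invoke Poisson correctly in the $\kappa_2$-step, where the pressure term produces $\tfrac{1}{4\pi}\|\partial^l\nabla\cdot E\|^2$), your proof coincides with the paper's.
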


\begin{proof} It is divided by five steps as follows.
\medskip

\noindent \textbf{ Step 1.} It holds that
\begin{equation}\label{3.3}
\begin{aligned}
&\frac{1}{2}\frac{d}{dt}\left(\sum_{|l|\leq N}\sum_{
\alpha=i,e}\int_{\mathbb{R}^3}\frac{p'_{\alpha}(\rho_{\alpha}+1)}{\rho_{\alpha}+1}
   |\partial^{l}\rho_{\alpha}|^2 +m_{\alpha}(\rho_{\alpha}+1)
   |\partial^{l}u_{\alpha}|^2)dx+\frac{1}{4\pi}\|[E,B]\|_{N}^{2}\right)\\
&+\sum_{|l|\leq N}\int_{\mathbb{R}^3}\sum_{\alpha=i,e}
m_{\alpha}(\rho_{\alpha}+1)
   |\partial^{l}u_{\alpha}|^2dx\leq  C(\|U\|_{N}+\|U\|_{N}^{2})\sum_{
\alpha=i,e}(\|u_{\alpha}\|^{2}+\|\nabla[\rho_{\alpha}u_{\alpha}]\|_{N-1}^{2}).
 \end{aligned}
\end{equation}
In fact, it is convenient  to start from the
following form of \eqref{2.5}:
\begin{equation}\label{re2.2}
\left\{
  \begin{aligned}
  &\partial_t \rho_\alpha+(\rho_{\alpha}+1)\nabla\cdot u_\alpha=-u_{\alpha}\cdot\nabla \rho_{\alpha},\\
  &m_{\alpha}\partial_t u_{\alpha}+\frac{p_{\alpha}'(\rho_{\alpha}+1)}{\rho_{\alpha}+1}\nabla \rho_{\alpha}-q_{\alpha}E+m_{\alpha}\nu_\alpha u_\alpha
  =-m_{\alpha}u_{\alpha}\cdot \nabla u_{\alpha}+q_{\alpha}\frac{u_{\alpha}}{c}\times B,\\
  &\partial_t E-c\nabla\times B+4\pi\sum_{\alpha=i,e}q_{\alpha}(\rho_{\alpha}+1)u_{\alpha}=0,\\
  &\partial_t B+c\nabla \times E=0,\\
  &\nabla \cdot E=4\pi\sum_{\alpha=i,e}q_{\alpha}\rho_{\alpha}, \ \  \nabla
\cdot B=0.
\end{aligned}\right.
\end{equation}
Applying $\partial^{l}$ to the first equation of (\ref{re2.2}) for
$0 \leq l\leq N$, multiplying it by
$\frac{p'_{\alpha}(\rho_{\alpha}+1)}{\rho_{\alpha}+1}
\partial^{l}\rho_{\alpha}$, and taking integration in $x$ gives
\begin{eqnarray}\label{4.17}
  && \begin{aligned}
   &\frac{1}{2}\frac{d}{dt}\left\langle\frac{p'_{\alpha}(\rho_{\alpha}+1)}{\rho_{\alpha}+1},
   |\partial^{l}\rho_{\alpha}|^2 \right\rangle +\left\langle p'_{\alpha}(\rho_{\alpha}+1)\partial^{l}\nabla\cdot u_\alpha,
 \partial^{l}\rho_{\alpha} \right\rangle\\
 =  &\frac{1}{2}\left\langle\left(\frac{p'_{\alpha}(\rho_{\alpha}+1)}{\rho_{\alpha}+1}\right)_{t}, |\partial^{l}\rho_{\alpha}|^2 \right\rangle
   -\sum_{k< l}C^{k}_{l}\left\langle \partial^{l-k}(\rho_{\alpha}+1)
   \partial^{k}\nabla\cdot u_\alpha,\frac{p'_{\alpha}(\rho_{\alpha}+1)}{\rho_{\alpha}+1}\partial^{l}\rho_{\alpha} \right\rangle\\
   &-\left\langle u_{\alpha}\cdot\partial^{l}\nabla\rho_{\alpha},\frac{p'_{\alpha}(\rho_{\alpha}+1)}{\rho_{\alpha}+1}\partial^{l}\rho_{\alpha} \right\rangle
   -\sum_{k< l}C^{k}_{l}\left\langle \partial^{l-k}u_{\alpha}\cdot
   \partial^{k}\nabla\rho_{\alpha},\frac{p'_{\alpha}(\rho_{\alpha}+1)}{\rho_{\alpha}+1}\partial^{l}\rho_{\alpha}
   \right\rangle.
\end{aligned}
\end{eqnarray}
Applying $\partial^{l}$ to the second equation of $(\ref{re2.2})$ for $0\leq l\leq N$, multiplying it by $(\rho_{\alpha}+1)\partial^{l}u_{\alpha}$, and
integrating the resulting equation with respect to $x$ give
\begin{eqnarray}\label{4.4}
  && \begin{aligned}
   &\frac{1}{2}\frac{d}{dt}\langle m_{\alpha}(\rho_{\alpha}+1),
   |\partial^{l}u_{\alpha}|^2 \rangle+\left\langle p'_{\alpha}(\rho_{\alpha}+1)\partial^{l}\nabla\rho_{\alpha},
\partial^{l}u_{\alpha} \right\rangle\\
&+m_{\alpha}\nu_\alpha\langle (\rho_{\alpha}+1),
   |\partial^{l}u_{\alpha}|^2 \rangle
  -\langle q_{\alpha}\partial^{l}E,(\rho_{\alpha}+1)\partial^{l}u_{\alpha} \rangle\\
   =&\frac{1}{2}\langle m_{\alpha}(\rho_{\alpha}+1)_{t},
   |\partial^{l}u_{\alpha}|^2 \rangle
-\sum_{k<l}C^{k}_{l}\left\langle\partial^{l-k}\left(\frac{p'_{\alpha}(\rho_{\alpha}+1)}{\rho_{\alpha}+1}\right)
\partial^{k}\nabla\rho_{\alpha},(\rho_{\alpha}+1)\partial^{l}u_{\alpha} \right\rangle\\
   &-m_{\alpha}\left\langle u_{\alpha}\cdot\partial^{l}\nabla u_{\alpha},(\rho_{\alpha}+1)\partial^{l}u_{\alpha} \right\rangle
   -\sum_{k<l}C^{k}_{l}\left\langle \partial^{l-k}u_{\alpha}\cdot\partial^{k}\nabla u_{\alpha},(\rho_{\alpha}+1)\partial^{l}u_{\alpha}\right\rangle\\
  &+q_{\alpha}\left\langle\partial^{l}\left(\frac{u_{\alpha}}{c}\times B\right),(\rho_{\alpha}+1)\partial^{l}u_{\alpha}\right\rangle.
\end{aligned}
\end{eqnarray}
Taking the summation of \eqref{4.17} and \eqref{4.4}, integrating by
parts and taking the summation of $\alpha=i,e$, we get
\begin{equation}\label{sum.2425}
  \begin{aligned}
   &\frac{1}{2}\frac{d}{dt}\sum_{\alpha=i,e}\left(\left\langle\frac{p'_{\alpha}(\rho_{\alpha}+1)}{\rho_{\alpha}+1},
   |\partial^{l}\rho_{\alpha}|^2 \right\rangle+\langle m_{\alpha}(\rho_{\alpha}+1),
   |\partial^{l}u_{\alpha}|^2 \rangle\right)
   +\sum_{\alpha=i,e}m_{\alpha}\nu_\alpha\langle (\rho_{\alpha}+1),
   |\partial^{l}u_{\alpha}|^2 \rangle\\
  &-\left\langle \partial^{l}E,\sum_{\alpha=i,e}q_{\alpha}(\rho_{\alpha}+1)\partial^{l}u_{\alpha} \right\rangle
   =\sum_{\alpha=i,e}\left(I_{1}^{(\alpha)}(t)+\sum_{k<l}C^{k}_{l}I_{k,l}^{(\alpha)}(t)\right),
\end{aligned}
\end{equation}
with
\begin{eqnarray*}
  && \begin{aligned}
  I_{1}^{(\alpha)}(t)=&\left\langle p''(\rho_{\alpha}+1)\nabla\rho_{\alpha}\partial^{l}\rho_{\alpha},
\partial^{l}u_{\alpha}\right\rangle+
\frac{1}{2}\left\langle\left(\frac{p'_{\alpha}(\rho_{\alpha}+1)}{\rho_{\alpha}+1}\right)_{t},
|\partial^{l}\rho_{\alpha}|^2 \right\rangle+\frac{1}{2}\langle
m_{\alpha}(\rho_{\alpha}+1)_{t},
   |\partial^{l}u_{\alpha}|^2 \rangle\\
&+\frac{1}{2}\left\langle\nabla\cdot\left(u_{\alpha}\frac{p'_{\alpha}(\rho_{\alpha}+1)}{\rho_{\alpha}+1}\right),|\partial^{l}\rho_{\alpha}|^{2}
\right\rangle +\frac{m_{\alpha}}{2}\left\langle
\nabla\cdot(u_{\alpha}(\rho_{\alpha}+1)),|\partial^{l}u_{\alpha}|^{2}
\right\rangle\\
&+q_{\alpha}\left\langle\partial^{l}\left(\frac{u_{\alpha}}{c}\times
B\right),(\rho_{\alpha}+1)\partial^{l}u_{\alpha}\right\rangle,
\end{aligned}
\end{eqnarray*}
and
\begin{eqnarray*}
  && \begin{aligned}
  I_{k,l}^{(\alpha)}(t)=&-\left\langle \partial^{l-k}(\rho_{\alpha}+1)
   \partial^{k}\nabla\cdot u_\alpha,\frac{p'_{\alpha}(\rho_{\alpha}+1)}{\rho_{\alpha}+1}\partial^{l}\rho_{\alpha} \right\rangle-\left\langle \partial^{l-k}u_{\alpha}\cdot
   \partial^{k}\nabla\rho_{\alpha},\frac{p'_{\alpha}(\rho_{\alpha}+1)}{\rho_{\alpha}+1}\partial^{l}\rho_{\alpha} \right\rangle\\
&-\left\langle\partial^{l-k}\left(\frac{p'_{\alpha}(\rho_{\alpha}+1)}{\rho_{\alpha}+1}\right)
\partial^{k}\nabla\rho_{\alpha},(\rho_{\alpha}+1)\partial^{l}u_{\alpha} \right\rangle
-\left\langle \partial^{l-k}u_{\alpha}\cdot\partial^{k}\nabla u_{\alpha},(\rho_{\alpha}+1)\partial^{l}u_{\alpha}\right\rangle.
\end{aligned}
\end{eqnarray*}
Recall that
\begin{equation*}
\partial_{t}\rho_{\alpha} =-(\rho_{\alpha}+1)\nabla \cdot u_{\alpha}-u_{\alpha}\cdot\nabla \rho_{\alpha}.
\end{equation*}
When $|l|=0$, it suffices to estimate $I_{1}^{(\alpha)}(t)$ by
\begin{eqnarray*}
\begin{aligned}
 I_{1}^{(\alpha)}(t)
  \leq & C \|\nabla \cdot u_{\alpha}\|(\|\rho_{\alpha}\|_{L^{6}}\|\rho_{\alpha}\|_{L^{3}}+ \|u_{\alpha}\|_{L^{6}}\|u_{\alpha}\|_{L^{3}})
  +C\|\nabla  \rho_{\alpha}\|\|\rho_{\alpha}\|_{L^{6}}\|u_{\alpha}\|_{L^{3}}\\
  &+C\|u_{\alpha}\|_{L^{\infty}}(\|\nabla
  \rho_{\alpha}\|\|\rho_{\alpha}\|_{L^{6}}\|\rho_{\alpha}\|_{L^{3}}+\|\nabla
  \rho_{\alpha}\|\|u_{\alpha}\|_{L^{6}}\|u_{\alpha}\|_{L^{3}})+C\|B\|_{L^{\infty}}\|u_{\alpha}\|^{2}\\
\leq &
C(\|[\rho_{\alpha},u_{\alpha}]\|_{H^{1}}+\|[\rho_{\alpha},u_{\alpha}]\|_{H^{2}}^{2})\|\nabla[\rho_{\alpha},u_{\alpha}]\|^{2}+C\|\nabla
B\|_{H^{1}}\|u_{\alpha}\|^{2},
 \end{aligned}
\end{eqnarray*}
which is further bounded by the r.h.s.~term of (\ref{3.3}). When
$|\alpha|\geq 1$, since each term in $I_{k,l}^{(\alpha)}(t)$ and
$I_{1}^{(\alpha)}(t)$ is at least the integration of the three-terms
product in which there is at least one term containing the
derivative, one has
\begin{eqnarray*}
  I_{k,l}^{(\alpha)}(t)+I_{1}^{(\alpha)}(t)\leq C\left(\|[\rho_{\alpha},u_{\alpha},B]\|_{N}+\|[\rho_{\alpha},u_{\alpha}]\|_{H^{3}}^{2}\right)\|\nabla
  [\rho_{\alpha},u_{\alpha}]\|_{N-1}^{2},
\end{eqnarray*}
which is also bounded by the r.h.s.~term of (\ref{3.3}). On the
other hand, from (\ref{re2.2}), energy estimates on $\partial^{l}E$
and $\partial^{l}B$ with $0\leq l\leq N$, yield
\begin{eqnarray}\label{energyEB}
  && \begin{aligned}
   &\frac{1}{4\pi}\frac{1}{2}\frac{d}{dt}\left\|\partial^{l}[E,B]\right\|^{2}+
   \sum_{\alpha=i,e}\langle q_{\alpha}(\rho_{\alpha}+1)\partial^{l}u_{\alpha},\partial^{l}E\rangle
   \\
  =&-\sum_{\alpha=i,e}\left\langle
  q_{\alpha}\sum_{k<l}C^{k}_{l}\partial^{l-k}(\rho_{\alpha}+1)\partial^{k}u_{\alpha},\partial^{l}E\right\rangle:=I_{2}(t).
\end{aligned}
\end{eqnarray}
When $|l|=1$, $I_{2}(t)$ can be  estimated as follows,
\begin{eqnarray*}
I_{2}(t)\leq C\sum_{\alpha=i,e}\|u_{\alpha}\|_{L^{\infty}}\|\nabla
\rho_{\alpha}\|\|\nabla E\|\leq \|\nabla
E\|\sum_{\alpha=i,e}\|\nabla [\rho_{\alpha},u_{\alpha}]\|_{N-1}^{2}.
\end{eqnarray*}
When $1<|l|\leq N$, $I_{2}(t)$ can be  estimated as follows,
\begin{eqnarray*}
 \begin{aligned}
I_{2}(t)\leq &
C\sum_{\alpha=i,e}\|\partial^{l}((\rho_{\alpha}+1)u_{\alpha})-(\rho_{\alpha}+1)\partial^{l}u_{\alpha}\|\|\nabla
E\|_{N-1}\\
\leq & C\sum_{\alpha=i,e}(\|\nabla
\rho_{\alpha}\|_{L^{\infty}}\|\partial^{l-1}u_{\alpha}\|+\|u_{\alpha}\|_{L^{\infty}}\|\partial^{l}\rho_{\alpha}\|)\|\nabla
E\|_{N-1}\\
\leq &\|\nabla E\|_{N-1}\sum_{\alpha=i,e}\|\nabla
[\rho_{\alpha},u_{\alpha}]\|_{N-1}^{2}.
\end{aligned}
\end{eqnarray*}
Then (\ref{3.3}) follows by taking the summation of (\ref{sum.2425}) and
(\ref{energyEB}) over $|l| \leq N$.

\medskip
\noindent{\bf Step 2.}
 It holds that
\begin{eqnarray}\label{step2}
  &&\begin{aligned}
  &\frac{d}{dt}\mathcal {E}_{N,1}^{int}(U(t))
  +\lambda\sum_{\alpha=i,e}\|\nabla\rho_{\alpha}\|_{N-1}^{2}+4\pi\left\|\sum_{\alpha=i,e}q_{\alpha}\rho_{\alpha}
\right\|_{N-1}^2\\
  \leq &
  C\sum_{\alpha=i,e}\|u_{\alpha}\|_{N}^{2}+C\left(\sum_{\alpha=i,e}\|[\rho_{\alpha},u_{\alpha},B]\|_{N}^{2}\right)\left(\sum_{\alpha=i,e}\|\nabla
[\rho_{\alpha},u_{\alpha}]\|_{N-1}^{2}\right),
  \end{aligned}
\end{eqnarray}
where $\mathcal {E}_{N,1}^{int}(\cdot)$ is defined by
\begin{eqnarray*}
\mathcal {E}_{N,1}^{int}(U(t))=\sum_{|l|\leq
N-1}\sum_{\alpha=i,e}m_{\alpha}\langle
\partial^{l}u_{\alpha},
\partial^{l}\nabla\rho_{\alpha}\rangle.
\end{eqnarray*}
In fact, recall the first two equations in \eqref{2.5}
\begin{equation}\label{step2equ}
\left\{
  \begin{aligned}
  &\partial_t \rho_\alpha+\nabla\cdot u_\alpha=g_{1\alpha},\\
  &m_{\alpha}\partial_t u_{\alpha}+T_{\alpha}\nabla \rho_{\alpha}-q_{\alpha}E+m_{\alpha}\nu_\alpha u_\alpha=g_{2\alpha},
  \end{aligned}
  \right.
  \end{equation}
with
\begin{equation*}
\arraycolsep=1.5pt \left\{
 \begin{aligned}
 &g_{1\alpha}=-\nabla\cdot(\rho_\alpha u_\alpha),\\
 &g_{2\alpha}=- m_{\alpha}u_{\alpha} \cdot \nabla
u_\alpha-\left(\frac{p_{\alpha}'(\rho_{\alpha}+1)}{\rho_{\alpha}+1}-\frac{p_{\alpha}'(1)}{1}\right)\nabla
\rho_{\alpha}+q_{\alpha}\frac{u_{\alpha}}{c}\times B.
\end{aligned}\right.
\end{equation*}
Let $0\leq l\leq N-1$, applying $\partial^{l}$ to
the second equation of $(\ref{step2equ})$, multiplying it by $\partial^{l}\nabla
\rho_{\alpha}$, taking integrations in $x$, using
integration by parts and also the final equation of (\ref{re2.2}),
replacing $\partial_{t}\rho_{\alpha}$ from $(\ref{step2equ})_{1}$, and
taking the summation of $\alpha=i,e$, one has
\begin{equation*}
 \begin{aligned}
 &
\sum_{\alpha=i,e}m_{\alpha}\frac{d}{dt}\langle
\partial^{l}u_{\alpha},
\partial^{l}\nabla\rho_{\alpha}\rangle
+\sum_{\alpha=i,e}T_{\alpha}\|\partial^{l}\nabla\rho_{\alpha}\|^{2}+4\pi\left\|\sum_{\alpha=i,e}q_{\alpha}\partial^{l}\rho_{\alpha}
\right\|^2\\
=&\sum_{\alpha=i,e}m_{\alpha}\|\nabla\cdot\partial^{l}u_{\alpha}\|^{2}-
\sum_{\alpha=i,e}m_{\alpha}\langle\nabla\cdot\partial^{l}u_{\alpha},\partial^{l}g_{1\alpha}\rangle\\
& -\sum_{\alpha=i,e}\langle
m_{\alpha}\nu_{\alpha}\partial^{l}u_{\alpha},\nabla\partial^{l}\rho_{\alpha}\rangle
 +\sum_{\alpha=i,e}\langle\partial^{l}g_{2\alpha},\nabla\partial^{l}\rho_{\alpha}\rangle.
 \end{aligned}
\end{equation*}
Then, it follows from the Cauchy-Schwarz inequality that
\begin{equation}\label{ineq.2}
 \begin{aligned}
 &
\sum_{\alpha=i,e}m_{\alpha}\frac{d}{dt}\langle
\partial^{l}u_{\alpha},
\partial^{l}\nabla\rho_{\alpha}\rangle
+\lambda\sum_{\alpha=i,e}\|\partial^{l}\nabla\rho_{\alpha}\|^{2}+4\pi\left\|\sum_{\alpha=i,e}q_{\alpha}\partial^{l}\rho_{\alpha}
\right\|^2\\
\leq& C\sum_{\alpha=i,e}(\|\nabla \cdot
\partial^{l}u_{\alpha}\|^{2}+\|\partial^{l}u_{\alpha}\|^{2})+C\sum_{\alpha=i,e}(\|\partial^{l}g_{1\alpha}\|^{2}+\|\partial^{l}g_{2\alpha}\|^{2}).
\end{aligned}
\end{equation}
Noticing that  $g_{1\alpha},\ g_{2\alpha}$ are quadratically
nonlinear, one has
\begin{equation*}
\|\partial^{l}g_{1\alpha}\|^{2}+\|\partial^{l}g_{2\alpha}\|^{2}\leq
C\left(\sum_{\alpha=i,e}\|[\rho_{\alpha},u_{\alpha},B]\|_{N}^{2}\right)\left(\sum_{\alpha=i,e}\|\nabla
[\rho_{\alpha},u_{\alpha}]\|_{N-1}^{2}\right).
\end{equation*}
Substituting this into (\ref{ineq.2}) and taking
the summation over $|l|\leq N-1$ implies (\ref{step2}).

\medskip
\noindent{\bf Step 3.}
 It holds that
\begin{multline}\label{step3}
  \dfrac{d}{dt}\mathcal {E}_{N,2}^{int}(U(t))+\dfrac{1}{4\pi}\|\nabla\cdot E\|_{N-1}^{2}+\lambda\|E\|^{2}_{N-1} \\
  \leq
  C\sum_{\alpha=i,e}\|u_{\alpha}\|_{N}^{2}+C\sum_{\alpha=i,e}\|u_{\alpha}\|_{N}\|\nabla
B\|_{N-2}\\
+C\left(\sum_{\alpha=i,e}\|[\rho_{\alpha},u_{\alpha},B]\|_{N}^{2}\right)\left(\sum_{\alpha=i,e}\|\nabla
[\rho_{\alpha},u_{\alpha}]\|_{N-1}^{2}\right),
\end{multline}
where $\mathcal {E}_{N,2}^{int}(\cdot)$ is defined by
\begin{eqnarray*}
\mathcal {E}_{N,2}^{int}(U(t))=\sum_{|l|\leq
N-1}\sum_{\alpha=i,e}m_{\alpha}\left\langle
\partial^{l}u_{\alpha},-\frac{q_{\alpha}}{T_{\alpha}}\partial^{l}E\right\rangle.
\end{eqnarray*}
In fact, for $|l|\leq N-1$, applying
$\partial^{l}$ to the second equation of $\eqref{step2equ}$, multiplying it by
$-\frac{q_{\alpha}}{T_{\alpha}}\partial^{l}E$, taking integrations
in $x$, using integration by parts, replacing $
\partial_{t}E$ from the third equation of $(\ref{2.5})$, and taking the summation for $\alpha=i,e$ give
\begin{equation*}
 \begin{aligned}
 &\frac{d}{dt}\sum_{\alpha=i,e}m_{\alpha}\langle
\partial^{l}u_{\alpha},-\frac{q_{\alpha}}{T_{\alpha}}\partial^{l}E\rangle+
\dfrac{1}{4\pi}\|\partial^{l}\nabla\cdot E\|^{2}+\sum_{\alpha=i,e}\frac{q_{\alpha}^{2}}{T_{\alpha}}\|\partial^{l} E\|^{2}\\
=&-\sum_{\alpha=i,e}m_{\alpha}\left\langle
\partial^{l}u_{\alpha},\frac{q_{\alpha}}{T_{\alpha}}c\nabla \times
\partial^{l}B\right\rangle+\sum_{\alpha=i,e}m_{\alpha} \left\langle
\partial^{l}u_{\alpha},\frac{q_{\alpha}}{T_{\alpha}}\partial^{l} \left(4\pi\sum_{\alpha=i,e}(\rho_{\alpha}+1)u_{\alpha}\right)
\right\rangle\\
&+\sum_{\alpha=i,e}\left\langle
m_{\alpha}\nu_{\alpha}\partial^{l}u_{\alpha},\frac{q_{\alpha}}{T_{\alpha}}
\partial^{l}E\right\rangle+\sum_{\alpha=i,e}\left\langle\partial^{l}g_{2\alpha},-\frac{q_{\alpha}}{T_{\alpha}}
\partial^{l}E\right\rangle,
\end{aligned}
\end{equation*}
which from the Cauchy-Schwarz inequality, further implies
\begin{multline*}
\frac{d}{dt}\sum_{\alpha=i,e}m_{\alpha}\left\langle
\partial^{l}u_{\alpha},-\frac{q_{\alpha}}{T_{\alpha}}\partial^{l}E\right\rangle+
\dfrac{1}{4\pi}\|\partial^{l}\nabla\cdot E\|^{2}+\lambda\|\partial^{l} E\|^{2}\\
\leq
C\sum_{\alpha=i,e}\|u_{\alpha}\|_{N}^{2}+C\sum_{\alpha=i,e}\|u_{\alpha}\|_{N}\|\nabla
B\|_{N-2}\\
+C\left(\sum_{\alpha=i,e}\|[\rho_{\alpha},u_{\alpha},B]\|_{N}^{2}\right)\left(\sum_{\alpha=i,e}\|\nabla
[\rho_{\alpha},u_{\alpha}]\|_{N-1}^{2}\right).
\end{multline*}
Thus $\eqref{step3}$ follows from taking the summation of the above
estimate over $|l|\leq N-1$.

\medskip
\noindent{\bf Step 4.} It holds that
\begin{equation}\label{step4}
\begin{aligned}
   \frac{d}{dt}\mathcal {E}_{N,3}^{int}(U(t))+\lambda\|\nabla B\|^{2}_{N-2}
   \leq & C\|E\|_{N-1}^{2}+\sum_{\alpha=i,e}\|u_{\alpha}\|_{N}^{2}\\
   &+C\left(\sum_{\alpha=i,e}\|[\rho_{\alpha},u_{\alpha}]\|_{N}^{2}\right)\left(\sum_{\alpha=i,e}\|\nabla
[\rho_{\alpha},u_{\alpha}]\|_{N-1}^{2}\right),
\end{aligned}
\end{equation}
where $\mathcal {E}_{N,3}^{int}(\cdot)$ is defined by
\begin{eqnarray*}
\mathcal {E}_{N,3}^{int}(U(t))=-\sum_{|l|\leq N-2}\langle
\partial^{l}E,\nabla \times \partial^{l}B\rangle.
\end{eqnarray*}
In fact, for $|l|\leq N-2$, applying $\partial^{l}$ to the third
equation of  $\eqref{2.5}$, multiplying it by $\partial^{l}\nabla
\times  B$, taking integrations in $x$ and using integration by
parts and replacing $
\partial_{t}B$  from the fourth  equation of $\eqref{2.5}$
implies
\begin{equation*}
\begin{aligned}
& -\dfrac{d}{dt}\langle
\partial^{l}E,\nabla \times \partial^{l}B\rangle+
c\|\nabla\times \partial^{l}B\|^{2}\\
=&c\|\nabla\times\partial^{l}E\|^{2} +4\pi\left\langle
\partial^{l}\left(\sum_{\alpha=i,e}q_{\alpha}(\rho_{\alpha}+1)u_{\alpha}\right),\nabla \times
\partial^{l}B\right\rangle.
\end{aligned}
\end{equation*}
The above estimate gives $\eqref{step4}$ by further using the
Cauchy-Schwarz inequality and taking the summation over $|l|\leq N-2$,
where we also have used
\begin{eqnarray*}
\|\partial^{l}\partial_{i}B\|=\|\partial_{i}\Delta^{-1}\nabla
\times(\nabla\times\partial^{l}B) \|\leq\|\nabla\times
\partial^{l}B\|
\end{eqnarray*}
for each $1\leq i\leq 3$, due to the fact  that
$\partial_{i}\Delta^{-1}\nabla$ is bounded from $L^{p}$ to itself
for $1<p<\infty$.

\medskip

\noindent\textbf{Step 5.}
 Let us define
\begin{eqnarray*}
   \mathcal {E}_{N}(U(t))=\sum_{|l|\leq N}\sum_{
\alpha=i,e}\int_{\mathbb{R}^3}\frac{p'_{\alpha}(\rho_{\alpha}+1)}{\rho_{\alpha}+1}
   |\partial^{l}\rho_{\alpha}|^2 +m_{\alpha}(\rho_{\alpha}+1)
   |\partial^{l}u_{\alpha}|^2)dx\\
   +\frac{1}{4\pi}\|[E,B]\|_{N}^{2}
   +\sum_{i=1}^{3}\kappa_{i}\mathcal {E}^{int}_{N,i}(U(t)),
\end{eqnarray*}
that is,
\begin{equation}\label{3.12}
\arraycolsep=1.5pt
\begin{array}{rl}
\mathcal {E}_{N}(U(t))=&\displaystyle\sum_{|l|\leq N}\sum_{
\alpha=i,e}\int_{\mathbb{R}^3}\frac{p'_{\alpha}(\rho_{\alpha}+1)}{\rho_{\alpha}+1}
   |\partial^{l}\rho_{\alpha}|^2 +m_{\alpha}(\rho_{\alpha}+1)
   |\partial^{l}u_{\alpha}|^2)dx+\frac{1}{4\pi}\|[E,B]\|_{N}^{2}\\[3mm]
&\displaystyle+\kappa_{1}\sum_{|l|\leq
N-1}\sum_{\alpha=i,e}m_{\alpha}\langle
\partial^{l}u_{\alpha},
\partial^{l}\nabla\rho_{\alpha}\rangle+\kappa_{2}\sum_{|l|\leq
N-1}\sum_{\alpha=i,e}m_{\alpha}\left\langle
\partial^{l}u_{\alpha},-\frac{q_{\alpha}}{T_{\alpha}}\partial^{l}E\right\rangle\\[3mm]
&\displaystyle-\kappa_{3}\sum_{|l|\leq N-2}\langle
\partial^{l}E,\nabla \times \partial^{l}B\rangle,
\end{array}
\end{equation}
for constants $0<\kappa_{3}\ll\kappa_{2}\ll\kappa_{1}\ll 1$ to be
determined. Notice that as long as $ 0<\kappa_{i}\ll 1$ is small
enough for $i=1,2,3$, then $\mathcal{E}_{N}(U(t))\sim
\|U(t)\|^{2}_{N}$ holds true. Moreover, letting
$0<\kappa_{3}\ll\kappa_{2}\ll\kappa_{1}\ll 1$ with
$\kappa_{2}^{3/2}\ll\kappa_{3}$, the sum of $\eqref{3.3}$,
$\eqref{step2}\times \kappa_{1}$, $\eqref{step3}\times \kappa_{2} $,
$\eqref{step4}\times \kappa_{3}$ implies that there are $\lambda>0$,
$C>0$ such that $\eqref{3.2}$ holds true with $\mathcal
{D}_{N}(\cdot)$ defined in $\eqref{de.D}$. Here, we have used the
following Cauchy-Schwarz inequality:
\begin{eqnarray*}
   2 \kappa_{2} \sum_{\alpha=i,e}\|u_{\alpha}\|_{N}\|\nabla
B\|_{N-2}\leq
   \kappa_{2}^{1/2}\sum_{\alpha=i,e}\|u_{\alpha}\|^{2}_{N}+\kappa_{2}^{3/2}\|\nabla B\|^{2}_{N-2}.
\end{eqnarray*}
Due to $\kappa_{2}^{3/2}\ll \kappa_{3}$, both terms on the r.h.s. of
the above inequality were absorbed. This completes the proof of
Theorem $\ref{estimate}$.
\end{proof}

\subsection{Asymptotic rate to constant states}

Moreover, the solutions obtained in Theorem $ \ref{pro.2.1}$ indeed
decay in time with some rates under some extra regularity and
integrability conditions on initial data. For that, given
$U_{0}=[\rho_{\alpha0},u_{\alpha0}, E_0,B_{0}]$, set
$\epsilon_{m}(U_0)$ as
\begin{eqnarray}\label{def.epsi}
\epsilon_{m}(U_0)=\|U_{0}\|_{m}+\|U_{0}\|_{L^{1}},
\end{eqnarray}
for the  integer $m \geq 0$. Then one has the following

\begin{theorem}\label{pro.2.2}
Under the assumptions of Proposition \ref{pro.2.1}, if
$\epsilon_{N+6}(U_{0})>0$ is small enough, then the solution
$U=[\rho_{\alpha},u_{\alpha}, E,B]$ satisfies
\begin{eqnarray}\label{V.decay}
\|U(t)\|_{N} \leq C \epsilon_{N+2}(U_{0})(1+t)^{-\frac{3}{4}},
\end{eqnarray}
and
\begin{eqnarray}\label{nablaV.decay}
\|\nabla U(t)\|_{N-1} \leq C
\epsilon_{N+6}(U_{0})(1+t)^{-\frac{5}{4}},
\end{eqnarray}
for any $t\geq 0$.
\end{theorem}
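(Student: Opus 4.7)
The plan is to combine the Duhamel representation \eqref{sec5.U} with the linear decay estimates of Corollary \ref{corollary.decayL} and the a priori energy inequality of Lemma \ref{estimate}, closing the argument by continuous induction on a time-weighted norm. The structural fact that drives everything is that the dissipation rates $\mathcal{D}_N$ and $\mathcal{D}_N^h$ control the energies $\mathcal{E}_N$ and $\mathcal{E}_N^h$ only up to the zeroth-order (resp.\ first-order) pieces $\|[\rho_\alpha,B]\|^2$ (resp.\ $\|\nabla[\rho_\alpha,B]\|^2$); these missing low-regularity pieces must be supplied by the linear decay applied to the Duhamel formula.

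For \eqref{V.decay}, I first bound $\|U(t)\|_{L^2}$. Applying Corollary \ref{corollary.decayL} with $p=1$, $r=q=2$, $m=0$ and $\ell$ chosen large enough to beat $(1+t)^{-3/4}$ gives $\|e^{tL}U_0\|_{L^2}\leq C(1+t)^{-3/4}\epsilon_{N+2}(U_0)$, where the index $N+2$ absorbs both the $L^1$-norm and the regularity-loss pickup $\|\nabla^{[\ell]_+}U_0\|_{L^2}$. The nonlinear source $S=[\nabla\!\cdot\! f_\alpha,g_{2\alpha},g_3,0]$ is quadratic in $U$, so $\|S(s)\|_{L^1}\leq C\|U(s)\|_{L^2}^2$ and $\|S(s)\|_{H^{[\ell]_+}}\leq C\|U(s)\|_N^2$ by Moser-type product estimates (here one uses $N\geq 3$ so that $H^N$ is an algebra). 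Setting $M(t):=\sup_{0\leq s\leq t}(1+s)^{3/4}\|U(s)\|_{L^2}$ and exploiting the convolution bound $\int_0^t(1+t-s)^{-3/4}(1+s)^{-3/2}\,ds\leq C(1+t)^{-3/4}$, I obtain $M(t)\leq C\epsilon_{N+2}(U_0)+CM(t)^2$, hence $\|U(t)\|_{L^2}\leq C\epsilon_{N+2}(U_0)(1+t)^{-3/4}$ as long as $\|U\|_N$ remains small (which is guaranteed by Theorem \ref{pro.2.1}). Plugging this into the energy inequality, which under smallness reduces to $\tfrac{d}{dt}\mathcal{E}_N+\lambda\mathcal{D}_N\leq 0$, and using the comparison $\mathcal{E}_N\leq C\mathcal{D}_N+C\|[\rho_\alpha,B]\|^2$, yields the Gronwall inequality $\tfrac{d}{dt}\mathcal{E}_N+\lambda\mathcal{E}_N\leq C(1+t)^{-3/2}$, which integrates to $\mathcal{E}_N(U(t))\leq C\epsilon_{N+2}(U_0)^2(1+t)^{-3/2}$, proving \eqref{V.decay}.

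For \eqref{nablaV.decay}, I repeat the scheme one derivative higher. Corollary \ref{corollary.decayL} with $m=1$ and a larger $\ell$ produces $\|\nabla e^{tL}U_0\|_{L^2}\leq C(1+t)^{-5/4}\epsilon_{N+6}(U_0)$; the $+6$ reflects both the larger $\ell$ needed to dominate $(1+t)^{-5/4}$ and the extra derivative $m=1$ in the regularity-loss pickup. In the Duhamel integral, $\|S(s)\|_{L^1}\leq C\|U(s)\|_{L^2}^2\leq C(1+s)^{-3/2}$ and $\|\nabla S(s)\|_{L^2}\leq C\|U(s)\|_N\|\nabla U(s)\|_{N-1}\leq C(1+s)^{-2}$ by the just-proved bound and the bootstrap hypothesis, so a bootstrap on $\widetilde M(t):=\sup_{0\leq s\leq t}(1+s)^{5/4}\|\nabla U(s)\|_{L^2}$ closes via $\int_0^t(1+t-s)^{-5/4}(1+s)^{-3/2}\,ds\leq C(1+t)^{-5/4}$, delivering $\|\nabla U(t)\|_{L^2}\leq C\epsilon_{N+6}(U_0)(1+t)^{-5/4}$. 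A high-order version of Lemma \ref{estimate}, obtained by repeating its derivation but discarding all zeroth-order energy and dissipation contributions, has the form $\tfrac{d}{dt}\mathcal{E}_N^h+\lambda\mathcal{D}_N^h\leq 0$ under smallness; combined with $\mathcal{E}_N^h\leq C\mathcal{D}_N^h+C\|\nabla[\rho_\alpha,B]\|^2$ and the preceding $\|\nabla U\|_{L^2}$ bound, this gives $\mathcal{E}_N^h(U(t))\leq C\epsilon_{N+6}(U_0)^2(1+t)^{-5/2}$, which is \eqref{nablaV.decay}.

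The main technical hurdle is the regularity loss inherited from the linear semigroup: the second term in Corollary \ref{corollary.decayL} forces $\ell$ to be chosen large enough to beat the target rate, and this is precisely what dictates the apparent overkill $N+6$ in the second estimate. A further subtlety is that the $\rho_\alpha$-component of the nonlinearity has the divergence form $g_{1\alpha}=\nabla\!\cdot\! f_\alpha$; invoking Theorem \ref{thm.special} in place of the naive Corollary \ref{corollary.decayL} for this component produces one extra half-power of time-decay, and this is what renders the nonlinear Duhamel convolution summable at the critical rate $(1+t)^{-5/4}$ without the need to sacrifice an additional spatial derivative.
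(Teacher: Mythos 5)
Your proposal contains a genuine gap at the heart of the argument: the comparison inequality $\mathcal{E}_{N}\leq C\mathcal{D}_{N}+C\|[\rho_\alpha,B]\|^2$ that you use to convert the Lyapunov inequality into a pointwise Gronwall inequality is false, and its high-order analogue $\mathcal{E}_{N}^{h}\leq C\mathcal{D}_{N}^{h}+C\|\nabla[\rho_\alpha,B]\|^2$ is false for the same reason. Comparing \eqref{de.E} with \eqref{de.D}, the energy $\mathcal{E}_{N}\sim\|U\|_{N}^{2}$ contains $\|\nabla^{N}[E,B]\|^{2}$, whereas the dissipation $\mathcal{D}_{N}$ only contains $\|\nabla[E,B]\|_{N-2}^{2}$, i.e.\ derivatives of $[E,B]$ up to order $N-1$; the top derivative $\|\nabla^{N}[E,B]\|^2$ is not in $\mathcal{D}_{N}$. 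This is exactly the regularity-loss feature of the system, and it blocks the pointwise Gronwall you propose: the best one can write is $\tfrac{d}{dt}\mathcal{E}_{N}+\lambda\mathcal{E}_{N}\leq C\|[\rho_\alpha,B]\|^2+C\|\nabla^{N}[E,B]\|^2$, and the last term has no reason to decay at rate $(1+t)^{-3/2}$ from the $L^2$-Duhamel bound you derived (which costs one derivative but gives only $\|U\|_{L^2}$, not $\|\nabla^{N}U\|$). The same top-order term $\|\nabla^{N}[E,B]\|^2$ escapes $\mathcal{D}_{N}^{h}$ in the high-order step, so your claimed closure of $\mathcal{E}_{N}^{h}$ is also incomplete; and in fact the paper's high-order energy inequality (Lemma 5.5) carries an additional source $C\sum_\alpha\|\nabla\rho_\alpha\|^2$ on the right-hand side which you silently drop.

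The paper circumvents this by trading derivatives for time weights rather than by a pointwise Gronwall: it uses the \emph{correct} comparison $\mathcal{E}_{N}\leq C(\mathcal{D}_{N+1}+\|B\|^{2}+\|[\rho_i,\rho_e]\|^{2})$ (with $\mathcal{D}_{N+1}$, not $\mathcal{D}_{N}$) and multiplies the Lyapunov inequality at levels $N$, $N+1$, $N+2$ by time weights $(1+t)^\ell$, $(1+t)^{\ell-1}$, $(1+t)^{\ell-2}$ and iterates, which is precisely what produces the cost $\epsilon_{N+2}(U_0)$ in \eqref{V.decay}. The residual low-order pieces $\|B\|^2+\|[\rho_\alpha]\|^2$ are then bounded through the Duhamel formula and a self-consistent sup-norm $\mathcal{E}_{N,\infty}$ (Lemma 5.3). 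For \eqref{nablaV.decay}, the paper keeps $\|\nabla^{N}[E,B]\|^2$ explicitly as a source in the Gronwall inequality for $\mathcal{E}_{N}^{h}$ and bounds it separately via the linearized $L^p$--$L^q$ estimate at derivative order $m=N$ (Lemma 5.6); this is the step that genuinely requires $\|U_0\|_{\dot H^{N+3}}$ and hence the index $N+6$, not merely the choice of a large $\ell$ in the semigroup bound as your final paragraph suggests. Your remark about invoking Theorem \ref{thm.special} is also not used anywhere in your argument for this theorem; it enters the paper only in the proof of Lemma 5.7 for Theorem 1.1. To repair your proof you would need to (i) replace the false comparison by one with $\mathcal{D}_{N+1}$ and set up the time-weight iteration, and (ii) add a separate Duhamel-based control of $\|\nabla^{N}[E,B]\|$ before the high-order Gronwall can close.
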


For completeness, we also give the proof of Theorem \ref{pro.2.2}.

\subsubsection{Time rate for the full instant energy functional}


Recall from the proof of Lemma \ref{estimate} that
\begin{eqnarray}\label{sec5.ENV0}
\dfrac{d}{dt}\mathcal {E}_{N}(U(t))+\lambda \mathcal {D}_{N}(U(t))
\leq 0,
\end{eqnarray}
for any $t\geq 0$. We now apply the time-weighted energy estimate and iteration to the
Lyapunov inequality $\eqref{sec5.ENV0}$. Let $\ell \geq 0$. Multiply
$\eqref{sec5.ENV0}$ by $(1+t)^{\ell}$ and taking integration over
$[0,t]$ gives
\begin{eqnarray*}
\begin{aligned}
 & (1+t)^{\ell}\mathcal {E}_{N}(U(t))+\lambda
 \int_{0}^{t}(1+s)^{\ell}\mathcal {D}_{N}(U(s))d s \\
 \leq & \mathcal {E}_{N}(U_{0})+ \ell
 \int_{0}^{t}(1+s)^{\ell-1}\mathcal {E}_{N}(U(s))d s.
\end{aligned}
\end{eqnarray*}
Noticing
\begin{eqnarray*}
  \mathcal {E}_{N}(U(t))
  \leq C (D_{N+1}(U(t))+\|
  B\|^{2}+ \|[\rho_{i},\rho_{e}]\|^{2}),
\end{eqnarray*}
it follows that
\begin{eqnarray*}
\begin{aligned}
 & (1+t)^{\ell}\mathcal {E}_{N}(U(t))+\lambda
 \int_{0}^{t}(1+s)^{\ell}\mathcal {D}_{N}(U(s))d s \\
 \leq & \mathcal {E}_{N}(U_{0})+ C \ell
 \int_{0}^{t}(1+s)^{\ell-1}(\|
  B\|^{2}+\|[\rho_{i},\rho_{e}]\|^{2})d s\\
&+ C\ell\int_{0}^{t}(1+s)^{\ell-1}\mathcal {D}_{N+1}(U(s))d s.
\end{aligned}
\end{eqnarray*}
Similarly, it holds that
\begin{eqnarray*}
\begin{aligned}
 & (1+t)^{\ell-1}\mathcal {E}_{N+1}(U(t))+\lambda
 \int_{0}^{t}(1+s)^{\ell-1}\mathcal {D}_{N+1}(U(s))d s \\
 \leq & \mathcal {E}_{N+1}(U_{0})+ C (\ell-1)
 \int_{0}^{t}(1+s)^{\ell-2}(\|
  B\|^{2}+\|[\rho_{i},\rho_{e}]\|^{2})d s\\
&+ C(\ell-1)\int_{0}^{t}(1+s)^{\ell-2}\mathcal {D}_{N+2}(U(s))d s,
\end{aligned}
\end{eqnarray*}
and
\begin{eqnarray*}
\mathcal {E}_{N+2}(U(t))+\lambda \int_{0}^{t}\mathcal
{D}_{N+2}(U(s))d s \leq \mathcal {E}_{N+2}(U_{0}).
\end{eqnarray*}
Then, for $1<\ell<2$, it follows by iterating the above estimates
that
\begin{eqnarray}\label{sec5.ED}
\begin{aligned}
 & (1+t)^{\ell}\mathcal {E}_{N}(U(t))+\lambda
 \int_{0}^{t}(1+s)^{\ell}\mathcal {D}_{N}(U(s))d s \\
 \leq & C \mathcal {E}_{N+2}(U_{0})+ C
 \int_{0}^{t}(1+s)^{\ell-1}(\|
  B\|^{2}+\|[\rho_{i},\rho_{e}]\|^{2})d s.
\end{aligned}
\end{eqnarray}
 On the other hand, to estimate the integral term on the r.h.s. of
$\eqref{sec5.ED}$, let's define
\begin{eqnarray}\label{sec5.def}
\mathcal {E}_{N,\infty}(U(t))=\sup\limits_{0\leq s \leq t}
(1+s)^{\frac{3}{2}}\mathcal {E}_{N}(U(s)).
\end{eqnarray}
\begin{lemma}\label{lem.Bsigma}
For any $t\geq0$, it holds that
\begin{eqnarray}\label{lem.tildeB}
&&\begin{aligned}
 \|B\|^{2}+\|[\rho_{i},\rho_{e}]\|^{2}
 \leq  C(1+t)^{-\frac{3}{2}}\Big(\mathcal
 {E}_{N,\infty}^{2}(U(t))+&\|[\rho_{i0},\rho_{e0},B_{0}]\|\|_{L^{1}\cap L^{2}}^2+\left.\|U_{0}\|_{L^{1}\cap \dot{H}^{2}}^{2}\right).
\end{aligned}
\end{eqnarray}
\end{lemma}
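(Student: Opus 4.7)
The plan is to represent the nonlinear solution via Duhamel's principle and bound the $\rho_\alpha$ and $B$ components by separating the contribution of the initial data (to which Corollary \ref{corollary.decayL} applies) from the contribution of the inhomogeneous source (to which Theorem \ref{thm.special} applies, since the source $N(s)=[\nabla\cdot f_\alpha,g_{2\alpha},g_3,0]$ has at every time precisely the structure of special initial data required there). Writing
\[
U(t)=e^{tL}U_{0}+\int_{0}^{t}e^{(t-s)L}N(s)\,ds,
\]
the proof of \eqref{lem.tildeB} reduces to two separate $L^2$ bounds on the $B$ and $\rho_\alpha$ components.

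For the linear piece, I would apply Corollary \ref{corollary.decayL} to $\FP_{1\alpha}e^{tL}U_0$ and $\FP_4 e^{tL}U_0$ with $p=1$, $q=r=2$, $m=0$, and $\ell=3/2$. The leading $(1+t)^{-3/4}$ rate comes from the $\|U_0\|_{L^1}$ term; the choice $\ell=3/2$ produces the matching weight $(1+t)^{-\ell/2}=(1+t)^{-3/4}$ against $\|\nabla^{[\ell]_+}U_0\|_{L^2}$, where $[\ell]_+=[3/2]_-+1=2$ because $3/2$ is not an integer, and $\nabla^2 U_0$ is exactly the $\dot H^2$ norm allowed by \eqref{lem.tildeB}; the lower-order term in Corollary \ref{corollary.decayL} contributes the piece $\|[\rho_{i0},\rho_{e0},B_0]\|_{L^1}$. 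Squaring delivers the initial-data portion of the right-hand side of \eqref{lem.tildeB}.

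For the Duhamel piece, since $f_\alpha=-\rho_\alpha u_\alpha$ and $g_{2\alpha},g_3$ are quadratic in $[\rho,u,B]$, Moser-type product estimates together with the embedding $H^2\hookrightarrow L^\infty$ (valid because $N\geq 3$) yield
\[
\|N(s)\|_{L^1}+\|[f_i(s),f_e(s)]\|_{L^1}+\|\nabla^2 N(s)\|_{L^2}\leq C\|U(s)\|_N^2\leq C(1+s)^{-3/2}\mathcal{E}_{N,\infty}(U(t)),
\]
where the last bound is just the definition \eqref{sec5.def}. Theorem \ref{thm.special}, again with $p=1$, $q=r=2$, $m=0$, $\ell=3/2$, then gives $(1+t-s)^{-5/4}$ decay against $\|N(s)\|_{L^1}$ and $\|[f_i(s),f_e(s)]\|_{L^1}$, and $(1+t-s)^{-3/4}$ decay against $\|\nabla^2 N(s)\|_{L^2}$. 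The standard time-weighted convolution bound
\[
\int_{0}^{t}(1+t-s)^{-a}(1+s)^{-3/2}\,ds\leq C(1+t)^{-3/4},\qquad a\in\{3/4,\,5/4\},
\]
then controls the Duhamel contribution in $L^2$ by $C(1+t)^{-3/4}\mathcal{E}_{N,\infty}(U(t))$. Squaring and summing with the linear bound produces \eqref{lem.tildeB}.

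The only delicate point is the bookkeeping of the fractional index $\ell=3/2$, which has to be large enough for $(1+t)^{-\ell/2}$ to match the $(1+t)^{-3/4}$ rate demanded by the lemma and small enough so that $[\ell+3(1/r-1/q)]_+=2$ matches exactly the $\dot H^2$ norm on the right-hand side; the linear estimate and the source estimate must then be balanced at this same fractional level. Everything else---the Moser inequality for $\|\nabla^2 N\|_{L^2}$, the convergence of the two convolution integrals, and the final squaring---is routine, so I would not expect any obstacle beyond this index matching.
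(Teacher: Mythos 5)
Your proof is correct and follows essentially the same strategy as the paper: write the solution in mild form via Duhamel's principle, apply the $L^p$--$L^q$ decay estimates to the $\rho_\alpha$ and $B$ components with the parameter choices $p=1$, $q=r=2$, $m=0$, $\ell=3/2$, bound the quadratic source by $(1+s)^{-3/2}\mathcal{E}_{N,\infty}(U(t))$, and close with the standard two-sided convolution estimate. The one deviation is that for the Duhamel integral you invoke Theorem \ref{thm.special} where the paper uses Corollary \ref{corollary.decayL}; that is legitimate (the source $N(s)=[\nabla\cdot f_\alpha,g_{2\alpha},g_3,0]$ does satisfy the compatibility condition \eqref{LNC}, since $\nabla\cdot g_3 = 4\pi\sum_\alpha q_\alpha\nabla\cdot f_\alpha$ and the $B$-component is zero), and Theorem \ref{thm.special} indeed yields the faster $(1+t-s)^{-5/4}$ decay against $\|[f_i,f_e]\|_{L^1}$ that you cite; but since the bottleneck is the $(1+t-s)^{-3/4}$ kernel paired with $\|\nabla^2 N(s)\|_{L^2}$, that extra gain is not exploited here. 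The paper reserves Theorem \ref{thm.special} for Lemma \ref{asy.small.lem}, where the improved rate for the $[f_i,f_e]$ term is what actually makes the $(1+t)^{-5/4}$ and $(1+t)^{-7/4}$ rates close; for the present Lemma \ref{lem.Bsigma} the weaker Corollary \ref{corollary.decayL} already suffices, so your choice is correct but slightly stronger machinery than needed.
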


\begin{proof}
By applying  the first linear estimate on $\rho_{\alpha}$ and the
fourth linear estimate on $B$ and  letting $m=0,\ q=r=2,\ p=1,\
\ell=\frac{3}{2}$ in Corollary \ref{corollary.decayL} to the mild
form \eqref{sec5.U} respectively, one has
\begin{multline}\label{sec5.decayB}
 \|B(t)\|\leq C
(1+t)^{-\frac{3}{4}}
 \left(\|U_{0}\|_{L^1\cap \dot{H}^{2}}+\|B_{0}\|_{L^{1}\cap L^{2}}\right)\\
+C
\int_{0}^{t}(1+t-s)^{-\frac{3}{4}}\|[g_{1\alpha}(s),g_{2\alpha}(s),g_{3}(s)]\|_{L^{1}\cap\dot{H}^{2}}ds,
\end{multline}
and
\begin{multline}\label{sec5.decayrho}
 \|[\rho_{i},\rho_{e}]\|\leq C
(1+t)^{-\frac{3}{4}}
 \left(\|U_{0}\|_{L^1\cap \dot{H}^{2}}+\|[\rho_{i0},\rho_{e0}]\|_{L^{1}\cap L^{2}}\right)\\
+C
\int_{0}^{t}(1+t-s)^{-\frac{3}{4}}\left(\|[g_{1\alpha}(s),g_{2\alpha}(s),g_{3}(s)]\|_{L^{1}\cap\dot{H}^{2}}+\|g_{1\alpha}(s)\|_{L^{1}\cap
L^{2}}\right)ds.
\end{multline}
Recall the definition $\eqref{sec5.ggg}$ of $g_{1\alpha}$,
$g_{2\alpha}$ and $g_{3}$. It is straightforward to verify that for
any $0\leq s\leq t$,
\begin{eqnarray*}
\|[g_{1\alpha}(s),g_{2\alpha}(s),g_{3}(s)]\|_{L^{1}\cap\dot{H}^{2}}\leq
C \mathcal {E}_{N}(U(s))\leq (1+s)^{-\frac{3}{2}}\mathcal
{E}_{N,\infty}(U(t)),
\end{eqnarray*}
\begin{eqnarray*}
\|g_{1\alpha}(s)\|_{L^{1}\cap L^{2}}\leq C \mathcal
{E}_{N}(U(s))\leq (1+s)^{-\frac{3}{2}}\mathcal {E}_{N,\infty}(U(t)).
\end{eqnarray*}
Here we have used $\eqref{sec5.def}$. Putting the above two
inequalities  into \eqref{sec5.decayB} and $\eqref{sec5.decayrho}$
respectively gives
\begin{eqnarray*}
&\displaystyle \|B(t)\|\leq C (1+t)^{-\frac{3}{4}}
 (\|U_{0}\|_{L^{1}\cap\dot{H}^{2}}+\|B_{0}\|_{L^{1}\cap L^{2}}+\mathcal {E}_{N,\infty}(U(t))),\\
&\displaystyle  \|[\rho_{i},\rho_{e}]\|\leq C (1+t)^{-\frac{3}{4}}
 (\|U_{0}\|_{L^{1}\cap\dot{H}^{2}}+\|[\rho_{i0},\rho_{e0}]\|_{L^{1}\cap L^{2}}+\mathcal {E}_{N,\infty}(U(t))),
\end{eqnarray*}
which imply $\eqref{lem.tildeB}$.  This completes the proof of Lemma
$\ref{lem.Bsigma}$.
\end{proof}
Now, the rest is to prove the uniform-in-time bound of $\mathcal
 {E}_{N,\infty}(U(t))$ which yields the time-decay rates of the
 Lyapunov functional $\mathcal
 {E}_{N}(U(t))$ and thus $\|U(t)\|_{N}^{2}$. In fact, by taking $\ell =\frac{3}{2}+\epsilon$
in $\eqref{sec5.ED}$ with $\epsilon>0$ small enough, one has
\begin{eqnarray*}
\begin{aligned}
 & (1+t)^{\frac{3}{2}+\epsilon}\mathcal {E}_{N}(U(t))+\lambda
 \int_{0}^{t}(1+s)^{\frac{3}{2}+\epsilon}\mathcal {D}_{N}(U(s))d s \\
 \leq & C \mathcal {E}_{N+2}(U_{0})+ C
 \int_{0}^{t}(1+s)^{\frac{1}{2}+\epsilon}(\|
  B(s)\|^{2}+\|[\rho_{i}(s),\rho_{e}(s)]\|^{2})d s.
\end{aligned}
\end{eqnarray*}
Here, using $\eqref{lem.tildeB}$ and the fact that $\mathcal
 {E}_{N,\infty}(U(t))$ is non-decreasing in $t$, it further holds
 that
\begin{eqnarray*}
\begin{aligned}
 &\int_{0}^{t}(1+s)^{\frac{1}{2}+\epsilon}(\|
 B\|^{2}+\|[\rho_{i}(s),\rho_{e}(s)]\|^{2})d s\\
  \leq  & C(1+t)^{\epsilon}\Big(\mathcal
 {E}_{N,\infty}^{2}(U(t))+\|[\rho_{i0},\rho_{e0},B_{0}]\|\|_{L^{1}\cap L^{2}}^2+\left.\|U_{0}\|_{L^{1}\cap \dot{H}^{2}}^{2}\right).
\end{aligned}
\end{eqnarray*}
Therefore, it follows that
\begin{eqnarray*}
\begin{aligned}
 & (1+t)^{\frac{3}{2}+\epsilon}\mathcal {E}_{N}(V(t))+\lambda
 \int_{0}^{t}(1+s)^{\frac{3}{2}+\epsilon}\mathcal {D}_{N}(V(s))d s \\
 \leq & C \mathcal {E}_{N+2}(V_{0})+ C(1+t)^{\epsilon}\left(\mathcal
 {E}_{N,\infty}^{2}(U(t))+\|[\rho_{i0},\rho_{e0},B_{0}]\|\|_{L^{1}\cap L^{2}}^2+\|U_{0}\|_{L^{1}\cap \dot{H}^{2}}^{2}\right),
\end{aligned}
\end{eqnarray*}
which implies
\begin{eqnarray*}
\begin{aligned}
      (1+t)^{\frac{3}{2}}\mathcal {E}_{N}(U(t))
 \leq & C \left( \mathcal {E}_{N+2}(U_{0})+
\mathcal
 {E}_{N,\infty}^{2}(U(t))+\|[\rho_{i0},\rho_{e0},B_{0}]\|\|_{L^{1}\cap L^{2}}^2+\|U_{0}\|_{L^{1}\cap \dot{H}^{2}}^{2}\right) .
\end{aligned}
\end{eqnarray*}
Thus, one has
\begin{eqnarray*}
\mathcal {E}_{N,\infty}(U(t))
 \leq C \left( \epsilon_{N+2}^{2}(U_{0})+
\mathcal {E}_{N,\infty}^{2}(U(t))\right).
\end{eqnarray*}
Here, recall the definition of $\epsilon_{N+2}(U_{0})$. Since
$\epsilon_{N+2}(U_{0})>0$ is sufficiently small, $\mathcal
{E}_{N,\infty}(U(t)) \leq C \epsilon_{N+2}^{2}(U_{0})$ holds true
for any $t\geq 0$, which implies
\begin{eqnarray*}
\|U(t)\|_{N} \leq C \mathcal {E}_{N}(U(t))^{1/2}
 \leq C  \epsilon_{N+2}(U_{0})(1+t)^{-\frac{3}{4}},
\end{eqnarray*}
for any $t\geq 0$. This proves \eqref{V.decay} in Theorem
\ref{pro.2.2}.

\subsubsection{Time rate for the higher-order  instant
energy functional}
\begin{lemma}\label{estimate2}
Let $U=[\rho_{\alpha},u_{\alpha},E,B]$ be the solution to the Cauchy
problem $\eqref{2.5}$-$ \eqref{NI}$ with initial data
$U_{0}=[\rho_{\alpha0},u_{\alpha0},E_{0},B_{0}]$ satisfying
$\eqref{NC}$ in the sense of Proposition $\ref{pro.2.1}$. Then if
$\mathcal {E}_{N}(U_{0})$ is sufficiently small, there are the
high-order instant energy functional $\mathcal {E}_{N}^{h}(\cdot)$
and the corresponding dissipation rate $\mathcal {D}_{N}^{h}(\cdot)$
such that
\begin{eqnarray}\label{sec5.high}
&& \frac{d}{dt}\mathcal {E}_{N}^{h}(U(t))+\lambda\mathcal
{D}^{h}_{N}(U(t))\leq C\sum_{\alpha=i,e}\|\nabla
\rho_{\alpha}\|^{2},
\end{eqnarray}
holds for any $ t \geq 0$.
\end{lemma}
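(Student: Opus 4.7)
The plan is to follow the same five-step energy procedure used in the proof of Lemma~\ref{estimate}, restricted to derivatives of order $\ge 1$ so that the resulting Lyapunov functional coincides with $\mathcal{E}_N^h$ in \eqref{de.Eh} and its dissipation with $\mathcal{D}_N^h$ in \eqref{de.Dh}. The a priori smallness $\sup_{t\ge 0}\mathcal{E}_N(U(t))\lesssim\mathcal{E}_N(U_0)\ll 1$ guaranteed by Theorem~\ref{pro.2.1} will be used at each step to absorb every genuinely cubic remainder into the linear dissipation $\lambda\mathcal{D}_N^h$.

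In the first step I apply $\partial^l$ with $1\le|l|\le N$ to \eqref{re2.2} and pair the four equations with $(p'_\alpha(\rho_\alpha+1)/(\rho_\alpha+1))\partial^l\rho_\alpha$, $(\rho_\alpha+1)\partial^l u_\alpha$, $\partial^l E/(4\pi)$ and $\partial^l B/(4\pi)$ respectively; the Hirota--Matsumura--Nishida-type cancellation between the symmetrised pressure gradient and the convection of the continuity equation, together with the cancellation of the $q_\alpha(\rho_\alpha+1)u_\alpha\cdot E$ coupling between momentum and Amp\`ere, survives unchanged and yields the friction dissipation $\sum_{1\le|l|\le N}\int m_\alpha\nu_\alpha(\rho_\alpha+1)|\partial^l u_\alpha|^2\,dx$. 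The next three steps introduce the cross functionals
\begin{equation*}
\mathcal{J}_1=\sum_{1\le|l|\le N-1}\sum_{\alpha=i,e}m_\alpha\langle\partial^l u_\alpha,\partial^l\nabla\rho_\alpha\rangle,\quad
\mathcal{J}_2=\sum_{1\le|l|\le N-1}\sum_{\alpha=i,e}m_\alpha\Big\langle\partial^l u_\alpha,-\tfrac{q_\alpha}{T_\alpha}\partial^l E\Big\rangle,
\end{equation*}
\begin{equation*}
\mathcal{J}_3=-\sum_{1\le|l|\le N-2}\langle\partial^l E,\nabla\times\partial^l B\rangle,
\end{equation*}
whose time derivatives (obtained by inserting $\partial_t u_\alpha$, $\partial_t E$, $\partial_t B$ from the system and integrating by parts) give, respectively, the density dissipation $\lambda\|\nabla^2\rho_\alpha\|_{N-2}^2$, the electric dissipation $\lambda\|\nabla E\|_{N-2}^2$, and the magnetic dissipation $\lambda\|\nabla^2 B\|_{N-3}^2$, which together with the friction dissipation of the first step realise exactly the full $\mathcal{D}_N^h$. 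In the assembly step I set $\mathcal{E}_N^h$ to be the sum of the first-step functional and $\kappa_1\mathcal{J}_1+\kappa_2\mathcal{J}_2+\kappa_3\mathcal{J}_3$ with $0<\kappa_3\ll\kappa_2\ll\kappa_1\ll 1$ and $\kappa_2^{3/2}\ll\kappa_3$; the last hierarchy is precisely what allows the Cauchy--Schwarz cross-term $\kappa_2\|u_\alpha\|_N\|\nabla B\|_{N-2}$ coming from Step~3 to be split as $\kappa_2^{1/2}\|u_\alpha\|_N^2+\kappa_2^{3/2}\|\nabla B\|_{N-2}^2$ and absorbed, exactly as at the end of the proof of Lemma~\ref{estimate}.

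The main obstacle is the structural mismatch between $\mathcal{E}_N^h$ and $\mathcal{D}_N^h$: the energy controls $\|\nabla\rho_\alpha\|^2$ (as part of $\|\nabla\rho_\alpha\|_{N-1}^2$), but the dissipation contains only $\|\nabla^2\rho_\alpha\|_{N-2}^2$, with no first-order density term available. Several Cauchy--Schwarz inequalities applied at the bottom index $|l|=1$ in Steps~2--3 inevitably produce pieces bounded by $\|\nabla\rho_\alpha\|^2$ that cannot be absorbed into $\mathcal{D}_N^h$ and must therefore be kept on the right-hand side, yielding exactly the residual $C\sum_\alpha\|\nabla\rho_\alpha\|^2$ of \eqref{sec5.high}. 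This apparent slack is harmless for the subsequent time-decay argument, since \eqref{V.decay} in Theorem~\ref{pro.2.2} already furnishes the decay $\|\nabla\rho_\alpha(t)\|^2\lesssim\epsilon_{N+2}(U_0)^2(1+t)^{-3/2}$, which is integrable in $(1+t)^{\ell}\,dt$ for suitable $\ell>1/2$ and so closes the time-weighted Gronwall inequality derived from \eqref{sec5.high}.
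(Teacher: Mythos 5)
Your overall strategy---re-running the five steps of the proof of Lemma~\ref{estimate} with all sums starting from $|l|\ge 1$, assembling the same cross functionals $\mathcal{J}_1,\mathcal{J}_2,\mathcal{J}_3$ with $0<\kappa_3\ll\kappa_2\ll\kappa_1\ll 1$, $\kappa_2^{3/2}\ll\kappa_3$, and keeping the unabsorbable first-order density piece on the right---is exactly what the paper does, so the proposal is essentially correct in approach.

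One substantive misattribution is worth correcting. You claim the residual $C\sum_\alpha\|\nabla\rho_\alpha\|^2$ is produced by \emph{linear} Cauchy--Schwarz applied at $|l|=1$ in Steps~2--3. That is not where it comes from. At $|l|=1$, the linear cross terms such as $\langle m_\alpha\nu_\alpha\partial^l u_\alpha,\partial^l\nabla\rho_\alpha\rangle$ and $\langle m_\alpha\nu_\alpha\partial^l u_\alpha,\tfrac{q_\alpha}{T_\alpha}\partial^l E\rangle$ split by Cauchy--Schwarz into $\epsilon\|\nabla^2\rho_\alpha\|^2$, $\epsilon\|\nabla E\|^2$, $C\|\nabla u_\alpha\|^2$, all of which sit inside $\mathcal{D}_N^h$. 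In the paper, the $\|\nabla\rho_\alpha\|^2$ slack comes entirely from the \emph{quadratically nonlinear} source bound $C\|U\|_N^2\sum_\alpha\|\nabla[\rho_\alpha,u_\alpha]\|_{N-1}^2$ that appears on the right of each of the four re-verified estimates: after absorbing the pieces $C\|U\|_N^2\|\nabla^2\rho_\alpha\|_{N-2}^2$ and $C\|U\|_N^2\|\nabla u_\alpha\|_{N-1}^2$ into $\lambda\mathcal{D}_N^h$ (using smallness of $\|U\|_N$), the remaining piece $C\|U\|_N^2\|\nabla\rho_\alpha\|^2\le C\|\nabla\rho_\alpha\|^2$ has no counterpart in $\mathcal{D}_N^h$ and must be carried on the right-hand side. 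Your final remark about the downstream Gronwall step is also imprecise: the paper does not feed the weaker decay $\|\nabla\rho_\alpha\|^2\lesssim(1+t)^{-3/2}$ from \eqref{V.decay} into a time-weighted inequality; instead it upgrades to an exponential Gronwall inequality in \eqref{sec.ee} and then invokes Lemma~\ref{estimate3} to obtain the sharper rate $\|\nabla\rho_\alpha\|^2\lesssim(1+t)^{-5/2}$, which is what yields \eqref{nablaV.decay}. Neither of these points affects the validity of the lemma's proof, but the reasoning supporting the residual term should be corrected.
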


\begin{proof}
The proof can be done by modifying the proof of Theorem
$\ref{estimate}$ a little. In fact, by making the energy estimates
on the only  high-order derivatives, then corresponding to
$\eqref{3.3}$, $\eqref{step2}$, $\eqref{step3}$ and $\eqref{step4}$,
it can be re-verified that

\begin{equation*}
\begin{aligned}
&\frac{1}{2}\frac{d}{dt}\left(\sum_{1\leq |l|\leq N}\sum_{
\alpha=i,e}\int_{\mathbb{R}^3}\frac{p'_{\alpha}(\rho_{\alpha}+1)}{\rho_{\alpha}+1}
   |\partial^{l}\rho_{\alpha}|^2 +m_{\alpha}(\rho_{\alpha}+1)
   |\partial^{l}u_{\alpha}|^2)dx+\frac{1}{4\pi}\|\nabla[E,B]\|_{N-1}^{2}\right)\\
&+\sum_{1\leq |l|\leq N}\int_{\mathbb{R}^3}\sum_{\alpha=i,e}
m_{\alpha}(\rho_{\alpha}+1)
   |\partial^{l}u_{\alpha}|^2dx\leq  C(\|U\|_{N}+\|U\|_{N}^{2})\sum_{
\alpha=i,e}\|\nabla[\rho_{\alpha}u_{\alpha}]\|_{N-1}^{2}.
 \end{aligned}
\end{equation*}
\begin{eqnarray*}
&&\begin{aligned}
  &\frac{d}{dt}\sum_{1\leq |l|\leq
N-1}\sum_{\alpha=i,e}m_{\alpha}\langle
\partial^{l}u_{\alpha},
\partial^{l}\nabla\rho_{\alpha}\rangle
  +\lambda\sum_{\alpha=i,e}\|\nabla^{2}\rho_{\alpha}\|_{N-2}^{2}+4\pi\left\|\sum_{\alpha=i,e}q_{\alpha}\nabla\rho_{\alpha}
\right\|_{N-2}^2\\
  \leq &
  C\sum_{\alpha=i,e}\|\nabla u_{\alpha}\|_{N-1}^{2}+C\|U\|_{N}^{2}\left(\sum_{\alpha=i,e}\|\nabla
[\rho_{\alpha},u_{\alpha}]\|_{N-1}^{2}\right),
  \end{aligned}
\end{eqnarray*}
\begin{eqnarray*}
&&\begin{aligned}
  &\dfrac{d}{dt}\sum_{1\leq |l|\leq
N-1}\sum_{\alpha=i,e}m_{\alpha}\left\langle
\partial^{l}u_{\alpha},-\frac{q_{\alpha}}{T_{\alpha}}\partial^{l}E\right\rangle+\dfrac{1}{4\pi}\|\nabla\nabla\cdot E\|_{N-2}^{2}+\lambda\|\nabla E\|^{2}_{N-2} \\
  \leq &
  C\sum_{\alpha=i,e}\|\nabla u_{\alpha}\|_{N-1}^{2}+C\sum_{\alpha=i,e}\|\nabla
  u_{\alpha}\|_{N-1}\|\nabla^{2}
B\|_{N-3}+C\|U\|_{N}^{2}\left(\sum_{\alpha=i,e}\|\nabla
[\rho_{\alpha},u_{\alpha}]\|_{N-1}^{2}\right).
 \end{aligned}
\end{eqnarray*}
and
\begin{eqnarray*}
&&\begin{aligned}
   &-\frac{d}{dt}\sum_{|l|\leq N-2}\langle
\partial^{l}E,\nabla \times \partial^{l}B\rangle+\lambda\|\nabla^{2}
B\|^{2}_{N-3}\\
   \leq & C\|\nabla^{2}E\|_{N-3}^{2}+\sum_{\alpha=i,e}\|\nabla
   u_{\alpha}\|_{N-1}^{2}
   +C\|U\|_{N}^{2}\left(\sum_{\alpha=i,e}\|\nabla
[\rho_{\alpha},u_{\alpha}]\|_{N-1}^{2}\right),
\end{aligned}
\end{eqnarray*}
Here, the details of proof are omitted for simplicity. Now, similar
to $\eqref{3.12}$, let us define
\begin{equation}\label{def.high}
\begin{aligned}
\mathcal {E}_{N}^{h}(U(t))=&\displaystyle\sum_{1\leq |l|\leq
N}\sum_{
\alpha=i,e}\int_{\mathbb{R}^3}\frac{p'_{\alpha}(\rho_{\alpha}+1)}{\rho_{\alpha}+1}
   |\partial^{l}\rho_{\alpha}|^2 +m_{\alpha}(\rho_{\alpha}+1)
   |\partial^{l}u_{\alpha}|^2)dx+\frac{1}{4\pi}\|\nabla[E,B]\|_{N-1}^{2}\\[3mm]
&\displaystyle+\kappa_{1}\sum_{1\leq|l|\leq
N-1}\sum_{\alpha=i,e}m_{\alpha}\langle
\partial^{l}u_{\alpha},
\partial^{l}\nabla\rho_{\alpha}\rangle+\kappa_{2}\sum_{1\leq |l|\leq
N-1}\sum_{\alpha=i,e}m_{\alpha}\left\langle
\partial^{l}u_{\alpha},-\frac{q_{\alpha}}{T_{\alpha}}\partial^{l}E\right\rangle\\[3mm]
&\displaystyle-\kappa_{3}\sum_{1\leq |l|\leq N-2}\langle
\partial^{l}E,\nabla \times \partial^{l}B\rangle,
\end{aligned}
\end{equation}

Similarly, one can choose $0<\kappa_{3}\ll\kappa_{2}\ll\kappa_{1}\ll
1$ with $\kappa_{2}^{3/2}\ll\kappa_{3}$ such that $\mathcal
{E}_{N}^{h}(U(t))\sim \|\nabla U(t)\|_{N-1}^{2}$. Furthermore, the
linear combination of previously obtained four estimates with
coefficients corresponding to $\eqref{def.high}$ yields
$\eqref{sec5.high}$ with $\mathcal {D}_{N}^{h}(\cdot)$ defined in
$\eqref{de.Dh}$. This completes the proof of Lemma \ref{estimate2}.
\end{proof}

By comparing \eqref{de.Dh} with $\eqref{de.Eh}$ for the definitions
of $ \mathcal {E}_{N}^{h}(U(t))$ and  $ \mathcal {D}_{N}^{h}(U(t))$,
it follows from $\eqref{sec5.high}$ that
\begin{eqnarray*}
&& \frac{d}{dt}\mathcal {E}_{N}^{h}(U(t))+\lambda\mathcal
{E}^{h}_{N}(U(t))\leq
 C\left(\|\nabla B\|^{2}+\|\nabla^{N}[E,B]\|^{2}+\sum_{\alpha=i,e}\|\nabla
\rho_{\alpha}\|^{2}\right),
\end{eqnarray*}
which implies
\begin{eqnarray}\label{sec.ee}
&&\begin{aligned}
 &\mathcal {E}_{N}^{h}(U(t))\leq {\exp({-\lambda
t})}
\mathcal{E}_{N}^{h}(U_{0}) \\
& +C\int_{0}^{t}{\exp\{-\lambda(t-s)\}} \left(\|\nabla
B(s)\|^{2}+\|\nabla^{N}[E,B](s)\|^{2}+\sum_{\alpha=i,e}\|\nabla
\rho_{\alpha}(s)\|^{2}\right)d s.
\end{aligned}
\end{eqnarray}
To estimate the time integral term on the r.h.s. of the above
inequality, one has

\begin{lemma}\label{estimate3}
Under the assumptions of Theorem $\ref{pro.2.1}$, if $
\epsilon_{N+6}(U_{0})$ defined in $\eqref{def.epsi}$ is sufficiently small then
\begin{eqnarray}\label{sec5.highBE}
&&\begin{aligned} \|\nabla B(t)\|^{2}+\|\nabla^{N}[E(t),B(t)]\|^{2}
+&\sum_{\alpha=i,e}\|\nabla \rho_{\alpha}(t)\|^{2} \leq
C\epsilon_{N+6}^{2}(U_{0})(1+t)^{-\frac{5}{2}}
\end{aligned}
\end{eqnarray}
holds for any $ t \geq 0$.
\end{lemma}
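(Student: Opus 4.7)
The plan is to apply Duhamel's principle via the mild representation \eqref{sec5.U} and estimate each of the three terms on the left-hand side separately, using Corollary \ref{corollary.decayL} for the linear homogeneous part $e^{tL}U_0$ and Theorem \ref{thm.special} for the nonlinear convolution. The key observation is that the nonlinear source $[g_{1\al},g_{2\al},g_3,0]=[\nabla\cdot f_\al,g_{2\al},g_3,0]$ has vanishing magnetic component and the density source is in divergence form with $f_\al=-\rho_\al u_\al$, which is precisely the structure that Theorem \ref{thm.special} was designed for.

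First, I would estimate $\|\nabla B\|_{L^2}$ and $\|\nabla\rho_\al\|_{L^2}$. Applying the fourth estimate of Corollary \ref{corollary.decayL} (for $B$) with $m=1$, $p=1$, $r=q=2$, and the first estimate (for $\rho_\al$) with the same indices, the linear piece $e^{tL}U_0$ contributes a rate $(1+t)^{-5/4}$ in $L^2$, provided $U_0\in L^1\cap H^{N+[\ell]_+}$ with $\ell$ chosen so that $\ell/2=5/4$. For the nonlinear integral, I would split $\int_0^t=\int_0^{t/2}+\int_{t/2}^t$: on $[0,t/2]$ apply the $L^1$-$L^2$ estimate of Theorem \ref{thm.special} and bound $\|f_\al\|_{L^1}\le \|\rho_\al\|_{L^2}\|u_\al\|_{L^2}\le C\epsilon_{N+2}^2(U_0)(1+s)^{-3/2}$ and similarly for $g_{2\al},g_3$ using \eqref{V.decay}; the resulting integral $\int_0^{t/2}(1+t-s)^{-5/4}(1+s)^{-3/2}\,ds$ yields $(1+t)^{-5/4}$. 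On $[t/2,t]$, apply the $L^2$-$L^2$ estimate with higher-order norms on the source and use $\|U(t)\|_N\|\nabla U(t)\|_{N-1}\le C\epsilon_{N+2}^2(U_0)(1+s)^{-3/2-1/2}$ combined with the decay of $\|U(s)\|_{L^1}\le C$.

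For the high-derivative quantity $\|\nabla^N[E,B]\|_{L^2}$, I would again use Corollary \ref{corollary.decayL} (third and fourth estimates) with $m=N$, $p=r=2$, $q=2$, and tune $\ell=5/2$ so that the regularity-loss factor $(1+t)^{-\ell/2}=(1+t)^{-5/4}$ matches the required rate. This necessitates control of $\|\nabla^{N+[\ell]_+}U_0\|_{L^2}=\|\nabla^{N+3}U_0\|_{L^2}$ for the linear part; the nonlinear Duhamel term requires estimating $\|\nabla^{N+3}[g_{1\al},g_{2\al},g_3]\|_{L^2}$ through Moser-type product inequalities, which loses an additional two derivatives coming from products and from differentiating the divergence structure. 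Summing these derivative losses with the base regularity $N$ yields the requirement $U_0\in H^{N+6}$, matching the hypothesis $\epsilon_{N+6}(U_0)\ll 1$. Here, the a priori decay \eqref{V.decay} must be invoked to bound the top-order pieces of the source, together with Sobolev embedding to put low-order factors in $L^\infty$.

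The principal obstacle will be the bookkeeping for the high-derivative term $\|\nabla^N[E,B]\|$: because the system is of regularity-loss type, the linear decay at top order only occurs under a tradeoff of extra derivatives on initial data, and the nonlinear source must itself be estimated in a Sobolev norm higher than the target level. Checking that the index $N+6$ suffices for both the linear contribution (with $\ell=5/2$) and all bilinear/trilinear estimates on $g_{1\al},g_{2\al},g_3$ in the Duhamel integral is the main technical burden. Once \eqref{sec5.highBE} is established, feeding it into \eqref{sec.ee} immediately gives $\mathcal E_N^h(U(t))\le C\epsilon_{N+6}^2(U_0)(1+t)^{-5/2}$, and hence \eqref{nablaV.decay} follows from $\mathcal E_N^h(U(t))\sim\|\nabla U(t)\|_{N-1}^2$.
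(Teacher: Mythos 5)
Your high-level plan agrees with the paper's: write each quantity via Duhamel, apply the $L^p$-$L^q$ decay estimates with $\ell = 5/2$ so the regularity-loss factor contributes $(1+t)^{-5/4}$, and control the sources by the already-established decay of a sufficiently high Sobolev norm of $U$. The derivative counting leading to $\epsilon_{N+6}(U_0)$ is also in the right spirit.

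There are, however, two points where you depart from the paper, one of which is a genuine flaw. First, the circularity: in the $[t/2,t]$ part of your split you invoke the bound
\begin{equation*}
\|U(s)\|_{N}\,\|\nabla U(s)\|_{N-1}\le C\,\epsilon_{N+2}^{2}(U_{0})\,(1+s)^{-3/2-1/2},
\end{equation*}
which requires $\|\nabla U(s)\|_{N-1}\le C(1+s)^{-5/4}$, i.e.\ precisely \eqref{nablaV.decay}. But \eqref{nablaV.decay} is what one obtains by plugging Lemma~\ref{estimate3} into \eqref{sec.ee}, so you are using the conclusion in its own proof. This is unnecessary: the paper only uses \eqref{V.decay} with $N$ replaced by $N+4$, namely $\|U(t)\|_{N+4}\le C\epsilon_{N+6}(U_{0})(1+t)^{-3/4}$, which gives $\|[g_{1\alpha},g_{2\alpha},g_3](s)\|_{L^1\cap\dot H^{N+3}}\le C\epsilon_{N+6}^2(U_{0})(1+s)^{-3/2}$. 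The single convolution estimate $\int_0^t(1+t-s)^{-5/4}(1+s)^{-3/2}\,ds\le C(1+t)^{-5/4}$ then closes the argument with no split and no reference to $\|\nabla U\|_{N-1}$.

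Second, a minor stylistic deviation: you route the Duhamel term through Theorem~\ref{thm.special}, which exploits the divergence structure $g_{1\alpha}=\nabla\cdot f_\alpha$ to squeeze an extra half power of decay out of the $\rho_\alpha$ and $u_\alpha$ projections. That is overkill here: with $m=1$, $p=1$, $q=r=2$ the slowest algebraic rate produced by Corollary~\ref{corollary.decayL} is already $(1+t)^{-5/4}$, which is all that \eqref{sec5.highBE} demands. The paper reserves Theorem~\ref{thm.special} for Lemma~\ref{asy.small.lem}, where the faster rate $(1+t)^{-7/4}$ for $u_\alpha$ and $E$ is genuinely needed, and simply uses Corollary~\ref{corollary.decayL} for Lemma~\ref{estimate3}.
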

For this time, suppose that the above lemma is true. Then by applying
$\eqref{sec5.highBE}$ to $\eqref{sec.ee}$,  it is immediate to
obtain
\begin{eqnarray*}
\mathcal {E}_{N}^{h}(U(t))\leq {\exp\{-\lambda t\}}
\mathcal{E}_{N}^{h}(U_{0})+C
\epsilon_{N+6}^{2}(U_{0})(1+t)^{-\frac{5}{2}},
\end{eqnarray*}
which proves  \eqref{nablaV.decay} in Theorem \ref{pro.2.2}.

\medskip
\noindent{\it Proof of Lemma \ref{estimate3}:} Suppose that
$\epsilon_{N+6}(U_{0})>0$ is sufficiently small. Notice that, by the
first part of Theorem \ref{pro.2.2},
\begin{eqnarray}\label{UN+4}
\|U(t)\|_{N+4}  \leq C  \epsilon_{N+6}(U_{0})(1+t)^{-\frac{3}{4}}.
\end{eqnarray}
Similar to obtaining $\eqref{sec5.decayB}$, one can apply the linear
estimate on $\rho_{\alpha},B$ and  letting $m=1,\ q=r=2,\ p=1,\
\ell=\dfrac{5}{2}$ in Corollary \ref{corollary.decayL} to the mild
form \eqref{sec5.U} respectively, and the linear estimate on $E,B$
and  letting $m=N,\ q=r=2,\ p=1,\ \ell=\dfrac{5}{2}$ so that
\begin{multline}\label{sec5.nablarho}
 \|\nabla \rho_{\alpha}(t)\|\leq C (1+t)^{-\frac{5}{4}}
 \|U_{0}\|_{ L^1\cap \dot{H}^{4}}+{\exp\{-\lambda t\}}
 \|\nabla[\rho_{i0},\rho_{e0}]\|\\
+C
\int_{0}^{t}(1+t-s)^{-\frac{5}{4}}\|[g_{1\alpha}(s),g_{2\alpha}(s),g_{3}(s)]\|_{L^{1}\cap\dot{H}^{4}}ds\\
+C
\int_{0}^{t}{\exp\{-\lambda(t-s)\}}\|\nabla[g_{1i}(s),g_{1e}(s)]\|ds,
\end{multline}
\begin{eqnarray}\label{sec5.nablaB}
&&\begin{aligned}\|\nabla B(t)\|\leq C (1+t)^{-\frac{5}{4}}
 \| & U_{0}\|_{ L^1 \cap \dot{H}^{4}}+C{\exp\{-\lambda t\}}\|\nabla B_{0}\|\\
&+C
\int_{0}^{t}(1+t-s)^{-\frac{5}{4}}\|[g_{1\alpha}(s),g_{2\alpha}(s),g_{3}(s)]\|_{L^{1}\cap\dot{H}^{4}}ds,
\end{aligned}
\end{eqnarray}
and
\begin{multline}\label{sec5.nablaNE}
 \|\nabla^{N}E(t)\|\leq
C(1+t)^{-\frac{5}{4}}
 \|U_{0}\|_{ L^1 \cap \dot{H}^{N+3}}+{\exp\{-\lambda t\}}
 \|\nabla^{N+1}[\rho_{i0},\rho_{e0},B_{0}]\|\\
+C
\int_{0}^{t}(1+t-s)^{-\frac{5}{4}}\|[g_{1\alpha}(s),g_{2\alpha}(s),g_{3}(s)]\|_{L^{1}\cap\dot{H}^{N+3}}ds\\
+C
\int_{0}^{t}{\exp\{-\lambda(t-s)\}}\|\nabla^{N+1}[g_{1i}(s),g_{1e}(s)]\|ds,
\end{multline}
\begin{multline}\label{sec5.nablaNB}
 \|\nabla^{N}B(t)\|\leq
C(1+t)^{-\frac{5}{4}}
 \|U_{0}\|_{ L^1 \cap \dot{H}^{N+3}}+{\exp\{-\lambda t\}}
 \|\nabla^{N}B_{0}\|\\
+C
\int_{0}^{t}(1+t-s)^{-\frac{5}{4}}\|[g_{1\alpha}(s),g_{2\alpha}(s),g_{3}(s)]\|_{L^{1}\cap\dot{H}^{N+3}}ds.
\end{multline}
Recalling the definition $\eqref{sec5.ggg}$, it is straightforward
to verify
\begin{eqnarray*}
\|[g_{1\alpha}(s),g_{2\alpha}(s),g_{3}(s)]\|_{L^{1}\cap\dot{H}^{4}}\leq
C\|U(t)\|_{5}^{2},
\end{eqnarray*}
\begin{eqnarray*}
\|[g_{1\alpha}(s),g_{2\alpha}(s),g_{3}(s)]\|_{L^{1}\cap\dot{H}^{N+3}}
\leq C\|U(t)\|^{2}_{N+4},
\end{eqnarray*}
\begin{eqnarray*}
\|\nabla[g_{1i}(s),g_{1e}(s)]\|\leq C\|U(t)\|_{3}^{2},\ \ \ \ \
\|\nabla^{N+1}[g_{1i}(s),g_{1e}(s)]\|\leq C\|U(t)\|^{2}_{N+2}.
\end{eqnarray*}
The above estimates together with $\eqref{UN+4}$ give
\begin{multline*}
\|[g_{1\alpha}(s),g_{2\alpha}(s),g_{3}(s)]\|_{L^{1}\cap\dot{H}^{4}}+\|[g_{1\alpha}(s),g_{2\alpha}(s),g_{3}(s)]\|_{L^{1}\cap\dot{H}^{N+3}}\\
+\|\nabla[g_{1i}(s),g_{1e}(s)]\|+\|\nabla^{N+1}[g_{1i}(s),g_{1e}(s)]\|
 \leq
C\|U(t)\|^{2}_{N+4}\leq C
\epsilon_{N+6}^{2}(U_{0})(1+s)^{-\frac{3}{2}}.
\end{multline*}
Then it follows from $\eqref{sec5.nablarho}$, $\eqref{sec5.nablaB}$,
$\eqref{sec5.nablaNB}$ and  $\eqref{sec5.nablaNE}$ that
\begin{eqnarray*}
\begin{aligned}
\|\nabla B(t)\|+\|\nabla^{N}[E(t),B (t)]\|
+\sum_{\alpha=i,e}\|\nabla \rho_{\alpha}(t)\|^{2} \leq
C\epsilon_{N+6}(U_{0})(1+t)^{-\frac{5}{4}},
\end{aligned}
\end{eqnarray*}
where the smallness of $\epsilon_{N+6}(U_{0})$ has been used. The proof
of Lemma $\ref{estimate3}$ is complete.

\subsubsection{Time rate in $L^{2}$}

Recall that Theorem \ref{pro.2.1} shows that for $N\geq 3$, if
$\epsilon_{N+2}(U_{0})$ is sufficiently small then
\begin{eqnarray}\label{UN}
\|U(t)\|_{N}  \leq C  \epsilon_{N+2}(U_{0})(1+t)^{-\frac{3}{4}},
\end{eqnarray}
and if $\epsilon_{N+6}(U_{0})$ is sufficiently small then
\begin{eqnarray*}
\|\nabla U(t)\|_{N-1}  \leq C
\epsilon_{N+6}(U_{0})(1+t)^{-\frac{5}{4}}.
\end{eqnarray*}
Now, we write down the $L^2$  time-decay rates of $[\rho_{\alpha},B]$ and
$[u_{\alpha},E]$ as follows.

\medskip
\noindent\emph{Estimate  on} $\|[\rho_{\alpha},B]\|_{L^{2}}$. It is
easy to see from $\eqref{UN}$ that
\begin{eqnarray}
\|B(t)\|+\sum_{i=i,e}\|\rho_{\alpha}\| \leq C
\epsilon_{5}(U_{0})(1+t)^{-\frac{3}{4}}.\label{rate.brho}
\end{eqnarray}

 \medskip
\noindent\emph{Estimate  on} $\|[u_{\alpha},E]\|_{L^{2}}$. Applying
the  second and the third linear estimate on $[u_{\alpha},E]$ with  $m=0,\ q=r=2,\ p=1,\ \ell=5/2$ in Corollary
\ref{corollary.decayL} to the mild form \eqref{sec5.U}, one has
\begin{multline*}
\|[u_{\alpha},E](t)\| \leq C(1+t)^{-\frac{5}{4}}
 \|U_{0}\|_{L^{1}\cap\dot{H}^{3}}+{\exp\{-\lambda t\}}\|\nabla [\rho_{i0},\rho_{e0},B_{0}]\|\\
+C
\int_{0}^{t}(1+t-s)^{-\frac{5}{4}}\|[g_{1\alpha}(s),g_{2\alpha}(s),g_{3}(s)]\|_{L^{1}\cap\dot{H}^{3}}ds\\
+C \int_{0}^{t}{\exp\{-\lambda(t-s)\}}\|\nabla
[g_{1i}(s),g_{1e}(s)]\|ds.
\end{multline*}
By $\eqref{UN}$, it follows that
\begin{equation*}
\|\nabla [g_{1i}(s),g_{1e}(s)]\|
+\|[g_{1\alpha}(s),g_{2\alpha}(s),g_{3}(s)]\|_{L^{1}\cap\dot{H}^{3}}
\leq C\|U(t)\|^{2}_{4}\leq
C\epsilon_{6}^{2}(U_{0})(1+t)^{-\frac{3}{2}}.
\end{equation*}
Therefore, one has
\begin{eqnarray}\label{uL2}
\|[u_{\alpha},E](t)\| \leq C\epsilon_{6}(U_{0})(1+t)^{-\frac{5}{4}}.
\end{eqnarray}

\subsection{Asymptotic rate to diffusion waves}
In this section  we shall prove the main Theorem \ref{thm.main} on the
large-time asymptotic behavior of the obtained solutions.

First of all, we prove in the following lemma that the solution $U(x,t)=[\rho_{\alpha},
u_{\alpha},E,B]$ to the nonlinear Cauchy problem
\eqref{2.5}-\eqref{NC} can be approximated by the one of the corresponding
linearized problem $\eqref{Linear}$-$\eqref{LNC}$ in large time.
%

\begin{lemma}\label{asy.small.lem}
Suppose that $\epsilon_{11}(U_{0})>0$ is sufficiently small, and
$U(x,t)=[\rho_{\alpha}, u_{\alpha},E,B]$ is a solution to the Cauchy
problem \eqref{2.5}-\eqref{NC} with initial data $U_0$. Then it holds that
\begin{eqnarray}
&&\left\|
\rho_{\alpha}(t)-\FP_{1\alpha}e^{tL}U_{0}\right\|\leq C
(1+t)^{-\frac{5}{4}}, \label{asy.small.dec.rho}\\
&&\left\|u_{\alpha}(t)-\FP_{2\alpha}e^{tL}U_{0}\right\|\leq
C (1+t)^{-\frac{7}{4}},\label{asy.small.dec.u}\\
&&\left\|E(t)-\FP_{3}e^{tL}U_{0}\right\|\leq C
(1+t)^{-\frac{7}{4}},\label{asy.small.dec.E}\\
&&\left\|B(t)-\FP_4e^{tL}U_{0}\right\|\leq C
(1+t)^{-\frac{5}{4}},\label{asy.small.dec.B}
\end{eqnarray}
for any $t\geq 0$.
\end{lemma}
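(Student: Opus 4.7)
The natural starting point is the Duhamel representation \eqref{sec5.U}, which gives
\begin{equation*}
U(t)-e^{tL}U_{0}=\int_{0}^{t}e^{(t-s)L}N(s)\,ds,\qquad N(s)=[\nabla\cdot f_{\alpha}(s),g_{2\alpha}(s),g_{3}(s),0].
\end{equation*}
Projecting onto each component and applying $L^2$ triangle inequality reduces the four claims to uniform time-decay estimates for $\int_0^t\|\FP_{j}e^{(t-s)L}N(s)\|_{L^2}\,ds$ with $j\in\{1\alpha,2\alpha,3,4\}$. The crucial observation is that $N$ has exactly the ``special initial data'' form of Section~\ref{sec.spec.}, so Theorem~\ref{thm.special} is tailor-made: it provides the \emph{extra} time-decay rate $(1+t-s)^{-5/4}$ for the $\rho_\alpha,B$ components and $(1+t-s)^{-7/4}$ for the $u_\alpha,E$ components, exactly matching the rates claimed in \eqref{asy.small.dec.rho}--\eqref{asy.small.dec.B}.

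\textbf{Key estimates on the source.} Using the definition \eqref{sec5.ggg} together with the already-proved decay rates from Theorem~\ref{pro.2.2} and the sharper $L^2$ estimates \eqref{rate.brho}, \eqref{uL2}, I will show that $N(s)$ satisfies, with $\epsilon=\epsilon_{11}(U_0)$,
\begin{equation*}
\|[f_{i}(s),f_{e}(s)]\|_{L^1}+\|N(s)\|_{L^1}\leq C\epsilon^{2}(1+s)^{-2},\qquad \|N(s)\|_{H^{m}}\leq C\epsilon^{2}(1+s)^{-3/2},
\end{equation*}
for $m$ up to some fixed order compatible with Theorem~\ref{thm.special}. The quadratic structure is what produces the fast rate: for instance $\|f_\alpha\|_{L^1}\leq\|\rho_\alpha\|_{L^2}\|u_\alpha\|_{L^2}\leq C\epsilon^{2}(1+s)^{-3/4-5/4}$, and analogous bilinear H\"older arguments handle $g_{2\alpha}$ and $g_3$. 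The high-Sobolev bounds needed to control the exponentially decaying remainder terms in Theorem~\ref{thm.special} are supplied by \eqref{V.decay}--\eqref{nablaV.decay} together with the smallness of $\epsilon_{11}(U_{0})$.

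\textbf{Time splitting.} Choosing $m=0$, $p=r=1$ or $r=2$, $q=2$, and $\ell$ sufficiently large in Theorem~\ref{thm.special}, I will split $\int_{0}^{t}=\int_{0}^{t/2}+\int_{t/2}^{t}$. On $[0,t/2]$ one has $(1+t-s)\sim(1+t)$, so the linear decay factor can be pulled out and the remaining $L^1$-in-time integral of $\|N(s)\|_{L^1}\lesssim(1+s)^{-2}$ (and of $\|[f_i,f_e]\|_{L^1}$) is uniformly bounded. On $[t/2,t]$ one has $(1+s)\sim(1+t)$, so $\|N(s)\|$ yields the desired decay in $(1+t)$, while the linear kernel $(1+t-s)^{-5/4}$ (resp. $(1+t-s)^{-7/4}$) is integrable at $s=t$ and produces only a logarithmic-free constant. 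Combining the two pieces produces $(1+t)^{-5/4}$ for $\rho_\alpha,B$ and $(1+t)^{-7/4}$ for $u_\alpha,E$, which are precisely \eqref{asy.small.dec.rho}--\eqref{asy.small.dec.B}.

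\textbf{Main obstacle.} The genuinely delicate point is not the scaling of either endpoint of the time split, but the bookkeeping of the several remainder terms in Theorem~\ref{thm.special} that involve $\|\nabla^{m+2+[\cdot]_+}[f_{i0},f_{e0}]\|_{L^r}$ and high Sobolev norms of $N_0$; these require the two-scale decay of both $\|U(t)\|_N$ and $\|\nabla U(t)\|_{N-1}$ from Theorem~\ref{pro.2.2} and ultimately the regularity assumption $\epsilon_{11}(U_0)<\eps$. Once those are matched, the exponential-in-time factors $e^{-\lambda(t-s)}$ in the remainders are convolved against sub-exponentially decaying sources and contribute terms $\lesssim(1+t)^{-2}$, which are absorbed. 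The rest of the argument is a routine Duhamel integration using the bounds above.
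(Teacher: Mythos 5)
Your overall strategy is exactly the one the paper takes: Duhamel's representation, Theorem~\ref{thm.special} applied to the mild form with the ``special'' right-hand side $N(s)=[\nabla\cdot f_\alpha,g_{2\alpha},g_3,0]$, and a time-split convolution estimate. Your $L^1$ source bound $\|[f_i,f_e](s)\|_{L^1}+\|N(s)\|_{L^1}\lesssim(1+s)^{-2}$ is also correct (and even a bit sharper than what the paper records at the $\rho_\alpha$ stage).

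The gap is in your proposed Sobolev estimate $\|N(s)\|_{H^m}\lesssim\epsilon^2(1+s)^{-3/2}$: this is \emph{not} strong enough to yield the rates \eqref{asy.small.dec.u}--\eqref{asy.small.dec.E}. To see why, apply Theorem~\ref{thm.special} for $\FP_{2\alpha}$ with $m=0$, $p=1$, $q=r=2$, $\ell=7/2$; then $[\ell+3(\tfrac1r-\tfrac1q)]_+=4$, and the Duhamel integral contains the term
\begin{equation*}
\int_0^t(1+t-s)^{-\frac{7}{4}}\|\nabla^4 N(s)\|_{L^2}\,ds .
\end{equation*}
If $\|\nabla^4 N(s)\|_{L^2}\lesssim(1+s)^{-3/2}$, the piece over $[t/2,t]$ already gives $(1+t)^{-3/2}\int_{t/2}^t(1+t-s)^{-7/4}ds\sim(1+t)^{-3/2}$, which is \emph{larger} than $(1+t)^{-7/4}$; the exponential remainder $\int_0^te^{-\lambda(t-s)}\|\nabla^2[f_i,f_e](s)\|\,ds$ suffers the same fate. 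So with your bound you would only recover the $(1+t)^{-3/2}$ rate, failing to capture the extra half-derivative gain in $u_\alpha$ and $E$, which is the whole point of the lemma. What is actually needed, and what the paper proves, is $\|\nabla^4 N(s)\|_{L^2}+\|\nabla^2[f_i,f_e](s)\|\lesssim(1+s)^{-5/2}$. This rests on a structural fact: every term of $\nabla^4 N$ is a bilinear expression in $U$ in which, after Sobolev embedding ($\|w\|_{L^\infty}\lesssim\|\nabla w\|_{H^1}$), \emph{each} factor carries at least one derivative. Combined with $\|\nabla U(s)\|_{H^4}\lesssim\epsilon_{11}(U_0)(1+s)^{-5/4}$ from \eqref{nablaV.decay}, this yields $(1+s)^{-5/2}$. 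This is precisely where the $\epsilon_{11}$ smallness and the higher-order decay \eqref{nablaV.decay} are used, and your proposal needs this sharpened source estimate inserted in place of the $(1+s)^{-3/2}$ bound. (Your $(1+s)^{-3/2}$ bound \emph{is} enough for the $\rho_\alpha$, $B$ cases with $\ell=5/2$, since there $(1+t-s)^{-5/4}\ast(1+s)^{-3/2}\lesssim(1+t)^{-5/4}$ already, so the discrepancy is only for $u_\alpha$ and $E$.)
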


\begin{proof}
We rewrite each component of solutions $U(x,t)=[\rho_{\alpha}, u_{\alpha},E,B]$ to \eqref{2.5} as the mild forms
by the Duhamel's principle:
\begin{equation}\label{opt.eq1}
\rho_{\alpha}(x,t)=\FP_{1\alpha}e^{tL}U_{0}+\int_0^t\FP_{1\alpha} e^{(t-s)L}[\nabla
\cdot f_{\alpha}(s),g_{2\alpha}(s),g_{3}(s),0]ds,
\end{equation}
\begin{equation}\label{opt.eq2}
u_{\alpha}(x,t)=\FP_{2\alpha}e^{tL}U_{0}+\int_0^t\FP_{2\alpha}e^{(t-s)L}[\nabla
\cdot f_{\alpha}(s),g_{2\alpha}(s),g_{3}(s),0]ds,
\end{equation}
for $\alpha=i,e$, and
\begin{equation*}
E(x,t)=\FP_3e^{tL}U_{0}+\int_0^t\FP_3e^{(t-s)L}[\nabla \cdot
f_{\alpha}(s),g_{2\alpha}(s),g_{3}(s),0]ds,
\end{equation*}
\begin{equation*}
B(x,t)=\FP_4e^{tL}U_{0}+\int_0^t\FP_4e^{(t-s)L}[\nabla \cdot
f_{\alpha}(s),g_{2\alpha}(s),g_{3}(s),0]ds.
\end{equation*}
Denote $N(s)=[\nabla \cdot f_{\alpha}(s),g_{2\alpha}(s),g_{3}(s),0]$
as in Section \ref{sec.spec.}. In what follows we only prove
\eqref{asy.small.dec.rho} and \eqref{asy.small.dec.u}, and the other two estimates \eqref{asy.small.dec.E} and \eqref{asy.small.dec.B} can be
proved in a similar way. One can apply the linear estimate on
$\FP_{1\alpha}e^{tL}N_{0}$  to the mild form
\eqref{opt.eq1}  by letting $m=0,\ q=r=2,\ p=1,\
\ell=5/2$ in Theorem \ref{thm.special}, so as to obtain
\begin{multline}\label{asy.lg.prf.eq1}
\left\|\rho_{\alpha}(t)-\FP_{1\alpha}e^{tL}U_{0}\right\|
 \leq \displaystyle \int_0^t\left\|\FP_{1\alpha}e^{(t-s)L}[\nabla
\cdot f_{\alpha}(s),g_{2\alpha}(s),g_{3}(s),0]\right\|ds\\
\leq  C\int_0^t(1+t-s)^{-\frac{5}{4}}\left(\|N(s)\|_{L^{1}\cap
\dot{H}^{3}}+\|[f_{i},f_{e}](s)\|_{L^{1}}\right)+{\exp\{\lambda
(t-s)\}}\|\nabla[f_{i},f_{e}](s)\|ds.
\end{multline}
Recalling the definition $\eqref{sec5.ggg}$, it is straightforward
to verify
\begin{eqnarray*}
\|N(s)\|_{L^{1}\cap \dot{H}^{3}}+\|[f_{i},f_{e}](s)\|_{L^{1}}\leq
C\|U(s)\|_{4}^{2}\leq C\epsilon_{6}^{2}(U_{0})(1+s)^{-\frac{3}{2}},
\end{eqnarray*}
and
\begin{eqnarray*}
\|\nabla[f_{i},f_{e}](s)\|\leq C\|U(s)\|_{4}^{2}\leq
C\epsilon_{6}^{2}(U_{0})(1+s)^{-\frac{3}{2}}.
\end{eqnarray*}
Plugging these estimates into \eqref{asy.lg.prf.eq1}, it follows that
\begin{equation*}
\left\|
\rho_{\alpha}(t)-\FP_{1\alpha}e^{tL}U_{0}\right\|\leq C
(1+t)^{-\frac{5}{4}}.
\end{equation*}
Applying the linear estimate on $\FP_{2\alpha}e^{tL}N_{0}$ to the mild form \eqref{opt.eq2} by letting
$m=0,\ q=r=2,\ p=1,\ \ell=7/2$ in Theorem \ref{thm.special}
gives
\begin{multline}\label{asy.lg.prf.eq2}
\left\|u_{\alpha}(t)-\FP_{2\alpha}e^{tL}U_{0}\right\|
 \leq \displaystyle \int_0^t\left\|\FP_{2\alpha}e^{(t-s)L}[\nabla
\cdot f_{\alpha}(s),g_{2\alpha}(s),g_{3}(s),0]\right\|ds\\
\leq  C\int_0^t(1+t-s)^{-\frac{7}{4}}\left(\|N(s)\|_{L^{1}\cap
\dot{H}^{4}}+\|[f_{i},f_{e}](s)\|_{L^{1}}\right)+{\exp\{\lambda
(t-s)\}}\|\nabla^{2}[f_{i},f_{e}](s)\|ds.
\end{multline}
As before, recall the definition $\eqref{sec5.ggg}$ and the time-decay rates \eqref{rate.brho} and \eqref{uL2}. We first estimate $L^{1}$ norms of those terms
without any derivative as
\begin{eqnarray*}
\begin{aligned}
&\|u_{\alpha}\times B\|_{L^{1}}\leq \|u_{\alpha}\| \|B\|\leq
C\epsilon_{6}^{2}(U_{0})(1+s)^{-\frac{5}{4}}(1+s)^{-\frac{3}{4}}\leq
C\epsilon_{6}^{2}(U_{0})(1+s)^{-2},\\
&\|\rho_{\alpha}u_{\alpha} \|_{L^{1}}\leq \|u_{\alpha}\|
\|\rho_{\alpha}\|\leq
C\epsilon_{6}^{2}(U_{0})(1+s)^{-\frac{5}{4}}(1+s)^{-\frac{3}{4}}\leq
C\epsilon_{6}^{2}(U_{0})(1+s)^{-2},\\
&\|f_{\alpha}(s)\|_{L^{1}}\leq \|u_{\alpha}\| \|\rho_{\alpha}\|\leq
C\epsilon_{6}^{2}(U_{0})(1+s)^{-\frac{5}{4}}(1+s)^{-\frac{3}{4}}\leq
C\epsilon_{6}^{2}(U_{0})(1+s)^{-2}.
\end{aligned}
\end{eqnarray*}
For other terms with one derivative, for $\rho_{\alpha}\nabla
\cdot u_{\alpha}$, one has
\begin{eqnarray*}
\|\rho_{\alpha}\nabla \cdot u_{\alpha}\|_{L^{1}}\leq \|\nabla
u_{\alpha}\| \|\rho_{\alpha}\|\leq
C\epsilon_{9}(U_{0})(1+s)^{-\frac{5}{4}}\epsilon_{5}
(U_{0})(1+s)^{-\frac{3}{4}}\leq C\epsilon_{9}^{2}(U_{0})(1+s)^{-2},
\end{eqnarray*}
and similarly it follows that
\begin{eqnarray*}
\|u_{\alpha} \cdot\nabla\rho_{\alpha} \|_{L^{1}}+\|u_{\alpha}
\cdot\nabla u_{\alpha} \|_{L^{1}}+\|\rho_{\alpha}\nabla
\rho_{\alpha}\|_{L^{1}}\leq C\epsilon_{9}^{2}(U_{0})(1+s)^{-2}.
\end{eqnarray*}
For $L^2$ norms,  by calculating  for $|l|=4$,
\begin{eqnarray*}
\begin{aligned}
\|\partial^{l}(u_{\alpha}\times B)\|\leq &
\|u_{\alpha} \|_{L^{\infty}}\|\partial^{l} B\|+\|
B\|_{L^{\infty}}\|\partial^{l}u_{\alpha} \|
 \leq  C\|\nabla U\|_{3}^{2}\leq
 \epsilon_{10}^{2}(U_{0})(1+s)^{-\frac{5}{2}},
\end{aligned}
\end{eqnarray*}
and
\begin{equation*}
\|\partial^{l}(u_{\alpha}\cdot \nabla \rho_{\alpha})\|\leq
\|u_{\alpha} \|_{L^{\infty}}\|\partial^{l} \nabla
\rho_{\alpha}\|+\|\nabla \rho_{\alpha}
\|_{L^{\infty}}\|\partial^{l}u_{\alpha} \|
 \leq  C\|\nabla U\|_{4}^{2}\leq
 \epsilon_{11}^{2}(U_{0})(1+s)^{-\frac{5}{2}},
\end{equation*}
it is direct to verify that
\begin{eqnarray*}
\|N(s)\|_{\dot{H}^{4}}+\|\nabla^{2}[f_{i},f_{e}](s)\| \leq C\|\nabla
U(s)\|_{4}^{2}\leq C\epsilon_{11}^{2}(U_{0})(1+s)^{-\frac{5}{2}}.
\end{eqnarray*}
Plugging the above inequalities into \eqref{asy.lg.prf.eq2} gives
\begin{equation*}
\left\|u_{\alpha}(t)-\FP_{2\alpha}e^{tL}U_{0}\right\|\leq
C (1+t)^{-\frac{7}{4}}.
\end{equation*}
This then completes the proof of Lemma \ref{asy.small.lem}.
\end{proof}

For the solution $U(x,t)=[\rho_{\alpha},u_{\alpha},E,B]$ to the
Cauchy problem \eqref{2.5}-\eqref{NC} and the desired large-time asymptotic profile
$\overline{U}(x,t)=[\overline{\rho},\overline{u}_{\alpha},\overline{E},\overline{B}]$, their difference
can be rewritten as
\begin{equation*}
U-\overline{U}=(U-e^{tL}U_0) +(e^{tL}U_0-\overline{U}), 
\end{equation*}
that is,
\begin{equation*}
\begin{aligned}
&\rho_{\alpha}-\overline{\rho}=\left(\rho_{\alpha}-\FP_{1\alpha}e^{
tL}U_0\right)+\left(\FP_{1\alpha}e^{ tL}U_{0}-\overline{\rho}\right),
\\
&u_{\alpha}-\overline{u}_\al=\left(u_{\alpha}-\FP_{2\alpha}e^{
tL}U_0\right)+\left(\FP_{2\alpha}e^{tL}U_{0}-\overline{u}_{\alpha}\right),
\\
&E-\overline{E}=\left(E-\FP_3e^{
tL}U_0\right)+\left(\FP_3e^{tL}U_{0}-\overline{E}\right),
\\
&B-\overline{B}=\left(B-\FP_4e^{
tL}U_0\right)+\left(\FP_4e^{tL}U_{0}-\overline{B}\right).
\end{aligned}
\end{equation*}
Therefore Theorem \ref{thm.main} follows from Lemma \ref{asy.small.lem}, Theorem \ref{thm.decaypar} and Theorem \ref{thm.decayperp}. 
\qed

\medskip

\noindent{\bf Acknowledgements:}\ \  RJD was supported by the
General Research Fund (Project No.~400912) from RGC of Hong Kong.
QQL and CJZ were supported by the National Natural Science
Foundation of China $\#$11331005, the Program for Changjiang
Scholars and Innovative Research Team in University $\#$IRT13066,
and the Special Fund Basic Scientific Research of Central Colleges
$\#$CCNU12C01001. QQL was also supported by excellent doctorial
dissertation cultivation grant from Central China Normal
University. The authors would like to thank anonymous referees for many helpful comments on the paper. 

\medskip

\end{document}